\let\counterwithout\relax
\title{Injectivity results for coarse homology theories}
\date{\today}
\author{
Ulrich Bunke\thanks{Fakult{\"a}t f{\"u}r Mathematik,
Universit{\"a}t Regensburg,
93040 Regensburg,
Germany\newline
ulrich.bunke@mathematik.uni-regensburg.de} 
\and Alexander Engel\thanks{Fakult{\"a}t f{\"u}r Mathematik,
Universit{\"a}t Regensburg,
93040 Regensburg,
Germany\newline
alexander.engel@mathematik.uni-regensburg.de
	}
\and
Daniel Kasprowski\thanks{
	Rheinische Friedrich-Wilhelms-Universit\"at Bonn, Mathematisches Institut, Endenicher Allee 60,\newline 53115 Bonn, Germany\newline
	kasprowski@uni-bonn.de
}
\and
Christoph Winges\thanks{
		Rheinische Friedrich-Wilhelms-Universit\"at Bonn, Mathematisches Institut, Endenicher Allee 60,\newline 53115 Bonn, Germany\newline
	winges@math.uni-bonn.de}
}
\numberwithin{equation}{section}
\newtheorem{theorem}{Theorem}[section] 
\newtheorem{prop}[theorem]{Proposition}
\newtheorem{ass-alt}[theorem]{Assumption}
\newtheorem{lem}[theorem]{Lemma}
\newtheorem{kor}[theorem]{Corollary}
\theoremstyle{remark}
\theoremstyle{definition}
\newtheorem{ddd-alt}[theorem]{Definition}
\newtheorem{ex-alt}[theorem]{Example}
\newtheorem{constr-alt}[theorem]{Construction}
\newtheorem{rem-alt}[theorem]{Remark}
\newenvironment{ddd}    
{%
	\pushQED{\qed}\begin{ddd-alt}}
	{\popQED\end{ddd-alt}}
\newenvironment{ex}    
{%
	\pushQED{\qed}\begin{ex-alt}}
	{\popQED\end{ex-alt}}
	\newenvironment{constr}    
{%
	\pushQED{\qed}\begin{constr-alt}}
	{\popQED\end{constr-alt}}
\newenvironment{rem}    
{%
	\pushQED{\qed}\begin{rem-alt}}
	{\popQED\end{rem-alt}}
\popQED\end{ass-alt}}
	\crefname{lem}{Lemma}{Lemmas}
	\newcommand{\weak}{\mathrm{weak}}
	\newcommand{\fin}{\mathrm{fin}}
	 \newcommand{\udisc}{\mathrm{udisc}}
\newcommand{\Simpl}{\mathbf{Simpl}}
\newcommand{\free}{\mathrm{free}}
\newcommand{\Tw}{\mathbf{Tw}}
\newcommand{\All}{\mathbf{All}}
\newcommand{\UBC}{\mathbf{UBC}}
\newcommand{\pt}{\mathrm{pt}}
\newcommand{\Rips}{\mathrm{Rips}}
\DeclareMathOperator{\indd}{ind}
\newcommand{\Fix}{\mathrm{Fix}}
\DeclareMathOperator{\Yo}{Yo}
\DeclareMathOperator{\yo}{yo}
\DeclareMathOperator{\Res}{Res}
\newcommand{\Orb}{\mathbf{Orb}}
\newcommand{\cR}{\mathcal{R}}
\newcommand{\BC}{\mathbf{BornCoarse}}
\newcommand{\Fin}{\mathbf{Fin}}
\newcommand{\wt}{\widetilde}
\newcommand{\hlg}{\mathrm{hlg}}
\newcommand{\bL}{\mathbf{L}}
\newcommand{\bM}{\mathbf{M}}
\newcommand{\cP}{\mathcal{P}}
\newcommand{\cK}{\mathcal{K}}
\newcommand{\cL}{{\mathcal{L}}}
\newcommand{\PSh}{{\mathbf{PSh}}}
\newcommand{\bA}{{\mathbf{A}}}
\newcommand{\cO}{{\mathcal{O}}}
\newcommand{\cU}{{\mathcal{U}}}
\newcommand{\cY}{{\mathcal{Y}}}
 \newcommand{\Cat}{{\mathbf{Cat}}}
\newcommand{\itconn}{\mathit{conn}}
\newcommand{\itprop}{\mathit{prop}}
\newcommand{\itfin}{\mathit{fin}}
\newcommand{\Born}{\mathbf{Born}}
\newcommand{\Coarse}{\mathbf{Coarse}}
\newcommand{\Spc}{\mathbf{Spc}}
\newcommand{\FDC}{\mathbf{FDC}}
\newcommand{\cFDC}{\mathbf{FDC^{cp}}}
\newcommand{\VCyc}{\mathbf{VCyc}}
\newcommand{\cp}{\mathbf{CP}}
 \newcommand{\As}{\mathrm{Asmbl}}
 \newcommand{\homolg}{\mathrm{hlg}}
 \renewcommand{\tilde}{\widetilde}
 \DeclareMathOperator{\ff}{\overline{Fix}}
\begin{document}
\maketitle

\begin{abstract}
We show injectivity results for assembly maps using equivariant coarse homology theories with transfers.
Our method is based on the   descent principle and applies to a large class of linear groups or, more generally, groups with finite decomposition complexity.  

\end{abstract}

\tableofcontents
\section{Introduction}

For a group $G$ we consider a functor $M\colon G\Orb\to \bC$ from the orbit category of $G$ to a cocomplete $\infty$-category $\bC$. Often one is interested in
the calculation of the object $\colim_{G\Orb} M$ in $\bC$, or equivalently, in the value $M(*)$ at the final object $*$ of $G\Orb$.
Given a family of subgroups $\cF$ of $G$ one can then ask which information about this colimit can be obtained from the 
restriction of $M$ to the subcategory $G_{\cF}\Orb$ of orbits with stabilizers in $\cF$. To this end one considers the assembly map
\[\As_{\cF,M}\colon \colim_{ G_{\cF}\Orb}M \to \colim_{ G  \Orb}M\ .\]
If $M$ is algebraic or topological $K$-theory, then such  assembly maps appear in the   Farrell-Jones  or Baum-Connes conjectures{; see} for example L\"{u}ck and Reich \cite{MR2181833} and Bartels \cite{MR3598160}.

In the present paper we show  split injectivity results about the assembly map by proving a descent principle. This method was first applied by   Carlsson and Pederson \cite{calped}.
For the application of the descent principle, on the one hand we will use geometric properties of the group $G$ like finite decomposition complextity as introduced by  Guentner, Tessera and Yu \cite{Guentner:2010aa,GTY}. On the other hand, we use that $M$ extends to an equivariant coarse homology theory with transfers as introduced in \cite{coarsetrans}. The main theorem of the paper is \cref{thm:main-injectivity-kor1}. 

We now start by introducing the notation which is necessary to state the theorem  and its assumptions  in detail.
Let $G$ be a group and $\cF$ be a set of subgroups of $G$.   

\begin{ddd} The set $\cF$ is called a \emph{family} of subgroups if it is non-empty, closed under conjugation in $G$, and taking subgroups.
\end{ddd} 

Let $\cF$ be a family of subgroups of $G$.
\begin{ddd}\label{gbiofgergergergrg}	\mbox{}
 \begin{enumerate}
\item $G\Set$ denotes the category of $G$-sets and equivariant maps.
\item $G_\cF\Set$ denotes the full subcategory of $G\Set$ of  $G$-sets with stabilizers in $\cF$.
\item $G\Orb$ denotes the full subcategory of $G\Set$ of  transitive $G$-sets.
\item $G_\cF\Orb$  denotes the full subcategory of $G_{\cF}\Set$ of  transitive $G$-sets  with stabilizers in $\cF$.\qedhere
\end{enumerate}\end{ddd}

The $\infty$-category of spaces will be denoted by   $\Spc$. For any small $\infty$-category $\cC$ (ordinary categories are considered as $\infty$-categories using the nerve) we use the notation $\PSh(\cC):=\Fun(\cC^{op},\Spc)$ for the $\infty$-category of $\Spc$-valued presheaves.
  \begin{ddd}\label{def:efg}
We denote by $E_\cF G$ the object of the presheaf category $\PSh(G\Orb)$, which is essentially uniquely determined by
\begin{equation*}\label{ef2uhfi23f32f2}
E_\cF G(T)\simeq \begin{cases}
                  *& \text{if }T\in G_{\cF}\Orb \\
                  \emptyset&\text{else}
                 \end{cases}\qedhere
\end{equation*}
\end{ddd}

In \cite[Def.~3.14]{trans} we defined the notion of $G$-equivariant finite decomposition complexity ($G$-FDC) for a $G$-coarse space (\cref{ergergregreg5t45t}). $G$-FDC is  an equivariant version of the notion of  finite decomposition complexity FDC  {which was originally} introduced by Guentner, Tessera and Yu \cite{GTY}.

For $S$ in $G\Set$ we let $S_{min}$ denote the $G$-coarse space with underlying $G$-set $S$ and the minimal coarse structure (see \cref{ufewiofheiwfhuewifewew}). In the definition below $\otimes$ denotes the cartesian product in the category $G\Coarse$ of $G$-coarse spaces.

Let $\cF$ be a family of subgroups of $G$ and $X$ be a $G$-coarse space.
\begin{ddd}\label{rfgui32r23r32r32r23-neu}
	$X$ has \emph{$G_\cF$-equivariant finite decomposition complexity} (abbreviated by $G_\cF$-FDC) if $S_{min}\otimes X$ has $G$-FDC for every  $S$ in $G_{\cF}\Set$.  \end{ddd}

We will consider the following families of subgroups.
\begin{ddd}\label{grregergerg}
	~\begin{enumerate}
		\item   $\Fin$ denotes the family of finite subgroups of $G$.
		\item  $\VCyc$ denotes the family of virtually cyclic subgroups of $G$.
		\item\label{giorgjerogiergreg}   $\FDC$ denotes the family of subgroups $V$ of $G$ such that $V_{can}$ has $V_\Fin$-FDC.
		\item \label{giorgjerogiergreg1} $\cp$ denotes the family of subgroups of $G$ generated by those subgroups $V$ such that $E_\Fin V$ is a compact object of $\PSh(V\Orb)$.
		\item   $\cFDC$ denotes  the intersection of $\FDC$ and $\cp$.\qedhere
	\end{enumerate}
\end{ddd}
 
\begin{rem}
 The notation $V_{can}$ in the definition of the family $\FDC$ refers to the group $V$ with the canonical coarse structure described below in \cref{ufewiofheiwfhuewifewew}. 

In order to see that $\FDC$ is a family of subgroups we use that the condition that $V_{can}$ has $V_{\Fin}$-FDC is stable under taking subgroups,  {see \cref{lem:fdc-subgroups}}.

An object  $A$ of an $\infty$-category $\bD$ is called compact if the functor $\Map(A,-)\colon\bD\to \Spc$ commutes with filtered colimits. The word {\em compact} in the definition of $\cp$ is understood in this sense.

The family of subgroups of $G$ generated by a set of subgroups of $G$  is the smallest family containing this subset. The condition that $E_{\Fin}V$ is compact is not stable under taking subgroups. Hence the family $\cp$ may also contain subgroups $V^{\prime}$ with noncompact $E_{\Fin}V^{\prime}$.
\end{rem}

Let $\bC$ be a cocomplete $\infty$-category and let
\begin{equation}\label{gegijioergergrg}M\colon G\Orb \to \bC\end{equation} be a functor. 
Let $\cF$ and $\cF^{\prime}$ be families of subgroups  such that $\cF^{\prime}\subseteq \cF$.
\begin{ddd}\label{ergoiegererg}
	The {\em relative assembly map} $\As_{\cF^{\prime},M}^{\cF}$  is the morphism 
	\[\As_{\cF^{\prime},M}^{\cF}\colon \colim_{ G_{\cF^{\prime}}\Orb}M \to \colim_{ G_\cF \Orb}M \]
	in $\bC$ canonically induced by the inclusion $G_{\cF^{\prime}}\Orb\to G_{\cF}\Orb$.
	
	If $\cF^{\prime}=\Fin$ and $\cF=\All$, then we omit the symbol $\All$ and call 
	$\As_{\Fin,M}$ simply
	the {\em assembly map}. 
\end{ddd}

In order  to capture the large-scale geometry of metric spaces like $G$ (with its word metric), we introduced   the category of $G$-bornological coarse spaces $G\BC$  in  \cite{buen}, \cite{equicoarse}. We further defined the notion of an  equivariant coarse   homology theory. All this will be recalled in detail in \cref{sec:homologytheories}.

We can embed the orbit category $G\Orb$ into  $G\BC$ by a functor 
\[i\colon G\Orb\to G\BC\] which sends a $G$-orbit $S$ to the $G$-bornological coarse space  
$S_{min, {max}}${; see} \cref{ufewiofheiwfhuewifewew}. {Note that the convention is that the first index specifies the coarse structure while the second index specifies the bornology.
We say that a functor $M\colon G\Orb
\to \bC$ can be extended to an equivariant coarse homology theory if there exists an equivariant coarse   homology theory 
 $F\colon G\BC\to \bC$ such that $M\simeq F\circ i $.

We will need various additional properties or  structures for an equivariant coarse homology theory.
\begin{enumerate}
\item The property of  {\em continuity}   of an equivariant  coarse homology theory was defined in \cite[Def.~5.15]{equicoarse},  {see \cref{rguierog34trt34g}.}
\item The property of  {\em strong additivity} of an equivariant  coarse homology theory  was defined in \cite[Def.~3.12]{equicoarse},  {see \cref{regerger}.}
\item The additional structure of transfers for an equivariant  coarse homology theory  is encoded in the notion of a {\em  coarse homology theory with transfers}
 which was   defined in \cite{coarsetrans},  {see \cref{rgiorhgiuregergergergergerg}.}
 \end{enumerate}

 Let $G_{can,min}$ denote the $G$-bornological coarse space consisting of $G$ with the canonical coarse and the minimal bornological structures{; see} \cref{ufewiofheiwfhuewifewew}. We  furthermore consider a stable $\infty$-category   $\bC$  and an  equivariant coarse homology theory  {(see \cref{def:coarsehom})} 
\[E\colon G\BC\to \bC\ .\]
To $E$ and $G_{can,min}$ we associate a new   equivariant coarse homology theory \[E_{G_{can,min}}\colon G\BC\to \bC\ ,\quad 
X\mapsto E(G_{can,min}\otimes X)\] called the twist of $E$ by $G_{can,min}${; see} \cref{def:twist}.

We can now  introduce the following assumption on a functor $M\colon G\Orb\to \bC$.
 \begin{ddd}\label{bioregrvdfb}
We call $M$ a \emph{CP-functor} if it satisfies the following assumptions:
		\label{def:cpfunctor}
				\begin{enumerate}
			\item $\bC$ is stable, complete, cocomplete, and compactly generated;  
			\item There exists an equivariant coarse homology theory $E$ satisfying:
			\begin{enumerate}
				\item $M$ is equivalent to $E_{G_{can,min}}\circ i$;
				\item $E$ is strongly additive;
				\item $E$ is continuous;
				\item $E$ extends to a coarse homology theory with transfers.\qedhere
			\end{enumerate}
		\end{enumerate}
	\end{ddd}

\begin{rem}
	We call $M$ a CP-functor since
	the above assumptions will allow us to apply methods similar to those from Carlsson and Pedersen \cite{calped}.
\end{rem}

 \begin{ex} \label{ergerge}~
 	\begin{enumerate}
 		\item {We claim that the} equivariant $K$-theory functor
 		\[K\bA^{G}\colon G\Orb\to \Sp\]
 		associated to an additive category  with $G$-action  $\bA$ (see  \cite[Def. 2.1]{bartels-reich-coeff})   is an example of a CP-functor.
 		Indeed, by  \cite[Cor. 8.25]{equicoarse} we have an equivalence 
 		\[K\bA^{G}\simeq K\bA\cX_{G_{can,min}}^{G}\circ i\ ,\] where $K\bA \cX^{G}\colon G\BC\to \Sp$ denotes the  coarse algebraic $K$-homology  functor.
 		By \cite[Thm.~1.4]{coarsetrans} the functor $K\bA\cX^{G}$ admits an extension to an equivariant coarse homology theory with transfers. Furthermore, $K\bA\cX^{G}$ is continuous by \cite[Prop.~8.17]{equicoarse} and strongly additive by \cite[Prop.~8.19]{equicoarse}.  
 		\item For a group $G$, let $P$ be the total space of a principal $G$-bundle and let $\bA$ denote the functor of nonconnective $A$-theory (taking values in the $\infty$-category of spectra). Then
 		$P$ gives rise to a $G\Orb$-spectrum $\bA_P$ sending a transitive $G$-set $S$ to the spectrum $\bA(P \times_G S)$. By \cite[Thm.~5.17]{coarseA}, $\bA_P$ is a CP-functor.
 		\item {More generally, every right-exact $\infty$-category with $G$-action $\cC$ gives rise to a functor $K\cC_G \colon G\Orb \to \Sp$. Taking $\cC = \Ch^b(\bA)$ or $\cC = \Sp$, this recovers $K\bA^G$ and $\bA_{EG}$, but one may also consider categories of perfect modules over an arbitrary ring spectrum. Also in this generality, $K\cC_G$ is a CP-functor. See \cite{unik} for details and proofs.}\qedhere
 	\end{enumerate}
 \end{ex}

We can now state the main theorem of this paper.
Let $G$ be a group and $M\colon G\Orb\to \bC$ be a functor. Let $\cF$ be a family of subgroups.  
\begin{theorem}
	\label{thm:main-injectivity-kor1}
	Assume that  {$M$ is a CP-functor (\cref{def:cpfunctor}). Furthermore, assume that one of the following conditions holds:
		\begin{enumerate}
			\item\label{thm:main-it1} $\cF$ is a subfamily of $\cFDC$ 
			such that $\Fin\subseteq \cF$;
			\item\label{thm:main-it2} $\cF$ is a subfamily of $\FDC$
			such that $\Fin\subseteq \cF$  and $G$ admits a finite-dimensional model for $E^{top}_\Fin G$.
		\end{enumerate}
		}
	Then the relative assembly map $\As_{\Fin,M}^\cF$ admits a left inverse.
\end{theorem}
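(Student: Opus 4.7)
The plan is to prove \cref{thm:main-injectivity-kor1} by adapting the descent principle of Carlsson--Pedersen to the equivariant coarse setting, exploiting in an essential way the transfer structure on the homology theory $E$ underlying the CP-functor $M$.

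\textbf{Step 1: Geometrize the assembly map.} Since $M$ is a CP-functor, I can write $M\simeq E_{G_{can,min}}\circ i$ for an equivariant coarse homology theory with transfers $E\colon G\BC\to \bC$ that is continuous and strongly additive. Using that $\colim_{G_{\cF'}\Orb} M$ is computed by a left Kan extension along $G_{\cF'}\Orb \hookrightarrow G\Orb$, I would reformulate the relative assembly map $\As^{\cF}_{\Fin,M}$ as a morphism in $\bC$ obtained by applying $E$ to a map of suitable $G$-bornological coarse spaces built from coarse avatars of $E_{\Fin}G$ and $E_{\cF}G$ and twisted by $G_{can,min}$. The target $\colim_{G_\cF\Orb}M$ should in particular be identified with $E$ evaluated on a coarse thickening of the classifying space for $\cF$.

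\textbf{Step 2: Produce a candidate splitting using transfers.} With the geometric description in hand, I would construct a ``forget-control'' morphism in the reverse direction by invoking the transfer structure of $E$: the extension of $E$ to a coarse homology theory with transfers yields, for appropriate $G$-bornological coarse spaces, wrong-way maps that one can combine with the obvious forgetful maps on control to produce a candidate left inverse to $\As_{\Fin,M}^\cF$.

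\textbf{Step 3: Verify the splitting via a vanishing argument.} To check that the candidate map actually splits the assembly map, I would reduce to proving that a certain ``germs at infinity'' variant of $E$ vanishes on the twisted coarse space $G_{can,min}\otimes X$, where $X$ is the coarse thickening appearing in Step 1. Here the geometric hypotheses on $\cF$ enter: using $\cF\subseteq \FDC$, I would argue by induction on the $G$-FDC complexity, at each step splitting $G_{can,min}\otimes X$ along a pair of equivariant entourages and using \emph{strong additivity} on the free components of the decomposition and \emph{continuity} to pass to the colimit over entourages; the base case is vanishing on $S_{min}\otimes X$ for $S\in G_{\cF}\Set$, which is handled by a twisting argument. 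In case \ref{thm:main-it1} the compactness hypothesis $\cF\subseteq \cp$ allows the relevant colimits over $V\Orb$ for $V\in\cF$ to commute with $E$; in case \ref{thm:main-it2} the existence of a finite-dimensional model for $E^{top}_{\Fin}G$ provides the necessary uniform dimension bound on the coarse cone so that the inductive step terminates.

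\textbf{Main obstacle.} The technical heart of the argument, and the step I expect to be hardest, is Step 3. Equivariant FDC provides decompositions by pairs of \emph{$G$-invariant} entourages, and one has to verify that the transfer-induced splitting interacts correctly with these decompositions. In particular, the entire argument runs for the twist $E_{G_{can,min}}$ rather than for $E$ itself, so one must carefully track how entourages on $G_{can,min}\otimes X$ decompose into pieces to which strong additivity of $E$ can be applied. Reconciling the transfer structure, strong additivity, and the inductive decomposition at each stage, uniformly in the $G$-FDC decomposition tree, is where most of the work will lie.
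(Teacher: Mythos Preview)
Your proposal captures the broad shape of the Carlsson--Pedersen descent principle, but there is a genuine structural gap: you have misidentified where the two alternative hypotheses \ref{thm:main-it1} and \ref{thm:main-it2} enter, and in doing so you miss the key technical device that makes the argument work.

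In the paper's argument, the left inverse is not produced by constructing an explicit wrong-way map from transfers. Rather, one introduces an auxiliary object $A$ in $\PSh(G\Set)$ with $r^*A \simeq E_{\Fin}G$ and factors the forget-control map through a functor $\wt E_G(A,-)$ built from the transfer structure (\cref{fwpeou2903rf}). The assembly map is then identified with a map $\bL\to\bM_*$, and one shows that the further composite $\bL\to\bM_*\to\bM_A$ is an equivalence on $G_\cF\Orb$; this immediately yields the left inverse. The point is that both hypotheses \ref{thm:main-it1} and \ref{thm:main-it2} serve \emph{the same purpose}: they guarantee that one can choose $A$ so that $\Res_H^G(A)$ is compact in $\PSh(H\Set)$ for every $H\in\cF$ (\cref{lem:compact} for case~\ref{thm:main-it1}, \cref{lem:fin-dim-model} for case~\ref{thm:main-it2}). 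Compactness of $A$ is what makes $\wt E_G(A,-)$ colimit-preserving (\cref{lem:homtheory}), and this is exactly what allows the descent comparison $\wt E_G(*,-)\to\wt E_G(A,-)$ to be checked cell-by-cell on the Rips complex (\cref{gio3jo24f22f3}). Your claim that in case~\ref{thm:main-it1} compactness ``allows the relevant colimits over $V\Orb$ to commute with $E$'' and that in case~\ref{thm:main-it2} the finite-dimensional model ``provides a uniform dimension bound on the coarse cone so that the inductive step terminates'' does not reflect how either hypothesis is actually used.

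A second gap concerns the separation of concerns. In your Step~3 you fold the FDC induction and the descent comparison into a single vanishing argument. In the paper these are cleanly decoupled: transfers and strong additivity are used \emph{only} in the descent step (showing $\wt E_G^H(*,-)\to\wt E_G^H(\Res_H^G A,-)$ is an equivalence on cones over orbits), while the FDC hypothesis is used \emph{only} to show that the $H$-equivariant forget-control map $\beta_{E_G^H,S_{min,min}\otimes H_{can,min}}$ is an equivalence for $H\in\cF$, by quoting the main result of \cite{trans}. Your proposal does not isolate a forget-control map at the level of subgroups $H\in\cF$, and without the reduction via induction (\cref{prop:comparison2}) and the comparison of the forget-control map with the assembly map via bornological duality (\cref{rgiofwewefeewfewf}), it is not clear how your Step~3 induction would ever connect back to the relative assembly map for the original group $G$.
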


 \begin{rem}\label{efweu9u9wefwef} By Elmendorf's theorem the homotopy theory  of $G$-spaces is modeled by the presheaf category $\PSh(G\Orb)$. More precisely, we have a functor
\begin{equation} \label{h56h5h56h5h56h56h}\mathrm{Fix}\colon G\Top\to \PSh(G\Orb)
\end{equation} which sends a $G$-topological space $X$ to the $\Spc$-valued presheaf which associates to $S$ in $G\Orb$ the
mapping space $\ell(\Map_{G\Top}(S_{disc},X))$. Here $S_{disc}$ is $S$ considered as a discrete $G$-topological space,
$\Map_{G\Top}(S_{disc},X)$ in $\Top$ is the topological space of equivariant maps from  $S_{disc}$ to $X$, and
$\ell \colon \Top\to \Top[W^{-1}]\simeq \Spc$ is the localization  functor inverting the weak equivalences in $\Top$ in the realm of $\infty$-categories.
Let $W_{G}$ be the morphisms in $G\Top$ which are sent by the functor $\mathrm{Fix}$  to equivalences. Then Elemendorf's theorem asserts that $\Fix$ induces an equivalence
of $\infty$-categories \begin{equation}\label{efwoiwggwegf} \ff \colon
G\Top[W^{-1}_{G}]\stackrel{\simeq}{\to} \PSh(G\Orb)\ .
\end{equation}
A model $E^{top}_{\cF}G$ for a classifying space $E_{\cF}G$ of a family $\cF$ is a $G$-CW complex $X$ whose fixed point spaces $X^{H}$ are contractible for all subgroups $H$ in $\cF$ and empty otherwise. Such a model is uniquely determined up to equivariant homotopy equivalence. It represents the object $E_{\cF}G$ from \cref{def:efg} under the equivalence \eqref{efwoiwggwegf}.
\end{rem}

Let $G$ be a group and $M\colon G\Orb\to \bC$ be a functor. 
\begin{kor}
	\label{kor:vcyc-bartels}
	If $M$ is a CP-functor, then
	  the relative assembly map $\As_{\Fin,M}^\VCyc$ admits a left inverse.
\end{kor}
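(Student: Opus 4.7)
The plan is to deduce the corollary from \cref{thm:main-injectivity-kor1} by verifying that $\VCyc$ is a subfamily of $\cFDC$ (so that condition \eqref{thm:main-it1} applies), since the inclusion $\Fin\subseteq \VCyc$ is automatic from the definitions.

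So the only real task is to show $\VCyc\subseteq \cFDC = \FDC\cap \cp$. Let $V\in \VCyc$. For the $\cp$-condition, I would recall that every virtually cyclic group admits a finite-dimensional model for $E^{top}_{\Fin}V$: a finite subgroup has a point as model, and an infinite virtually cyclic group $V$ acts properly and cocompactly on $\IR$ (in the orientable case) or on a one-dimensional simplicial tree (in the non-orientable case). Via the equivalence \eqref{efwoiwggwegf} recalled in \cref{efweu9u9wefwef}, this exhibits $E_\Fin V$ as a compact object of $\PSh(V\Orb)$, placing $V$ into $\cp$.

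For the $\FDC$-condition, I would use that every virtually cyclic group has asymptotic dimension at most one, and more strongly, that the $V$-coarse space $S_{min}\otimes V_{can}$ for any $V$-set $S$ with finite stabilizers has $V$-FDC. The idea is to decompose $V$ into bounded pieces along a finite-index infinite cyclic subgroup, exploit that the action of $V$ on $S$ has finite stabilizers so that the entourages of $V_{can}$ thickened by $S$ still admit a suitable equivariant decomposition, and reduce in one step of the FDC recursion to a family of uniformly bounded (hence trivially FDC) coarse spaces; alternatively one may quote the permanence properties of $G_\cF$-FDC established in the cited references to reduce directly to the statement that $V_{can}$ has $V_\Fin$-FDC, which is the asymptotic dimension bound above. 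This gives $V\in \FDC$, and combined with the first step, $V\in \cFDC$.

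The main (and essentially only) obstacle is verifying the equivariant FDC statement in a form compatible with \cref{rfgui32r23r32r32r23-neu}; the non-equivariant input is classical, but its upgrade to $V_\Fin$-FDC has to be extracted from the permanence results of the cited literature. Once these two membership statements are in hand, the corollary is immediate by applying \cref{thm:main-injectivity-kor1}\eqref{thm:main-it1} with $\cF = \VCyc$.
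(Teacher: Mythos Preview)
Your approach is correct and matches the paper's: verify $\VCyc \subseteq \cFDC$ and invoke \cref{thm:main-injectivity-kor1}\eqref{thm:main-it1}, with the paper citing \cref{ex:groups1} (finite asymptotic dimension plus a uniform bound on finite subgroups) for the $\FDC$-part where you sketch it directly. One small correction: it is the existence of a \emph{cocompact} (i.e.\ $V$-finite) model for $E^{top}_\Fin V$---not merely a finite-dimensional one---that makes $E_\Fin V$ a compact object of $\PSh(V\Orb)$; your concrete models (a point, respectively a cocompact action on $\IR$ or a tree) do have this property, so the argument is correct in content.
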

\begin{proof}
	Every virtually cyclic subgroup $V$ admits a compact model for $E_\Fin V$. Furthermore, it has $V_{\Fin}$-FDC{; see} \cref{ex:groups1}. We conclude that $\Fin\subseteq \VCyc\subseteq \FDC^{\cp}$ and hence the corollary follows from Case \ref{thm:main-it1} of  \cref{thm:main-injectivity-kor1}.
\end{proof}
For algebraic $K$-theory  (\cref{ergerge}), \cref{kor:vcyc-bartels} was first proven by Bartels~\cite{bartels-fin-vcyc}.

Let $G$ be a group and  $M\colon G\Orb\to \bC$ be a functor.
\begin{kor}
	\label{kor:oldversion}
	Assume that:
	\begin{enumerate}
		\item $M$ is a CP-functor;
		\item $G$ admits a finite-dimensional model for $E^{top}_\Fin G$;
		\item \label{it:gfinfdc} $G_{can}$ has $G_\Fin$-FDC.
	\end{enumerate}
	Then the assembly map $\As_{\Fin,M}$ admits a left inverse.
\end{kor}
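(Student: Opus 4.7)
The plan is to deduce this corollary as a direct application of case \ref{thm:main-it2} of \cref{thm:main-injectivity-kor1} taken with $\cF := \All$, since by \cref{ergoiegererg} the unadorned assembly map $\As_{\Fin,M}$ is exactly $\As_{\Fin,M}^{\All}$.

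First, I would verify that the three hypotheses of \cref{thm:main-injectivity-kor1}\ref{thm:main-it2} are satisfied with this choice of $\cF$. The CP-functor hypothesis is assumption (1), and the finite-dimensional model for $E^{top}_\Fin G$ is assumption (2). The inclusion $\Fin\subseteq \All$ is obvious. The only nontrivial point to check is that $\All\subseteq \FDC$, i.e.\ that every subgroup $V\le G$ satisfies: $V_{can}$ has $V_\Fin$-FDC. This is where assumption \ref{it:gfinfdc} enters: by the remark following \cref{grregergerg} the condition defining the family $\FDC$ is hereditary (stable under taking subgroups), via \cref{lem:fdc-subgroups}. Since $G_{can}$ has $G_\Fin$-FDC by (3), this heredity yields $V\in \FDC$ for every $V\le G$, hence $\All\subseteq \FDC$.

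Having verified the hypotheses, case \ref{thm:main-it2} of \cref{thm:main-injectivity-kor1} then produces a left inverse for $\As_{\Fin,M}^{\All} = \As_{\Fin,M}$, completing the proof.

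There is really no main obstacle in this deduction; the entire substance of the argument is packaged into \cref{thm:main-injectivity-kor1}, and the heredity of $V_\Fin$-FDC under passage to subgroups. The one place where one must be a little careful is precisely that heredity statement: it is not merely that FDC is inherited by subspaces of a coarse space, but that if the ambient group $G$ has the equivariant version ($G_\Fin$-FDC) then restricting the action to a subgroup $V$ and replacing $\Fin(G)$ with $\Fin(V)$ still yields $V_\Fin$-FDC on $V_{can}$. This is the content of \cref{lem:fdc-subgroups}, which I would simply cite.
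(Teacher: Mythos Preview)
Your proposal is correct and follows exactly the same approach as the paper, which simply says the corollary follows from Case~\ref{thm:main-it2} of \cref{thm:main-injectivity-kor1}. You have merely spelled out the verification that $\All\subseteq\FDC$ via \cref{lem:fdc-subgroups}, which the paper leaves implicit.
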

For algebraic $K$-theory  (\cref{ergerge}) this was first proven in \cite{KasFDC}.
\begin{proof}
 The corollary follows from Case \ref{thm:main-it2} of  \cref{thm:main-injectivity-kor1}.
\end{proof}

As an application of \cref{thm:main-injectivity-kor1} we also obtain the following new injectivity result for algebraic $K$-theory.
 
\begin{theorem}
	\label{thm:relhyp}
	Suppose $G$ is relatively hyperbolic to groups $P_1,\ldots, P_n$. Assume that each $P_i$ is contained in $\FDC$ or satisfies the $K$-theoretic Farrell--Jones conjecture.  {Furthermore, assume that each $P_i$ admits a finite-dimensional model for $E_{\Fin}^{top}P_i$.}
	Then $\As_{\Fin,K\bA^{G}}$ admits a left inverse.	
\end{theorem}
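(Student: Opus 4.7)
The plan is to combine \cref{thm:main-injectivity-kor1} with Bartels' geometric work on relatively hyperbolic groups and a transitivity argument exploiting the Farrell--Jones conjecture for the FJC peripheral subgroups. A preliminary observation is that, since each $P_i$ admits a finite-dimensional model for $E_\Fin^{top} P_i$, $G$ admits a finite-dimensional model for $E_\Fin^{top} G$ by the standard construction for relatively hyperbolic groups (gluing peripheral classifying spaces to a finite-dimensional model built from the cusped space). This verifies the dimension hypothesis of \cref{thm:main-injectivity-kor1}(2).

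Split the peripheral subgroups according to the two alternatives in the hypothesis. Let $\cE_1$ be the family of subgroups of $G$ generated by $\VCyc$ together with all subgroups of conjugates of those $P_i$ which lie in $\FDC$. Since virtually cyclic groups have asymptotic dimension at most one and hence lie in $\FDC$, and since $\FDC$ is closed under taking subgroups (\cref{lem:fdc-subgroups}), one has $\cE_1 \subseteq \FDC$ and $\Fin \subseteq \cE_1$. Because $K\bA^G$ is a CP-functor by \cref{ergerge}, \cref{thm:main-injectivity-kor1}(2) provides a left inverse of $\As^{\cE_1}_{\Fin, K\bA^G}$.

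Next, enlarge $\cE_1$ to the family $\cE$ by also adjoining subgroups of conjugates of the remaining (FJC) peripherals. Every subgroup $H$ of such a peripheral inherits the FJC, so the assembly map $\As_{\Fin, K\bA^H}$ for $H$ is split injective: it factors as the composition of the equivalence $\As_\VCyc$ (from FJC for $H$) with the left-invertible map $\As^\VCyc_\Fin$ from \cref{kor:vcyc-bartels}. A transitivity argument then yields a left inverse of $\As^\cE_{\cE_1, K\bA^G}$. Finally, by Bartels' theorem that a relatively hyperbolic group is transfer reducible over the family generated by virtually cyclic and peripheral subgroups, the relative assembly $\As^{\All}_{\cE, K\bA^G}$ is already an equivalence. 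Composing the three (essentially) left-invertible maps produces the desired left inverse of $\As_{\Fin, K\bA^G}$. The main technical obstacle is the correct formulation and application of the transitivity principle for split injectivity so as to combine the FDC-driven input from \cref{thm:main-injectivity-kor1} with the FJC-driven input on the remaining peripherals, together with the dependence on Bartels' transfer-reducibility theorem as an external geometric input.
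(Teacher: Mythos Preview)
Your strategy matches the paper's: factor $\As_{\Fin,K\bA^G}$ through a family built from $\VCyc$ and the peripheral subgroups, use Bartels' result for relatively hyperbolic groups to get an equivalence at the top, and use \cref{thm:main-injectivity-kor1} at the bottom. The paper uses the single intermediate family $\cP\cap\FDC$ (where $\cP$ is the family generated by $\VCyc$ and all $P_i$), while you use two intermediate families $\cE_1\subseteq\cE$; these choices are equivalent in effect.

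There is, however, a genuine muddle in your middle step. You show that for $H$ inside an FJC peripheral the map $\As_{\Fin,K\bA^H}$ is split injective, and then invoke a ``transitivity principle for split injectivity'' which you yourself flag as the main technical obstacle. No such principle is needed, and in the form you suggest it does not exist. The point is that FJC gives you an \emph{equivalence}, not merely split injectivity: for each $H\in\cE$, either $H\in\cE_1$ (so $\cE_1(H)$ is all subgroups of $H$ and there is nothing to prove), or $H$ lies in an FJC peripheral, whence FJC passes to $H$ and $\As_{\VCyc(H),K\bA^H}$ is an equivalence; since $\VCyc(H)\subseteq\cE_1(H)$, the map $\As_{\cE_1(H),K\bA^H}$ is an equivalence as well. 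Now the standard transitivity principle (for equivalences, as in \cite[Thm.~2.4]{MR2210223}) shows directly that $\As^{\cE}_{\cE_1,K\bA^G}$ is an \emph{equivalence}. This is exactly how the paper proceeds: from Bartels one has that $\As_{\cP,K\bA^G}$ is an equivalence, and transitivity (with FJC for the non-FDC peripherals) upgrades this to $\As_{\cP\cap\FDC,K\bA^G}$ being an equivalence; then \cref{thm:main-injectivity-kor1} supplies the left inverse for $\As^{\cP\cap\FDC}_{\Fin,K\bA^G}$.

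Your justification of the finite-dimensional model for $E^{top}_{\Fin}G$ is also too vague. The paper makes this precise: there is a cocompact model for $E^{top}_{\cF}G$ with $\cF$ the family generated by $\Fin$ and the $P_i$ (citing \cite{martinez-pedroza_przytycki_2019}), and since each $P_i$ has a finite-dimensional $E^{top}_{\Fin}P_i$ with a uniform dimension bound, \cref{lem:resolutions} yields a finite-dimensional $E^{top}_{\Fin}G$.
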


\begin{proof}
	{Let $\cF$ be a the smallest family of subgroups of $G$ that contains all finite subgroups and all $P_i$. By \cite[Thm~1.1]{martinez-pedroza_przytycki_2019} there is a cocompact model for $E_\cF^{top}G$. Since there are only finitely many $P_i$, there is a uniform upper bound on the dimension of $E_\Fin^{top} H$ for all $H$ in $\cF$. By {\cref{lem:resolutions} below}, there is a finite-dimensional model for $E_\Fin^{top}G$.}
		
	Let $\cP$ be the smallest family of subgroups of $G$ that contains all virtually cyclic subgroups and all $P_i$. By \cite[Thm.~4.4]{Bartels-relhyp} the assembly map $\As_{\cP,K\bA^{G}}$ is an equivalence. Thus by the transitivity principle \cite[Thm.~2.4]{MR2210223} the assembly map $\As_{\cP\cap\FDC,K\bA^{G}}$ is an equivalence  (here we have to use the assumptions on the groups $P_i$  {as well as that the Farrell--Jones conjecture passes to subgroups \cite[Thm.~4.5]{bartels-reich-coeff}}).  By \cref{thm:main-injectivity-kor1}, the relative assembly map $\As_{\Fin,K\bA^{G}}^{\cP\cap\FDC}$ admits a left inverse. The theorem now follows by combining these results.
\end{proof}

Let $G$ be a group and let $\cF$ and $\cF'$ be families of subgroups of $G$ such that $\cF' \subseteq \cF$. We denote the restriction of $\cF'$ to a subgroup $H$ of $G$ by $\cF'(H)$; see \cref{def:familyrestriction}.
\begin{lem}\label{lem:resolutions}
 If $G$ admits a finite-dimensional model for $E^{top}_\cF G$ and all subgroups $H$ in $\cF$ admit a model for $E^{top}_{\cF'(H)} H$ with a uniform upper bound on their dimension, then $G$ admits a finite-dimensional model for $E^{top}_{\cF'}G$.
\end{lem}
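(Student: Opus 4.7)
The plan is to build the desired model $Y$ by induction on the skeleta of a fixed finite-dimensional $G$-CW-model $X$ for $E^{top}_\cF G$. Let $d=\dim X$ and let $N$ be the uniform upper bound on the dimensions of models $E_H := E^{top}_{\cF'(H)}H$ for $H \in \cF$. For each such $H$, fix a model $E_H$ once and for all. I aim to produce a $G$-CW-complex $Y$ of dimension at most $d+N$, with all stabilizers in $\cF'$, together with a $G$-map $f\colon Y\to X$ such that $f^K$ is a weak equivalence for every $K \in \cF'$. Such a $Y$ is automatically a model for $E^{top}_{\cF'}G$: for $K\in\cF'\subseteq\cF$ we get $Y^K\simeq X^K\simeq *$, and for $K\notin\cF'$ we get $Y^K=\emptyset$ by the stabilizer condition.

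The core building block is the observation that for any $H\in\cF$ the projection $G\times_H E_H \to G/H$ is a $\cF'$-equivalence of $G$-CW-complexes: taking $K$-fixed points for $K\in\cF'$ decomposes into components indexed by $[g]\in K\backslash G/H$ with $g^{-1}Kg\subseteq H$, on each of which the map is $E_H^{g^{-1}Kg}\to *$; since $\cF'$ is closed under conjugation, $g^{-1}Kg\in\cF'(H)$ and the contractibility assumption on $E_H$ applies. Moreover, $\dim(G\times_H E_H)\le N$ and its stabilizers lie in $\cF'$.

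I then construct $Y^{(n)}$ and $f^{(n)}\colon Y^{(n)}\to X^{(n)}$ inductively, starting with $Y^{(-1)}=\emptyset$. Given the $n$-th skeletal pushout of $X$ formed from attaching maps $\phi_\alpha\colon G/H_\alpha\times S^{n-1}\to X^{(n-1)}$ with $H_\alpha\in\cF$, pre-compose with $G\times_{H_\alpha}E_{H_\alpha}\times S^{n-1}\to G/H_\alpha\times S^{n-1}$ and lift the composite along $f^{(n-1)}$ to obtain an equivariant attaching map $\tilde\phi_\alpha\colon G\times_{H_\alpha}E_{H_\alpha}\times S^{n-1}\to Y^{(n-1)}$; such a lift exists because the domain is a $G$-CW-complex with stabilizers in $\cF'$ and $f^{(n-1)}$ is a $\cF'$-equivalence, so by Elmendorf's theorem (equivalently, equivariant obstruction theory in the Bredon model structure on $G$-spaces relative to $\cF'$) the induced map of equivariant mapping spaces is a weak equivalence. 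Define $Y^{(n)}$ by the pushout attaching the cells $G\times_{H_\alpha}E_{H_\alpha}\times D^n$ along the $\tilde\phi_\alpha$, with $f^{(n)}$ induced by $f^{(n-1)}$ together with the maps $G\times_{H_\alpha}E_{H_\alpha}\times D^n\to G/H_\alpha\times D^n\hookrightarrow X^{(n)}$.

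It remains to verify that $f^{(n)}$ remains a $\cF'$-equivalence and that the dimension grows only by $n$. The first holds because the building blocks of the pushout are $\cF'$-equivalences and the vertical maps $G\times_{H_\alpha}E_{H_\alpha}\times S^{n-1}\hookrightarrow G\times_{H_\alpha}E_{H_\alpha}\times D^n$ are equivariant cofibrations, so fixed-point gluing lemmas apply; the dimension estimate $\dim Y^{(n)}\le n+N$ is immediate from the cell dimensions. Setting $Y:=Y^{(d)}$ then gives a model for $E^{top}_{\cF'}G$ of dimension at most $d+N$. The main technical obstacle is the careful bookkeeping of the lifting/gluing step in the inductive construction, in particular ensuring that the obstruction-theoretic extension of attaching maps is compatible with the stabilizer constraint $\cF'$; once this is phrased in the Bredon/Elmendorf framework the verification is formal.
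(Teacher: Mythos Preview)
Your argument is correct and follows the same underlying idea as the paper---replace each orbit $G/H$ with $H\in\cF$ by $G\times_H E^{top}_{\cF'(H)}H$---but the implementations differ. The paper works with a $G$-\emph{simplicial} model $X$ and applies the ``resolution'' construction of \cite[Def.~2.2]{winges15} only at the $0$-skeleton, obtaining a single combinatorial object $X[\cY,\upsilon_0]$; the fixed-point computation is then outsourced to \cite[Cor.~2.5]{winges15}. Your proof instead performs an explicit skeletal induction on a $G$-CW-model, using the Bredon model structure (relative to $\cF'$) to lift attaching maps and the gluing lemma to propagate the $\cF'$-equivalence. Your approach is more self-contained (no external reference needed) and yields the sharper dimension bound $d+N$ rather than the paper's $nk+n+k$; the paper's route is shorter on the page precisely because the combinatorics and fixed-point analysis are packaged in the citation. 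One minor point worth making explicit in your write-up: the lift $\tilde\phi_\alpha$ only satisfies $f^{(n-1)}\circ\tilde\phi_\alpha\simeq \phi_\alpha\circ p$ up to homotopy, so defining $f^{(n)}$ strictly requires the standard collar trick (or working with homotopy pushouts throughout); you allude to this in your final sentence, and it is indeed routine.
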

\begin{proof}
 By assumption, there exists $n$ in $\nat$ and an $n$-dimensional $G$-simplicial complex $X$ modelling $E^{top}_\cF G$.
 Choose a set of representatives $S$ for the $G$-orbits of vertices in $X$.
 Again by assumption, there exists for some $k$ in $\nat$ and every $s$ in $S$ an at most $k$-dimensional simplicial complex $Y(s)$ modelling $E^{top}_{\cF'(G_s)}G_s$. Then the projections $Y(s) \to *$ induce a $G$-equivariant map
 \[ \upsilon_0 \colon \cY := \coprod_{s \in S} G \times_{G_s} Y(s) \to \coprod_{s \in S} G \times_{G_s} * \cong X_0\ .\]
 Now apply the construction of \cite[Def.~2.2]{winges15} to obtain a $G$-simplicial complex $X[\cY,\upsilon_0]$ whose dimension is bounded by $nk + n + k$.
 After observing that this construction is compatible with taking fixed points in the sense that $X[\cY,\upsilon_0]^H \cong X^H[\cY^H,\upsilon_0^H]$ for all subgroups $H$ of $G$, \cite[Cor.~2.5]{winges15} implies that $X[\cY,\upsilon_0]$ is a model for $E^{top}_{\cF'}G$.
\end{proof}

\begin{rem}
Most of the groups for which the Farrell--Jones conjecture is known by now also have finite asymptotic dimension. But for example for CAT(0)-groups, which satisfy the Farrell--Jones conjecture \cite{wegner-cat0}, this is an open problem. Hence taking some $P_i$ to be CAT(0)-groups that are not known to have FDC and some $P_i$ to be groups that have FDC but for which the Farrell--Jones conjecture is not known, we obtain examples of groups for which \cref{thm:relhyp} applies and the split-injectivity was not known before.
\end{rem}

\subsection*{Acknowledgements} 
{The authors thank the referee for their very helpful and detailed report.}

U.B.~and A.E.~were supported by the SFB 1085 \emph{Higher Invariants} funded by the Deutsche Forschungsgemeinschaft DFG. A.E. was furthermore supported by the SPP 2026 \emph{Geometry at Infinity} also funded by DFG. C.W.~acknowledges support from the Max Planck Society {and Wolfgang L\"uck's ERC Advanced Grant ``KL2MG-interactions'' (no.~662400). D.K.~and C.W.~were funded by the Deutsche Forschungsgemeinschaft (DFG, German Research Foundation) under Germany's Excellence Strategy---GZ 2047/1 [390685813].} 

\section{Injectivity results for linear groups}
In general it is not an easy task to verify the assumptions on the group $G$ and the family $\cF$ appearing in \cref{thm:main-injectivity-kor1} and its corollaries.
In this section we provide various cases where the required properties can be shown. Furthermore, we show how \cref{thm:main-injectivity-kor1} can be applied to linear groups.

For a family $\cF$ of subgroups of $G$ we consider the $G$-coarse space $S_{\cF,min}$ consisting of the $G$-set $S_{\cF}:=\bigsqcup_{H\in \cF} G/H$ with the minimal coarse structure. Let $X$ be a $G$-coarse space. The condition that $X$ has $G_{\cF}$-FDC is equivalent to the condition that $S_{\cF,min}\otimes X$ has $G$-FDC.	

The space $(G/H)_{min}\otimes X$ has $G$-FDC if and only if the space $X$ has $H$-FDC. This can be seen by taking an $H$-equivariant decomposition of $X$ and extending it $G$-equivariantly to $(G/H)_{min}\otimes X$. Hence morally, $S_{\cF,min}\otimes X$ has $G$-FDC if and only if $X$ has $H$-equivariant FDC for every group $H$ in the family $\cF$ in a uniform way. More precisely, the condition that $S_{\cF,min}\otimes X$ has $G$-FDC is equivalent to the condition, formulated in \cite{KasLin}, that the family $\{(X,H)\}_{H\in\cF}$ has FDC.

Applying this  equivalence of conditions we can transfer the results from \cite{KasLin}.
 We consider the case $X=G_{can}$ and $\cF=\Fin$. Then we see that  {Assumption~\ref{it:gfinfdc} of Corollary~\ref{kor:oldversion}} is equivalent to the condition that the family $\{(G,H)\}_{H\in\Fin}$ has FDC. In \cite{KasLin} instead of general coarse spaces only metric spaces were considered. For a countable group $G$, the canonical coarse structure agrees with the metric coarse structure for any proper, left invariant metric $d$ on $G${; see} \cite[Rem.~2.8]{equicoarse}. Given a proper, left invariant metric $d$ on $G$, we can define a metric $d_H$ on the quotient $H\backslash G$ for every subgroup $H$ of $G$ by setting
\[d_H(Hg,Hg'):=\min_{h\in H}d(g,hg^\prime).\]
By \cite[Prop.~A.7]{KasLin} $\{(G,H)\}_{H\in\Fin}$ has FDC if and only if the family  $\{H\backslash G\}_{H\in \Fin}$ has (unequivariant) FDC (for any proper, left invariant metric on $G$). This reformulation is the statement proved in the references given in the next example.

\begin{ex}
	\label{ex:groups1}
	 {Assumption~\ref{it:gfinfdc} of Corollary~\ref{kor:oldversion}} is satisfied for finitely generated linear groups over commutative rings with unit and trivial nilradical \cite[Thm.~4.3]{KasLin}.
	
	By \cite[Thms.~2.13, 5.3, 5.21 and 5.28]{RegFDC},  {Assumption~\ref{it:gfinfdc} of Corollary~\ref{kor:oldversion}} is satisfied for groups with a uniform upper bound on the cardinality of their finite subgroups, and belonging to one of the following classes.
	\begin{enumerate}
		\item Elementary amenable groups.
		\item Countable subgroups of $GL_n(R)$, where $R$ is any commutative ring with
		unit.
		\item Countable subgroups of virtually connected Lie groups.
		\item Groups with finite asymptotic dimension.\qedhere
	\end{enumerate}
\end{ex}

See Hillman \cite{hillman} for the definition of the Hirsch length $h(G)$ of an elementary amenable group $G$. If $G$ has a finitely generated abelian subgroup $A$ of finite index, then $h(G)$ is the rank of $A$ by definition. In particular, $h(G)=0$ if $G$ is finite.

\begin{ex}
	\label{ex:groups2}
	Let $G$ be a finitely generated, linear group $G$ over a commutative ring with unit or a finitely generated subgroup of a virtually connected Lie group. By \cite[Prop.~1.3]{KasLie} and \cite[Prop.~1.2]{KasLin}, there exists a finite-dimensional CW-model for the space $E_\Fin G$ if and only if there is a natural number $N$ such that the Hirsch length of every solvable subgroup $A$ of $G$ is bounded by $N$. 
\end{ex}

Combining \cref{kor:oldversion} with \cref{ex:groups1} and \cref{ex:groups2} we obtain injectivity results for linear groups over commutative rings with unit and trivial nilradical and for subgroups of virtually connected Lie groups with a uniform upper bound on the cardinality of their finite subgroups. We will now extend these to recover the injectivity results from \cite{KasLie, KasLin} for algebraic  {$K$-theory}{; see}  \cref{cor:injectivity-linear} below.

 Before we start, we show that the family $\FDC$ is closed under subgroups.

	 Let $G$ be a group and let $H$ be a subgroup of $G$. 
	 Let  $\cF$ be a family of subgroups of $G$. \begin{ddd}
	 	\label{def:familyrestriction}
	 By \[\cF(H):=\{F\in \cF\mid F\leq H\}\] we denote the \emph{restriction of the family} $\cF$ to $H$.
\end{ddd}
	\begin{lem}
		\label{lem:fdc-subgroups}
		If $G_{can}$ has $G_\Fin$-FDC, then $H_{can}$ has $H_{\Fin(H)}$-FDC.
	\end{lem}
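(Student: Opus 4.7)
The plan is to use the standard induction/restriction trick: given an $H$-set $T$ with stabilizers in $\Fin(H)$, induce it to the $G$-set $G \times_H T$, whose stabilizers are conjugates of those of $T$ and hence still finite, so that $G \times_H T$ lies in $G_\Fin\Set$. The hypothesis then tells us that $(G \times_H T)_{min}\otimes G_{can}$ has $G$-FDC, and the goal is to transport this information down to $T_{min}\otimes H_{can}$ via an $H$-equivariant coarse embedding.

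Concretely, I would first verify that the map $T \times H \to (G \times_H T) \times G$ given by $(t,h)\mapsto((e,t),h)$ is an $H$-equivariant coarse embedding whose image is $H$-invariant. The $H$-equivariance is immediate from the identity $h\cdot (e,t)=(h,t)=(e,ht)$ in $G\times_H T$. For the coarse embedding property, one checks separately that the minimal coarse structure on $G\times_H T$ restricts to the minimal coarse structure on $T$, and that the canonical coarse structure on $G$ restricts to the canonical coarse structure on $H$ (since entourages of $G_{can}$ are of the form $\{(g,g')\mid g^{-1}g'\in K\}$ for finite $K\subseteq G$, whose intersection with $H\times H$ is controlled by the finite set $K\cap H\subseteq H$).

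Next, I would invoke two standard stability properties of equivariant finite decomposition complexity from \cite{trans}: (i) if a $G$-coarse space $X$ has $G$-FDC, then its restriction to an $H$-coarse space via the inclusion $H\leq G$ has $H$-FDC (any $G$-equivariant decomposition is \emph{a fortiori} $H$-equivariant); and (ii) $H$-FDC passes to $H$-invariant coarse subspaces, or more generally along $H$-equivariant coarse embeddings. Combining these with the first step, $(G \times_H T)_{min}\otimes G_{can}$ viewed as an $H$-coarse space has $H$-FDC, and hence so does the $H$-invariant subspace $T_{min}\otimes H_{can}$.

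The main conceptual step is simply recognising that induction from $H$ to $G$ transports the $H_{\Fin(H)}$-FDC question into the $G_{\Fin}$-FDC question which is covered by hypothesis; the remaining ingredients are the formal stability properties of equivariant FDC under restriction of the acting group and under $H$-equivariant coarse embeddings, which are already available in the literature on equivariant FDC. No delicate analysis of the decomposition game itself is required beyond citing these standard results.
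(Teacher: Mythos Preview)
Your argument is correct but follows a genuinely different route from the paper's proof. The paper translates both hypothesis and conclusion into statements about nonequivariant FDC of families of quotient metric spaces, using the reformulation discussed just before \cref{ex:groups1}: $G_{can}$ has $G_\Fin$-FDC iff the family $\{F\backslash G\}_{F\in\Fin}$ has FDC, and similarly for $H$; since each $F\backslash H$ with $F\in\Fin(H)$ embeds isometrically into $F\backslash G$, one concludes by the nonequivariant coarse invariance of FDC \cite[Coarse Invariance~3.1.3]{GTY}. You instead stay entirely in the equivariant framework: induce the $H$-set $T$ up to the $G$-set $G\times_H T$ (which lies in $G_\Fin\Set$ since stabilizers are conjugates of the $H$-stabilizers of $T$), apply the hypothesis to this induced set, and then descend via equivariant hereditariness. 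The paper's route is short because it collapses everything to a single well-established nonequivariant statement, at the cost of passing through metric quotients (and hence implicitly using countability of $G$). Your route is more conceptual and works directly with the coarse definitions, making the induction/restriction mechanism transparent; however, you should pin down precise references in \cite{trans} for the two equivariant stability properties you invoke---that $G$-FDC restricts to $H$-FDC along a subgroup inclusion, and that $H$-FDC passes to $H$-invariant coarse subspaces---rather than only asserting them as standard. In particular, the first of these depends on the base case of the equivariant FDC recursion being independent of the acting group, which you should check against \cite[Def.~3.14]{trans}.
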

	\begin{proof}
		Fix a proper, left invariant metric on $G$ and consider its restriction to $H$.
		
		Recall from the discussion preceding \cref{ex:groups1} that $H_{can}$ has $H_{\Fin(H)}$-FDC if and only if $\{F\backslash H\}_{F\in \Fin(H)}$ has FDC.
		
		Each element of $\{F\backslash H\}_{F\in \Fin(H)}$ is a subspace of an element of $\{F\backslash G\}_{F\in \Fin(H)}$ which is contained in $\{F'\backslash G\}_{F'\in \Fin}$.
		If $G_{can}$ has $G_\Fin$-FDC, then $\{F'\backslash G\}_{F'\in\Fin}$ has FDC. Hence $\{F\backslash H\}_{F\in \Fin(H)}$ has FDC by \cite[Coarse Invariance 3.1.3]{GTY}.
	\end{proof}

We now consider a functor $M\colon G\Orb\to \bC$. Recall \cref{bioregrvdfb} of a CP-functor.
 \begin{ddd}\label{bioregrvdfb1}
 	We call $M$ a \emph{hereditary CP-functor} if $M\circ \Res_\phi$ is a CP-functor for every surjective homomorphism $\phi\colon G\to Q$.  \end{ddd}

\begin{ex}~
	\begin{enumerate}
		\item Recall that $K_{\bA}^{G}$ is a CP-functor by \cref{ergerge}. It is also a hereditary CP-functor since by \cite[Cor.~2.9]{bartels-reich-coeff} we have $K_\bA^G\circ \Res_\phi\simeq K_{\indd_\phi \bA}^Q$ for every surjective homomorphism $\phi\colon G\to Q$.
		\item The functor $\bA_P$ from \cref{ergerge} is also a hereditary CP-functor by \cite[Thm.~5.17]{coarseA}.\qedhere
	\end{enumerate}
\end{ex}

We will need the following well-known facts about the Hirsch length, for a proof see \cite[Thm.~1]{hillman}. For a subgroup $H$  {of $G$} we have $h(H)\leq h(G)$ and, if $H$ is normal in $G$, $h(G)=h(H)+h(G/H)$. Recall that, for finitely generated abelian groups, the Hirsch length coincides with the rank of the group.

\begin{lem}
	\label{lem:virtab}
	Every countable virtually abelian group $G$ of finite Hirsch length $n$ has $G_\Fin$-FDC.
\end{lem}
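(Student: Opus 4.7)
The plan is to reformulate the claim in terms of FDC of a family of metric quotients, reduce modulo the torsion of a finite-index normal abelian subgroup to a well-behaved virtually torsion-free abelian quotient, and then propagate FDC back up via an extension argument.

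By the discussion preceding \cref{ex:groups1} together with \cite[Prop.~A.7]{KasLin}, it suffices to fix a proper left-invariant metric on $G$ and show that the family $\{H\backslash G\}_{H\in\Fin}$ has FDC in the sense of \cite{KasLin}. Replacing a finite-index abelian subgroup by the intersection of its finitely many $G$-conjugates, one obtains a finite-index \emph{normal} abelian subgroup $A\le G$, and by the Hirsch length identities recalled above the lemma, $h(A)=n$. Let $T\le A$ be the torsion subgroup of $A$; it is characteristic in $A$ and hence normal in $G$, and since $A$ is countable abelian, $T$ is a countable abelian torsion group, in particular locally finite. The quotient $Q:=G/T$ is virtually abelian and contains the torsion-free countable abelian group $A/T$ of rank $n$ as a finite-index subgroup. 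Since $A/T$ is torsion-free, every finite subgroup $\bar F\le Q$ has trivial intersection with $A/T$ and therefore injects into the finite group $Q/(A/T)$, so finite subgroups of $Q$ have uniformly bounded cardinality. Moreover $A/T$ embeds into $\mathbb{Q}^n$, and a countable subgroup of $\mathbb{Q}^n$ has asymptotic dimension at most $n$; by coarse invariance under passage to a finite-index overgroup, $\operatorname{asdim}(Q)\le n$. Combining these two properties, \cite[Thm.~5.28]{RegFDC} applies to $Q$ and yields $Q_\Fin$-FDC, equivalently (by \cite[Prop.~A.7]{KasLin} again) that the family $\{\bar H\backslash Q\}_{\bar H\in\Fin(Q)}$ has FDC.

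To lift FDC from the quotient back to $G$, consider the $1$-Lipschitz projection $p\colon G\to Q$. For each $H\in\Fin(G)$ it induces a surjection $H\backslash G\twoheadrightarrow p(H)\backslash Q$, and the fiber over $p(H)\bar g$ is a single $HT$-coset modulo $H$, hence coarsely equivalent to a quotient of $T$ by a conjugate of $H\cap T$. Because $T$ is locally finite, the collection of all such fibers (as $H\in\Fin$ and $\bar g$ vary) forms a uniform family of metric spaces of asymptotic dimension $0$, the uniformly bounded partitions being provided by an exhaustion of $T$ by finite subgroups. The fibering/extension permanence of FDC (cf.\ the Fibering Theorem in \cite{GTY}, in the family formulation of \cite{KasLin}) then transfers FDC of $\{\bar H\backslash Q\}_{\bar H\in\Fin(Q)}$ to $\{H\backslash G\}_{H\in\Fin(G)}$, which is the desired conclusion.

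The principal obstacle is the final fibering step: one must verify that the permanence theorem applies \emph{uniformly} across the family $\{H\backslash G\}_{H\in\Fin}$, rather than separately for each $H$. The uniform bound on the cardinality of finite subgroups of $Q$, combined with the fact that all fibers are quotients of the one locally finite group $T$, supplies the uniform decomposition data needed; nevertheless, matching this data to the precise definition of FDC for families in \cite{KasLin} is the crux of the proof.
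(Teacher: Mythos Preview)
Your approach differs substantially from the paper's, and the step you yourself flag as the crux is indeed a genuine gap.

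The paper does not pass to a quotient at all. Instead it proves the stronger statement that the family $\{F\backslash G\}_{F\in\Fin}$ has asymptotic dimension at most $n$, working directly and scale-by-scale. Fixing a normal abelian subgroup $G'$ of finite index $k$ and a scale $R$, one lets $H\le G'$ be the subgroup generated by the $R$-ball about the identity; this $H$ is finitely generated abelian of rank $\le n$, hence has $\operatorname{asdim}\le n$, and since $H$ has a uniform bound on its finite subgroups, \cite[Cor.~1.2]{KasQuot} gives uniform $R$-covers of all $F'\backslash H$. A short metric computation (comparing $F\backslash FH$ with $(F\cap H)\backslash H$ and using the conjugation isometries $F\backslash FgH\cong F^g\backslash F^gH$) propagates these covers first to $\{F\backslash G'\}_{F\in\Fin(G')}$, then via the Finite Union Theorem to $\{F\backslash G\}_{F\in\Fin(G')}$, and finally via \cite[Cor.~1.2]{KasQuot} once more to $\{F\backslash G\}_{F\in\Fin}$. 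No fibering or extension permanence is used.

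Your reduction to $Q=G/T$ is correct and elegant: $Q$ really does have both finite asymptotic dimension and uniformly bounded finite subgroups, so \cite[Thm.~5.28]{RegFDC} applies and gives $Q_{\Fin}$-FDC. The problem is the lift back to $G$. What you need is not a statement about point fibres but about $R$-ball preimages: for each $R$, the family
\[
\bigl\{\,\pi_H^{-1}\bigl(B_R(p(H)\bar g)\bigr)\;:\;H\in\Fin(G),\ \bar g\in Q\,\bigr\}
\]
(where $\pi_H\colon H\backslash G\to p(H)\backslash Q$) must have FDC \emph{uniformly}. You identify the point fibres as quotients of $T$, but an $R$-preimage is an $R$-neighbourhood of an $HT$-coset in $H\backslash G$, and you have not shown that these form a uniform asymptotic-dimension-$0$ family, nor have you cited a precise family-fibering theorem whose hypotheses you verify. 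The sentence ``supplies the uniform decomposition data needed'' is exactly where a proof is required and none is given. The argument is plausibly completable, but as it stands the lifting step is a sketch rather than a proof, and it is precisely the nontrivial part.
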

\begin{proof}
	 		Fix a left invariant, proper metric on $G$.	It suffices to show that $\{F\backslash G\}_{F\in \Fin}$ has FDC{; see} the discussion preceding \cref{ex:groups1}. More precisely, we will show that this family has  asymptotic dimension at most $n$. Then it has FDC by \cite[Thm.~4.1]{GTY}.
	
	Let $G'$ be a normal, abelian subgroup of finite index $k$.
	
	Now let $R>0$ be given. Let $H$ denote the subgroup of $G'$ generated by all elements of distance at most $R$ from the neutral element. Since $H$ is a finitely generated abelian group of  {rank at most $n$}, it has asymptotic dimension at most $n$. Moreover, there is an upper bound on the cardinality of the finite subgroups of $H$. Hence by \cite[Cor.~1.2]{KasQuot}, the family $\{F'\backslash H\}_{F'\in \Fin(H)}$ has asymptotic dimension at most $n$. In particular, there is $S>0$ such that for every $F'\backslash H$ there is a cover $U^{F^{\prime}}_0\cup\ldots \cup U^{F^{\prime}}_n$, such that for every $i$ in  $\{0,\dots,n\}$ the subset $U^{F^{\prime}}_i$ is an $R$-disjoint union of subspaces of diameter at most $S$.
	
	 Let $F$ be a finite subgroup  of $G'$ and $h,h'$ be elements of $H$.  If the condition $d(Fh,Fh')<R$ holds in $F\backslash FH$, then there exists an element $f$ of $F$ with $d(h,fh')<R$, or equivalently, $d(e,h^{-1}fh')<R$. It follows that $h^{-1}fh'\in H$ and therefore $f\in H$. Hence we get that $d((F\cap H)h,(F\cap H)h')<R$ in $(F\cap H)\backslash H$. Therefore, for every $i$ in $\{0,\dots,n\}$, the image of $U^{F\cap H}_i$ under the canonical bijection $q\colon (F\cap H)\backslash H\to F\backslash FH$ is still an $R$-disjoint union of subspaces of diameter at most $S$.
	
	 {Let $F$ be a finite subgroup of $G'$, let $h,h'$ be elements of $H$ and let $g,g'$ be elements of $G$.
	 If we have $d(Fgh,Fg'h')<R$ in $F\backslash G'$, then there} is an $f$ in $F$ with $d(e, h^{-1}g^{-1}fg'h')<R$, and hence $h^{-1}g^{-1}fg'h'\in H$.
	 Therefore, $g^{-1}fg'\in H$, so $FgH=Fg'H$. Hence the quotient $F\backslash G'$ is an $R$-disjoint union of spaces of the form $F\backslash FgH$.
	
	 For $g$ in $G $ we set $F^g:=g^{-1}Fg$. For every $h$ in $H$ we have the equalities
	 \[\min_{f\in F}d(gh,fgh')=\min_{f\in F}d(h,g^{-1}fgh')=\min_{f'\in F^g}d(h,f'h {'})\ ,\]
	  i.e., the map $F\backslash FgH \to F^g\backslash F^gH, Fgh \mapsto F^g h$ is an isometry.
		Hence we can use the covers for the spaces $ {F^g \cap H}\backslash H$  {as $g$ varies} to obtain for every $F\backslash G'$ a cover $U_0\cup\ldots\cup U_n$, such that for every $i$ in $\{0,\dots,n\}$ the subset $U_i$ is an $R$-disjoint union of subspaces of diameter at most $S$. This shows that $\{F\backslash G'\}_{F\in \Fin(G')}$ has asymptotic dimension at most~$n$.
	
	For $g$ in $G$ and $F$ a finite subgroup of $G'$, $F\backslash FgG'$ is isometric to $F^g\backslash G'$ as above. Since $G'$ is normal in $G$, the group $F^g$ is again a finite subgroup of $G'$.
	Therefore, every element of $\{F\backslash G\}_{F\in \Fin(G')}$ is a union of at most $k$ subspaces isometric to elements of $\{F\backslash G'\}_{F\in \Fin(G')}$. Hence also $\{F\backslash G\}_{F\in \Fin(G')}$ has asymptotic dimension at most $n$ by the Finite Union Theorem of \cite{bell-dranishnikov}.
	
	Every finite subgroup $F$ of $G$ has a normal subgroup $F'$ of index at most $k$ contained in~$G'$. Then $ {F'\backslash F}$ acts isometrically on $F'\backslash G$ with quotient $F\backslash G$. Hence we can again apply \cite[Cor.~1.2]{KasQuot} to see that $\{F\backslash G\}_{F\in \Fin}$ has asymptotic dimension at most $n$.
\end{proof}

Let 
\[1\to S\to G\xrightarrow{\phi} Q\to 1\]
be an extension of countable groups and let $S'$ be a subgroup of $S$ that is normal in $G$. 
\begin{lem}
	\label{lem:dimension}
	Assume:
	\begin{enumerate}
		\item $S$ is elementary amenable with finite Hirsch length $n$;
		\item $Q$ admits a $k$-dimensional model for $E^{top}_{\Fin(Q)} Q$.
	\end{enumerate}
	Then $G/S'$ admits an $n+k+2$-dimensional model for $E^{top}_{\Fin(G/S')} G/S'$.
\end{lem}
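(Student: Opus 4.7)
The plan is to first reduce to the case where $S' = 1$ and then to construct an $(n{+}k{+}2)$-dimensional model for $E^{top}_{\Fin(G/S')}(G/S')$ by pulling the given model for $Q$ back along $\phi$ and gluing in classifying spaces of the (elementary amenable) stabilizers.

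First I would observe that $G/S'$ fits into the extension $1 \to S/S' \to G/S' \to Q \to 1$. Since $S/S'$ is a quotient of the elementary amenable group $S$, it is itself elementary amenable, and by additivity of the Hirsch length recalled just before \cref{lem:virtab} one has $h(S/S') \le h(S) = n$. Thus, after renaming $G/S'$ as $G$ and $S/S'$ as $S$, it suffices to build an $(n{+}k{+}2)$-dimensional model for $E^{top}_{\Fin(G)}G$ starting from the $k$-dimensional model $Y$ for $E^{top}_{\Fin(Q)}Q$.

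Next I would pull $Y$ back along $\phi$ to obtain the $k$-dimensional $G$-CW complex $\phi^{*}Y$, whose point stabilizers are precisely the groups $H := \phi^{-1}(F)$ for $F \in \Fin(Q)$. Each such $H$ fits in the extension $1 \to S \to H \to F \to 1$ with $F$ finite, so $H$ is again elementary amenable of Hirsch length at most $n$. Invoking the theorem (due to Flores--Nucinkis, respectively Dembegioti--Petrosyan--Talelli) that every elementary amenable group of Hirsch length at most $n$ admits a model for $E^{top}_{\Fin}$ of dimension at most $n{+}1$, each stabilizer $H$ then has a model $Z_H$ for $E^{top}_{\Fin(H)}H$ of dimension at most $n{+}1$.

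Finally I would assemble these data into a model for $E^{top}_{\Fin(G)}G$. A direct appeal to \cref{lem:resolutions} applied to $\phi^{*}Y$ with fibers $Z_H$ yields only the multiplicative bound $k(n{+}1) + k + (n{+}1)$, which is too large; the point of the argument will therefore be to replace this with a fibration-style construction that is additive in $k$ and $n{+}1$ (as in L\"uck's treatment of classifying spaces for group extensions). The idea is to exploit the fact that $S$ is \emph{normal} in every stabilizer $H$, so that the $Z_H$ may be chosen coherently as $S$-invariant subcomplexes and thus assemble $G$-equivariantly into a complex whose cells are products of cells of $\phi^{*}Y$ (dimension $\le k$) with cells of a fiber (dimension $\le n{+}1$), with one extra dimension absorbed by mapping cylinders in the gluing. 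The main obstacle I foresee is precisely this coherent choice of $Z_H$: one must verify that the classifying spaces afforded by the Flores--Nucinkis/Dembegioti--Petrosyan--Talelli construction can be chosen to be functorial (or at least compatible) with respect to the inclusions $H \hookrightarrow H'$ arising from the attaching data of $\phi^{*}Y$, so that the total dimension of the resulting $G$-CW complex is bounded by $k + (n{+}1) + 1 = n{+}k{+}2$.
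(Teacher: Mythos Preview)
Your overall strategy matches the paper's: reduce to the extension $1 \to S/S' \to G/S' \to Q \to 1$, bound the dimension of $E^{top}_{\Fin}$ for the preimages $p^{-1}(F)$ of finite subgroups of $Q$ via Flores--Nucinkis, and then assemble. The divergence is in the last step, and it is exactly the place where you flag an obstacle.

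The paper does not attempt a bespoke ``fibration-style'' construction with coherent choices of $Z_H$. Instead it invokes a standard result, namely L\"uck's theorem on classifying spaces for group extensions (\cite[Thm.~5.16]{lueck-survey}): if $1 \to N \to G \xrightarrow{p} Q \to 1$ is an extension, $Q$ admits a $k$-dimensional model for $E^{top}_{\Fin}Q$, and for every finite $F \le Q$ the group $p^{-1}(F)$ admits an $m$-dimensional model for $E^{top}_{\Fin}(p^{-1}(F))$, then $G$ admits an $(k+m)$-dimensional model for $E^{top}_{\Fin}G$. This theorem already packages the coherence you are worried about, and gives an additive bound directly. Your proposed hand-built argument is thus unnecessary, and the obstacle you identify (functorial choice of $Z_H$) is precisely what L\"uck's theorem takes care of.

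There is also a small numerical slip: the Flores--Nucinkis bound used in the paper is $n+2$, not $n+1$ (see \cite[Cor.~4]{flores-nucinkis}). With $m = n+2$ and the additive bound $k+m$ from L\"uck, one gets $n+k+2$ on the nose, with no extra $+1$ coming from mapping cylinders. Your arithmetic happens to land on the right number only because the off-by-one in the Flores--Nucinkis bound is cancelled by the spurious $+1$ you introduce in the assembly step.
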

\begin{proof}
	Consider the extension
	\[1\to S/S'\to G/S'\xrightarrow{p} Q\to 1.\]
	Then $h(S/S')\leq h(S)=n$ and also for every finite subgroup $F$ of $Q$, we have \[h(p^{-1}(F))=h(S/S')+h(F)\leq n+0=n.\]
	Hence by Flores and Nucinkis \cite[Cor.~4  {and the discussion preceding it}]{flores-nucinkis}, there exists a model for $E^{top}_{\Fin(p^{-1}(F))}p^{-1}(F)$ of dimension at most $n+2$. 
	Since $Q$ admits a $k$-dimensional model for $E^{top}_{\Fin(Q)} Q$ and for every finite subgroup $F$ of $Q$ there exists a model for $E^{top}_{\Fin(p^{-1}(F))}p^{-1}(F)$ of dimension at most $n+2$, there is an $n+k+2$-dimensional model for $E^{top}_{\Fin(G/S')} G/S'$ by \cite[Thm.~5.16]{lueck-survey}.
\end{proof}

Let 
\[1\to S\to G\xrightarrow{\phi} Q\to 1\]
be an extension of groups. Denote by $\Fin(Q)$ the family of finite subgroups of $Q$. 
By $\phi^{-1}(\Fin(Q))$ we denote the family of subgroups of $G$ whose image under $\phi$ belongs to $\Fin(Q)$.
Let $M\colon G\Orb\to\bC$ be a functor.
\begin{theorem}
	\label{thm:solv}
	Assume:
	\begin{enumerate}
		\item $M$ is a hereditary CP-functor;
		\item $S$ is virtually solvable and has Hirsch length $n<\infty$;
		\item\label{gegregregrege} $Q$ admits a finite dimensional model for $E^{top}_{\Fin(Q)} Q$.
	\end{enumerate}
	Then the relative assembly map $\As_{\Fin,M}^{\phi^{-1}(\Fin(Q))}$ admits a left-inverse.
\end{theorem}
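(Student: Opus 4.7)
My plan is to apply \cref{thm:main-injectivity-kor1} case~\ref{thm:main-it2} to the family $\cF := \phi^{-1}(\Fin(Q))$, proceeding by induction on the derived length $d$ of a characteristic solvable subgroup of finite index in $S$ (well-defined since $S$ is virtually solvable). As a preliminary common to every stage, \cref{lem:dimension} applied with $S' = 1$ provides a finite-dimensional model for $E^{top}_\Fin G$: indeed, $S$ is elementary amenable of finite Hirsch length $n$ and $Q$ admits a finite-dimensional model by assumption, so $G$ inherits one. This verifies the classifying-space hypothesis of \cref{thm:main-injectivity-kor1} case~\ref{thm:main-it2}.

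\textbf{Base case} ($d \leq 1$, so $S$ is virtually abelian of Hirsch length at most $n$): every $H \in \cF$ is a finite extension of $H \cap S$, itself virtually abelian of Hirsch length at most $n$, hence $H$ is virtually abelian of Hirsch length at most $n$ as well. \cref{lem:virtab} then shows that $H_{can}$ has $H_\Fin$-FDC, so $\cF \subseteq \FDC$. \cref{thm:main-injectivity-kor1} case~\ref{thm:main-it2} applies and supplies the left inverse.

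\textbf{Inductive step} ($d \geq 2$): let $S_0 \leq S$ be the characteristic solvable subgroup of finite index, and set $A := S_0^{(d-1)}$, the last nontrivial term of its derived series. Then $A$ is abelian, characteristic in $S$, and hence normal in $G$. Write $\psi \colon G \to G/A$, and let $\tilde\phi \colon G/A \to Q$ be the induced surjection. The hereditary hypothesis ensures that $\tilde M := M \circ \Res_\psi$ is itself a hereditary CP-functor, and the kernel $S/A$ of $\tilde\phi$ is virtually solvable with Hirsch length at most $n$ and derived length at most $d-1$. The inductive hypothesis thus yields a left inverse of $\As_{\Fin,\tilde M}^{\tilde\phi^{-1}(\Fin(Q))}$ in $G/A$. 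Separately, applying the base case to the abelian-kernel extension $1 \to A \to G \to G/A \to 1$ (after another invocation of \cref{lem:dimension} to supply $G/A$ with a finite-dimensional model for $E^{top}_\Fin$) produces a left inverse of $\As_{\Fin, M}^{\cF_A}$, where $\cF_A := \psi^{-1}(\Fin(G/A))$. Since $\Fin \subseteq \cF_A \subseteq \cF$, transitivity of relative assembly maps gives
\[\As_{\Fin, M}^{\cF} \simeq \As_{\cF_A, M}^{\cF} \circ \As_{\Fin, M}^{\cF_A},\]
so it suffices to produce a left inverse of the first factor. The plan is to transport the left inverse supplied by the inductive hypothesis along a natural identification of $\As_{\cF_A, M}^{\cF}$ in $G$ with $\As_{\Fin,\tilde M}^{\tilde\phi^{-1}(\Fin(Q))}$ in $G/A$.

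\textbf{Main obstacle.} The hardest step is this last identification. Subgroups $H \in \cF_A$ need not contain $A$, so the orbit category $G_{\cF_A}\Orb$ and the image of $(G/A)_{\Fin(G/A)}\Orb$ under $\Res_\psi$ do not literally coincide, and the two colimits computing the relative assembly do not match on the nose. A cofinality or transfer argument is therefore needed to produce the requisite equivalence, and is expected to use in an essential way the transfer structure built into the CP-functor hypothesis (this is the real reason the \emph{hereditary} version of the hypothesis is needed, since each inductive step reduces to a genuine quotient situation).
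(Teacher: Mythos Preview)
Your inductive scheme is essentially the paper's, run from the opposite end of the derived series: the paper quotients by $[S,S]$, so the induction hypothesis handles the inner factor $\As_{\Fin,M}^{\psi^{-1}(\Fin(G'))}$ and the virtually-abelian base case the outer one; you quotient by the bottom abelian piece $A$, swapping the two roles. Both versions work and need the same ingredients.

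Your ``Main obstacle'' is not an obstacle, and no transfers enter here. The functor $\Res_\psi \colon (G/A)\Orb \to G\Orb$ is fully faithful (since $\psi$ is surjective), and restricted to either pair of families it is cofinal: for $G/H$ with $\psi(H)$ in the relevant family on the quotient side, the comma category $(G/H)\downarrow\Res_\psi$ has an initial object, namely $(G/A)/\psi(H)$ together with the canonical projection $G/H \to G/HA$. This yields a commuting square
\[\xymatrix{
\colim\limits_{(G/A)_{\Fin}\Orb} \tilde M \ar[r]^-{\simeq}\ar[d] & \colim\limits_{G_{\cF_A}\Orb} M \ar[d] \\
\colim\limits_{(G/A)_{\tilde\phi^{-1}(\Fin(Q))}\Orb} \tilde M \ar[r]^-{\simeq} & \colim\limits_{G_{\cF}\Orb} M
}\]
with horizontal equivalences, identifying $\As_{\cF_A,M}^{\cF}$ with $\As_{\Fin,\tilde M}^{\tilde\phi^{-1}(\Fin(Q))}$; this is exactly the paper's one-line argument. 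The hereditary CP hypothesis is needed only to guarantee that $\tilde M = M \circ \Res_\psi$ is again a CP-functor so that \cref{thm:main-injectivity-kor1} (respectively the induction hypothesis) applies on the quotient group --- you are right about that --- but the cofinality step itself is elementary and uses nothing beyond the surjectivity of $\psi$.
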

\begin{proof} 
	We argue by induction on the derived length $k$ of $S$.
	
	If {$k=1$}, then $S$ is virtually abelian and every group in $\phi^{-1}(\Fin(Q))$ is virtually abelian {of Hirsch length at most $n$}, too. Hence the statement follows from case \ref{thm:main-it2} of \cref{thm:main-injectivity-kor1} since its assumptions are verified by \cref{lem:virtab} and \cref{lem:dimension} applied with $S'$ the trivial group.
	
	Now suppose that the statement holds for $k$ and assume $S$ has derived length $k+1$. Note that $[S,S]$ is normal in $G$ and has derived length $k$. We set $G^{\prime}:=G/[S,S]$. Then there is a finite dimensional model for $E^{top}_{\Fin(G')}G'$ by \cref{lem:dimension}. We consider the factorization of $\phi$ as 
	\[\phi\colon G\stackrel{\psi}{\to} G^{\prime} \stackrel{p}{\to} Q\ .\]
	The inclusions
	\[\Fin\subseteq \psi^{-1} (\Fin(G^{\prime}))\subseteq \phi^{-1}(\Fin(Q))\]
	of families of subgroups of $G$ induce a factorization 
	\[\As_{\Fin,M}^{\phi^{-1}(\Fin(Q))}\simeq  \As^{\phi^{-1}(\Fin(Q))}_{ \psi^{-1} (\Fin(G^{\prime})),M}\circ \As_{\Fin,M}^{ \psi^{-1} (\Fin(G^{\prime}))}\] of the relative assembly map. Because $\As_{\Fin,M}^{\psi^{-1}(\Fin(G^{\prime}))}$ admits a left-inverse by the induction assumption, it remains to show that $  \As^{\phi^{-1}(\Fin(Q))}_{ \psi^{-1} (\Fin(G^{\prime})),M}$ admits a left-inverse. We have a commuting diagram  of categories \[\xymatrix{G^{\prime}_{\Fin(G^{\prime})}\Orb\ar[r]^-{\Res_{\psi}}\ar[d]&G_{\psi^{-1}(\Fin(G^{\prime}))}\Orb\ar[d]\\G^{\prime}_{p^{-1}(\Fin(Q))}\Orb\ar[r]^-{\Res_{\psi}}&G_{\phi^{-1}(\Fin(Q))}\Orb}\]
	where the vertical functors are the fully faithful inclusions induced by the inclusions of families
	$\Fin(G^{\prime})\subseteq p^{-1}(\Fin(Q))$ and $\psi^{-1}(\Fin(G^{\prime}))\subseteq \phi^{-1}(\Fin(Q))$.
	We now note that the horizontal maps are fully faithful inclusions as well and cofinal. 
	We obtain an induced square in $\bC$
	\[\xymatrix{\colim_{G^{\prime}_{\Fin(G^{\prime})}\Orb}M\circ \Res_{\psi}\ar[r]^-{\simeq}\ar[d]^{\As^{p^{-1}(\Fin(Q))}_{\Fin(G^{\prime}),M\circ \Res_{\psi}}}&\colim_{G_{\psi^{-1}(\Fin(G^{\prime}))}\Orb}M\ar[d]^{ \As^{\phi^{-1}(\Fin(Q))}_{ \psi^{-1} (\Fin(G^{\prime})),M}}\\ \colim_{G^{\prime}_{p^{-1}(\Fin(Q))}\Orb} M\circ \Res_{\psi}\ar[r]^-{\simeq}&\colim_{G_{\phi^{-1}}(\Fin(Q))\Orb}M}\]
	The existence of a left-inverse   of $\As_{\Fin {(G')},M\circ\Res_{\psi}}^{p^{-1}(\Fin(Q))}$ again follows from the case  {$k=1$} since
	$M\circ \Res_{\psi}$ is also a  CP-functor. 
\end{proof}
\begin{rem}
For algebraic $K$-theory $K\bA^{G}$ (see \cref{ergerge}) in place of $M$ and under the same assumptions on $S$ and $G$ 
  as in  \cref{thm:solv} the existence of a left-inverse for
  $\As^{\phi^{-1}(\Fin(Q))}_{\Fin,K\bA^{G}}$ has been shown by combining the split-injectivity of the relative assembly map from finite to virtually cyclic subgroups with the Farrell--Jones conjecture for solvable groups, cf.~\cite[Prop.~4.1]{KasLie}.  
  With the new techniques to understand relative assembly maps developed in this article the use of the Farrell--Jones conjecture can be avoided. 
\end{rem}

For convenience, we repeat the arguments from \cite{KasLie} and \cite{KasLin} to obtain split-injectivity for finitely generated subgroups of linear groups and  {of} virtually connected Lie groups with a finite-dimensional classifying space.

Let $M\colon G\Orb\to \bC$ be a functor.
\begin{kor}
\label{cor:injectivity-linear}
	Assume:
	\begin{enumerate}
		\item $M$ is a hereditary CP-functor;
		\item $G$ admits a finite-dimensional model for $E^{top}_\Fin G$;
		\item $G$ is a finitely generated subgroup of a linear group over a commutative ring with unit or of a virtually connected Lie group.
	\end{enumerate}
	Then the assembly map $\As_{\Fin,G}$ is split injective.
\end{kor}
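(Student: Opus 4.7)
The plan is to reduce \cref{cor:injectivity-linear} to a combined application of \cref{kor:oldversion} and \cref{thm:solv} by producing a short exact sequence $1\to S\to G\xrightarrow{\phi} Q\to 1$ with $S$ virtually solvable of finite Hirsch length and $Q$ satisfying the hypotheses of \cref{kor:oldversion}. Following \cite{KasLin}, in the linear case with $G\subseteq GL_n(R)$ one replaces $R$ by the finitely generated (hence Noetherian) subring $R'$ generated by the matrix entries of a finite generating set, and defines $\phi$ as the restriction to $G$ of the reduction $GL_n(R')\to GL_n(R'/N)$ modulo the nilradical $N\subseteq R'$. Since $N$ is nilpotent, $S=\ker\phi$ is virtually solvable of finite Hirsch length. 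The virtually connected Lie group case is handled by the parallel construction from \cite{KasLie}, reducing to the linear setting.

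Next, I would verify the hypotheses of \cref{kor:oldversion} for the quotient group $Q$ together with the restricted functor $M\circ\Res_\phi$. Since $M$ is a hereditary CP-functor by assumption, $M\circ\Res_\phi$ is a CP-functor. The ring $R'/N$ is a commutative ring with unit and trivial nilradical, and $Q$ is a finitely generated linear group over it, so $Q_{can}$ has $Q_\Fin$-FDC by \cite[Thm.~4.3]{KasLin} (see \cref{ex:groups1}). To produce a finite-dimensional model for $E^{top}_{\Fin(Q)}Q$, one observes that every virtually solvable subgroup $T\leq Q$ lifts to the virtually solvable subgroup $\phi^{-1}(T)\leq G$ with Hirsch length $h(\phi^{-1}(T))=h(T)+h(S)$. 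The assumed finite-dimensionality of $E^{top}_\Fin G$ bounds the Hirsch length of virtually solvable subgroups of $G$ by \cref{ex:groups2}, hence also bounds $h(T)$ uniformly in $T$. A second application of \cref{ex:groups2} then yields the desired finite-dimensional model for $E^{top}_{\Fin(Q)}Q$. Therefore \cref{kor:oldversion} produces a left inverse to $\As_{\Fin(Q),M\circ\Res_\phi}$.

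Finally, the two splittings are combined. The cofinality argument used in the proof of \cref{thm:solv} identifies $\colim_{Q_{\Fin(Q)}\Orb}M\circ\Res_\phi$ with $\colim_{G_{\phi^{-1}(\Fin(Q))}\Orb}M$, and similarly for the full colimits, so the left inverse of the previous step is in fact a left inverse to $\As_{\phi^{-1}(\Fin(Q)),M}$. Applied to the extension $1\to S\to G\to Q\to 1$, \cref{thm:solv} yields a left inverse to the relative assembly $\As_{\Fin,M}^{\phi^{-1}(\Fin(Q))}$. The factorization
\[\As_{\Fin,M}\simeq \As_{\phi^{-1}(\Fin(Q)),M}\circ \As_{\Fin,M}^{\phi^{-1}(\Fin(Q))}\]
then provides the desired left inverse to $\As_{\Fin,M}$. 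The main obstacle is the first step: the careful structural identification of a quotient $\phi$ with virtually solvable kernel of bounded Hirsch length, which is the reason the paper defers to the arguments of \cite{KasLin} and \cite{KasLie}.
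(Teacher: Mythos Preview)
Your proposal is correct and follows essentially the same strategy as the paper: produce an extension $1\to S\to G\to Q\to 1$ with virtually solvable kernel of finite Hirsch length and quotient linear over a ring with trivial nilradical, apply \cref{kor:oldversion} to $Q$ (verifying the finite-dimensional model for $E^{top}_{\Fin(Q)}Q$ via the Hirsch-length characterization of \cref{ex:groups2}), and then combine with \cref{thm:solv} through the factorization of $\As_{\Fin,M}$. Your explicit passage to a finitely generated subring $R'$ is a reasonable precaution that the paper leaves implicit.
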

\begin{proof}
	Let $G$ be a finitely generated subgroup of a virtually connected Lie group. The adjoint representation induces an extension with abelian kernel and quotient a finite index supergroup $Q$ of a finitely generated subgroup of $GL_n(\mathbb{C})$. The group $Q$ has $Q_\Fin$-FDC by \cref{ex:groups1}. Since $G$ admits a finite-dimensional model for $E_\Fin^{top}G$, so does $Q$ using the characterization from \cref{ex:groups2}. By \cref{kor:oldversion} the assembly map $\As_{\Fin(Q),M\circ \Res_Q^G}$ is split-injective. This assembly map is equivalent to $\As_{p^{-1}(\Fin(Q)),M}$, where $p\colon G\to Q$ is the projection. Because the kernel of $p$ is abelian, the assembly map $\As_{\Fin,M}^{p^{-1}(\Fin(Q))}$ is split-injective by \cref{thm:solv}.
	
	Now let $G$ be a finitely generated subgroup of a linear group over a commutative ring $R$ with unit. Let $\mathfrak{n}$ be the nilradical of $R$. Then we have an extension
	\[1\to (1+M_n(\mathfrak{n}))\cap G\to G\xrightarrow{p} Q\to 1\ ,\]
	where $Q$ is a finitely generated subgroup of $GL_n(R/\mathfrak{n})$.
	Arguing as above, the assembly map $\As_{p^{-1}(\Fin(Q)),M}$ is split-injective by \cref{ex:groups1} since $R/\mathfrak{n}$ has trivial nilradical.
	Since {the group} $(1+M_n(\mathfrak{n}))$ is nilpotent, the assembly map $\As_{\Fin,M}^{p^{-1}(\Fin(Q))}$ is split-injective by \cref{thm:solv}.
\end{proof}

\section{\texorpdfstring{$\boldsymbol{G}$}{G}-bornological coarse spaces and coarse homology theories}
\label{sec:homologytheories}

In this section we recall the definition of the category $G\BC$ of $G$-bornological coarse spaces and provide basic examples. 
We further recall the notion of an equivariant coarse homology theory, in particular its universal version $\Yo^{s}$  with values in the stable $\infty$-category $G\Sp\cX$ of
equivariant coarse motivic spectra.
Most of this material has been developed in 
\cite{equicoarse} (see also \cite{buen} for the non-equivariant case).

In the  definitions below we will use the following notation. 
\begin{enumerate}
\item For a set $Z$ we let $\cP(Z)$ denote the power set of $Z$. 
\item If  a group $G$ acts on a set $X$, then it acts diagonally on $X\times X$ and therefore on $\cP(X\times X)$.
For $U$ in $\cP(X\times X)$ we set \begin{equation*}\label{betrboklvree}
GU:=\bigcup_{g\in G} gU\ .
\end{equation*} 
\item For $U$ in $\cP(X\times X)$ and $B$ in $\cP(X)$ we define the
 $U$-{\em thickening} $U[B]$   by
\begin{equation*}\label{viojoewfwefvvdvsdv} U[B]:=\{x\in X \mid \exists y\in B: (x,y) \in U \}\ .\end{equation*}
\item For $U$ in $\cP(X\times X)$ we define the {\em inverse} by \[U^{-1}:=\{(y,x) \mid (x,y)\in U  \}\ .\]  \item For $U,V$ in $\cP(X\times X)$ we define their {\em composition}  by \begin{equation}\label{rv3roih43iuoff3fwe}
U\circ V:=\{(x,z) \mid \exists y\in X : (x,y)\in U \wedge (y,z)\in V  \}\ .
\end{equation}
\end{enumerate}

Let $G$ be a group and let $X$ be a $G$-set.

\begin{ddd}
 A \emph{$G$-coarse structure} $\cC$ on $X$ is a subset of $\cP(X\times X)$ with the following properties:
\begin{enumerate}
\item $\cC$ is closed under composition, inversion, and forming finite unions or subsets;
\item $\cC$ contains the diagonal $\diag(X)$ of $X$;
\item for every $U$ in $\cC$, the set $GU$
is also in $\cC$. 
\end{enumerate}
The pair $(X,\cC)$ is called a \emph{$G$-coarse space},
 {and the members of $\cC$ are called (coarse) \emph{entourages} of $X$.}
\end{ddd}

Let $(X,\cC)$ and $(X^{\prime},\cC^{\prime})$ be $G$-coarse spaces and  {let} $f\colon X\to X^{\prime}$ be an equivariant map between the underlying sets.
\begin{ddd}
The map $f$ is \emph{controlled} if for every $U$ in $\cC$ we have  $(f\times f)(U)\in \cC^{\prime}$.  \end{ddd}

We obtain a category $G\Coarse$ of $G$-coarse spaces and controlled equivariant maps.

\begin{ddd}\label{erjgoijgoiergjergegrrr}A \emph{$G$-bornology} $\cB$ on $X$ is a subset of $\cP(X)$ with the following properties:
\begin{enumerate}
\item $\cB$ is closed under {forming} finite unions and subsets;
\item $\cB$ contains all finite subsets of $X$;
\item $\cB$ is $G$-invariant. 
\end{enumerate} 
The pair $(X,\cB)$ is called a \emph{$G$-bornological space}, and the members of $\cB$ are called \emph{bounded subsets} of $X$.
\end{ddd}

Let $(X, {\cB})$ and $(X^{\prime}, {\cB^{\prime}})$ be $G$-bornological spaces and {let} $f \colon X\to X^{\prime}$ be an equivariant map between the underlying sets.

\begin{ddd}\label{erjgoijgoiergjergegrrr1}
 {The map} $f$ is \emph{proper} if for every $B^{\prime}$ in $\cB^{\prime}$ we have $f^{-1}(B^{\prime})\in  \cB$.   \end{ddd}
 
We obtain a category $G\Born$ of $G$-bornological spaces and proper equivariant maps.

Let $X$ be a $G$-set with a $G$-coarse structure $\cC$ and a $G$-bornology $\cB$.
\begin{ddd}
The coarse structure $\cC$ and the bornology $\cB$ are said to be  
  \emph{compatible} if 
for every $B$ in $\cB$ and $U$ in $\cC$ 
 {the $U$-thickening $U[B]$ lies in $\cB$.}
\end{ddd}

\begin{ddd}\label{ergergregreg5t45t}
A \emph{$G$-bornological coarse space} is a triple $(X,\cC,\cB)$ consisting of a $G$-set $X$, a $G$-coarse structure $\cC$, and a $G$-bornology $\cB$ such that $\cC$ and $\cB$  are  compatible. \end{ddd}
 
\begin{ddd}
A \emph{morphism} $f\colon (X,\cC,\cB)\to (X^{\prime},\cC^{\prime},\cB^{\prime})$ between $G$-bornological coarse spaces
is an equivariant  map $f\colon X\to X^{\prime}$ of the underying $G$-sets which is controlled and proper.
\end{ddd}

We obtain a category $G\BC$ of $G$-bornological coarse spaces and morphisms.
If the structures are clear from the context, we will use the notation  $X$ instead of $(X,\cC,\cB)$ in order to denote $G$-bornological coarse spaces.

Let $X$ be a $G$-set.

\begin{ex}\label{ufewiofheiwfhuewifewew}
If $W$ is a subset of $\cP(X\times X)$, then the $G$-coarse structure generated by $W$ is the minimal $G$-coarse structure containing $W$,
 i.e., it is the coarse structure $\cC\langle \{GU\mid U\in W\}\rangle$  generated by the set of invariant entourages  $GU$ for all $U$ in $W$.

We can define the following $G$-coarse structures on $X$:
\begin{enumerate}
\item The \emph{minimal} coarse structure on $X$ is the $G$-coarse structure generated by the empty family.
 It consists of all subsets of $\diag(X)$. {We denote the corresponding $G$-coarse space by $X_{min}$.}
 \item The \emph{canonical} coarse structure on $X$ is the $G$-coarse structure generated by the  entourages $ B\times B$ for all finite subsets $B$ of $X$.
  {We denote the corresponding $G$-coarse space by $X_{can}$.}
   \item $\cP(X\times X)$ is the \emph{maximal} coarse structure on $X$.  {We denote the corresponding $G$-coarse space by $X_{max}$.}
 \item If $X$ comes equipped with a  quasi-metric\footnote{The notion of a quasi-metric generalizes the notion of a metric. The difference is that for a quasi metric we admit the value $\infty$.} $d$, then the \emph{metric} coarse structure on $X$ is generated by the subsets
 $\{ (x,y) \mid d(x,y) \leq r \} $ of $X\times X$
  for all  $r$ in $[0,\infty)$.
   {We denote the corresponding  coarse space by $X_d$.}
  If the quasi-metric $d$ is  {$G$-}invariant, then  we obtain a $G$-coarse structure and $X_{d}$ is a $G$-coarse space.
\end{enumerate}

If $A$ is a subset of $\cP(X)$, then the $G$-bornology generated by $A$ is the minimal $G$-bornology containing $A$, i.e., it is the bornology $\cB\langle \{gB\mid g\in G, B\in A\}\rangle$ generated by the set of all $G$-translates of elements of $A$.

We can define the following $G$-bornologies on $X$:
\begin{enumerate}
 \item The \emph{minimal} $G$-bornological structure consists of the finite subsets.
  {We denote the corresponding $G$-bornological space by $X_{min}$.}
 \item The \emph{maximal} $G$-bornological structure consists of all subsets.
  {We denote the corresponding $G$-bornological space by $X_{max}$.}
 \item If $X$ comes equipped with a  quasi-metric $d$, the \emph{metric bornology} on $X$ is generated by the sets
  $\{ y \mid d(x,y) \leq r \}$
  for all  $x$ in $X$ and $r$ in $[0,\infty)$.
   {We denote the corresponding bornological space by $X_d$.}
  If $d$ is $G$-invariant, then we obtain a $G$-bornology and $X_{d}$ is a $G$-bornological space.
\end{enumerate}
Taking any pair of compatible coarse and bornological structures as above, we can form a $G$-bornological coarse space.
These will be denoted by two subscripts, where the first subscript refers to the coarse structure and the second subscript to the bornology.
Examples include $X_{can,min}$, $X_{can,max}$, $X_{min,min}$, $X_{min,max}$, $X_{max,max}$ and, if $X$ comes equipped with an invariant metric, $X_{d,d}$. 
\end{ex}

Let $X$  be a $G$-coarse space with coarse structure $\cC$. Then
\begin{equation}\label{vfwrekjnkewjfwefewfwef}
\cR_{\cC}:=\bigcup_{U\in \cC} U
\end{equation} is an invariant equivalence relation on $X$.

\begin{ddd}\label{fiofhjweio23r23r23r}
We let $\pi_{0}(X)$ denote the $G$-set of equivalence classes with respect to $\cR_{\cC}$.
 {The elements of $\pi_{0}(X)$ are called the \emph{coarse components} of $X$.}
\end{ddd}

\begin{ddd}
 A $G$-coarse space $(X,\cC)$ is \emph{coarsely connected} if $\pi_{0}(X)$ {is a singleton set.} \end{ddd}

We now introduce the notion of an equivariant coarse homology theory{; see} \cite[Sec.~3]{equicoarse} for details.  

Let $X$ be a $G$-bornological coarse space. 
\begin{ddd}
	An \emph{equivariant big family} on $X$ is a filtered family of $G$-invariant subsets $(Y_{i})_{i\in I}$ of $X$  such that for every entourage $U$ of $X$ and $i$ in $I$ there exists $j$ in $I$ such that $U[Y_{i}]\subseteq Y_{j}$.
	
	An \emph{equivariant complementary pair} $(Z,\cY)$ on $X$ is a pair of a $G$-invariant subset $Z$ of $X$ and an equivariant big family $\cY=(Y_{i})_{i\in I}$ on $X$ such that there exists $i$ in $I$ with $Z\cup Y_{i}=X$.
\end{ddd}

Let $g,f\colon X\to X'$ be two morphisms in $G\BC$. Then  we say that $f$  is close to $g$ if $(f\times g)(\diag(X))$ is a coarse entourage of $X'$. This notion will be used in Condition~\ref{qiorffweqewfqew}.\ of the definition below.

Let $X$ be a $G$-bornological coarse space.
\begin{ddd}  The space $X$ is \emph{flasque} if it admits a morphism $f\colon X\to X$ such that
	\begin{enumerate}
		\item \label{qiorffweqewfqew} $f$ is close to $\id_{X}$.
		\item For every entourage $U$ of $X$ the subset $\bigcup_{n\in \nat}(f^{n}\times f^{n})(U)$ is an entourage of $X$.
		\item For every bounded subset $B$ of $X$ there exists an integer  $n$ such that
		$GB\cap f^{n}(X)=\emptyset$.
	\end{enumerate}
	
	We say that flasqueness of $X$ is \emph{implemented} by $f$.
\end{ddd}

 The category $G\BC$ has a symmetric monoidal structure $\otimes${; see} \cite[Ex.~2.17]{equicoarse}. 
 If $X$ and $Y$ are $G$-bornological coarse spaces, then
 $X\otimes Y$ has the following description:
 \begin{enumerate}
 \item The underlying $G$-coarse space of $X\otimes Y$ is the  {cartesian} product in $G\Coarse$ of the underlying $G$-coarse spaces of $X$ and $Y$. More explicitly, the underlying $G$-set of $X\otimes Y$ is $X\times Y$ with the diagonal $G$-action, and the coarse structure is generated by the entourages $U\times V$ for all coarse entourages $U$ of $X$ and $V$ of $Y$.
 
 \item  The bornology on $X\otimes Y$ is generated by the products $A\times B$ for all bounded subsets $A$ of $X$ and $B$ of $Y$.
 \end{enumerate}
 Note that $X\otimes Y$ in general differs from the {cartesian} product $X\times Y$ in $G\BC$.

Let $\bC$ be a cocomplete stable $\infty$-category and let
\[E\colon G\BC\to \bC\]
be a functor.  If $\cY=(Y_{i})_{i\in I}$ is a filtered family of  $G$-invariant subsets of $X$, then we set
\begin{equation}
E(\cY):=\colim_{i\in I} E(Y_{i})\ .
\end{equation} 
In this formula we consider the subsets $Y_{i}$ as $G$-bornological coarse spaces with the structures induced from $X$.

If $Z$ is another invariant subset, then we use the notation $Z\cap \cY:=(Z\cap Y_{i})_{i\in I}$.

Let $\bC$ be a {cocomplete} stable $\infty$-category and consider a functor
\[ E \colon G\BC\to \bC\ .\]
\begin{ddd}\label{def:coarsehom}
	A \emph{$G$-equivariant $\bC$-valued coarse homology theory} is a functor
	\[E\colon G\BC\to \bC\] with the following properties:
	\begin{enumerate}
		\item (Coarse invariance) For all $X$ in $G\BC$ the functor $E$ sends the projection $\{0,1\}_{max,max}\otimes  X\to X$ to an equivalence.
		\item(Excision) $E(\emptyset)\simeq 0$ and for every equivariant complementary pair   $(Z,\cY)$ on  a  $G$-bornological coarse space $X$ the square  
		\[\xymatrix{E(Z\cap \cY)\ar[r]\ar[d]&E(Z)\ar[d]\\E(\cY)\ar[r]&E(X)}\]
		is  a push-out.
		\item (Flasqueness) If  a  $G$-bornological coarse space $X $ is  flasque, then $E(X)\simeq 0$.
	\item(u-Continuity) For every  $G$-bornological coarse space $X $ the natural map
	\[\colim_{U\in \cC^{G}(X)} E(X_{U})\to E(X)\]
	is an equivalence. Here $X_U$ denotes the $G$-bornological coarse space $X$ with the coarse structure replaced by the one generated by $U$, and $\cC^{G}(X)$ is the  poset of $G$-invariant coarse entourages of $X$.
\end{enumerate}
If the group $G$ is clear from the context, then we will often just speak of an equivariant coarse homology theory.
\end{ddd}

We have a universal equivariant coarse homology theory 
\[ \Yo^{s} \colon G\BC\to G\Sp\cX \]
(see \cite[Def. 4.9]{equicoarse}), where $G\Sp\cX$ is a stable presentable $\infty$-category called the category of coarse motivic spectra.
{More precisely, we have the following.
\begin{prop}[{\cite[Cor.~4.9]{equicoarse}}]\label{prop:motives-universal-prop} 
 Restriction along $\Yo^s$ induces an equivalence between the $\infty$-categories of colimit-preserving functors $G\Sp\cX \to \bC$ and $\bC$-valued equivariant coarse homology theories.
\end{prop}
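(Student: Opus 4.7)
The plan is to exhibit $G\Sp\cX$ as an iterated Bousfield localization of a stable presheaf category, and to read off the claimed universal property by combining the universal properties of the presheaf construction, of localization, and of stabilization, one axiom of \cref{def:coarsehom} at a time.

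I would start from the stable presheaf $\infty$-category $\PSh_{\Sp}(G\BC):=\Fun(G\BC^{op},\Sp)$. Composing the usual Yoneda embedding into spaces with the (free) stabilization functor produces a functor $G\BC\to \PSh_{\Sp}(G\BC)$ with the universal property that, for every cocomplete stable $\bC$, restriction induces an equivalence between colimit-preserving functors $\PSh_{\Sp}(G\BC)\to \bC$ and arbitrary functors $G\BC\to \bC$. This reduces the proposition to identifying the subcategory of colimit-preserving functors landing in equivariant coarse homology theories with colimit-preserving functors out of a suitable localization of $\PSh_{\Sp}(G\BC)$.

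For each of the four axioms in \cref{def:coarsehom} I would write down a small set of morphisms in $\PSh_{\Sp}(G\BC)$ that must be inverted:
\begin{enumerate}
\item for coarse invariance, the Yoneda images of the projections $\{0,1\}_{max,max}\otimes X\to X$;
\item for excision, the map from the cofibre of $\Yo^{s}(Z\cap\cY)\to \Yo^{s}(Z)\oplus \Yo^{s}(\cY)$ to $\Yo^{s}(X)$ associated with every complementary pair $(Z,\cY)$, together with the map $\Yo^{s}(\emptyset)\to 0$;
\item for flasqueness, the morphism $\Yo^{s}(X)\to 0$ for every flasque $X$;
\item for u-continuity, the canonical map $\colim_{U\in \cC^{G}(X)}\Yo^{s}(X_{U})\to \Yo^{s}(X)$.
\end{enumerate}
Let $W$ denote the resulting set of morphisms and define $G\Sp\cX$ to be the accessible Bousfield localization $\PSh_{\Sp}(G\BC)[W^{-1}]$, with $\Yo^{s}$ being the composite of the stable Yoneda embedding with the localization. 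Since Bousfield localization of a presentable stable $\infty$-category is again presentable and stable, $G\Sp\cX$ has the correct formal properties. By the universal property of such a localization, a colimit-preserving functor $\PSh_{\Sp}(G\BC)\to \bC$ factors through $G\Sp\cX$ if and only if it sends every element of $W$ to an equivalence, which, after restriction along the Yoneda embedding, translates precisely into the four axioms of an equivariant coarse homology theory.

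The only subtle point, and the place I expect the main technical work, is the handling of u-continuity. Unlike the other three axioms it is formulated as a filtered colimit condition rather than the inversion of a single map per datum, so one must verify that imposing it via the above localization does not accidentally invert additional morphisms or destroy compatibility with the other three axioms. Concretely, I would check that the presheaves representing the $X_{U}$ for $U\in\cC^{G}(X)$ form a filtered diagram in $\PSh_{\Sp}(G\BC)$ whose colimit maps to $\Yo^{s}(X)$, and that inverting these colimit maps is equivalent to requiring the output functor to commute with those specific filtered colimits; this is where one uses that $\bC$ is cocomplete. Once u-continuity is incorporated into the localization without disturbing the other axioms, the equivalence between colimit-preserving functors $G\Sp\cX\to\bC$ and $\bC$-valued equivariant coarse homology theories is immediate, and $\Yo^{s}$ is itself the universal such theory by construction.
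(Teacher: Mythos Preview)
The paper does not actually prove this proposition; it is quoted verbatim as \cite[Cor.~4.9]{equicoarse} and used as a black box. So there is no in-paper proof to compare against, only the construction in the cited reference. Your outline is precisely the strategy used there: one builds $G\Sp\cX$ as a presentable stable localization of a presheaf category and reads off the universal property from the universal properties of presheaves and Bousfield localization.

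Two comments on your write-up. First, your stated ``main technical work'' around u-continuity is a non-issue: by the very definition of Bousfield localization, a colimit-preserving functor out of the localization is the same datum as a colimit-preserving functor out of $\PSh_{\Sp}(G\BC)$ inverting the specified morphisms. There is nothing to check about ``accidentally inverting more'' or about interaction with the other axioms; the universal property packages this for free. Second, and more seriously, you have glossed over the genuine set-theoretic obstacle: $G\BC$ is not a small category (bornological coarse spaces can have underlying sets of arbitrary cardinality), so $\PSh_{\Sp}(G\BC)$ is not a presentable $\infty$-category in the usual sense, and your proposed $W$ is a proper class, not a set. Accessible Bousfield localization of a presentable category requires localizing at a set of morphisms. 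The cited references handle this either via a universe convention or by cutting down to a small generating subcategory; you should say explicitly which route you take, since otherwise the very first step of your argument is not well-posed.
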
}
 
  The symmetric monoidal structure $\otimes$ descends to $G\Sp\cX$ such that $\Yo^{s}$ becomes a symmetric monoidal functor \cite[Lem. 4.17]{equicoarse}.
  
   \begin{ex}\label{eriogjoqergwfdefwefqwef}
  The following is an illustrative example of the usage of some of the axioms  of a coarse homology theory for $\Yo^{s}$. Let $X$ be in $G\BC$. On    $\R\otimes X$ we consider the subset $Z:=[0,\infty)\times X$ and
  the big family $\cY:= ((-\infty,n]\times X)_{n\in \nat}$. Then $(Z,\cY)$ is a complementary pair on $\R\otimes X$. By the excision  axiom we get a push-out square
 \begin{equation}\label{qwefqwefweewfqwefqwfwef}
\xymatrix{\Yo^{s}(Z\cap \cY) \ar[r]\ar[d]&\Yo^{s}(Z)\ar[d]\\\Yo^{s}(\cY)\ar[r]&\Yo^{s}(\R\otimes X)}\ . 
\end{equation}  
  We now observe that $Z$ is flasque with flasqueness implemented by the map $f(t,x):=(t+1,x)$. Similarly, all members of $\cY$ are flasque. Since $\Yo^{s}$ vanishes on flasques, we get $\Yo^{s}(Z)\simeq 0$ and $\Yo^{s}(\cY)\simeq 0$. The inclusion
  $X\cong \{0\}\times X\to \R\times X$ induces an equivalence of $X$ with every member of $Z\cap \cY$. Consequently, we have a canonical equivalence $\Yo^{s}(X)\simeq \Yo^{s}(Z\cap \cY)$. Therefore, the push-out square in \eqref{qwefqwefweewfqwefqwfwef} is equivalent to a push-out square   \begin{equation*}\label{qwefqwefweeeccwcwecwecwcwfqwefqwfwef}
\xymatrix{\Yo^{s}(X) \ar[r]\ar[d]&0\ar[d]\\0\ar[r]&\Yo^{s}(\R\otimes X)}\ . 
\end{equation*}  
This square provides an equivalence  \begin{equation}\label{qwefijqweofqewfqewfqewfqewfq}
\Sigma \Yo^{s}(X)\simeq \Yo^{s}(\R\otimes X)\ .\qedhere
\end{equation}
   \end{ex}

 Let $E\colon G\BC\to \bC$ be a functor  and let $X$ be a $G$-bornological coarse space.
\begin{ddd}\label{def:twist}
 The \emph{twist} $E_X$ of $E$ by $X$ is the functor \[ E(X \otimes -) \colon G\BC \to \bC\ .\qedhere \]
\end{ddd}
\begin{lem}\label{lem:twist} If $E$ is an equivariant  coarse homology theory, then
 the twist $E_X$ is an equivariant coarse homology theory, too.
\end{lem}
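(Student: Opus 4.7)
The plan is to verify each of the four axioms of \cref{def:coarsehom} for $E_X = E(X \otimes -)$ using the corresponding axioms for $E$ together with formal properties of the symmetric monoidal structure $\otimes$ on $G\BC$. For a $G$-bornological coarse space $Y$, we will repeatedly exploit the fact that an entourage of $X \otimes Y$ is, up to generation and equivariance, of the form $U \times V$ with $U \in \cC^G(X)$ and $V \in \cC^G(Y)$, and that one has a canonical isomorphism $(X \otimes Y)_{U \times V} \cong X_U \otimes Y_V$.

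Three of the four axioms are essentially formal. For coarse invariance, symmetry of $\otimes$ gives an isomorphism $X \otimes (\{0,1\}_{max,max} \otimes Y) \cong \{0,1\}_{max,max} \otimes (X \otimes Y)$ under which the projection $\{0,1\}_{max,max} \otimes Y \to Y$ tensored with $X$ becomes the projection for the space $X \otimes Y$, which is sent to an equivalence by $E$. For excision, I would first verify that if $(Z, \cY = (Y_i)_i)$ is an equivariant complementary pair on $Y$, then $(X \otimes Z, (X \otimes Y_i)_i)$ is an equivariant complementary pair on $X \otimes Y$; here the big family condition uses the above description of entourages of $X \otimes Y$, and the complementarity $Z \cup Y_i = Y$ implies $(X \otimes Z) \cup (X \otimes Y_i) = X \otimes Y$. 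One then applies excision of $E$ and observes that the resulting pushout square is exactly the one required for $E_X$. For flasqueness, if $f \colon Y \to Y$ implements flasqueness of $Y$, then $\id_X \otimes f \colon X \otimes Y \to X \otimes Y$ implements flasqueness of $X \otimes Y$: it is close to the identity, stable under iteration on any entourage $U \times V$, and for any bounded $A \times B$ of $X \otimes Y$ one uses that some power of $f$ separates $GB$ from $f^n(Y)$. Hence $E(X \otimes Y) \simeq 0$.

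The main obstacle is u-continuity, which requires a cofinality argument. We must show that
\[ \colim_{V \in \cC^G(Y)} E(X \otimes Y_V) \longrightarrow E(X \otimes Y) \]
is an equivalence. Starting from u-continuity of $E$ on $X \otimes Y$, I would argue that entourages of $X \otimes Y$ of the form $U \times V$ with $U \in \cC^G(X)$ and $V \in \cC^G(Y)$ are cofinal in $\cC^G(X \otimes Y)$, and combine this with the identification $(X \otimes Y)_{U \times V} \cong X_U \otimes Y_V$ to rewrite
\[ E(X \otimes Y) \;\simeq\; \colim_{(U,V)} E(X_U \otimes Y_V) \;\simeq\; \colim_V \colim_U E(X_U \otimes Y_V). \]
For fixed $V$, a second application of u-continuity of $E$ to the space $X \otimes Y_V$, together with the same cofinality observation for entourages of $X \otimes Y_V$, identifies the inner colimit with $E(X \otimes Y_V)$. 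The delicate point is to make this double cofinality argument precise — in particular, checking that entourages of $X \otimes Y_V$ generated by $U \times V$ as $U$ varies are cofinal among all entourages of $X \otimes Y_V$, which requires the closure properties of coarse structures and the invariance under the $G$-action. Once this is in place, the iterated colimit assembles into the desired equivalence.
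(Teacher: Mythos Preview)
Your direct verification of the four axioms is correct. The u-continuity step is indeed the only one requiring care, and the double cofinality you outline does work: the key observations are that products $U\times V$ with $U\in\cC^G(X)$, $V\in\cC^G(Y)$ are cofinal in $\cC^G(X\otimes Y)$, that $(X\otimes Y)_{U\times V}\cong X_U\otimes Y_V$ (using that $U$ and $V$ may be taken to contain the diagonal), and that the index poset of pairs $(V,V')$ with $V\in\cC^G(Y)$ and $V'\in\cC^G(Y_V)$ is cofinally equivalent to $\cC^G(Y)$ itself via $V'\mapsto(V',V')$.

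The paper, however, takes a different and more structural route. It simply invokes \cite[Lem.~4.17]{equicoarse}, which says that the symmetric monoidal structure $\otimes$ on $G\BC$ descends to $G\Sp\cX$ with $\Yo^s$ symmetric monoidal. Combined with the universal property of $\Yo^s$ (\cref{prop:motives-universal-prop}), this means that $E$ factors as $\overline E\circ\Yo^s$ with $\overline E$ colimit-preserving, and then $E_X\simeq \overline E\circ(\Yo^s(X)\otimes -)\circ\Yo^s$ is automatically a coarse homology theory because $\Yo^s(X)\otimes -$ is a colimit-preserving endofunctor of $G\Sp\cX$. In effect, the paper has already packaged the axiom-by-axiom verification into the statement that $\otimes$ descends to motives; your argument unpacks exactly what that verification looks like in elementary terms. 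Your approach is self-contained and makes the content explicit; the paper's approach is shorter and illustrates why one sets up the motivic category in the first place.
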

\begin{proof}
 This follows from \cite[Lem.~4.17]{equicoarse}.
\end{proof}

Let $(X,\cB)$ be a $G$-bornological space.
\begin{ddd}\label{rgieog3434gergeg}
 A subset $F$ of $X$ is   \emph{locally finite} if $F \cap B$ is a finite set for every~$B$  in~$\cB$.
\end{ddd}

{\em Continuity} is an additional property  of  an equivariant coarse homology theory $E$. We refer to  
\cite[Def.~5.15]{equicoarse} for the  precise definition. For our purposes, it suffices to know the following.

{Let $X$ be a $G$-bornological coarse space and let $\cL(X)$ denote the poset of all $G$-invariant locally finite subsets of the underlying bornological  space of $X$. We consider $F$ in $\cL(X)$ with the  $G$-bornological coarse structure induced from $X$.}
\begin{lem} [{\cite[Rem. 5.16]{equicoarse}}]\label{rguierog34trt34g}
   If $E$ is continuous, then  
    the canonical map
\[ \colim_{F \in \cL(X)} E(F) \to E(X) \]
is an equivalence.
\end{lem}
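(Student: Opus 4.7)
The plan is to reduce the claim directly to the formal continuity axiom of \cite[Def.~5.15]{equicoarse}. That axiom presents $E(X)$ as a filtered colimit of $E$-values on certain subspaces of $X$; the content of the lemma is that this indexing poset can equivalently be taken to be all of $\cL(X)$.

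First I would unpack the precise shape of \cite[Def.~5.15]{equicoarse}. It provides a poset $\cP(X)$ of distinguished $G$-invariant subspaces of $X$ (equipped with the induced bornological coarse structure) together with an equivalence
\[ \colim_{F \in \cP(X)} E(F) \xrightarrow{\simeq} E(X)\ . \]
Each member of $\cP(X)$ is, in particular, a $G$-invariant locally finite subset of $X$, so the inclusion yields a functor $\cP(X) \hookrightarrow \cL(X)$ compatible with the structure maps into $E(X)$.

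Next I would verify that $\cP(X)$ is cofinal in $\cL(X)$: given an arbitrary $G$-invariant locally finite subset $F \subseteq X$, one needs to dominate $F$ by some $F' \in \cP(X)$. This uses the fact that $\cL(X)$ is itself filtered under unions and that the bornology on $X$ is invariant and admits $G$-invariant bounded exhaustions; combining these with the compatibility of coarse and bornological structures lets one enlarge $F$ to a member of $\cP(X)$ without leaving $\cL(X)$. Cofinality being established, the tautological factorisation
\[ \colim_{F \in \cP(X)} E(F) \to \colim_{F \in \cL(X)} E(F) \to E(X) \]
has first arrow an equivalence, and by continuity the composite is an equivalence, hence so is the second arrow.

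The main obstacle will be the cofinality step and the attendant bookkeeping: one must check that the $G$-bornological coarse structure that $F \in \cL(X)$ inherits from $X$ agrees with the structure used in \cite[Def.~5.15]{equicoarse}, and that every $F \in \cL(X)$ is in fact dominated, within $\cL(X)$, by a subspace of the restricted type featured in that definition. Once these compatibility and cofinality checks are dispatched, the lemma is immediate from the equivalence of colimits along a cofinal inclusion.
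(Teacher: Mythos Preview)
The paper does not prove this lemma; it imports it verbatim from \cite[Rem.~5.16]{equicoarse}, prefaced by ``For our purposes, it suffices to know the following.'' There is therefore no in-paper argument to compare your proposal against.

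As for the proposal itself: it is a template rather than a proof. You postulate that \cite[Def.~5.15]{equicoarse} furnishes some sub-poset $\cP(X)\subseteq\cL(X)$ over which the continuity axiom is phrased, and that a cofinality argument bridges the gap --- but you never identify $\cP(X)$, never carry out the cofinality check, and explicitly flag it as ``the main obstacle.'' Whether this template is even appropriate depends entirely on the actual content of Def.~5.15, which you leave unexamined. If continuity is \emph{defined} there by the very equivalence stated in the lemma (which is a common convention in this literature), then there is nothing to prove and your cofinality apparatus is vacuous. If instead the definition uses a genuinely different indexing poset, then the cofinality step is the entire content, and you have not supplied it. Either way, what you have written is a sketch of a possible strategy conditioned on an unknown, not a proof.
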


In order to capture continuity of equivariant coarse homology theories motivically we  introduce the universal 
continuous equivariant coarse homology theory
\begin{equation}\label{brtoij4o5g4fwefwefewfewfbbt4}
 \Yo^{s}_{c} \colon G\BC\to G\Sp\cX_{c}
\end{equation} 
 whose target $G\Sp\cX_c$ is  the stable presentable $\infty$-category
  of continuous equivariant motivic coarse spectra (see \cite[Def.~5.21]{equicoarse}). 
 \begin{prop} [{\cite[Cor.~5.22]{equicoarse}}] \label{prop:motives-universal-prop-cont}    Restriction along $\Yo_{c}^s$ induces an equivalence between the $\infty$-categories of  colimit-preserving functors $G\Sp\cX_{c} \to \bC$ and $\bC$-valued continuous equivariant coarse homology theories.
\end{prop}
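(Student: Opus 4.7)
The plan is to realise $G\Sp\cX_c$ as an accessible Bousfield localization of $G\Sp\cX$ whose inverted morphisms encode the continuity axiom, and then to deduce the universal property by combining the universal property of the localization with \cref{prop:motives-universal-prop}.

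Concretely, guided by \cref{rguierog34trt34g} I would define an essentially small class
\[ S := \bigl\{ \colim_{F \in \cL(X)} \Yo^s(F) \to \Yo^s(X) \;\bigm|\; X \in G\BC \bigr\} \]
of morphisms in $G\Sp\cX$. Since $G\BC$ is essentially small and $G\Sp\cX$ is presentable, $S$ determines an accessible Bousfield localization $L \colon G\Sp\cX \to G\Sp\cX[S^{-1}]$ into a presentable stable $\infty$-category. I would set $G\Sp\cX_c := G\Sp\cX[S^{-1}]$ and $\Yo^s_c := L \circ \Yo^s$. Because $L$ is colimit-preserving and $\Yo^s$ is a coarse homology theory, $\Yo^s_c$ inherits the coarse invariance, excision, flasqueness, and $u$-continuity axioms; continuity of $\Yo^s_c$ is built in, since each $s \in S$ becomes an equivalence after applying $L$.

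The universal property now follows from two successive equivalences. First, the universal property of accessible Bousfield localization identifies the $\infty$-category of colimit-preserving functors $G\Sp\cX_c \to \bC$ with the full subcategory of colimit-preserving functors $F \colon G\Sp\cX \to \bC$ that send every morphism in $S$ to an equivalence. Second, by \cref{prop:motives-universal-prop}, restriction along $\Yo^s$ identifies the latter with those $\bC$-valued equivariant coarse homology theories $E \colon G\BC \to \bC$ for which $\colim_{F \in \cL(X)} E(F) \to E(X)$ is an equivalence for every $X$, i.e., the continuous ones. Composing these two equivalences and using $\Yo^s_c = L \circ \Yo^s$ yields the claim.

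The main obstacle is the compatibility between the concrete inversion condition encoded by $S$ and the intrinsic definition of continuity from \cite[Def.~5.15]{equicoarse}: I need to argue that a colimit-preserving $F$ inverts every $s_X$ if and only if the corresponding coarse homology theory $E = F \circ \Yo^s$ satisfies the official continuity axiom. \cref{rguierog34trt34g} provides one direction; for the other direction, the point is that the official definition is designed so that the characterisation of \cref{rguierog34trt34g} is in fact equivalent to it, which is a direct unfolding of definitions. Once this equivalence of formulations is in hand, everything else is a formal consequence of the localization machinery and of \cref{prop:motives-universal-prop}.
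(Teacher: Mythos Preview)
The paper does not prove this statement: it is quoted verbatim from \cite[Cor.~5.22]{equicoarse}, so there is no in-paper argument to compare against. Your outline is the standard and correct strategy for establishing such a universal property---one realises $G\Sp\cX_c$ as a presentable Bousfield localization of $G\Sp\cX$ at a set of morphisms encoding continuity, and then composes the universal property of the localization with \cref{prop:motives-universal-prop}. This is also how \cite{equicoarse} proceeds.

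The one point that deserves more care than you give it is the final paragraph. \cref{rguierog34trt34g} is stated only as an implication (continuity implies the colimit formula), and the actual definition \cite[Def.~5.15]{equicoarse} is phrased in terms of a slightly different generating class of ``continuous equivalences''. Your assertion that the converse is ``a direct unfolding of definitions'' is not quite right without further argument: one must check that inverting your class $S$ and inverting the class used in \cite[Def.~5.21]{equicoarse} yield the same localization. This is true, but it requires a short cofinality/generation argument rather than being immediate. Once that is in hand, your proof goes through.
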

 {We have a canonical colimit-preserving functor \begin{equation}\label{oih4oifhoifh23f2f23dd2d2d2}
C^s\colon G\Sp\cX\to G\Sp\cX_c
\end{equation}
    such that $\Yo^s_c \simeq C^s \circ \Yo^s$ (see \cite[(5.6)]{equicoarse}).}
\begin{ddd}\label{grioer34g43g43ggg4}
 A morphism in $G\Sp\cX$ or $G\BC$ is a \emph{continuous equivalence} if it becomes an equivalence after application of $C^{s}$ or $\Yo^{s}_{c}$, respectively.
 
 Two morphisms in $G\Sp\cX$ or $G\BC$ are \emph{continuously equivalent} if they become equivalent after application of $C^{s}$ or $\Yo^{s}_{c}$, respectively.
\end{ddd}

\section{Cones and the forget-control map}\label{sec:cones}

In this section we recall the cone construction and the cone sequence. We further  introduce the  forget-control map and show its  compatibility with induction and twisting.
 
We start with discussing $G$-uniform bornological coarse spaces and  the cone construction.
Let $X$ be a $G$-set. 
\begin{ddd}\label{rgwefwwefwefef}
A $G$-{\em uniform structure} on $X$ is  a subset  $\cU$ of $\cP(X\times X)$  with the following properties:
\begin{enumerate}
\item Every element of $\cU$ contains the diagonal;
\item $\cU$ is  {closed under inversion}, composition, finite intersections, and supersets;
\item for every $U$ in $\cU$ there exists $V$ in $\cU$ with $V\circ V\subseteq U$;
\item {for every $U$ in $\cU$ we have $\bigcap_{g\in G} gU \in \cU$.}\qedhere
\end{enumerate}
\end{ddd}
The first three conditions define the notion of a uniform structure, and the last condition reflects the compatibility with the action of $G$. A $G$-{\em uniform space} is a pair $(X,\cU)$ of a $G$-set $X$ and a $G$-uniform structure $\cU$. 

Let $(X,\cU)$ and $ (X^{\prime},\cU^{\prime})$ be $G$-uniform spaces and $f\colon X\to X^{\prime}$ be an equivariant map between the underlying sets.

\begin{ddd}
$f$ is {\em uniform}  if $f^{-1}(U^{\prime})\in \cU$
for every $U^{\prime}$ in $\cU^{\prime}$.
\end{ddd}

Let $X$ be a $G$-set with a $G$-uniform structure $\cU$ and a $G$-coarse structure $\cC$.
\begin{ddd}
We say that $\cU$ and $\cC$ are {\em compatible} if $\cU\cap \cC$ is not empty.
\end{ddd}

\begin{ddd}[{\cite[Def.~9.9]{equicoarse}}]\label{rgfergerg} 
 A $G$-{\em uniform bornological coarse space} is a tuple $(X,\cC,\cB,\cU)$, where $(X,\cC,\cB)$ is a $G$-bornological coarse space and $\cU$ is a $G$-uniform structure which is compatible with $\cC$.
 \end{ddd}
 
\begin{ddd}
A {\em morphism}   between $G$-uniform bornological coarse spaces  \[f\colon (X,\cC,\cB,\cU)\to (X^{\prime},\cC^{\prime},\cB^{\prime},\cU^{\prime})\]
 is a morphism between $G$-bornological coarse spaces $f\colon (X,\cC,\cB)\to (X^{\prime},\cB^{\prime},\cC^{\prime})$ 
 which, as a morphism $(X,\cU)\to (X^{\prime},\cU^{\prime})$, is uniform.
\end{ddd}

We obtain the category $G\UBC$ of $G$-\emph{uniform bornological coarse spaces}.
 {We have the forgetful functor \begin{equation}\label{veroihi3uuhfove}
\cF\colon G\UBC\to G\BC
\end{equation}  which forgets the uniform structure.}

 \begin{ex}\label{wrgoijo42reg} Let $X$ be a $G$-set with a quasi-metric $d$.
 Then we get a uniform structure on $X$ generated by the subsets
 $\{(x,y)\in X\times X\mid d(x,y)<r\}$ for all $r$ in $(0,\infty)$. We let $X_{d}$ denote the corresponding uniform space.
 If $d$ is invariant, then  we obtain a $G$-uniform structure and $X_{d}$ is a $G$-uniform space.

 Expanding the notation for $G$-bornological coarse spaces, we use triple subscripts to indicate $G$-uniform bornological coarse spaces,
 where the first subscript indicates the $G$-uniform structure, the second subscript indicates the $G$-coarse structure, and the third subscript indicates the $G$-bornology.
 
 In particular, if $X$ is a $G$-set with an invariant quasi-metric $d$, then we obtain the $G$-uniform bornological coarse spaces $X_{d,d,d}$ and $X_{d,max,max}$.
\end{ex}

\begin{ex}
 {Let $S$ be a $G$-set. Then the $G$-bornological coarse space $S_{min,min}$ equipped with the uniform structure containing all supersets of the diagonal is a $G$-uniform bornological coarse space
 which we denote by $S_{disc,min,min}$.}
\end{ex}

 {Let $X$ be a $G$-uniform bornological coarse space and let $\cY = (Y_i)_{i \in I}$ be an equivariant big family. Let $\cC$ and $\cU$ denote the coarse and uniform structures of $X$.}
\begin{ddd}[{\cite[Def.~9.15]{equicoarse}}]\label{wfefuehwifefw}
An order-preserving function
\[\psi \colon I\to \cP(X\times X)^{G}\]
(where we consider the target with the opposite of the inclusion relation) is \emph{$\cU$-admissible} if for every $U$ in $\cU^{G}$ there is $i$ in $I$ such that $\psi(i)\subseteq U$.
 Given a function $\psi \colon I\to \cP(X\times X)^{G}$ we define the entourage
 \[ U_{\psi}:=\bigcup_{i\in I} \left[(Y_{i}\times Y_{i})\cup \psi(i)\right]\ .\]
 The {\em hybrid structure} $\cC_{h}$ on $X$ is the $G$-coarse structure generated by the entourages
 $U\cap U_{\psi}$ for all $U$ in $\cC^{G}$ and all $\cU$-admissible functions $\psi$.
 
 We let $X_{h}$ denote the bornological coarse space obtained from $X$  by forgetting the uniform structure and replacing the coarse structure by the hybrid coarse structure.
\end{ddd}
Since $\cC_{h}\subseteq \cC$ by construction, we have a morphism of $G$-bornological coarse spaces $X_{h}\to \cF(X)$, where $\cF$ is the forgetful functor \eqref{veroihi3uuhfove}.

 {\begin{ddd}\label{345go3ihjoi43gg34g}
We have the functor
\[ \cO^{\infty}_{geom} \colon G\UBC\to G\BC \]
which sends a $G$-uniform bornological coarse space $X$ to the $G$-bornological coarse space
\begin{equation*}
 \cO^{\infty}_{geom}(X):=(\R\otimes X)_{h}\ ,
\end{equation*}
where $\R:=\R_{d,d,d}$ is the $G$-uniform bornological coarse space with structures induced from the standard metric and the trivial $G$-action.
The subscript $h$ stands for the hybrid coarse structure associated to the equivariant big family $((-\infty,n]\times X)_{n\in \nat}${; see} \cref{wfefuehwifefw}.

If $f\colon X\to X'$ is a morphism in $G\UBC$, then 
$\cO^{\infty}_{geom}(f)\colon \cO^{\infty}_{geom}(X)\to \cO^{\infty}_{geom}(X')$ is given by the map
$\id_{\R}\times f\colon \R\times X\to \R\times X'$.
\end{ddd}}

\begin{ddd}\label{iwoijgwegewfefewfefw}
The functor 
 \begin{equation*}
\cO^{\infty}:=\Yo^{s}\circ \cO^{\infty}_{geom} \colon G\UBC\to G\Sp\cX
\end{equation*}
  is called the \emph{cone-at-infinity functor}.
\end{ddd}

 \begin{ddd}
 The \emph{cone functor}
 \[ \cO \colon G\UBC\to G\BC \]
 sends a $G$-uniform bornological coarse space $X$ to
 \[ \cO(X):=([0,\infty)\times X)_{\cO^{\infty}_{geom}(X)}\ , \]
where the subscript indicates that we equip the subset with the structures induced from $\cO^{\infty}_{geom}(X)$.
In particular, $\cO(X)$ is a subspace of $\cO^{\infty}_{geom}(X)$.
\end{ddd}

\begin{rem}
 We refer to \cite[Sec. 9.4 and 9.5]{equicoarse} for more details and properties of these functors.
 Note that $\cO^{\infty}_{geom}$ is denoted by $\cO^{\infty}_{-}$ in the reference. The definition of $\cO^{\infty}$ given above is equivalent to \cite[Def.~9.29]{equicoarse} in view of \cite[Prop.~9.31]{equicoarse}. 
\end{rem}

By \cite[Cor.~9.30]{equicoarse} we have a fibre sequence of functors $G\UBC\to G\Sp\cX$
\begin{equation}\label{feojp2r2}
\dots\to \Yo^{s}\circ \cF\to \Yo^{s}\circ \cO\to \cO^{\infty}\stackrel{\partial}{\to} \Sigma \Yo^{s}\circ \cF\to \dots\ , 
\end{equation}
which is called the \emph{cone sequence}.
 The first map of the cone sequence is induced by the inclusion $X\to [0,\infty)\times X$ given by including the point $0$ into $[0,\infty)$.
The second map is induced by the inclusion $\cO(X)\to \cO^{\infty}_{geom}(X)$. Finally, the cone boundary $\partial$
is given by
\begin{equation}\label{t4blopg34g3g3g}
 {\Yo^s}(\cO^{\infty}_{geom}(X))\to \Yo^{s}(\R\otimes \cF(X))\simeq \Sigma \Yo^{s}(\cF(X))\ ,
\end{equation}
where the first map is induced by the identity of the underlying sets, and the equivalence is the equivalence~\eqref{qwefijqweofqewfqewfqewfqewfq} explained in \cref{eriogjoqergwfdefwefqwef}. We use  \cite[Prop.~9.31]{equicoarse} in order to see that this description of the sequence is equivalent to the original definition from \cite[Cor.~9.30]{equicoarse}.

In various constructions we form a colimit over the poset of invariant entourages $\cC^{G}(X)$ of a $G$-bornological coarse space $X$.
In order to suppress these colimits in an approriate language we use the following procedure.
We let $G\BC^{\cC}$ denote the category of pairs $(X,U)$, where $X$ is a $G$-bornological coarse space and
$U$ is an invariant entourage of $X$ containing the diagonal. A morphism $(X,U)\to (X^{\prime},U^{\prime})$ is a morphism $f\colon X\to X^{\prime}$ in $G\BC$ such that $(f\times f)(U)\subseteq U^{\prime}$.
We have a forgetful functor
\begin{equation}\label{vreoi34fg3vefv}
G\BC^{\cC}\to G\BC\ ,\quad (X,U)\mapsto X\ .
\end{equation}

Let \[F\colon G\BC^{\cC}\to \bC\] be a functor to a cocomplete target $\bC$ and
let $E$ be the left Kan extension of $F$ along~\eqref{vreoi34fg3vefv}. The evaluation of $E$ on a $G$-bornological coarse space $X$ is then given as follows.
\begin{lem} \label{rgoipewerfwefewfew} We have an equivalence
\[
E(X)\simeq \colim_{U\in \cC^{G}(X)} F(X,U)\ .
\]
\end{lem}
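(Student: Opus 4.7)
The plan is to identify the left Kan extension formula with the given colimit via a cofinality argument. By the pointwise formula for left Kan extensions, applied to $F\colon G\BC^{\cC}\to \bC$ along the forgetful functor $p\colon G\BC^{\cC}\to G\BC$ from \eqref{vreoi34fg3vefv}, we have
\[
E(X)\simeq \colim_{((Y,U),f)\in p\downarrow X} F(Y,U)\ ,
\]
where objects of $p\downarrow X$ are triples consisting of a $G$-bornological coarse space $Y$, an invariant entourage $U$ of $Y$ containing the diagonal, and a morphism $f\colon Y\to X$ in $G\BC$. Morphisms are morphisms $g\colon (Y,U)\to (Y',U')$ in $G\BC^{\cC}$ which are compatible with the structure maps to $X$.

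I would then introduce the functor
\[
\iota\colon \cC^{G}(X)\to p\downarrow X\ ,\qquad U\mapsto ((X,U),\id_{X})\ ,
\]
where $\cC^{G}(X)$ is regarded as the poset of invariant entourages containing the diagonal, ordered by inclusion. The goal is to show that $\iota$ is cofinal, so that the colimit in the Kan extension formula can be restricted along $\iota$ to yield the desired expression.

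The key step is to verify cofinality using the standard criterion: for every object $((Y,U),f)$ of $p\downarrow X$, the slice category $((Y,U),f)\downarrow\iota$ should be weakly contractible. An object of this slice is an entourage $V\in\cC^{G}(X)$ together with a morphism $(Y,U)\to(X,V)$ in $G\BC^{\cC}$ whose underlying map is $f$; unwinding the definitions, this is precisely an entourage $V\in\cC^{G}(X)$ satisfying $(f\times f)(U)\subseteq V$. Since $f$ is controlled and equivariant and $U$ is $G$-invariant, the entourage
\[
V_{0}:=(f\times f)(U)\cup\diag(X)
\]
lies in $\cC^{G}(X)$ and is the smallest such $V$. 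Hence the slice category has an initial object and is contractible. Cofinality then yields
\[
E(X)\simeq \colim_{U\in\cC^{G}(X)}F(X,U)\ .
\]

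The main obstacle is really just bookkeeping: one must carefully unwind the morphisms of $p\downarrow X$ to identify the slice categories with the posets $\{V\in\cC^{G}(X)\mid V_{0}\subseteq V\}$, and check that $V_{0}$ genuinely lies in $\cC^{G}(X)$ (which uses that invariance and the controlled condition interact correctly). Everything else is a direct application of the pointwise Kan extension formula together with the cofinality criterion.
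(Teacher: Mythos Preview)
Your proposal is correct and follows essentially the same approach as the paper: apply the pointwise formula for the left Kan extension and then show that the full subcategory of objects of the form $((X,U),\id_X)$ is cofinal in the slice $G\BC^{\cC}/X$. The paper merely observes that every object $((X',U'),f)$ admits a morphism to $(X,(f\times f)(U')\cup\diag(X))$ and asserts that this ``easily implies'' cofinality, whereas you spell out the verification via the slice criterion by exhibiting $V_0=(f\times f)(U)\cup\diag(X)$ as an initial object of each slice; this is exactly the content of the paper's implicit argument.
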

\begin{proof}
By the pointwise formula for the left Kan extension we have an  equivalence
\[E(X)\simeq \colim_{((X^{\prime},U^{\prime}),f\colon X^{\prime}\to X) \in G\BC^{\cC}/X} F(X^{\prime},U^{\prime})\ .\]
 If $((X^{\prime},U^{\prime}),f\colon X^{\prime}\to X)$ belongs to $G\BC^{\cC}/X$, then we have a morphism \[(X^{\prime},U^{\prime})\to (X,f(U^{\prime})\cup \diag(X))\] in $G\BC^{\cC}/X$. 
This easily implies that the full subcategory of objects of the form $((X,U),\id_{X})$ of $G\BC^{\cC}/X$ with $U$ in $\cC^{G}(X)$ is cofinal in $G\BC^{\cC}/X$.
 \end{proof}

\begin{constr}
	\label{def:rips}
Let $X$ be a $G$-bornological coarse space and let $U$ be an
invariant entourage of $X$.  Then we can form the $G$-simplicial complex $P_{U}(X)$ of
finitely supported $U$-bounded probability measures on $X$ (see \cite[Def.~11.1]{equicoarse} and the subsequent text). We equip $P_{U}(X)$ with the path quasi-metric in which every simplex has the spherical metric. The path quasi-metric determines the uniform and the coarse structure on $P_{U}(X)$. We equip
$P_{U}(X)$ with the bornology generated by all subcomplexes $P_{U}(B)$ of measures supported on $B$ for a bounded subset $B$ of $X$. 
The resulting $G$-uniform bornological coarse space will be denoted by $P_{U}(X)_{d,d,b}$.
We denote by $P_U(X)_{d,b}$ the underlying bornological coarse space.
Note that the bornology in general differs from the metric bornology which would be indicated by a subscript $d$ in the {last} slot.

Let $f\colon X\to X^{\prime}$ be a morphism of $G$-bornological coarse spaces and $U^{\prime}$ be an invariant entourage of $X^{\prime}$ such that $(f\times f)(U)\subseteq U^{\prime}$. Then the push-forward of measures induces a morphism 
\[f_{*}\colon P_{U}(X)_{d,d,b}\to P_{U^{\prime}}(X^{\prime})_{d,d,b}\] in a functorial way. 
We have thus constructed a functor \begin{equation*}\label{viuhviwevwvwcw}
P\colon G\BC^{\cC}\to G\UBC\ , \quad (X,U)\mapsto P_{U}(X)_{d,d,b}\ .\qedhere
\end{equation*}
\end{constr}

If we compose the functor $P$ with the fibre sequence \eqref{feojp2r2}, then we obtain a fibre sequence of functors $G\BC^{\cC}\to G\Sp\cX$ which sends $(X,U)$ to
\begin{equation}\label{efweflkwef234rergrg1}
\Yo^{s}(P_{U}(X)_{d,b})\to \Yo^s(\cO (P_{U}(X)_{d,d,b}))\to\cO^{\infty}(P_{U}(X)_{d,d,b})\stackrel{\partial}{\to}\Sigma \Yo^{s}(P_{U}(X)_{d,b}) \ .
\end{equation} 
\begin{ddd} \label{gioowegfwefwfwef} We define the fibre sequence of functors $G\BC \to G\Sp\cX$
	\[ F^{0} \to F\to F^{\infty} \xrightarrow{{\partial}} \Sigma F^{0} \]
	by left Kan extension of \eqref{efweflkwef234rergrg1}
	along the forgetful functor \eqref{vreoi34fg3vefv}. 
\end{ddd}

In order to justify this definition note that a colimit of a diagram of fibre sequences in a stable $\infty$-category is again a fibre sequence. Since a fibre sequence of functors can be detected objectwise,  it is a consequence of the pointwise formula for  the Kan extension that a Kan extension of a fibre sequence of functors with values in a stable $\infty$-category is again a fibre sequence.

If $S$ is a  {$G$-set}, then we 
  have a twist functor 
 \begin{equation}\label{gioreggwefewffewfwfwfwftw}
  T_{S}\colon G\BC\to G\BC\ ,  \quad X\mapsto S_{min,min}\otimes X\ .\end{equation} 
 By \cite[Lem.~4.17]{equicoarse} the twist functor  extends to {a functor
\[ T_{S}^{Mot} \colon G\Sp\cX\to G\Sp\cX \]
 on motives} such that
 \begin{equation}\label{rgeieo4343frfwfefweftw}\xymatrix{G\BC\ar[r]^{T_{S}}\ar[d]^{\Yo^{s}}&G\BC\ar[d]^{\Yo^{s}}\\G\Sp\cX\ar[r]^{T_{S}^{Mot}}&G\Sp\cX}\end{equation}
commutes. Note that $T_{S}^{ Mot}\simeq \Yo^{s}(S_{min,min})\otimes -$, and this functor is equivalent to the left Kan-extension of $\Yo^{s} \circ T_{S} $ along $\Yo^{s} $, so in particular it commutes with colimits.

We can extend the twist  functor  to a functor
\[ T_{S}^{\cC} \colon G\BC^{\cC}\to G\BC^{\cC}\ , \quad  (X,U)\mapsto ( S_{min,min}\otimes X, \diag(S)\times U)\ .\]
Then we have a commuting diagram
\[\xymatrix{
G\BC^{\cC}\ar[r]^{T_{S}^{ \cC}}\ar[d]^{\eqref{vreoi34fg3vefv} }&G\BC^{\cC}\ar[d]^{\eqref{vreoi34fg3vefv} }\\G\BC\ar[r]^{T_{S} }&G\BC
}\]
The  twist   functor \eqref{gioreggwefewffewfwfwfwftw} further extends to a twist functor
\[ T_{S}^{ \cU } \colon G\UBC\to G\UBC\ , \quad X\mapsto S_{disc,min,min}\otimes X \]
for  uniform bornological coarse spaces.

\begin{lem}\label{gioergregregregtw}
We have a natural isomorphism of functors
\[ T_{S}^{ \cU}\circ P \xrightarrow{\cong} P  \circ T_{S}^{ \cC} \colon G\BC^{\cC}\to G\UBC\ .\]
\end{lem}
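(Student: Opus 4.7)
The plan is to construct the isomorphism slicewise and check that all structures on the uniform bornological coarse spaces match. Given $(X,U)$ in $G\BC^{\cC}$, the key observation is that a finitely supported probability measure $\mu$ on the underlying set of $S_{min,min}\otimes X$ is $(\diag(S)\times U)$-bounded if and only if there exists a unique $s\in S$ such that $\supp(\mu)\subseteq \{s\}\times X$ and the pushforward $(\mathrm{pr}_X)_*\mu$ is $U$-bounded on $X$. Indeed, any two points $(s_1,x_1),(s_2,x_2)\in\supp(\mu)$ must satisfy $(s_1,s_2)\in\diag(S)$ and $(x_1,x_2)\in U$, forcing $s_1=s_2$. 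Sending $\mu\mapsto (s,(\mathrm{pr}_X)_*\mu)$ thus gives an equivariant bijection
\[ \phi_{(X,U)}\colon P_{\diag(S)\times U}(S_{min,min}\otimes X)\xrightarrow{\cong} S\times P_U(X) \]
of underlying $G$-sets (equivariance is immediate from the diagonal $G$-action on both sides).

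Next I would upgrade $\phi_{(X,U)}$ to an isomorphism of $G$-simplicial complexes: the simplices of the left-hand side are precisely the subsets of the form $\{s\}\times F$ for a $U$-bounded finite $F\subseteq X$, so the left decomposes as the disjoint union $\bigsqcup_{s\in S}P_U(X)$, no simplex meeting more than one slice. This identification immediately matches the path quasi-metric structures: within a slice the metric agrees with that of $P_U(X)$, and distinct slices lie at infinite distance on both sides (on the right, because $S_{disc,min,min}$ contributes the discrete uniform and minimal coarse structure, so entourages of the tensor are generated by $\diag(S)\times W$ for $W$ a uniform or coarse entourage of $P_U(X)_{d,d,b}$).

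It remains to verify the bornologies match. The bornology on $P_{\diag(S)\times U}(S_{min,min}\otimes X)_{d,d,b}$ is generated by subcomplexes $P_{\diag(S)\times U}(B')$ for bounded $B'\subseteq S_{min,min}\otimes X$, and such $B'$ are generated by products $A\times B$ with $A\subseteq S$ finite and $B\subseteq X$ bounded. Under $\phi_{(X,U)}$, the subcomplex $P_{\diag(S)\times U}(A\times B)=\bigsqcup_{s\in A}P_U(B)$ corresponds to $A\times P_U(B)$, which generates the tensor bornology on $S_{disc,min,min}\otimes P_U(X)_{d,d,b}$ since the bornology of $P_U(X)_{d,d,b}$ is by definition generated by the subcomplexes $P_U(B)$.

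Finally, naturality in $(X,U)$ follows from the fact that pushforward of measures commutes with projections: for a morphism $f\colon(X,U)\to(X',U')$ we have $\mathrm{pr}_{X'}\circ(\id_S\times f)=f\circ\mathrm{pr}_X$, so $\phi_{(X',U')}\circ(\id_S\times f)_*=(\id_S\times f_*)\circ\phi_{(X,U)}$. I expect no serious obstacle; the only point requiring care is the bornology on $P_U(X)_{d,d,b}$, which is \emph{not} the metric bornology of the path metric (as noted in \cref{def:rips}), so one must explicitly check the matching via generating families rather than appealing to metric properties.
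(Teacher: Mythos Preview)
Your proposal is correct and takes essentially the same approach as the paper: both construct the isomorphism of underlying $G$-simplicial complexes from the observation that a $(\diag(S)\times U)$-bounded support in $S\times X$ is concentrated in a single $\{s\}\times X$ slice, and then verify compatibility with the remaining structures. The only cosmetic difference is that the paper writes down the map $S\times P_U(X)\to P_{\diag(S)\times U}(S_{min,min}\otimes X)$ directly (sending $(s,\sum\lambda_i\delta_{x_i})$ to $\sum\lambda_i\delta_{(s,x_i)}$) while you describe its inverse; your treatment of the bornologies is in fact more explicit than the paper's, which leaves that verification to the reader.
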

\begin{proof}
 For  $(X,U)$  in $G\BC^{\cC}$  we construct an isomorphism of $G$-simplicial complexes
\begin{equation}\label{geroih34iog34g3g3g}
S\times P_{U}(X)\xrightarrow{\cong}  P_{ \diag(S)\times U}( S_{min,min}\otimes X)\ 
\end{equation}
 which induces the desired isomorphism of $G$-uniform bornological coarse spaces.
Let $(s,\mu)$ be a point in $S\times P_{U}(X)$.
Then there is some $n$ in $\nat$, a collection of points $x_{0},\dots,x_{n}$ in $X$ and numbers $\lambda_{i}\in [0,1]$
such that $(x_{i},x_{j})\in U$ for all pairs $i,j$, $\sum_{i=0}^{n}\lambda_{i}=1$ and
\[ \mu=\sum_{i=0}^{n} \lambda_{i}\delta_{x_{i}}\ .\]
The map~\eqref{geroih34iog34g3g3g} sends the point $(s, \mu)$ to the point
$\sum_{i=0}^{n} \lambda_{i} \delta_{(s,x_{i})}$ in $P_{ \diag(S)\times U}( S_{min,min}\otimes X)$.
In order to see that this map is invertible, note that if  $\nu=\sum_{i=0}^{n^{\prime}}\lambda^{\prime}_{i} \delta_{(s_{i},x^{\prime}_{i})}$ is a point in
$P_{ \diag(S)\times U}( S_{min,min}\otimes X)$, then  $s_{i}=s_{0}$ for all $i=1,\dots,n^{\prime}$ and 
 $( x^{\prime}_{i}, x^{\prime}_{j})\in U$ for all $i,j$. 
Therefore, the inverse of the isomorphism \eqref{geroih34iog34g3g3g} sends $\nu$ to the point $(s_{0},\sum_{i=0}^{n^{\prime}} \lambda_{i}^{\prime}\delta_{ x^{\prime}_{i}})$.

It is straightforward to check that the isomorphism is $G$-equivariant, natural in $(X,U)$ and compatible with the bornologies.
\end{proof}

\begin{lem}\label{ergierjgioerewfefwefwe111tw}
We have a commuting diagram of   functors $G\UBC\to G\BC$
\begin{equation*}\label{ergierjgioerewfefwefwe111tweq}\xymatrix{T_{S}\circ \cF \ar[d]^{\cong} \ar[r]  &T_{S}\circ \cO \ar[d]^{\cong} \ar[r]&T_{S}\circ\cO^{\infty}_{geom}\ar[d]^{\cong}\ar[r] &T_{S}\circ( \R\otimes  \cF )  \ar[d]^{\cong} \\\cF \circ T_{S}^{\cU}\ar[r]&\cO\circ T_{S}^{\cU}\ar[r]&\cO^{\infty}_{geom}\circ T_{S}^{\cU}\ar[r]& \R\otimes (\cF\circ T_{S}^{\cU})}\end{equation*}
\end{lem}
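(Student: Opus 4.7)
The strategy is to exhibit each of the four vertical isomorphisms via the canonical equivariant swap of factors on underlying $G$-sets, and then to observe that the three squares commute tautologically because the horizontal arrows are induced by canonical set-level maps (inclusions of subsets and the identity on underlying sets) which are preserved by the swap.

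The outer two vertical isomorphisms are formal. Since the tensor product in both $G\UBC$ and $G\BC$ uses the same recipe for the coarse structure and bornology, and the forgetful functor $\cF$ of \eqref{veroihi3uuhfove} merely discards the uniform structure, the identity on underlying sets provides an isomorphism $S_{min,min} \otimes \cF(X) \cong \cF(S_{disc,min,min} \otimes X)$ in $G\BC$. Associativity of $\otimes$ then also handles the square involving $T_S \circ (\R \otimes \cF)$ and $\R \otimes (\cF \circ T_S^{\cU})$.

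The main work lies in comparing $T_S \circ \cO^{\infty}_{geom}(X) = S_{min,min} \otimes (\R \otimes X)_h$ with $\cO^{\infty}_{geom}(T_S^{\cU}(X)) = (\R \otimes (S_{disc,min,min} \otimes X))_h$. By the previous paragraph, the underlying $G$-bornological coarse spaces (before applying the hybrid construction) agree under the swap, as does the equivariant big family $((-\infty,n]\times S\times X)_{n\in \nat}$. The remaining point is to match the hybrid coarse structures of \cref{wfefuehwifefw}, which are generated by entourages of the form $U \cap U_\psi$ for $\psi$ admissible with respect to the uniform structure on the respective tensor product. Because $S_{disc,min,min}$ has $\diag(S)$ as its smallest uniform entourage, admissibility of $\psi$ on the space $\R \otimes (S_{disc,min,min} \otimes X)$ forces $\psi(n)$ eventually to lie inside $U_\R \times \diag(S) \times U_X$ for every uniform entourage $U_\R$ of $\R$ and $U_X$ of $X$. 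Such admissible functions thus correspond bijectively, by projection to the $(\R\times X)^2$-factors and extension by $\diag(S)$, to admissible functions for $\R \otimes X$, and the resulting $U_\psi$ entourages agree under the swap. The analogous isomorphism for $\cO$ follows by restricting to the $S$-invariant subset $[0, \infty) \times X$ on both sides.

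The main obstacle is this identification of hybrid coarse structures under twisting; the three squares then commute on the nose, since the horizontal morphisms are all induced by the identity on the set-theoretic factors other than the swapped $S$-factor, so they are strictly intertwined by the swap isomorphism.
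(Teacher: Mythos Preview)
Your approach is the same as the paper's: exhibit the swap map $S\times(\R\times X)\cong\R\times(S\times X)$ and verify that the hybrid coarse structures agree by relating admissible functions on the two sides. The paper carries this out with the same two constructions you name, extension by $\diag(S)$ (your $\psi\mapsto\diag(S)\times\psi$) and projection along $p\colon\R\times S\times X\to\R\times X$ (your $\phi\mapsto(p\times p)\circ\phi$), and then checks that the resulting \emph{generators} $U\cap U_\psi$ match under the swap.

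One imprecision is worth correcting. You assert that the admissible functions ``correspond bijectively'' via projection and extension, and that ``the resulting $U_\psi$ entourages agree''. Neither claim is literally true: projection followed by extension does not recover a general admissible $\phi$ on $\R\otimes S_{disc,min,min}\otimes X$, since $\phi(n)$ may contain pairs with distinct $S$-coordinates; and correspondingly $U_\phi$ need not equal $\diag(S)\times U_{\phi'}$. What is true, and what the paper verifies, is that after intersecting with a coarse entourage of the form $\diag(S)\times U$ (which is all that appears in the hybrid generators, since the coarse structure on $S_{min,min}$ is generated by $\diag(S)$), one obtains
\[
(\diag(S)\times U)\cap U_{\psi_S}=\diag(S)\times(U\cap U_\psi)\quad\text{and}\quad(\diag(S)\times U)\cap U_\phi\subseteq\diag(S)\times(U\cap U_{\phi'})\ .
\]
So the relevant statement is not a bijection of admissible functions but the equality of the two generated coarse structures, established by matching generators. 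With this correction your argument is complete and coincides with the paper's.
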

\begin{proof}
We first discuss the isomorphism in the case of $\cO^{\infty}_{geom}$.
 {For $X$ in $G\UBC$ the desired  isomorphism
\[ S_{min,min} \otimes (\R \otimes X)_h  \xrightarrow{\cong} (\R \otimes S_{disc,min,min} \otimes X)_h \]
is induced by the natural bijection of $G$-sets
\begin{equation*}\label{ergierjgioerewfefwefwetw}f \colon S\times (\R\times X)\xrightarrow{\cong} \R\times (S\times X)\ , \quad (s,(r,x))\mapsto (r,(s,x))\ .\end{equation*}
We need to verify that the coarse structures agree.}

 {For an admissible function $\psi \colon \nat \to \cP((\R \times X)^2)^G$, define
\[ \psi_S \colon \nat \to \cP((\R \times S \times X)^2)^G \]
as the function sending $n$ to the image of $\diag(S) \times \psi(n)$ under the identification induced by $f$.
Then we have
\[ \diag(S) \times (U \cap U_\psi) = (\diag(S) \times U) \cap U_{\psi_S} \]
for all admissible functions $\psi$, so the bijection $f$ induces a controlled map.}

{Conversely, let $p \colon \R \times S \times X \to \R \times X$ be the projection map.
If $\phi \colon \nat \to \cP((\R \times S \times X)^2)^G$ is an admissible function,
then the function $\phi' \colon \nat \to \cP((\R \times X)^2)^G$ sending $n$ to $(p \times p)(\phi(n))$ is also admissible.
Moreover, we have
\[ \diag(S) \times (U \cap U_{\phi'}) = (\diag(S) \times U) \cap (f \times f)^{-1}(U_{\phi}) \]
for every admissible function $\phi$ and coarse entourage $U$ of $\R \otimes X$.
Hence, the generating entourages of $S_{min,min} \otimes (\R \otimes X)_h$ and $(\R \otimes S_{disc,min,min} \otimes X)_h$ agree under the identification induced by $f$.}

 {The other isomorphisms are induced by the same bijection of underlying $G$-sets, restricted to $[0,\infty)$ for the case $\cO$ and to $\{0\}$ for the case $\cF$.
Then the diagram commutes.}
\end{proof}

\begin{lem}\label{gregee4ergtw}
We have a commuting diagram of   functors $G\UBC\to G\Sp\cX$
\begin{equation}\label{ergierjgioerewfefwefwe333tw1111}\xymatrix{ T_{S}^{ Mot }\circ F^{0} \ar[d]^{\simeq} \ar[r]  & T_{S}^{ Mot }\circ F \ar[d]^{\simeq} \ar[r]& T_{S}^{ Mot }\circ F^{\infty} \ar[d]^{\simeq}\ar[r] & T_{S}^{ Mot }\circ \Sigma F^{0}  \ar[d]^{\simeq} \\ F^{0} \circ T_{S} \ar[r]&F \circ T_{S} \ar[r]&F^{\infty} \circ T_{S} \ar[r]& \Sigma F^{0} \circ   T_{S}   }\end{equation}
\end{lem}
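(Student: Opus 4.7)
The plan is to reduce the lemma to the preceding \cref{gioergregregregtw} and \cref{ergierjgioerewfefwefwe111tw} by unwinding the definitions of $F^{0}$, $F$, and $F^{\infty}$ as left Kan extensions along \eqref{vreoi34fg3vefv}.

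First, by \cref{rgoipewerfwefewfew} each of $F^{0}(X)$, $F(X)$, $F^{\infty}(X)$ is naturally a colimit over $U\in \cC^{G}(X)$ of an expression built from $P_{U}(X)$ via the cone sequence \eqref{efweflkwef234rergrg1}. Since $T_{S}^{Mot}\simeq \Yo^{s}(S_{min,min})\otimes (-)$ preserves colimits (cf.\ the discussion following \eqref{rgeieo4343frfwfefweftw}), applying $T_{S}^{Mot}$ passes into these colimits. Meanwhile, the coarse structure on $S_{min,min}\otimes X$ is generated by entourages of the form $\diag(S)\times U$ with $U\in \cC^{G}(X)$, so the family $\{\diag(S)\times U\mid U\in \cC^{G}(X)\}$ is cofinal in $\cC^{G}(S_{min,min}\otimes X)$. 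Consequently, I may rewrite $F^{?}(T_{S}(X))$ as a colimit over the same indexing poset $\cC^{G}(X)$, evaluating the Rips complex at the twisted entourage.

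Second, I would compose \cref{gioergregregregtw} --- which supplies the natural $G\UBC$-isomorphism $S_{disc,min,min}\otimes P_{U}(X)_{d,d,b}\cong P_{\diag(S)\times U}(S_{min,min}\otimes X)_{d,d,b}$ --- with \cref{ergierjgioerewfefwefwe111tw} applied to $P_{U}(X)_{d,d,b}$. The composition produces a diagram in $G\BC$, commutative and natural in $(X,U)\in G\BC^{\cC}$, identifying $T_{S}$ applied to each of $\cF$, $\cO$, $\cO^{\infty}_{geom}$ and $\R\otimes \cF$ evaluated at $P_{U}(X)_{d,d,b}$ with the corresponding term for $P_{\diag(S)\times U}(S_{min,min}\otimes X)_{d,d,b}$. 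Applying $\Yo^{s}$ and taking colimits over $U\in \cC^{G}(X)$ then yields the three leftmost vertical equivalences of the lemma and the commutativity of the first two squares.

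For the rightmost square, observe that the boundary map $\partial$ is built from the natural equivalence $\Yo^{s}(\R\otimes Y)\simeq \Sigma \Yo^{s}(Y)$ of \cref{eriogjoqergwfdefwefqwef}, itself a consequence of excision and flasqueness. Since $S_{min,min}\otimes -$ preserves flasque spaces and commutes with the complementary pair $(Z,\cY)$ underlying that construction, this suspension equivalence is compatible with $T_{S}^{Mot}$. Combined with the rightmost square of \cref{ergierjgioerewfefwefwe111tw}, which identifies $T_{S}\circ (\R\otimes \cF)$ with $\R\otimes (\cF\circ T_{S}^{\cU})$, this yields the fourth vertical equivalence and the commutativity of the final square. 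The anticipated obstacle is not conceptual but a matter of bookkeeping: one must verify that the isomorphisms of \cref{ergierjgioerewfefwefwe111tw} are natural in $(X,U)\in G\BC^{\cC}$, not merely in the $G\UBC$-argument, so that left Kan extension together with the cofinality argument applies uniformly. Since the underlying bijections of $G$-sets are essentially tautological, this check should be routine.
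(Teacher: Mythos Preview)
Your proposal is correct and follows essentially the same route as the paper: precompose the diagram of \cref{ergierjgioerewfefwefwe111tw} with $P$, apply $\Yo^{s}$, then left-Kan-extend (equivalently, take colimits over $\cC^{G}(X)$) and use \cref{gioergregregregtw} together with the cofinality of $\{\diag(S)\times U\}$ in $\cC^{G}(S_{min,min}\otimes X)$ to identify the lower row. The paper phrases the last step via the universal property of the left Kan extension rather than the explicit colimit formula, and treats the suspension/boundary column uniformly as part of the diagram of \cref{ergierjgioerewfefwefwe111tw} rather than singling it out, but the content is the same.
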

\begin{proof}
In a first step we postcompose  {the diagram from \cref{ergierjgioerewfefwefwe111tw}} with $\Yo^{s} $ and precompose it with the functor
$P \colon G\BC^{\cC}\to G\UBC$.  Then we get a corresponding diagram of functors $G\BC^{\cC}\to G\Sp\cX$.
We  apply the {left} Kan extension along the forgetful functor $G\BC^{\cC}\to G\BC$ and get the commuting diagram
\begin{align}\label{ergierjgioerewfefwefwe333tw11}
\mathclap{\xymatrix{
LK(T_{S}^{ Mot }  \Yo^{s}\cF P) \ar[d]^{\simeq} \ar[r]  & LK(T_{S}^{ Mot }\Yo^{s} \cO P) \ar[d]^{\simeq} \ar[r]& LK(T_{S}^{ Mot }  \cO^{\infty}P) \ar[d]^{\simeq}\ar[r] &LK( T_{S}^{ Mot } \Sigma  \Yo^{s} \cF P)  \ar[d]^{\simeq} \\
LK(\Yo^{s}\cF  T^{\cU}_{S}  P)\ar[r]&LK(\Yo^{s}\cO  T^{\cU}_{S}P) \ar[r]&LK(\cO^{\infty}  T^{\cU}_{S} P)\ar[r]& \Sigma LK( \Yo^{s}\cF   T^{\cU}_{S}P)
}}\notag\\
\end{align}
Using \eqref{rgeieo4343frfwfefweftw} and the fact that $T_{S}^{Mot}$ preserves colimits,
the upper line of \eqref{ergierjgioerewfefwefwe333tw11} is equivalent to the upper line of the diagram
\eqref{ergierjgioerewfefwefwe333tw1111}.
It remains to identify the lower line. 

 {We use \cref{gioergregregregtw} to identify the lower line of \eqref{ergierjgioerewfefwefwe333tw11} with
\begin{equation}\label{heouirhioefghohfgor} LK(\Yo^{s}\cF P  T^{\cC}_{S}) \to LK(\Yo^{s}\cO  P T^{\cC}_{S}) \to LK(\cO^{\infty} P T^{\cC}_{S}) \to \Sigma LK( \Yo^{s}\cF P T^{\cC}_{S})\ .\end{equation}
Let $E$ denote any one  of the functors $\Yo^s \circ \cF$, $\Yo^s \circ \cO$ or $\cO^\infty$.
Because the restrictions of $LK(EPT^{\cC}_{S})$ and $LK(EP)T^{\cC}_{S}$ to $G\BC^{\cC}$ are equivalent,
the universal property of the left Kan extension provides a transformation from \eqref{heouirhioefghohfgor} to
\[ LK(\Yo^{s}\cF P) T^{\cC}_{S} \to LK(\Yo^{s}\cO  P) T^{\cC}_{S} \to LK(\cO^{\infty} P) T^{\cC}_{S} \to \Sigma LK( \Yo^{s}\cF P) T^{\cC}_{S}\ .\]}
We show that this {transformation} is an equivalence. To this end we use the pointwise formula from \cref{rgoipewerfwefewfew}. We therefore must show that the natural morphism
\[ \colim_{U\in \cC^{G}(X)} E(P_{ \diag(S)\times  U}( S_{min,min}\otimes X))\to
 \colim_{V\in \cC^{G}( S_{min,min}\otimes X)} E(P_{V}( S_{min,min}\otimes X))\]
 is an equivalence. This is clear since
 $U\mapsto \diag(S)\times U$ is an isomorphism of posets from  $\cC^{G}_{\diag}(X)$ to $\cC_{\diag}^{G}(S_{min,min}\otimes X)$.
We therefore get the desired identification of the lower line of  the diagram \eqref{ergierjgioerewfefwefwe333tw11}  with the lower line 
in \eqref{ergierjgioerewfefwefwe333tw1111}.
  \end{proof}

If  $H$ is a subgroup of $G$, then we have an induction functor \begin{equation}\label{kljoeirjfoirejfreffwfe}
\Ind_{H}^{G}\colon H\Set\to G\Set\ , \quad X\mapsto G\times_{H}X\ .
\end{equation}
 The elements of $G\times_{H}X$ will be written in the form $[g,x]$ for $g$ in $G$ and $x$ in $X$, and we have the equality
$[gh,h^{-1}x]=[g,x]$ for all $h$ in $H$. 
We have a natural projection \begin{equation}\label{frefpojfp2f23f23ff}
G\times X\to \Ind_{H}^{G}(X)= G\times_{H}X\ , \quad (g,x)\mapsto [g,x]\ .
\end{equation}
This induction functor refines to an  induction functor \begin{equation}\label{gioreggwefewffewfwfwfwf}
  \Ind_{H}^{G}\colon H\BC\to G\BC \end{equation} for bornological coarse spaces.
If $X$ is some $H$-bornological coarse space, then $\Ind_{H}^{G}(X)$ becomes a $G$-bornological coarse space with the following structures:
\begin{enumerate}
\item 
 The bornological structure on $\Ind_{H}^{G}(X)$    is generated by the images under \eqref{frefpojfp2f23f23ff} of the subsets
$\{g\}\times B $ of $G\times X$ for all $g$ in $G$ and bounded subsets $B$  of $X$.
\item The coarse structure is generated by  {the entourages $\Ind_H^G(U)$, which are the images of the entourages 
$\diag(G)\times U$ of $G\times X$ under the projection \eqref{frefpojfp2f23f23ff}, for all coarse entourages $U$ of $X$.}
\end{enumerate}

The induction functor extends to motives  \begin{equation}\label{3ljg34opg34f34f3}
\Ind_{H}^{G,Mot}\colon H\Sp\cX\to G\Sp\cX
\end{equation}
 such that \begin{equation}\label{rgeieo4343frfwfefwef}\xymatrix{H\BC\ar[r]^{\Ind_{H}^{G}}\ar[d]^{\Yo^{s}_{H}}&G\BC\ar[d]^{\Yo^{s}_{G}}\\H\Sp\cX\ar[r]^{\Ind_{H}^{G,Mot}}&G\Sp\cX}\end{equation}
commutes{; see} \cite[Sec.~6.5]{equicoarse}.

\begin{lem}\label{lem:ind-mot}
 The functor $\Yo^{s}_{G}\circ \Ind_{H}^{G} \colon H\BC\to G\Sp\cX$ is an $H$-equivariant coarse homology theory.
\end{lem}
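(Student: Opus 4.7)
My plan is to deduce this directly from the identification $\Yo^s_G \circ \Ind_H^G \simeq \Ind_H^{G,Mot} \circ \Yo^s_H$ provided by the commutative square \eqref{rgeieo4343frfwfefwef}, combined with the universal property \cref{prop:motives-universal-prop}. Since $\Yo^s_H$ is the universal $H$-equivariant coarse homology theory, precomposition with $\Yo^s_H$ induces an equivalence between colimit-preserving functors out of $H\Sp\cX$ (with target any cocomplete stable $\infty$-category, here $G\Sp\cX$) and $G\Sp\cX$-valued $H$-equivariant coarse homology theories. Therefore, to finish the argument it suffices to check that $\Ind_H^{G,Mot}$ is colimit-preserving, which is established in \cite[Sec.~6.5]{equicoarse} since the motivic induction functor is constructed as a left Kan extension.

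Alternatively, if one prefers a direct verification, the four axioms of \cref{def:coarsehom} can be checked by hand, and this is the fallback I would use if the colimit-preservation statement were not already available. The key observation underlying each axiom is that $\Ind_H^G \colon H\BC \to G\BC$ is compatible with the relevant structures: it commutes with tensoring by $\{0,1\}_{max,max}$ (which has trivial group action), it sends an equivariant complementary pair $(Z,\cY)$ on $X$ to the equivariant complementary pair $(\Ind_H^G(Z), \Ind_H^G(\cY))$ on $\Ind_H^G(X)$, it sends $H$-flasqueness implemented by $f \colon X \to X$ to $G$-flasqueness of $\Ind_H^G(X)$ implemented by $\Ind_H^G(f)$, and the assignment $U \mapsto \Ind_H^G(U)$ identifies the poset $\cC^H(X)$ cofinally with the poset $\cC^G(\Ind_H^G(X))$ of invariant coarse entourages. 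Each axiom then follows from the corresponding axiom for the $G$-equivariant coarse homology theory $\Yo^s_G$.

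The main conceptual point, and the only step that requires some thought, is the cofinality argument for u-continuity: one must verify that every $G$-invariant entourage of $\Ind_H^G(X)$ is contained in $\Ind_H^G(U)$ for some $H$-invariant entourage $U$ of $X$, which follows from the explicit description of the coarse structure on $\Ind_H^G(X)$ given after \eqref{gioreggwefewffewfwfwfwf}. Given this, the colimit computation reduces to the u-continuity of $\Yo^s_G$. I expect no substantive difficulties, and would present the proof via the universal-property route, which is shorter and more conceptual.
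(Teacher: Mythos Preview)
Your proposal is correct and follows essentially the same approach as the paper: use the square \eqref{rgeieo4343frfwfefwef} to rewrite $\Yo^s_G \circ \Ind_H^G$ as $\Ind_H^{G,Mot} \circ \Yo^s_H$, invoke \cref{prop:motives-universal-prop}, and conclude by noting that $\Ind_H^{G,Mot}$ preserves colimits. The paper justifies this last step by observing that $\Ind_H^{G,Mot}$ is a left adjoint (rather than a left Kan extension), but the reference to \cite[Sec.~6.5]{equicoarse} is the same.
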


\begin{proof}
By \eqref{rgeieo4343frfwfefwef},   we have an equivalence  
$\Yo^{s}_{G}\circ \Ind_{H}^{G}\simeq\Ind_{H}^{G,Mot} \circ \Yo^{s}_{H} $.  
In view of \cref{prop:motives-universal-prop}, it suffices to show that  $\Ind_{H}^{G,Mot}$
preserves colimits.  This is the case since 
   $\Ind_{H}^{G,Mot}$ is a left adjoint functor (see \cite[Sec.~6.5]{equicoarse}). \end{proof}

We can extend the induction functor  to a functor
\[\Ind_{H}^{G,\cC}\colon H\BC^{\cC}\to G\BC^{\cC}\ , \quad  (X,U)\mapsto (\Ind_{H}^{G}(X),\Ind_{H}^{G}(U))\ .\]
Then we have a commuting diagram
\[\xymatrix{
H\BC^{\cC}\ar[r]^-{\Ind_{H}^{G,\cC}}\ar[d]^{\eqref{vreoi34fg3vefv}}&G\BC^{\cC}\ar[d]^{\eqref{vreoi34fg3vefv}}\\
H\BC\ar[r]^-{\Ind_{H}^{G}}&G\BC
}\]

The induction  functor \eqref{gioreggwefewffewfwfwfwf} further extends to an induction functor 
\begin{equation}\label{vrevoijoi3jfoi3f34f3}
\Ind_{H}^{G,\cU }\colon H\UBC\to G\UBC
\end{equation}  for  uniform bornological coarse spaces.
If $X$ is an $H$-uniform bornological coarse space, then the uniform structure on $\Ind_{H}^{G,\cU}(X)$ is generated by  the images of the entourages 
$\diag(G)\times U$ of $G\times X$ for all uniform entourages $U$ of $X$ under the projection \eqref{frefpojfp2f23f23ff}.

In the following lemma  $P_{G}$ and $P_{H}$ are the versions of the functor  {$P$ from \cref{def:rips}} for the groups $G$ and $H$, respectively.
\begin{lem}\label{gioergregregreg}
We have a natural isomorphism of functors
\[ \Ind_{H}^{G,\cU}\circ P_{H}\xrightarrow{\cong} P_{G} \circ \Ind_{H}^{G,\cC} \colon H\BC^{\cC}\to G\UBC\ .\]
\end{lem}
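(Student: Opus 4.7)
The plan is to imitate the construction from Lemma~\ref{gioergregregregtw} but with the twist by $S_{min,min}$ replaced by the induction functor. Concretely, for $(X,U)$ in $H\BC^{\cC}$ I will construct a bijection of $G$-sets
\[ \alpha_{(X,U)} \colon G \times_{H} P_{U}(X) \xrightarrow{\cong} P_{\Ind_{H}^{G}(U)}(\Ind_{H}^{G}(X)) \]
by sending $[g, \sum_{i=0}^{n} \lambda_{i} \delta_{x_{i}}]$ to $\sum_{i=0}^{n} \lambda_{i} \delta_{[g, x_{i}]}$. Well-definedness with respect to the $H$-action is immediate from $[gh, h^{-1}x] = [g,x]$, and $G$-equivariance is clear from the formula. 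Naturality in $(X,U)$ is then straightforward.

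Next I would check that $\alpha_{(X,U)}$ is a bijection. The key observation is that, by the definition of $\Ind_{H}^{G}(U)$ as the image of $\diag(G) \times U$ under the quotient map $G \times X \to G \times_{H} X$, any two points $[g_1, x_1]$ and $[g_2, x_2]$ that are $\Ind_{H}^{G}(U)$-close lie in the same $G/H$-coset; more precisely, one can find a common representative $g \in G$ with $g_{i} = g h_{i}$ for $h_{i} \in H$ and $(h_{1} x_{1}, h_{2} x_{2}) \in U$. Applying this inductively to the finite support of any measure $\nu$ in $P_{\Ind_{H}^{G}(U)}(\Ind_{H}^{G}(X))$, one obtains a common $g$ such that $\nu = \alpha_{(X,U)}([g, \mu])$ for a uniquely determined $\mu$ in $P_{U}(X)$, which gives both surjectivity and injectivity.

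Finally, I would verify that $\alpha_{(X,U)}$ identifies the uniform, coarse, and bornological structures on both sides. The path quasi-metric on $P_{U}(X)$, which determines both the uniform and coarse structures, is preserved simplex-by-simplex under $\alpha_{(X,U)}$: a geometric simplex in $G \times_{H} P_{U}(X)$ with vertices $[g, x_{0}], \dots, [g, x_{n}]$ is mapped isometrically to the simplex in $P_{\Ind_{H}^{G}(U)}(\Ind_{H}^{G}(X))$ spanned by the same vertices, and every simplex on the right arises in this way by the coset argument from the previous paragraph. For the bornology I would use that bounded sets in $\Ind_{H}^{G}(X)$ are generated by $\{g\} \times B$ for bounded $B \subseteq X$, so that $P_{\Ind_{H}^{G}(U)}(\{g\} \times B) = \alpha_{(X,U)}(\{[g, -]\}(P_{U}(B)))$; this matches the generators of the bornology induced from $G \times_{H} P_{U}(X)_{d,d,b}$ via the induction functor \eqref{vrevoijoi3jfoi3f34f3}.

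The main obstacle I anticipate is the book-keeping in the last step: making sure that the generating families of entourages and bounded sets that one obtains from the construction of $\Ind_{H}^{G,\cU} \circ P_{H}$ coincide with those coming from $P_{G} \circ \Ind_{H}^{G,\cC}$, rather than merely generating the same structures up to equivalence. The observation that makes this painless is the same one used implicitly in Lemma~\ref{gregee4ergtw}, namely that $U \mapsto \Ind_{H}^{G}(U)$ is an order-isomorphism between invariant coarse entourages of $X$ containing the diagonal and invariant coarse entourages of $\Ind_{H}^{G}(X)$ that come from $H$-invariant entourages; once this is recorded, the identification of structures follows mechanically.
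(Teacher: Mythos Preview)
Your proposal is correct and follows essentially the same approach as the paper: you construct the same explicit map $[g,\mu]\mapsto \sum_i \lambda_i\delta_{[g,x_i]}$ on the level of $G$-simplicial complexes, invert it using the observation that $\Ind_H^G(U)$-close points share a common representative $g$, and then check compatibility with the structures. The paper phrases the final step slightly differently, recording the identification $G\times_{H} P_{U}(X)_{d,d,b}\cong (G\times_{H} P_{U}(X))_{d,d,b}$ (i.e.\ that the induced structures agree with the metric ones on the induced complex), which is exactly the ``book-keeping'' you flag in your last paragraph.
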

\begin{proof}
 For  $(X,U)$  in $H\BC^{\cC}$  we construct an isomorphism of $G$-simplicial complexes
 \begin{equation}\label{rgkjrhni3u4f34f3f3f}
G\times_{H} P_{U}(X)\xrightarrow{\cong}  P_{\Ind_{H}^{G}(U)}(\Ind_{H}^{G}(X))
\end{equation} 
  {which} induces the desired isomorphism of $G$-uniform bornological coarse spaces.
Let   $[g,\mu]$ be a point in $ G\times_{H}P_{U}(X )$.
 Then there are some $n$ in $\nat$, a collection of points $x_{0},\dots,x_{n}$ in $X$, and numbers $\lambda_{0}\dots,\lambda_{n}$ in  $[0,1]$
such that $(x_{i},x_{j})\in U$ for all pairs $i,j$, $\sum_{i=0}^{n}\lambda_{i}=1$, and
\[ \mu=\sum_{i=0}^{n} \lambda_{i}\delta_{x_{i}}\ .\]
The isomorphism sends the point $[g,\mu]$ to the point
$\sum_{i=0}^{n} \lambda_{i} \delta_{[g,x_{i}]}$ in $ P_{\Ind_{H}^{G}(U)}(\Ind_{H}^{G}(X))$.
In order to see that this map is invertible, note that if  $\nu=\sum_{i=0}^{n^{\prime}}\lambda^{\prime}_{i} \delta_{[g_{i},x^{\prime}_{i}]}$ is a point in
$  P_{\Ind_{H}^{G}(U)}(\Ind_{H}^{G}(X)) $, then in view of the definition of $\Ind_{H}^{G}(U)$  there exist  elements $h_i$ in $H$ for $i=0,\dots,n$ such that $g_{i}h_{i}^{-1}=g_{0}$.
Consequently, $\nu=\sum_{i=0}^{n^{\prime}}\lambda^{\prime}_{i} \delta_{[g,h_{i}x^{\prime}_{i}]}$, and we have  $(h_{i}x^{\prime}_{i},h_{j}x^{\prime}_{j})\in U$ for all $i,j$. 
Therefore, the inverse of the isomorphism sends $\nu$ to the point $[g,\sum_{i=0}^{n^{\prime}} \lambda_{i}^{\prime}\delta_{h_{i}x^{\prime}_{i}}]$.

It is straightforward to check that the isomorphism is $G$-equivariant, natural in $(X,U)$ and compatible with the bornologies.  {From the explicit description of the coarse and uniform structure on the induction, it follows that $G\times_{H} P_{U}(X)_{d,d,b}\cong (G\times_{H} P_{U}(X))_{d,d,b}$ and hence $\Ind_{H}^{G,\cU}\circ P_{H}\cong P_{G} \circ \Ind_{H}^{G,\cC}$ as claimed.}
\end{proof}

 In the following statement we again added subscripts $G$ or $H$ in order to indicate on which categories the respective versions of the functors $\cF$, $\cO$ and $\cO^{\infty}_{geom}$ act.
\begin{lem}\label{ergierjgioerewfefwefwe111}
We have a commuting diagram of   functors $H\UBC\to G\BC$
\begin{equation*}\label{ergierjgioerewfefwefwe111eq}\xymatrix{ \Ind_{H}^{G }\circ \cF_{H}\ar[d]^{\cong} \ar[r]  & \Ind_{H}^{G }\circ \cO_{H}\ar[d]^{\cong} \ar[r]& \Ind_{H}^{G }\circ\cO^{\infty}_{H,geom}\ar[d]^{\cong}\ar[r] & \Ind_{H}^{G }\circ( \R\otimes  \cF_{H})  \ar[d]^{\cong} \\\cF_{G}\circ \Ind_{H}^{G,\cU}\ar[r]&\cO_{G}\circ \Ind_{H}^{G,\cU}\ar[r]&\cO^{\infty}_{G,geom}\circ \Ind_{H}^{G,\cU}\ar[r]& \R\otimes (\cF_{G}\circ \Ind_{H}^{G,\cU})}\end{equation*}
\end{lem}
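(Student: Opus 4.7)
The plan is to mirror closely the argument used for the twist functor in \cref{ergierjgioerewfefwefwe111tw}. First I would write down the natural bijection of underlying $G$-sets
\[ \iota \colon \Ind_H^G(\R \times X) = G \times_H (\R \times X) \xrightarrow{\cong} \R \times (G \times_H X) = \R \times \Ind_H^G(X)\ ,\quad [g,(r,x)] \mapsto (r,[g,x])\ .\]
This is well-defined because $H$ acts trivially on the $\R$-factor (so that the equivalence $[gh, h^{-1}(r,x)] = [gh, (r, h^{-1}x)]$ maps to $(r,[gh, h^{-1}x]) = (r,[g,x])$), and it is manifestly $G$-equivariant. Since $\iota$ sends $\{0\} \times X$ to $\{0\} \times \Ind_H^G X$ and $[0,\infty) \times X$ to $[0,\infty) \times \Ind_H^G X$, all four vertical isomorphisms of the diagram — for $\cF_H$, $\cO_H$, $\cO^\infty_{H,geom}$, and $\R \otimes \cF_H$ — will be produced by restriction of the single map $\iota$.

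The main content is then to verify that $\iota$ induces an isomorphism of $G$-uniform bornological coarse spaces between $\Ind_H^G(\cO^\infty_{H,geom}(X))$ and $\cO^\infty_{G,geom}(\Ind_H^{G,\cU}(X))$. Compatibility with bornologies is routine from the generating sets on both sides, and compatibility with the uniform structure is similarly straightforward since $\R$ carries a trivial $H$-action. The substantive step is to match the hybrid coarse structures (\cref{wfefuehwifefw}) attached to the big family $((-\infty,n]\times X)_{n\in\nat}$. Given a $\cU$-admissible function $\psi$ on the $H$-side, I would check that $n \mapsto \Ind_H^G(\psi(n))$ is $\cU$-admissible on the $G$-side and that $\iota$ sends $\Ind_H^G(U \cap U_\psi)$ onto $\Ind_H^G(U) \cap U_{\Ind_H^G \psi}$. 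Conversely, given a $\cU$-admissible function $\phi$ on the $G$-side, I would produce an admissible function on the $H$-side via pullback along the quotient map $G \times (\R \times X) \twoheadrightarrow \Ind_H^G(\R \times X)$, arguing that the resulting generating entourages dominate the corresponding $U \cap U_\phi$. Once this identification is established, the isomorphisms for $\cF$ and $\cO$ follow by restricting $\iota$, and the isomorphism $\Ind_H^G(\R \otimes \cF_H(X)) \cong \R \otimes \cF_G(\Ind_H^{G,\cU}(X))$ follows because the hybrid coarse structure is not involved there.

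Finally, commutativity of the diagram reduces, square by square, to a verification on underlying $G$-sets: the horizontal arrows are in each case the canonical maps induced by the inclusions $\{0\} \hookrightarrow [0,\infty) \hookrightarrow \R$ together with the identity on $X$, and $\iota$ restricts along all of these inclusions in the evident compatible way. The hard part will be the explicit matching of $\cU$-admissible functions under induction: unlike the twist case, where the analogous identification is simply $\diag(S) \times U$, induction involves passing to the quotient $G\times_H(-)$, so a given point $[g,x]$ has many representatives and one must check that admissibility is preserved in both directions without inflating the coarse structure. I expect this bookkeeping to be the only genuine obstacle; everything else is formal manipulation from the definitions of $\Ind_H^G$, $\cO^\infty_{geom}$, and the hybrid structure.
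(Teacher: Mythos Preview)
Your plan is correct and follows essentially the same route as the paper: the same bijection $[g,(r,x)]\mapsto (r,[g,x])$, the same two-way matching of admissible functions (inducing $\psi$ from the $H$-side, restricting $\phi$ from the $G$-side via the fibre over $1\in G$), and the same restriction argument for $\cF$ and $\cO$. One small slip: $\cO^{\infty}_{geom}$ lands in $G\BC$, not $G\UBC$, so there is no uniform structure to check on the target---only the hybrid coarse structure matters, which you correctly identify as the substantive step.
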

\begin{proof}
We {first} discuss the isomorphism in the case of the functor $\cO^{\infty}_{geom}$.
For $X$ in $H\UBC$ the isomorphism is induced by the natural bijection of $G$-sets
\begin{equation*}\label{ergierjgioerewfefwefwe}f \colon G\times_{H}(\R\times X)\xrightarrow{\cong} \R\times (G\times_{H}X)\ , \quad [g,(r,x)]\mapsto (r,[g,x])\ ,\end{equation*}
 {which is obviously an isomorphism of $G$-bornological spaces.
We need to show that the hybrid coarse structures agree under $f$.}

 {Let $p \colon G \times \R \times X \to G\times_{H}(\R\times X)$ and $q \colon \R \times G \times X \to \R\times (G\times_{H}X)$ denote the projection maps.
For an admissible function $\psi \colon \nat \to \cP((\R \times X)^2)^G$, define
\[ \psi_G \colon \nat \to \cP((\R \times (G \times_H X))^2)^G \]
as the function sending $n$ to the image of $(p \times p)(\diag(G) \times \psi(n))$ under the identification induced by $f$.
Then we have
\[ (p \times p)(\diag(G) \times (U \cap U_\psi)) = (p \times p)(\diag(G) \times U) \cap U_{\psi_G} \]
for all admissible functions $\psi$, so the bijection $f$ induces a controlled map.}

 {Conversely, if $\phi \colon \cP((\R \times (G \times_H X))^2)^G$ is an admissible function,
then the function $\phi' \colon \nat \to \cP((\R \times X)^2)^G$ sending $n$ to $(q \times q)^{-1}(\phi(n)) \cap (\R \times \{1\} \times X)^2$ is also admissible.
Moreover, we have
\[ (p \times p)(\diag(G) \times (U \cap U_{\phi'})) = (p \times p)(\diag(G) \times U) \cap (f \times f)^{-1}(U_{\phi}) \]
for every admissible function $\phi$ and coarse entourage $U$ of $\R \otimes X$.

Hence, the generating entourages of $\Ind_H^G(\R \otimes X)_h$ and $(\R \otimes \Ind_H^{G,\cU}(X))_h$ agree under the identification induced by $f$.}

The other isomorphisms are induced by the same bijection of underlying $G$-sets, restricted to $[0,\infty)$ for the case $\cO$ and to $\{0\}$ for the case $\cF$.
Then the diagram commutes.
\end{proof}

\begin{lem}\label{gregee4erg}
We have a commuting diagram of   functors $H\UBC\to G\Sp\cX$
\begin{equation*}\label{ergierjgioerewfefwefwe333}\xymatrix{ \Ind_{H}^{G,Mot }\circ F^{0}_{H}\ar[d]^{\simeq} \ar[r]  & \Ind_{H}^{G,Mot }\circ F_{H}\ar[d]^{\simeq} \ar[r]& \Ind_{H}^{G,Mot }\circ F^{\infty}_{H}\ar[d]^{\simeq}\ar[r] & \Ind_{H}^{G,Mot }\circ \Sigma F^{0}_{H}  \ar[d]^{\simeq} \\ F^{0}_{G}\circ \Ind_{H}^{G}\ar[r]&F_{G}\circ \Ind_{H}^{G}\ar[r]&F^{\infty}_{G }\circ \Ind_{H}^{G }\ar[r]& \Sigma F^{0}_{G}\circ   \Ind_{H}^{G}  }\end{equation*}
\end{lem}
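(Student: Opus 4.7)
The plan is to mimic the proof of \cref{gregee4ergtw} step by step, replacing the twist $T_S$ and its variants with the induction $\Ind_H^G$ and its variants. First, I would postcompose the diagram of natural isomorphisms provided by \cref{ergierjgioerewfefwefwe111} with $\Yo^s_G$ and precompose it with the functor $P_H\colon H\BC^{\cC}\to H\UBC$. This yields a commuting diagram of functors $H\BC^{\cC}\to G\Sp\cX$ whose top row is $\Yo^s_G\circ \Ind_H^G \circ (\cF_H, \cO_H, \cO^\infty_{H,geom}, \R\otimes \cF_H) \circ P_H$ and whose bottom row involves $\Ind_H^{G,\cU}\circ P_H$.

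Next, I would apply the left Kan extension along the forgetful functor $G\BC^{\cC}\to G\BC$ (using that left Kan extensions preserve fibre sequences of functors into a stable target) to obtain a diagram of functors $H\UBC\to G\Sp\cX$. Using \eqref{rgeieo4343frfwfefwef} together with the fact that $\Ind_H^{G,Mot}$ preserves colimits (by \cref{lem:ind-mot} and the argument in its proof, since left adjoints commute with left Kan extensions), the top row of the resulting diagram can be identified with $\Ind_H^{G,Mot}\circ (F_H^0, F_H, F_H^\infty, \Sigma F_H^0)$, which is the top row of the claimed diagram.

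To identify the bottom row I would use \cref{gioergregregreg} to replace $\Ind_H^{G,\cU}\circ P_H$ with $P_G\circ \Ind_H^{G,\cC}$. This yields a transformation of the form
\[ LK(\Yo^s_G \circ E_G \circ P_G \circ \Ind_H^{G,\cC}) \longrightarrow LK(\Yo^s_G \circ E_G \circ P_G)\circ \Ind_H^G = F^{\bullet}_G\circ \Ind_H^G\]
(where $E_G$ is any of $\cF_G$, $\cO_G$ or $\cO^\infty_{G,geom}$), which by the universal property of left Kan extensions lifts to a canonical comparison morphism. The remaining task, which I expect to be the main technical point, is to verify that this comparison morphism is an equivalence.

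By the pointwise formula of \cref{rgoipewerfwefewfew}, this amounts to showing that for every $X$ in $H\BC$ the canonical map
\[ \colim_{U\in \cC^{H}(X)} E_G(P_{\Ind_H^G(U)}(\Ind_H^G(X))) \longrightarrow \colim_{V\in \cC^{G}(\Ind_H^G(X))} E_G(P_V(\Ind_H^G(X)))\]
is an equivalence. This reduces to the observation that the assignment $U\mapsto \Ind_H^G(U)$ defines a cofinal functor from the poset $\cC^H(X)$ (of $H$-invariant entourages of $X$ containing the diagonal) to the poset $\cC^G(\Ind_H^G(X))$: cofinality follows because by construction the coarse structure on $\Ind_H^G(X)$ is generated by the entourages $\Ind_H^G(U)$, so every $V$ in $\cC^G(\Ind_H^G(X))$ is contained in some $\Ind_H^G(U)$. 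Granting this, the comparison map becomes an equivalence and the lower row of the diagram is identified with $(F_G^0,F_G,F_G^\infty,\Sigma F_G^0)\circ \Ind_H^G$, completing the proof.
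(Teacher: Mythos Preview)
Your proposal is correct and follows exactly the approach the paper takes: the paper's proof simply states that the argument is ``mutatis mutandis'' identical to that of \cref{gregee4ergtw}, replacing $T_S$ by $\Ind_H^G$, \cref{ergierjgioerewfefwefwe111tw} by \cref{ergierjgioerewfefwefwe111}, and \cref{gioergregregregtw} by \cref{gioergregregreg}. One minor slip: the left Kan extension should be taken along the forgetful functor $H\BC^{\cC}\to H\BC$ (not $G\BC^{\cC}\to G\BC$), and the resulting functors have domain $H\BC$ rather than $H\UBC$; the cofinality of $U\mapsto \Ind_H^G(U)$ you invoke is indeed used elsewhere in the paper (see the proof of \cref{ergioergege}).
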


\begin{proof}
The proof is, mutatis mutandis, identical to the proof of the \cref{gregee4ergtw}.
More precisely, one replaces $T_S$ by $\Ind_H^G$, starts with the diagram from \cref{ergierjgioerewfefwefwe111} instead of the one from \cref{ergierjgioerewfefwefwe111tw},
and uses \cref{gioergregregreg} instead of \cref{gioergregregregtw}.
  \end{proof}

\section{A descent result}
\label{sec:descent}
The main result of the present section is \cref{thm:desc-orbits}. Morally it is a descent result stating that  a certain natural transformation from fixed points to homotopy fixed points is an equivalence. The proof  is based on the interplay between the covariant and  contravariant functoriality of coarse homology theories encoded in their extensions to the  $\infty$-category   $G\BC_{tr}$ of $G$-bornological coarse spaces with transfers. This  $\infty$-category was introduced in \cite{coarsetrans}. It  extends the category $G\BC$, which only  {captures} the covariant behaviour of coarse homology theories.

We start by briefly  recalling the   construction of the category $G\BC_{tr}$. Let $X$ be a  $G$-bornological coarse space.
Then we let $\cC(X)$ and $\cB(X)$ denote the coarse and bornological structures of $X$. 
 For a subset $B$ of $X$ we let $[B]$ denote the coarse closure of $B$, i.e., the closure of $B$ with respect to the equivalence relation $R_{\cC(X)}${; see} \eqref{vfwrekjnkewjfwefewfwef}.

 Let now $X$ and $Y$ be $G$-bornological coarse spaces and $f \colon X\to Y$ be an equivariant map between the underlying $G$-sets.
\begin{ddd}[{\cite[{Def.~2.14}]{coarsetrans}}]\label{rgeeiorjgergergreg}
The map $f$ is called a \emph{{bounded covering}} if:
\begin{enumerate}
\item $f$ is a morphism between the underlying $G$-coarse spaces;
\item the coarse structure $\cC(X)$ is generated by the sets
$(f\times f)^{-1}(U)\cap U_{\pi_{0}}$, where $U$ is in $\cC(Y)$ and \begin{equation}\label{v4kjhuiuhedcwcwcwc}
U_{\pi_{0}}:=\bigcup_{W\in \pi_{0}(X)}W\times W ;
\end{equation}
\item\label{wtiojoegergergweg2} for every $W$ in $\pi_{0}(X)$ the restriction $f_{|W} \colon W\to f(W)$ is an isomorphism  {of coarse spaces} between coarse components;  
\item $f$ is bornological, i.e., for every $B$ in $\cB(X)$ we have $f(B)\in \cB(Y)$;
\item \label{wtiojoegergergweg} {for every $B$ in $\cB(X)$ there exists a finite bound (which may depend on $B$) on the cardinality of the sets
 \[ \{  W\in \pi_{0}(X) \mid  \pi_{0}(f)(W)=V\ , W\cap B\not=\emptyset\} \]
 (the coarse components of $X$ over $V$ which intersect $B$ non-trivially) for all $V$ in $\pi_{0}(Y)$.}
\qedhere \end{enumerate}
\end{ddd}

Note that a bounded covering is not a morphism of bornological coarse spaces in general, since it may not be proper. 
The composition of two bounded coverings is again a bounded covering; see \cite[Lem.~2.18]{trans}.

\begin{rem}\label{qeroigjoqergfewfdeeqwd}
 {Conditions \ref{wtiojoegergergweg2} and \ref{wtiojoegergergweg} in \cref{rgeeiorjgergergreg}  together are  equivalent to the following single condition:
for every $B$ in $\cB(X)$ there exists a finite coarsely disjoint partition $(B_{\alpha})_{\alpha\in A}$ of $B$, i.e.\ a finite partition $(B_{\alpha})_{\alpha\in A}$ of $B$ such that $[B_{\alpha}]\cap [B_{\alpha'}]=\emptyset$ for all $\alpha\neq\alpha'$, such that
$f_{[B_{\alpha}]} \colon [B_{\alpha}]\to [f(B_{\alpha})]$ is an isomorphism of the underlying  coarse spaces.}

 {Our phrasing of \cref{rgeeiorjgergergreg} separates the assumptions on the coarse structures from the conditions on the bornologies.}
\end{rem}

\begin{ex}\label{gbioegeerrgdfbv}
Let $h \colon S\to T$ be a map between $G$-sets and $X$ be a $G$-bornological coarse space. Then the map
\[h\times \id_{X} \colon S_{min,min}\otimes X\to T_{min,min}\otimes X\] is a bounded covering{; see} \cite[{Ex.~2.16}]{coarsetrans}.

Let $X^{\cC}$ be a $G$-coarse space with two compatible $G$-bornological structures $\cB$ and $\cB^{\prime}$ such that $\cB^{\prime}\subseteq \cB$. We let $X$ and $X^{\prime}$ denote the corresponding $G$-bornological coarse spaces. Then the identity map of  the underlying sets is a bounded covering $X^{\prime}\to X${; see} \cite[{Ex.~2.17}]{coarsetrans}.
If $\cB^{\prime}\not=\cB$, then it is not a morphism of $G$-bornological coarse spaces.
\end{ex}

\begin{constr}\label{gfioweffwefwefwefwef}
 {We recall the $\infty$-category $G\BC_{tr}$ from \cite[Def.~2.29]{coarsetrans}.
Let $\Tw \colon \Delta \to \Cat$ denote the cosimplicial category which sends $[n]$ to $\Tw([n]) = [n]^{op} \star [n])$, the twisted arrow category of $[n]$ (as a simplicial set, this is the edgewise subdivision).
We denote by $\tilde{G\BC}$ the category whose objects are $G$-bornological coarse spaces and whose morphisms are morphisms of the underlying $G$-coarse spaces.}

 {Then $G\BC_{tr}$ is a certain sub-simplicial set of the simplicial set
\[ \Fun(\Tw,\tilde{G\BC}) \colon \Delta^{op} \to \Set,\quad [n] \mapsto \Fun(\Tw([n]),\tilde{G\BC})\ .\]
Since it turns out that $G\BC_{tr}$ is $2$-coskeletal \cite[Lem.~2.30]{coarsetrans}, we content ourselves with describing $2$-simplices. They are given by diagrams of the form
\[\xymatrix{&&U\ar[dr] \ar[dl] &&\\&Z\ar[dr] \ar[dl] &&V\ar[dr] \ar[dl] \\X&&Y&&W}\]
such that all morphisms going left are bounded coverings, all morphisms going right are proper and bornological and such that the square in the middle is a pullback on the level of the underlying $G$-coarse spaces. This $\infty$-category is an effective Burnside category in the sense of Barwick \cite[Def.~3.6]{Barwick:2014aa}; see \cite[Def.~4.40 \& Rem.4.41]{unik}.}
\end{constr}

We have a functor
 \begin{equation}\label{g354oijot343g3}
m\colon G\Set^{op} \times G\BC \to G\BC_{tr}
\end{equation}
which admits the following description. Consider the functor
\[ m' \colon G\Set \times \tilde{G\BC} \to \tilde{G\BC},\quad (S,X) \mapsto S_{min,min} \otimes X\ .\]
We have a cosimplicial $\infty$-category $\nu \colon \Delta \to \Cat_\infty$ which sends $[n]$ to the nerve of $[n]$. Then $\nu$ corepresents the identity functor on $\Cat_\infty$, while $(-)^{op} \circ \nu$ corepresents the functor $(-)^{op} \colon \Cat_\infty \to \Cat_\infty$.
Moreover, we have a transformation of cosimplicial $\infty$-categories $\pi \colon \Tw \to ((-)^{op} \circ \nu) \times \nu$. From this, we obtain the functor
\begin{align*}
 G\Set^{op} \times \tilde{G\BC} &\xrightarrow{\times} \Fun(((-)^{op} \circ \nu) \times \nu,G\Set \times \tilde{G\BC}) \\
 &\xrightarrow{\pi^*} \Fun(\Tw, \tilde{G\BC})\ .
\end{align*}
In fact, this functors restricts to a functor
\[ \tilde{m} \colon G\Set^{op} \times \tilde{G\BC} \to A^{eff}(\tilde{G\BC})\ ,\]
where the target is the effective Burnside category of $\tilde{G\BC}${; here we use that the effective Burnside category $A^{eff}$ is defined for every category with pullbacks \cite[Def.~3.6]{Barwick:2014aa}.}
We compose $\tilde{m}$ with the endofunctor $P$ of $\Fun(\Tw, \tilde{G\BC})$ which takes each simplex to the simplex represented by the same diagram of $G$-coarse spaces, but where we replace the bornologies on all entries which are the domain of a map by that bornology which turns the morphism going right into a bornological morphism. For example, in a diagram as in \cref{gfioweffwefwefwefwef}, we equip $Z$ with the bornology pulled back from $Y$ and we equip $U$ and $V$ with the bornologies pulled back from $W$.

Using \cref{gbioegeerrgdfbv}, one now checks that the composition $m := P \circ \tilde{m}$ defines a functor $G\Set^{op} \times G\BC \to G\BC_{tr}$.
The restriction of $m$ to the object $pt$ of $G\Set^{op}$ induces a functor
 \begin{equation}\label{verv3r3oijfoij3f3f}
\iota \colon G\BC\to G\BC_{tr}\ ,
\end{equation}
cf.~\cite[{Def.~2.33}]{coarsetrans}.

Let $\bC$ be a cocomplete stable $\infty$-category and let $E \colon G\BC_{tr} \to \bC$ be a functor.

\begin{ddd}[{\cite[{Def.~2.53}]{coarsetrans}}]\label{rgiorhgiuregergergergergerg}
$E$ is called a $\bC$-valued {\em equivariant coarse homology theory with transfers} if $E\circ \iota\colon G\BC\to \bC$ is a  $\bC$-valued  equivariant coarse homology theory (in the sense of  {\cref{def:coarsehom}}).
\end{ddd}

Let $E\colon G\BC_{tr}\to\bC$ be a functor.
\begin{ddd}\label{jfi3rhfiuofewfewf} We define the functor
	\[\underline{E}:=E\circ m \colon G\Set^{op}\times  G\BC\to \bC\ .\qedhere\]
\end{ddd}

Assume now that  $E$ is a coarse homology theory with transfers. For every $G$-set $T$, we have an equivalence   \[\underline{E}(T,-)\simeq (E\circ \iota)_{T_{min,min}}(-)\]  of functors $G\BC\to \bC${; see} \cref{def:twist} for notation. 
The right-hand side is a twist of an equivariant coarse homology theory and therefore again an equivariant coarse homology theory by \cref{lem:twist}.
 {By \cref{prop:motives-universal-prop}, we can extend $\underline E$ along $\Yo^{s}$ to a functor (denoted by the same symbol for simplicity)}
\[\underline{E} \colon G\Set^{op}\times G\Sp\cX\to \bC \]
which preserves colimits in its second argument.

From now on until the end of this section we assume that the $\infty$-category $\bC$ is stable, cocomplete and complete, and that $E$ is a $\bC$-valued equivariant coarse homology theory with transfers.
\begin{ddd}\label{fwpeou2903rf}
We define the functor
\[\wt E \colon  \PSh(G\Set)^{op}\times G\Sp\cX\to \bC \]
as a right Kan extension
of 
$\underline{E}$ along the functor
\[ \yo^{op}\times \id_{G\Sp\cX} \colon G\Set^{op}\times G\Sp\cX\to \PSh(G\Set)^{op}\times G\Sp\cX\ .\qedhere \]
\end{ddd}

From now on we consider $\wt E$ as a contravariant functor in its first argument.
\begin{rem}\label{rem_234ertewert}
{Since the Yoneda embedding is fully faithful, we have a commuting diagram
\[ \xymatrix{G\Set\times G\Sp\cX\ar[d]_-{\yo\times \id_{G\Sp\cX}}\ar[r]^-{\underline{E}}&\bC\\ \PSh(G\Set)\times G\Sp\cX\ar[ur]_(.67){\wt E}&}\]
As $\PSh(G\Set)$ is the free colimit completion of $G\Set$ (\cite[Thm.~5.1.5.6]{htt}), the functor $\wt E$ is essentially uniquely characterized by an equivalence
\[\wt E\circ (\yo\times \id_{G\Sp\cX})\simeq \underline{E}\]
and the property that it sends colimits in its first argument to limits.}

{Consequently, if $A \colon I\to G\Set$ and $X \colon J\to G\Sp\cX$ are some functors from small categories $I$ and $J$, then we have a canonical equivalence
\[ \wt E(\colim_{I} \yo(A),\colim_{J}X)\simeq \lim_{I} \colim_{J}\wt E(\yo(A),X)\ .\]
Note that the order of the limit and the colimit matters  in general.}
\end{rem}

Let $A$ be in $\PSh(G\Set)$ and let $E$ be a $\bC$-valued equivariant coarse homology theory with transfers. 
\begin{lem}
	\label{lem:homtheory}
	If $A$ is compact, then the functor $\wt E(A,-)\colon G\Sp\cX\to \bC$ preserves colimits.
	\end{lem}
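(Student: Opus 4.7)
The plan is to consider the full subcategory $\mathcal{S}\subseteq \PSh(G\Set)$ of those presheaves $A$ for which $\wt E(A,-)\colon G\Sp\cX\to \bC$ preserves colimits, and to show that $\mathcal{S}$ contains the representables and is closed under finite colimits and retracts. Since the compact objects of the presheaf $\infty$-category $\PSh(G\Set)$ are precisely the retracts of finite colimits of representables, this at once implies that $\mathcal{S}$ contains every compact $A$.

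For a representable $\yo(T)$ with $T$ in $G\Set$, \cref{rem_234ertewert} gives $\wt E(\yo(T),-)\simeq \underline{E}(T,-)$, and the latter preserves colimits by construction: after \cref{jfi3rhfiuofewfewf} the functor $\underline{E}(T,-)\colon G\BC\to \bC$ is an equivariant coarse homology theory (being a twist of $E\circ\iota$ by $T_{min,min}$, cf.\ \cref{lem:twist}), and its extension to $G\Sp\cX$ along $\Yo^{s}$ is colimit-preserving by \cref{prop:motives-universal-prop}. Thus $\yo(T)\in \mathcal{S}$.

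For closure under finite colimits, let $I$ be a finite $\infty$-category and write $A\simeq \colim_{i\in I} A_i$ with each $A_i\in \mathcal{S}$. For any diagram $X\colon J\to G\Sp\cX$, the characterization of $\wt E$ recalled in \cref{rem_234ertewert} combined with the assumption that each $\wt E(A_i,-)$ preserves colimits yields
\[ \wt E(A,\colim_J X)\simeq \lim_{i\in I}\wt E(A_i,\colim_J X)\simeq \lim_{i\in I}\colim_J \wt E(A_i,X)\ . \]
Because $\bC$ is stable and $I$ is finite, finite limits in $\bC$ coincide with finite colimits and therefore commute with the arbitrary colimit over $J$. Interchanging the (co)limits and reassembling via \cref{rem_234ertewert}, we obtain $\wt E(A,\colim_J X)\simeq \colim_J \wt E(A,X)$, so $A\in \mathcal{S}$. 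Closure of $\mathcal{S}$ under retracts is immediate, since a retract of a colimit-preserving functor between cocomplete $\infty$-categories is again colimit-preserving. The only step that really requires care is the swap of the finite limit over $I$ past the colimit over $J$; this is the place where the stability of $\bC$ is genuinely used, and without it one would only obtain preservation of filtered colimits.
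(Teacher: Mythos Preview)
Your proof is correct and takes essentially the same approach as the paper: both argue that $\wt E(\yo(T),-)$ preserves colimits via the identification with $\underline{E}(T,-)$, then use that compact objects in $\PSh(G\Set)$ are retracts of finite colimits of representables, and finally invoke that finite limits commute with arbitrary colimits in a stable $\infty$-category. The only cosmetic difference is that you package the argument by defining the subcategory $\mathcal{S}$ and checking closure properties, whereas the paper handles the two cases (finite colimit of representables, then retract thereof) sequentially.
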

 \begin{proof}
We have an equivalence $\wt E(\yo(S),-)\simeq \underline{E}(S,-)$ of functors  from $G\Sp\cX$ to $\bC$. Therefore, $\wt E(\yo(S),-)$ preserves colimits for every $G$-set $S$.  Since $A$ is compact, it is a retract of a finite colimit of objects of the form $\yo(S)$ with $S$ in $G\Set$ by  \cite[Prop.~5.3.4.17]{htt}. 

	If $A$ in $\PSh(G\Set)$ is a finite colimit of representables,  then  $\wt E(A,-)$ is a finite limit  of colimit preserving functors. Since $\bC$ is stable,  finite limits in $\bC$ commute with arbitrary colimits \cite[Prop.~1.1.4.1]{HA}. 
	Hence $\wt E(A,-)$ preserves colimits.

	 {If $A$ is a retract of a finite colimit $A'$ of representables, then $\wt E(A,-)$ is a retract of $\wt E(A',-)$. Consequently, the relevant comparison maps for $\wt E(A,-)$ are retracts of the analogous comparison maps for $\wt E(A',-)$. Since the comparison maps for $\wt E(A',-)$ are equivalences and retracts of equivalences are equivalences, the lemma follows.} 
\end{proof}

Recall that  $G\Orb$  denotes the full subcategory of $G\Set$ of transitive $G$-sets{; see} \cref{gbiofgergergergrg}.

\begin{rem}\label{ewfijoewffwefewf}
	By Elmendorf's theorem the homotopy theory  of $G$-spaces is modeled by the presheaf category $\PSh(G\Orb)${; see} \cref{efweu9u9wefwef}. This category is equivalent to the category of sheaves   $\Sh(G\Set)$  with respect to the Grothendieck topology on $G\Set$ given by disjoint decompositions into invariant subsets.  We prefer to identify  the sheafification morphism $\PSh(G\Set)\to \Sh(G\Set)$   with the restriction morphism along the
	inclusion $r\colon G\Orb \to G\Set$ since in our special situation it has an additional left adjoint $r_{!}$ which is not part of general sheaf theory.\end{rem}

The inclusion  \begin{equation}\label{qwelfjqwoifqfeefqefqwef}
r\colon G \Orb \to G\Set
\end{equation} induces an adjunction
\begin{equation}\label{efkjbnkjffrefw}
r_{!} \colon  \PSh(G\Orb) \leftrightarrows  \PSh(G\Set):r^{*}
\end{equation}
by \cite[Prop.~5.2.6.3]{htt}.
Later in the proof of  \cref{lem:u-equiv} we will need a formula for the counit
\begin{equation}\label{g5k3n4kf34f34f34f}
r_{!}r^{*}\to \id \end{equation}  of the adjunction \eqref{efkjbnkjffrefw}.  To this end we consider a $G$-set  $S$ and let $S\cong \bigsqcup_{R\in G\backslash S} R$ be the decomposition of $S$ into transitive $G$-sets.

\begin{lem}
	\label{lem:annoying}
	The counit \[ r_{!}r^{*}\yo(S) \to \yo(S)\] is equivalent to  the morphism
	\begin{equation}\label{efewjflwef23r}\coprod_{R\in G\backslash S} \yo(r(R))\to \yo(S)\ ,\end{equation} induced by the family of inclusions $(r(R)\to S)_{R\in G\backslash S}$.
\end{lem}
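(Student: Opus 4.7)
The plan is to compute $r_!r^*\yo(S)$ directly by first decomposing $r^*\yo(S)$ as a coproduct of representables on $G\Orb$ and then applying $r_!$, exploiting that $r_!$ is a left adjoint and that it sends representables to representables because $r$ is fully faithful.

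First I would compute $r^{*}\yo(S)$ in $\PSh(G\Orb)$. Since $r$ is fully faithful, $r^{*}\yo_{G\Set}(r(R))\simeq \yo_{G\Orb}(R)$ for every $R$ in $G\Orb$. For a general $G$-set $S$ with orbit decomposition $S\cong \bigsqcup_{R\in G\backslash S} R$ and a transitive $T$ in $G\Orb$, an equivariant map $T\to S$ must factor through a single orbit of $S$ because the image of $T$ is a single orbit; hence
\[\Hom_{G\Set}(T,S)\;\cong\;\coprod_{R\in G\backslash S}\Hom_{G\Set}(T,r(R))\ .\]
This gives a natural equivalence
\[r^{*}\yo(S)\;\simeq\;\coprod_{R\in G\backslash S}\yo_{G\Orb}(R)\]
in $\PSh(G\Orb)$, where the components are the restrictions of the inclusions $r(R)\hookrightarrow S$.

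Next I would apply $r_{!}$. As a left adjoint, $r_{!}$ preserves small colimits, and by the universal property of left Kan extension along a fully faithful functor we have $r_{!}\yo_{G\Orb}(R)\simeq \yo_{G\Set}(r(R))$ for $R$ in $G\Orb$. Combining these facts yields
\[r_{!}r^{*}\yo(S)\;\simeq\;\coprod_{R\in G\backslash S}\yo(r(R))\ .\]

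Finally I need to identify this equivalence with the counit. By the adjunction \eqref{efkjbnkjffrefw}, a morphism $\coprod_{R}\yo(r(R))\to \yo(S)$ in $\PSh(G\Set)$ is the counit evaluated at $\yo(S)$ if and only if its $r_!\dashv r^*$-adjoint is the identity of $r^{*}\yo(S)$. The map \eqref{efewjflwef23r}, induced componentwise by the inclusions $r(R)\hookrightarrow S$, has adjoint precisely the identification of Step 1, so it agrees with $\epsilon_{\yo(S)}$.

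The only subtle point is the identification $r_{!}\yo_{G\Orb}(R)\simeq \yo(r(R))$ and the matching of the counit with the inclusion-induced morphism; both are essentially formal consequences of $r$ being fully faithful together with the Yoneda lemma, so no genuine obstacle is expected.
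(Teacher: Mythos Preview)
Your proposal is correct and follows essentially the same strategy as the paper: both arguments rest on the computation $r^{*}\yo(S)\simeq \coprod_{R\in G\backslash S}\yo_{G\Orb}(R)$ (using that maps from a transitive $G$-set land in a single orbit) together with $r_{!}\yo_{G\Orb}(R)\simeq \yo(r(R))$ and the fact that $r_{!}$ preserves coproducts. The only organizational difference is in the final identification of the counit: you verify directly that the adjoint of \eqref{efewjflwef23r} is the identity of $r^{*}\yo(S)$, whereas the paper instead uses naturality of the counit to reduce to showing that the counit on $\coprod_{R}\yo(r(R))\simeq r_{!}\bigl(\coprod_{R}\yo(R)\bigr)$ is an equivalence, which it deduces from the triangle identity and full faithfulness of $r$. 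Your route is slightly more direct; the paper's route makes the role of full faithfulness more explicit. Both are valid.
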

\begin{proof}
	We start with the morphism
	\[\coprod_{R\in G\backslash S} \yo(r(R))\to  \yo(S)\]
	induced by the collection of  inclusions $(r(R)\to S)_{R\in G\backslash S}$.
	We claim that it becomes an equivalence after application of $r^{*}$. Indeed, for $T$ in $G\Orb$ we have a commuting square
	\[\xymatrix{(r^{*}\coprod_{R\in G\backslash S} \yo(r(R)))(T)\ar[r]\ar[d]^{\simeq}&(r^{*}\yo(S))(T)\ar[d]^{\simeq}\\\coprod_{R\in G\backslash S} \Map_{G\Set}(r(T),r(R)) \ar[r]^(0.6){\simeq}&\Map_{G\Set}(r(T),S)}\]
	 The lower horizontal map is an equivalence since the functor $\Map_{G\Set}(r(T),-)$ commutes with coproducts since  $r(T)$ is a transitive $G$-set.
	 
	Since the counit of an adjunction is a natural transformation, we get  the following commuting diagram
	\begin{equation*}\label{friuhiwuevwevwevw}
	\xymatrix{
	r_{!}r^{*}\coprod_{R\in G\backslash S} \yo(r(R))\ar[d]^{\operatorname{counit}}\ar[r]^-{\simeq}&r_{!}r^{*}\yo(S)\ar[d]^{\operatorname{counit}}\\
		\coprod_{R\in G\backslash S} \yo(r(R))\ar[r]^-{\eqref{efewjflwef23r}}&\yo(S)
	}
	\end{equation*} 
	 {It remains to} show that the left vertical arrow is an equivalence. To this end we consider the diagram 
	\[\xymatrix{
	r_{!}(r^{*}r_{!})\coprod_{R\in G\backslash S} \yo(R) \ar[r]^{\simeq }&(r_{!}r^{*})r_{!}\coprod_{R\in G\backslash S} \yo(R)\ar[d]^{\operatorname{counit}\circ r_{!}} \ar[r]^{\simeq }&r_{!}r^{*}\coprod_{R\in G\backslash S} \yo(r(R))\ar[d]^{\operatorname{counit}} \\
	r_{!}\coprod_{R\in G\backslash S} \yo(R) \ar@{=}[r]\ar[u]_{r_{!}(\operatorname{unit})}&r_{!}\coprod_{R\in G\backslash S} \yo(R) \ar[r]^{\simeq}&
		\coprod_{R\in G\backslash S} \yo(r(R))
		}\]
	The left square commutes by the  usual relation between the unit and the counit of an adjunction.
	 {Since $r_!$ commutes with colimits and $r_!\yo(R) \simeq \yo(r(R))$ by adjointness, the horizontal morphisms on the right are equivalences.}
	Since $r$ is fully faithful, the unit appearing at the left  is an equivalence.
	Hence, the counit on the right is an equivalence as claimed.
\end{proof}

In order to simplify the notation in the arguments below we introduce now the following abbreviation.
Let $\pt$ denote the one-point $G$-bornological coarse space.
\begin{ddd}\label{rgiuregergegergre}
	We define the functor
	\[\wt E_\pt:= \wt E(-,\Yo^s(\pt))\colon \PSh(G\Set)^{op}\to \bC \ .\qedhere\] 
\end{ddd}
We consider $\wt E_{\pt}$ as a contravariant functor from $\PSh(G\Set)$ to $\bC$ which sends colimits to limits.

The counit \eqref{g5k3n4kf34f34f34f} induces a transformation
\begin{equation}\label{rejvbnkjf4f43f}
u\colon \wt E_\pt\to \wt E_\pt\circ r_!\circ  r^*\ .
\end{equation}

\begin{rem}\label{regerger}
Recall from \cite[{Def.~2.61}]{coarsetrans} that we call a coarse homology theory with transfers \emph{strongly additive}
if its sends free unions (see \cite[Ex.~2.16]{equicoarse}) of families of $G$-bornological coarse  spaces to products. Note further  that for $S$ in $G\Set$ the $G$-bornological coarse space 
$S_{min,min}$ is the free union of the family $(R_{min,min})_{R\in G\backslash S}$.
This is used to see that the morphism \eqref{oihofefwefwef} below is an equivalence.
\end{rem}

\begin{lem}
	\label{lem:u-equiv}
	If $E$ is strongly additive, then the transformation \eqref{rejvbnkjf4f43f}
	is an equivalence.
\end{lem}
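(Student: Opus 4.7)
The plan is to reduce the claim to representables via colimit/limit manipulations, identify the resulting map using \cref{lem:annoying}, and then read off strong additivity.

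First I would observe that both sides of $u$ are colimit-to-limit-preserving functors on $\PSh(G\Set)^{op}$. For $\wt E_\pt$ this is built into \cref{fwpeou2903rf} (see \cref{rem_234ertewert}). For $\wt E_\pt \circ r_! \circ r^*$, the functor $r_!$ is a left adjoint and hence preserves colimits, while $r^*$ is restriction of presheaves and so preserves all colimits (computed pointwise). Since every object of $\PSh(G\Set)$ is a colimit of representables $\yo(S)$, it therefore suffices to verify that $u_{\yo(S)}$ is an equivalence for every $S$ in $G\Set$.

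Fix such an $S$ and let $S\cong \bigsqcup_{R\in G\backslash S}R$ be its orbit decomposition. By \cref{lem:annoying}, the counit $r_! r^*\yo(S)\to \yo(S)$ is equivalent to the canonical map $\coprod_{R}\yo(r(R))\to\yo(S)$ induced by the inclusions of orbits. Applying the limit-preserving functor $\wt E_\pt$ turns this into the map
\[ \wt E_\pt(\yo(S)) \longrightarrow \prod_{R\in G\backslash S} \wt E_\pt(\yo(r(R)))\ . \]
By \cref{jfi3rhfiuofewfewf} and the definition of $\wt E$, we have $\wt E_\pt(\yo(T))\simeq \underline E(T,\Yo^s(\pt))\simeq E(T_{min,min})$ for every $G$-set $T$, and under these identifications $u_{\yo(S)}$ becomes the map
\[ E(S_{min,min}) \longrightarrow \prod_{R\in G\backslash S} E(R_{min,min}) \]
whose $R$-component is the image under $E$ of the transfer $m(R\hookrightarrow S)\colon S_{min,min}\to R_{min,min}$ in $G\BC_{tr}$ associated to the inclusion of an orbit.

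Finally, as recorded in \cref{regerger}, $S_{min,min}$ is the free union of the family $(R_{min,min})_{R\in G\backslash S}$, and the transfers $m(R\hookrightarrow S)$ are precisely the projections of this free union onto its summands in the $G\BC_{tr}$-framework of \cite{coarsetrans}. Hence strong additivity of $E$ forces the above map to be an equivalence, which completes the argument. The only real subtlety is bookkeeping: one must trace through \cref{g354oijot343g3} to confirm that the components of $u_{\yo(S)}$ are indeed the transfers witnessing strong additivity rather than some unrelated wrong-way maps; everything else is a formal colimit–limit manipulation.
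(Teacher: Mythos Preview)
Your proposal is correct and follows essentially the same approach as the paper's own proof: reduce to representables, use \cref{lem:annoying} to identify the counit, unwind the definitions to the map $E(S_{min,min})\to\prod_R E(R_{min,min})$ given by orbit-inclusion transfers, and conclude by strong additivity. The only difference is that you spell out the reduction to representables (both sides send colimits to limits), which the paper leaves implicit.
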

\begin{proof}
	Let $S$ be in $G\Set$.	Using \cref{lem:annoying} and the fact that $\wt E_\pt$ sends colimits to limits, the specialization $u_S$ of \eqref{rejvbnkjf4f43f} to $S$ is given by the map
	\[\wt E_\pt(S)\to \prod_{R\in G\backslash S}\wt E_{\pt}(r(R))\ .\]
	 {Recall from \cref{gbioegeerrgdfbv} that the inclusions $R_{min,min} \to S_{min,min}$ are bounded coverings. Then by} the definition of $\wt E_\pt$ this  {map} is equivalent to the map
	\[E(S_{min,min})\to \prod_{R\in G\backslash S}E(R_{min,min})\]
	obtained from the transfers along the inclusions of the orbits of $S_{min,min}$. Since $S_{min,min}$ is discrete, we have an isomorphism \[S_{min,min}\cong \coprod^{\free}_{R\in G\backslash S}R_{min,min}\] of $G$-bornological coarse spaces. 
	By strong additivity of $E$, the map
	\begin{equation}\label{oihofefwefwef}
	E(S_{min,min})\simeq E\big(\coprod^{\free}_{R\in G\backslash S}R_{min,min}\big)\to \prod_{R\in G\backslash S}  E (R_{min,min})
	\end{equation}
	is an equivalence. Therefore, $u_S$ is an equivalence.
\end{proof}

The following lemma is the crucial technical ingredient in the proof of the main result of the present section (\cref{guoiuoi34ug34g34g3}). It allows us to move $G$-sets from one argument of the functor $\wt E$ to the other.

We consider a $G$-set $S$.
\begin{lem}
	\label{lem:swap}
	There is an equivalence 
	\[s\colon \wt E(-,\Yo^{s}(S_{min,min}))\to \wt E_\pt(-\times  \yo(S)) \]	
	of contravariant functors from  $\PSh(G\Set)$ to $\bC$. \end{lem}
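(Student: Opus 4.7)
The plan is to reduce to representables and invoke the universal property of $\PSh(G\Set)$ as the free colimit completion of $G\Set$ (\cite[Thm.~5.1.5.6]{htt}). First, both sides of $s$ are contravariant functors $\PSh(G\Set)\to \bC$ sending colimits in $\PSh(G\Set)$ to limits in $\bC$: for $\wt E(-,\Yo^{s}(S_{min,min}))$ this is part of the characterization in \cref{rem_234ertewert}, and for $\wt E_\pt(-\times\yo(S))$ it follows by combining this same property for $\wt E_\pt$ with the fact that $-\times\yo(S)$ preserves colimits (the presheaf $\infty$-topos $\PSh(G\Set)$ is cartesian closed). Consequently, constructing $s$ is equivalent to exhibiting a natural equivalence between the restrictions of both functors to $G\Set$ along $\yo$.

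On a representable $\yo(T)$ with $T$ in $G\Set$, we unfold definitions. Since $\yo$ preserves finite limits, $\yo(T)\times \yo(S)\simeq \yo(T\times S)$. Using $\wt E\circ(\yo\times\id)\simeq \underline E$ and $\underline E=E\circ m$, both sides become
\[ E(m(T,S_{min,min})) \quad\text{and}\quad E(m(T\times S,\pt))\ , \]
respectively. From the formula $m(U,X)=U_{min,min}\otimes X$ on objects (\cref{gfioweffwefwefwefwef}) together with the identifications $T_{min,min}\otimes S_{min,min}=(T\times S)_{min,min}=(T\times S)_{min,min}\otimes \pt$, these two objects of $G\BC_{tr}$ are canonically the same $G$-bornological coarse space $(T\times S)_{min,min}$. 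This induces the candidate natural transformation $s_T$ at each representable.

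The main obstacle is upgrading this pointwise identification to a natural transformation of functors on $G\Set^{op}$. For a morphism $f\colon T_1\to T_2$ in $G\Set$, we must check that the transfer induced on the left by the morphism $m(f^{op},\id_{S_{min,min}})\colon m(T_2,S_{min,min})\to m(T_1,S_{min,min})$ in $G\BC_{tr}$ agrees with the transfer induced on the right by $m((f\times \id_S)^{op},\id_\pt)\colon m(T_2\times S,\pt)\to m(T_1\times S,\pt)$, under the identifications above. Tracing through the construction of $m$ via the effective Burnside category and the bornology-adjusting endofunctor $P$, both morphisms are represented by the same span
\[ (T_2\times S)_{min,min}\xleftarrow{f\times\id_S}(T_1\times S)_{min,min}\xrightarrow{\id}(T_1\times S)_{min,min} \]
in $G\BC_{tr}$: the left leg is a bounded covering by \cref{gbioegeerrgdfbv}, the right leg is the identity, and no further adjustment of bornologies is required since the right-going map is already bornological. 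Applying $E$ yields the required compatibility, so $s$ extends uniquely to a natural transformation on all of $\PSh(G\Set)$; since it is a levelwise equivalence on representables and both sides turn colimits into limits, $s$ is an equivalence.
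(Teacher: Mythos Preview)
Your proof is correct and follows essentially the same approach as the paper. Both arguments reduce to the functorial identification $m(-,S_{min,min})\simeq m(-\times S,\pt)$ on $G\Set^{op}$ and then use that both sides of $s$ send colimits to limits; the paper phrases the extension step via right Kan extension along $\yo^{op}$, while you invoke the equivalent universal property of $\PSh(G\Set)$ as a free cocompletion, and you spell out the naturality check on spans somewhat more explicitly than the paper does.
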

\begin{proof}
 	Using the canonical isomorphisms  of functors \[(-)_{min,min}\otimes S_{min,min}\cong (-\times S)_{min,min}\cong (-\times S)_{min,min}\otimes \pt\] 
	from $G\Set^{op}$ to  $G\BC_{tr}$ we obtain an equivalence of functors \[m(-,S_{min,min}) \simeq m(-\times S,\pt)\ .\]
	We compose this equivalence with $E$ and form the right-Kan extension along the functor $\yo^{op} \colon G\Set^{op}\to \PSh(G\Set)^{op}$. 
	We obtain an equivalence
	\begin{equation}\label{rgeokerpogregreg}
RK(\underline{E}(\ldots,\Yo^{s}(S_{min,min})))(-)\simeq RK(\underline{E}(\ldots\times S,\pt))(-)
\end{equation}
	of contravariant functors from $\PSh(G\Set)$ to $\bC$ which send colimits to limits. 
	Here $RK$ denotes the right-Kan extension in the variable indicated by $\ldots$, and $-$ is the argument of the resulting functor. 
	By definition of $\wt E$ we have an equivalence
	\begin{equation}\label{efwoig344343gebergerg}
RK(\underline{E}(\ldots,\Yo^{s}(S_{min,min})))(-)\simeq \wt E(-,\Yo^{s}(S_{min,min}))\ .
\end{equation} 
	For the right-hand side we note the equivalence  $\yo(\ldots\times S)\simeq \yo(\dots)\times \yo(S)$, and that the functor
	$-\times \yo(S)$ preserves colimits. This implies  a natural equivalence 
	\begin{equation}\label{efwoig344343gebergerg1}
RK(\underline{E}(\ldots\times S,\pt))(-)\simeq \wt E_{\pt}(-\times \yo(S))\end{equation} 
	since both functors send colimits to limits and coincide on representables.	 
	Inserting \eqref{efwoig344343gebergerg} and \eqref{efwoig344343gebergerg1} into \eqref{rgeokerpogregreg} we obtain the desired equivalence.
\end{proof}

We now state the main result of the present section. Recall that $\bC$ is a complete and cocomplete, stable $\infty$-category.
Furthermore, $E$ is an equivariant $\bC$-valued coarse homology theory with transfers. We let $\wt E$ be defined as in  \cref{fwpeou2903rf}.  
We consider an object  $A$  in $\PSh(G\Set)$ and  a transitive $G$-set $R$   in $G_{\cF}\Orb$. Let
\begin{equation}\label{ihiuewfwefewfw}
p_{R}\colon \wt E(*,\Yo^{s}(R_{min,min}))\to \wt E(A,\Yo^{s}(R_{min,min}))
\end{equation}
be the map induced by $A\to *$
\begin{prop}\label{guoiuoi34ug34g34g3}
	\label{thm:desc-orbits}
	Assume:
	\begin{enumerate}
		\item $E$ is strongly additive (see \cref{regerger});
		\item\label{eiwfowefefewf} $r^*A$ in $\PSh(G\Orb)$ is equivalent to $E_\cF G$.
	\end{enumerate} Then the morphism $p_R$ in \eqref{ihiuewfwefewfw} is an equivalence.
\end{prop}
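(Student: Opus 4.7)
The plan is to reduce the claim, via \cref{lem:swap} and \cref{lem:u-equiv}, to a pointwise computation on the presheaf $r^{*}A \simeq E_{\cF}G$. First I would apply \cref{lem:swap} with $S = R$ to identify $p_R$ with the morphism
\[ \wt E_{\pt}(\yo(R)) \to \wt E_{\pt}(A \times \yo(R)) \]
obtained from contravariant functoriality of $\wt E_{\pt}$ applied to the projection $\pi \colon A \times \yo(R) \to * \times \yo(R) \simeq \yo(R)$ (here I use that the terminal object of $\PSh(G\Set)$ is preserved by products, so the map $A \to *$ yields $\pi$ after taking the product with $\yo(R)$).

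Second, since $E$ is strongly additive, the counit $\epsilon \colon r_! r^* \to \id_{\PSh(G\Set)}$ from \eqref{g5k3n4kf34f34f34f} induces via \cref{lem:u-equiv} a commutative square of equivalences
\[\xymatrix{\wt E_{\pt}(\yo(R)) \ar[r]^-{\simeq} \ar[d]_{\wt E_{\pt}(\pi)} & \wt E_{\pt}(r_! r^* \yo(R)) \ar[d]^{\wt E_{\pt}(r_! r^* \pi)} \\ \wt E_{\pt}(A \times \yo(R)) \ar[r]^-{\simeq} & \wt E_{\pt}(r_! r^*(A \times \yo(R)))}\]
in which the horizontal maps are equivalences. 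Thus it suffices to see that $r^{*}\pi$ is an equivalence in $\PSh(G\Orb)$ (since $r_{!}$ and $\wt E_{\pt}$ both preserve equivalences). To check this, I would evaluate at an arbitrary $T$ in $G\Orb$:
\[ (r^{*}(A \times \yo(R)))(T) \simeq (r^{*}A)(T) \times \Map_{G\Set}(T,R) \simeq E_{\cF}G(T) \times \Map_{G\Set}(T,R) \]
by hypothesis \ref{eiwfowefefewf}. If $\Map_{G\Set}(T,R) = \emptyset$, both sides vanish. Otherwise, any $G$-map $T \to R$ shows that the stabilizer of any point of $T$ is a subgroup of the stabilizer of its image in $R$; since $R \in G_{\cF}\Orb$ and $\cF$ is closed under taking subgroups, $T$ belongs to $G_{\cF}\Orb$, so $E_{\cF}G(T) \simeq *$ by \cref{def:efg}. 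Hence $(r^{*}(A \times \yo(R)))(T) \simeq \Map_{G\Set}(T,R) \simeq (r^{*}\yo(R))(T)$ naturally in $T$, and $r^{*}\pi$ is an equivalence.

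The main obstacle is setting up the correct naturality diagram to invoke \cref{lem:u-equiv}, keeping straight the contravariance of $\wt E_{\pt}$ versus the covariant counit $\epsilon$; the pointwise verification in the last step is then straightforward once one uses that $\cF$ is closed under subgroups.
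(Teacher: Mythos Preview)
Your proof is correct and follows essentially the same strategy as the paper: apply \cref{lem:swap} to move $R$ into the first variable, apply \cref{lem:u-equiv} to pass to $r_!r^*$, and then argue that the projection $r^*A \times \yo(R) \to \yo(R)$ is an equivalence in $\PSh(G\Orb)$. The only differences are cosmetic: the paper writes $\yo(r(R))$ to distinguish the Yoneda embedding into $\PSh(G\Set)$ from that into $\PSh(G\Orb)$, and it phrases the final step as the single equivalence $E_\cF G \times \yo(R) \simeq \yo(R)$ rather than your explicit case split on whether $\Map_{G\Set}(T,R)$ is empty.
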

\begin{proof} We consider the following commutative diagram in $\bC$:
	\[\xymatrix{
		\wt E( *,\Yo^{s}( R_{min,min}))\ar[d]^s_\simeq\ar[r]^-{p_{R}}&\wt E(A,\Yo^{s}(R_{min,min}))\ar[d]^s_\simeq\\
		\wt E_\pt(\yo(r(R)))\ar[r]\ar[d]^{u}_\simeq&\wt E_\pt(A\times \yo( r(R)))\ar[d]^u_\simeq\\
		\wt E_\pt(r_!r^*(\yo(r(R))))\ar[r]\ar[d]_{\simeq}&\wt E_\pt (r_! r^*(A\times \yo(r(R))))\ar[d]_\simeq\\
		\wt E_\pt( r_!(\yo(R)))\ar[r]&\wt E_\pt( r_!(r^*A\times\yo(R))) 	}\]
		Here $s$ is the natural equivalence from \cref{lem:swap}, and the morphism $u$ {from \eqref{rejvbnkjf4f43f}} is a natural equivalence by \cref{lem:u-equiv}. 
	
	We further use the canonical equivalence
	$r^{*}\yo ( r(R))\simeq \yo(R)$ for the lower left vertical equivalence, and in addition the fact that $ r^{*}$   preserves products for  the lower right vertical equivalence. The lower horizontal morphism is an equivalence since 	\[r^{*}A\times \yo(R)\simeq E_{\cF}G\times \yo(R)\simeq \yo(R)\ ,\]
	where the first equivalence holds true by {Assumption~\ref{eiwfowefefewf}} and the second equivalence  follows from the fact that
	  $R$ has stabilizers in $\cF$, also by assumption.
\end{proof}

\begin{rem}\label{rgrioegoregergreg}  	As  explained  in  \cref{efweu9u9wefwef}, the $\infty$-category  $\PSh(G\Orb)$ is a model for the homotopy theory of $G$-spaces.   	
	Compactness of $E_{\cF}G$ as a presheaf  on $G\Orb$ will play a crucial role in our arguments. This condition is closely related to  the {existence of} a  $G$-compact   model $E^{top}_{\cF}G$ of $E_{\cF}G$.
	
	Identifying presheaves on $G\Orb$ with sheaves on $G\Set$, we can consider $E_{\cF}G$ as an object of $\PSh(G\Set)$  which satisfies the sheaf condition. But 
	 compactness of $E_{\cF}G$ as an object of $\PSh(G\Set)
	$ is a too strong condition. 
		 For this reason we consider compact objects $A$ in $\PSh(G\Set)$ which after sheafification, i.e., after application of $r^{*}$, become equivalent to $E_{\cF}G$. The existence of such an object is an 
		 important assumption in the following. In \cref{lem:fin-dim-model}, we will show that the existence of a finite-dimensional model for $E^{top}_{\cF}G$ (a much weaker condition than $G$-compactness) implies the existence of such a compact presheaf $A$.		 
\end{rem}

A $G$-simplicial complex is a simplicial complex on which $G$ acts by morphisms of simplicial complexes.
{We denote by $G\Simpl$ the category of $G$-simplicial complexes and $G$-equivariant simplicial maps.}
Let $K$ be a $G$-simplicial complex.
\begin{ddd}\label{gu92zue98wregwew}
$K$ is \emph{$G$-finite} if it consists of finitely many $G$-orbits of simplices.
\end{ddd}

We let   $G_{\cF}\Simpl^{\fin}$ denote the full subcategory of $G\Simpl$ of  $G$-finite
$G$-simplicial complexes with stabilizers in $\cF$.

We have a canonical functor
\[ k := (-)_{d,d,d} \colon G_{\cF}\Simpl^{\fin}\to G\UBC\]
which equips a $G$-simplicial complex with the structures induced by the spherical quasi-metric. Hence we have a functor
\[\cO^\infty\circ k\colon G_{\cF}\Simpl^{\fin}\to G\Sp\cX\ ,\]
where $\cO^\infty$ denotes the cone-at-infinity functor  {from \cref{iwoijgwegewfefewfefw}}.

Let   $A$ be in $\PSh(G\Set)$. 
\begin{prop}\label{gio3jo24f22f3}
	Assume:
	\begin{enumerate} \item $E$ is strongly additive; \item $A$ is compact;   \item\label{regieorg43tr34tg34t34t} $r^*A$ is equivalent to $E_\cF G$.\end{enumerate}
	Then the natural transformation
	\[ \wt E(*,(\cO^\infty \circ k)(-))\to \wt E(A,(\cO^\infty\circ k)(-))\]
	of functors from $G_{\cF}\Simpl^{\fin}$ to $\bC$
	induced by $A\to *$ is a natural equivalence.
\end{prop}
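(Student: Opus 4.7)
\emph{Reduction via the cone sequence.} For every $G$-uniform bornological coarse space $X$, the cone $\cO(X)=([0,\infty)\times X)_{h}$ is flasque via the shift $(t,x)\mapsto(t+1,x)$, so $\Yo^{s}(\cO(k(K)))\simeq 0$. From the cone fibre sequence \eqref{feojp2r2} we then obtain a natural equivalence $\cO^{\infty}(k(K))\simeq \Sigma\,\Yo^{s}(\cF(k(K)))$. Both $\wt E(\ast,-)$ and $\wt E(A,-)$ are colimit-preserving (hence exact) functors $G\Sp\cX\to \bC$: the former extends the coarse homology theory $E\circ \iota$ along $\Yo^{s}$, while the latter preserves colimits by \cref{lem:homtheory} thanks to the compactness of $A$. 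Since both commute with suspensions, it suffices to prove that
\[ \wt E(\ast,\Yo^{s}(\cF(k(-))))\to \wt E(A,\Yo^{s}(\cF(k(-)))) \]
is an equivalence on each $K$ in $G_\cF\Simpl^{\fin}$.

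\emph{Induction on orbits of simplices.} We argue by induction on the number of $G$-orbits of simplices of $K$; the case $K=\emptyset$ is trivial. For the inductive step, choose a $G$-orbit of top-dimensional simplices with stabilizer $H\in\cF$, let $C=G\times_{H}\overline{\Delta^{d}}\subseteq K$ be its closure and $L\subseteq K$ the $G$-subcomplex obtained by removing the open interiors of these top simplices. Both $L$ and $\partial C=G\times_{H}\partial\overline{\Delta^{d}}$ belong to $G_\cF\Simpl^{\fin}$ and have strictly fewer $G$-orbits of simplices than $K$. In $\cF(k(K))$ consider the equivariant complementary pair $(L,\cY)$ with $\cY=(U_{r}[C])_{r>0}$; the spherical metric being bounded on $\overline{\Delta^{d}}$ makes each inclusion $C\hookrightarrow U_{r}[C]$ a coarse equivalence, so $\Yo^{s}(\cY)\simeq \Yo^{s}(\cF(k(C)))$ and analogously $\Yo^{s}(L\cap\cY)\simeq \Yo^{s}(\cF(k(\partial C)))$. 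Excision then presents $\Yo^{s}(\cF(k(K)))$ as the pushout of $\Yo^{s}(\cF(k(L)))$ along $\Yo^{s}(\cF(k(\partial C)))\to\Yo^{s}(\cF(k(C)))$. Applying the colimit-preserving functors $\wt E(\ast,-)$ and $\wt E(A,-)$ yields a comparison of pushouts in $\bC$; the induction hypothesis handles $L$ and $\partial C$, and for $C$ itself the projection $\pi\colon C\to G/H$ is a coarse equivalence of $\cF(k(C))$ with $(G/H)_{min,min}$ (bounded fibres at infinite mutual distance, with section $gH\mapsto[g,x_{0}]$ for a fixed vertex $x_{0}\in\overline{\Delta^{d}}$), so \cref{guoiuoi34ug34g34g3} applied to $R=G/H\in G_\cF\Orb$, using the standing hypothesis $r^{*}A\simeq E_\cF G$, completes this case.

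\emph{Main obstacle.} The technical crux is the verification in the inductive step that $(L,(U_{r}[C])_{r>0})$ is an equivariant complementary pair in $\cF(k(K))$ and that the motives of $\cY$ and $L\cap\cY$ identify via coarse equivalence with those of $\cF(k(C))$ and $\cF(k(\partial C))$. This requires careful bookkeeping with the spherical metric and its interaction with subcomplex inclusions in $k(K)$, but once these identifications are in place, coarse excision together with colimit preservation of $\wt E(A,-)$ (by compactness of $A$) and the orbit-wise input from \cref{guoiuoi34ug34g34g3} assemble into the desired equivalence.
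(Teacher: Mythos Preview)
Your opening reduction is wrong: the cone $\cO(X)=([0,\infty)\times X)_{h}$ is \emph{not} flasque via the shift $(t,x)\mapsto(t+1,x)$. The hybrid coarse structure forces pairs of points with large $\R$-coordinate to lie in arbitrarily small uniform entourages, but the pairs $((t+1,x),(t,x))$ stay at uniform distance~$1$ for all~$t$; thus the shift is not close to the identity and $\Yo^{s}(\cO(k(K)))$ is not zero in general. Concretely, take $K=\partial\Delta^{2}$ with trivial $G$-action: uniform excision and homotopy invariance of $\cO^{\infty}$ yield $\cO^{\infty}(k(K))\simeq\Sigma\Yo^{s}(\ast)\oplus\Sigma^{2}\Yo^{s}(\ast)$, whereas $K_{d,d}$ is bounded and hence $\Sigma\Yo^{s}(\cF(k(K)))\simeq\Sigma\Yo^{s}(\ast)$. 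So the cone boundary is not an equivalence here, and your reduction to $\Yo^{s}\circ\cF\circ k$ collapses. More conceptually, your claim would force $\cO^{\infty}\simeq\Sigma\Yo^{s}\circ\cF$, but the former is homotopy invariant for the uniform structure while the latter only sees the underlying coarse structure.

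The paper avoids this by working with $\cO^{\infty}\circ k$ directly. On orbits $R\in G_{\cF}\Orb$ one has $(\cO^{\infty}\circ k)(R)\simeq\Sigma\Yo^{s}(R_{min,min})$ by \cite[Prop.~9.35]{equicoarse}, so \cref{guoiuoi34ug34g34g3} together with colimit preservation of $\wt E(\ast,-)$ and $\wt E(A,-)$ (\cref{lem:homtheory}) gives the base case. The induction then uses that $\cO^{\infty}\circ k$ is excisive for equivariant decompositions of $G$-simplicial complexes and homotopy invariant \cite[Lem.~10.9, Cor.~9.36, Cor.~9.38]{equicoarse}. Even setting aside the flasqueness error, your coarse-excision step has a further gap: the pushout identifies $\Yo^{s}(\cY)$ with $\Yo^{s}$ of $C$ equipped with the metric \emph{induced from} $K$, yet you then claim $C$ is coarsely equivalent to $(G/H)_{min,min}$. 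When distinct $G$-translates of the top simplex share lower-dimensional faces in $K$, the induced metric makes $C$ coarsely connected, and the projection to $G/H$ is not a coarse equivalence.
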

\begin{proof}
	 For $R$ in $G_{\cF}\Orb$, the object
	$(\cO^\infty\circ k)(R)$ of $G\Sp\cX$ is equivalent to $\Sigma\Yo^{s}(R_{min,min})$ by \cite[Prop.~9.35]{equicoarse}.  
	{Since $\wt E(*,-)$ and $\wt E(A,-)$ preserve colimits in the second argument  by \cref{lem:homtheory},}
	 the map $\wt E(*,(\cO^\infty\circ k)(R)) \to \wt E(A,(\cO^\infty\circ k)(R))$ is equivalent to the map $\Sigma\wt E(*,\Yo^{s}(R_{min,min})) \to \Sigma\wt E(A,\Yo^{s}(R_{min,min}))$,
	which is an equivalence by \cref{thm:desc-orbits}.

The functor $k$ sends equivariant decompositions of $G$-finite $G$-simplicial complexes  {to equivariant uniform decompositions} of $G$-uniform bornological coarse spaces by \cite[Lem.~10.9]{equicoarse}.
The functor $\cO^{\infty}$ is  excisive for  those decompositions by \cite[Cor.~9.36 and Rem.~9.37]{equicoarse}. Furthermore, it is homotopy invariant by \cite[Cor.~9.38]{equicoarse}.

Since $\wt E(*,-)$ and $\wt E(A,-)$ preserve colimits in the second argument by \cref{lem:homtheory}, the functors $ \wt E(*, (\cO^\infty \circ k)(-))$ and $\wt E(A,(\cO^\infty \circ k)(-))$ are excisive for equivariant decompositions of $G$-finite $G$-simplicial complexes. Furthermore, they are both homotopy invariant.

 A natural transformation between two such functors which is an equivalence on $G$-orbits with stabilizers in $\cF$ is an equivalence on $G$-finite 
 $G$-simplicial complexes with stabilizers in $\cF${: by induction on the number of equivariant cells, this follows from application of the Five-Lemma to the Mayer--Vietoris sequences arising from the pushout squares describing simplex attachments}. This implies the assertion.
\end{proof}

\section{Duality of \texorpdfstring{$\boldsymbol{G}$}{G}-bornological spaces}
In this section we develop a notion of duality for $G$-bornological spaces that we will use later to compare certain assembly and forget-control maps.

The category $G\Born$  {(see Definitions~\ref{erjgoijgoiergjergegrrr} and \ref{erjgoijgoiergjergegrrr1})} of $G$-bornological spaces and proper equivariant maps has a symmetric monoidal structure $\otimes$.
If $Y$ and $X$ are $G$-bornological spaces, then $Y\otimes X$ is the $G$-bornological space with underlying $G$-set $Y\times X$ (with diagonal action) and the bornology generated by the subsets $A\times B$
for bounded subsets $A$ of $Y$ and $B$ of $X$. Note that this tensor product is not the cartesian product in $G\Born$.

Recall   that a subset $L$ of a $G$-bornological space $X$ is called locally finite if $L\cap B$ is finite for every bounded subset $B$ of $X${; see} \cref{rgieog3434gergeg}.

For a set 
$A$ we let $|A|$ in $\nat\cup \{\infty\}$ denote the number of elements of $A$. 

For a subset $L$ of $X\times G$ we consider \begin{equation}\label{ehfiohefoiwhf}L_{1}:=L\cap (X\times \{1\})\end{equation}
as a subset of $X$ in the natural way.

Let $X$ be a $G$-bornological space and $L$ be a  $G$-invariant  subset  of $X\times G$.

\begin{lem}\label{wtquzdguz1}
	$L$ is a locally finite  subset of $X\otimes G_{max}$ if  and only if 
	$\sum_{g\in G} |L_{1}\cap gB|<\infty$
	for every bounded subset $B$ of $X$.
\end{lem}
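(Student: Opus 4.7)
The plan is to identify the bounded sets of $X\otimes G_{max}$ explicitly, rewrite the local finiteness condition as a cardinality estimate on slices $B\times G$, and then use the $G$-invariance of $L$ to reshuffle this into the stated sum.

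First I would unpack the bornology. Since $G_{max}$ has every subset of $G$ as a bounded set, the bornology on $X\otimes G_{max}$ is generated by the sets $A\times G$ with $A$ bounded in $X$. Because a finite union of such generators is again of this form, a subset $C$ of $X\times G$ is bounded in $X\otimes G_{max}$ if and only if $C\subseteq B\times G$ for some bounded $B$ in $X$. Consequently, $L$ is locally finite in $X\otimes G_{max}$ if and only if $L\cap(B\times G)$ is finite for every bounded subset $B$ of $X$.

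The key step is then the computation, for each $g$ in $G$,
\[ L\cap(B\times\{g\}) = (B\cap gL_{1})\times\{g\}\ . \]
Indeed, $(x,g)\in L$ is equivalent to $(g^{-1}x,1)\in L$ by $G$-invariance, i.e.\ to $x\in gL_{1}$. Applying the bijection $x\mapsto g^{-1}x$ we get $|B\cap gL_{1}|=|g^{-1}B\cap L_{1}|=|L_{1}\cap g^{-1}B|$. Summing over $g\in G$ and reindexing $g\mapsto g^{-1}$ yields
\[ |L\cap(B\times G)| = \sum_{g\in G}|L\cap(B\times\{g\})| = \sum_{g\in G}|L_{1}\cap g^{-1}B| = \sum_{g\in G}|L_{1}\cap gB|\ . \]

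Combining these two observations, $L\cap(B\times G)$ is finite for every bounded $B$ in $X$ precisely when $\sum_{g\in G}|L_{1}\cap gB|<\infty$ for every bounded $B$, which is the claimed equivalence. There is no real obstacle here: the only subtlety is making sure the bornology of $X\otimes G_{max}$ is correctly described (so that ``bounded'' in $X\times G$ reduces to ``contained in $B\times G$ for $B$ bounded in $X$''), after which the statement is a direct rewriting using $G$-invariance.
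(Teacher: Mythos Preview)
Your proof is correct and follows essentially the same approach as the paper: reduce local finiteness to finiteness of $L\cap(B\times G)$ for bounded $B$, then use $G$-invariance to rewrite this cardinality as $\sum_{g\in G}|L_{1}\cap gB|$. The paper is slightly terser (it omits your explicit justification of the bornology on $X\otimes G_{max}$), but the argument is the same.
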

\begin{proof}
	The  subset $L$ of $X\otimes G_{max}$ is locally finite if and only if 
	$L\cap (B\times G)$ is finite  for every bounded subset $B$ of $X$.
	Since $L$ is $G$-invariant we have  bijections
	\[L\cap (B\times G)\cong \bigsqcup_{g\in G} L\cap (B\times \{g^{-1}\})\cong \bigsqcup_{g\in G} L\cap (gB\times \{1\})\cong \bigsqcup_{g\in G} L_{1}\cap gB\ .\]
	This   implies the assertion. 
\end{proof}

Let $X$ be a $G$-bornological space and $L$ be a  $G$-invariant  subset  of $X\times G$.
\begin{lem}\label{wtquzdguz2}
	$L$ is a  locally finite   subset  of $X\otimes G_{min}$ if and only if
	$L_{1}\cap gB$ is finite for every bounded subset $B$ of $X$ and {every} $g$ in $G$.
\end{lem}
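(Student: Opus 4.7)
The plan is to give a direct proof that parallels the argument for \cref{wtquzdguz1}, with the only substantive change being the different characterization of bounded subsets caused by replacing $G_{max}$ by $G_{min}$.

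The first step is to describe the bornology on $X\otimes G_{min}$. By definition, it is generated by products $B\times F$ with $B$ bounded in $X$ and $F$ a bounded (hence finite) subset of $G_{min}$. Since such products are already closed under $G$-translation (up to enlarging $B$ within the bornology, which is $G$-invariant, and noting that $gF$ is again finite), every bounded subset of $X\otimes G_{min}$ is contained in a finite union of sets of the form $B\times F$ with $B$ bounded in $X$ and $F$ a finite subset of $G$. So $L$ is locally finite in $X\otimes G_{min}$ if and only if $L\cap (B\times F)$ is finite for every such $B$ and $F$.

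The second step is to reduce to the condition on $L_1$. Decompose
\[
L\cap (B\times F)=\bigsqcup_{g\in F} L\cap (B\times \{g\})\ .
\]
Since $F$ is finite, $L\cap (B\times F)$ is finite if and only if each $L\cap (B\times\{g\})$ is finite. Using that the $G$-action on $X\times G$ is diagonal and that $L$ is $G$-invariant, the element $g^{-1}$ induces a bijection
\[
L\cap (B\times\{g\})\ \xrightarrow{\cong}\ L\cap (g^{-1}B\times\{1\})\cong L_1\cap g^{-1}B\ ,
\]
where the last identification uses the notation \eqref{ehfiohefoiwhf}. Therefore $L\cap(B\times\{g\})$ is finite if and only if $L_1\cap g^{-1}B$ is finite.

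Combining the two steps yields the equivalence: $L$ is locally finite in $X\otimes G_{min}$ if and only if $L_1\cap g^{-1}B$ is finite for every bounded subset $B$ of $X$ and every $g\in G$. Letting $g$ range over all of $G$ (and replacing $g$ by $g^{-1}$ if desired), this is precisely the stated condition. There is no real obstacle here; the only point worth handling carefully is explaining why the generators $B\times F$ already suffice, as opposed to arbitrary $G$-translates thereof, but this follows immediately from the $G$-invariance of the bornology of $X$ and the fact that finite subsets of $G$ are stable under the $G$-action.
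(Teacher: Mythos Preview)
Your proof is correct and follows essentially the same approach as the paper's own proof; the paper is just more terse, asserting directly that it suffices to test against sets of the form $B\times\{g\}$, while you spell out the intermediate reduction via $B\times F$ with $F$ finite and the decomposition over $g\in F$.
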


\begin{proof}
	The  subset $L$ of $X\otimes G_{min}$ is locally finite if and only
	$L\cap (B\times \{g\})$ is finite for every bounded subset $B$ of $X$ and $g$ in $G$.
	Since $L$ is $G$-invariant we 
	have   bijections
	\[L\cap (B\times \{g\})\cong L\cap (g^{-1}B\times \{1\})\cong L_{1}\cap g^{-1}B\ .\]
	This implies the assertion. \end{proof}

Let $X$ and $X^{\prime}$ be   two $G$-bornological spaces with the same underlying $G$-set.  

\begin{ddd}\label{grepogergeerge}
	We say that $X$ is \emph{dual to} $X^{\prime}$ if the sets of $G$-invariant locally finite subsets of
	$X\otimes G_{max}$ and $X^{\prime}\otimes G_{min}$ coincide.
\end{ddd}

 If $X$ and $X'$ are two $G$-bornological coarse spaces, then we say that $X$ is dual to $X^{\prime}$ if the underlying $G$-bornological space of $X$ is dual to the one of~$X^{\prime}$.
 
\begin{rem}
	Note that duality is not an equivalence relation. In particular, the order is relevant.
\end{rem}

\begin{ex}
	Let $S$ be a $G$-set with finite stabilizers.
	\begin{enumerate}
		\item $S_{min}$ is dual to $S_{lax}$, where $S_{lax}$ ($lax$ stands for locally $max$) is $S$ with the bornology generated by the $G$-orbits.
		\item $S_{lin}$ is dual to $S_{max}$, where $S_{lin}$ ($lin$ stands for locally $min$) is $S$ with the bornology given by subsets which have at most finite intersections with each   $G$-orbit.\qedhere
	\end{enumerate}
\end{ex}

Let $X$ be a $G$-bornological  space.
\begin{ddd}\label{regij4o34g3g}
	$X$ is called \emph{$G$-bounded} if there exists a bounded subset $B$ of $X$ such that $GB=X$.
\end{ddd}

\begin{ddd}
\label{def:gproper}
	$X$ is called \emph{$G$-proper} if the set $\{g\in G\:|\: gB\cap B\not=\emptyset\}$ is finite for every bounded subset $B$ of $X$.
\end{ddd}

If $X$ is a $G$-bornological space, then we let $X_{max}$ denote the $G$-bornological space with the same underlying $G$-set and the maximal bornology.

Let $X$ be a $G$-bornological space and $Y$ be a bornological space (which we consider as a $G$-bornological space with the trivial $G$-action).

\begin{lem}\label{wtquzdguz3}Assume:
	\begin{enumerate}
		\item\label{rgijgioergregegreger1} $X$ is $G$-proper.
		\item\label{rgijgioergregegreger2} $X$ is $G$-bounded. \end{enumerate}
	Then $Y\otimes X$ is dual to $Y\otimes X_{max}$.
\end{lem}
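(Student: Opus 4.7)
The plan is to unpack the definition of duality via Lemmas~\ref{wtquzdguz1} and \ref{wtquzdguz2}, and then use the two hypotheses on $X$ separately in the two directions of the resulting equivalence.

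Fix a $G$-invariant subset $L\subseteq(Y\times X)\times G$ and write $L_1:=L\cap(Y\times X\times\{1\})$. Bounded subsets of $Y\otimes X$ are contained in finite unions of products $A\times B$ with $A\subseteq Y$ and $B\subseteq X$ bounded, while bounded subsets of $Y\otimes X_{max}$ are contained in products $A\times X$ with $A\subseteq Y$ bounded. Because the conditions appearing in \cref{wtquzdguz1} and \cref{wtquzdguz2} are monotone under enlarging the bounded subset, duality reduces to showing the equivalence of
\[ (\mathrm{L})\quad \sum\nolimits_{g\in G}|L_1\cap g(A\times B)|<\infty\ \text{ for all bounded } A\subseteq Y,\ B\subseteq X \]
and
\[ (\mathrm{R})\quad |L_1\cap(A\times X)|<\infty\ \text{ for all bounded } A\subseteq Y\ . \]
Since $G$ acts trivially on $Y$, the diagonal translate satisfies $g(A\times B)=A\times gB$, so (L) simplifies to $\sum_{g}|L_1\cap(A\times gB)|<\infty$.

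The implication (L)$\Rightarrow$(R) uses $G$-boundedness of $X$: pick a bounded $K\subseteq X$ with $GK=X$. Then $A\times X=\bigcup_{g\in G}(A\times gK)$, hence $|L_1\cap(A\times X)|\leq\sum_{g\in G}|L_1\cap(A\times gK)|$, which is finite by (L) applied with $B=K$.

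The implication (R)$\Rightarrow$(L) uses $G$-properness of $X$ (\cref{def:gproper}): the set $F:=\{h\in G\mid hB\cap B\neq\emptyset\}$ is finite. A short calculation shows that for each $x\in X$, the set $\{g\in G\mid g^{-1}x\in B\}$ has cardinality at most $|F|$: if $g_0^{-1}x,g^{-1}x\in B$, then $g^{-1}g_0\in F$, so $g\in g_0F^{-1}$. Swapping the order of summation,
\[ \sum_{g\in G}|L_1\cap(A\times gB)|=\sum_{(y,x)\in L_1\cap(A\times X)}|\{g\in G\mid g^{-1}x\in B\}|\leq |F|\cdot|L_1\cap(A\times X)|\ , \]
which is finite by (R). The only genuine obstacle is the preliminary bookkeeping needed to rewrite duality in terms of $L_1$; after that, everything reduces to elementary double counting, with $G$-boundedness controlling the unbounded $X$-direction in (L)$\Rightarrow$(R) and $G$-properness controlling the sum over $g$ in (R)$\Rightarrow$(L).
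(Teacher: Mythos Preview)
Your proof is correct and follows essentially the same approach as the paper's: both reduce duality to the equivalence of the two conditions on $L_1$ via \cref{wtquzdguz1} and \cref{wtquzdguz2}, use $G$-boundedness for the implication (L)$\Rightarrow$(R), and use $G$-properness (via the finite multiplicity of the cover $(gB)_{g\in G}$) for (R)$\Rightarrow$(L). The only difference is that you spell out the multiplicity bound as an explicit double count, whereas the paper states it more tersely.
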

\begin{proof}
	Let $L$ be a $G$-invariant subset of $Y\times X\times G$. In view of Lemma \ref{wtquzdguz1} and Lemma \ref{wtquzdguz2}  local finiteness of $L$ in $Y\otimes X\otimes G_{max}$ or $Y\otimes X_{max}\otimes G_{min}$ is characterized by conditions on the subset $L_{1}$ of $Y\times X${; see} \eqref{ehfiohefoiwhf} for notation.
	
	We must check that the following conditions on $L_{1}$ are equivalent:
	\begin{enumerate}
		\item $|(A\times X)\cap L_{1}|<\infty$ for every  bounded subset $A$ of $Y$.
		\item $\sum_{g\in G}| (A\times gB)\cap L_{1}|<\infty$ for all bounded subsets $A$ of $Y$ and bounded subsets $B$ of $X$.
	\end{enumerate}
	We assume that $L_{1}$ satisfies Condition~\ref{rgijgioergregegreger1}.  
	Let $B$ be a bounded subset of $X$ and $A$ be a bounded subset of $Y$.
	Since $X$ {is $G$-proper, the family $(gB)_{g\in G}$ has finite multiplicity, say bounded by  $m$ in $\nat$. 
	We get
	\[\sum_{g\in G} |(A\times gB)\cap L_{1}| \le  m |(A\times X)\cap L_{1}|<\infty\ .\]}
	Consequently, $L_{1}$ satisfies Condition~\ref{rgijgioergregegreger2}.
	
	We now assume that $L_{1}$ satisfies Condition~\ref{rgijgioergregegreger2}. Let $A$ be a bounded subset of $Y$.
	Since $X$ is $G$-bounded we can choose a bounded subset $B$ of $X$ such that $GB=X$.
	Then
	\[|(A\times X)\cap L_{1}|\le \sum_{g\in G}| (A\times gB)\cap L_{1}|<\infty\ .\]
	Hence $L_{1}$ satisfies Condition~\ref{rgijgioergregegreger1}.
\end{proof}

 The following lemma explains why the  notion of duality is relevant. 
  Assume that $X$ and $X^{\prime}$ are $G$-bornological coarse spaces with the same underlying $G$-coarse space. 
  Recall the notation   $\Yo^{s}_{c}$ for the universal continuous equivariant coarse homology theory,  see  \eqref{brtoij4o5g4fwefwefewfewfbbt4}.
 \begin{lem} \label{biorgregergrege111}
 If  $X$ is dual to $X^{\prime}$, then we have a canonical equivalence in $G\Sp\cX_{c}$ \[\Yo^{s}_{c}(X\otimes G_{can,max})\simeq \Yo^{s}_{c}(X^{\prime}\otimes G_{can,min})\ .\]
 \end{lem}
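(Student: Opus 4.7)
The plan is to exploit continuity of $\Yo^s_c$ together with the definition of duality. By \cref{rguierog34trt34g} (applied to $\Yo^s_c$, which is continuous by construction), for any $G$-bornological coarse space $Y$ the canonical map
\[ \colim_{F \in \cL(Y)} \Yo^s_c(F) \to \Yo^s_c(Y) \]
is an equivalence, where $F$ carries the $G$-bornological coarse structure induced from $Y$. So I will identify the two indexing posets $\cL(X \otimes G_{can,max})$ and $\cL(X' \otimes G_{can,min})$ and check that the diagrams agree.

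First, observe that the two spaces $X \otimes G_{can,max}$ and $X' \otimes G_{can,min}$ have literally the same underlying $G$-coarse space: the coarse structure of $X$ coincides with that of $X'$ by assumption, and the coarse structure of $G_{can,max}$ coincides with that of $G_{can,min}$ (both equal the canonical coarse structure on $G$). Therefore the underlying $G$-coarse structure of the tensor product does not depend on which bornological variant we pick. Since the notion of \emph{locally finite} subset (\cref{rgieog3434gergeg}) depends only on the bornology, the underlying $G$-bornological space of $X \otimes G_{can,max}$ is $X \otimes G_{max}$, and similarly for the minimal side. By the hypothesis of duality (\cref{grepogergeerge}) the sets of $G$-invariant locally finite subsets of $X \otimes G_{max}$ and of $X' \otimes G_{min}$ coincide, so $\cL(X \otimes G_{can,max}) = \cL(X' \otimes G_{can,min})$ as subposets of the power set.

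Next, for $F$ in this common collection, I need to check that the $G$-bornological coarse structure induced on $F$ from either ambient space agrees. The induced coarse structure agrees because the ambient coarse structures agree, as noted above. The induced bornology in either case consists of subsets of the form $B \cap F$ for $B$ bounded in the ambient space; since $F$ is locally finite, every such intersection is finite. Conversely, every finite subset of $F$ is bounded (every bornology contains the finite subsets). Hence in both cases the induced bornology on $F$ is the minimal bornology, so the two induced $G$-bornological coarse structures on $F$ coincide.

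Combining the two observations, the two colimit diagrams $F \mapsto \Yo^s_c(F)$ are literally equal as functors from this common poset to $G\Sp\cX_c$, yielding
\[ \Yo^s_c(X \otimes G_{can,max}) \simeq \colim_{F \in \cL(X \otimes G_{can,max})} \Yo^s_c(F) = \colim_{F \in \cL(X' \otimes G_{can,min})} \Yo^s_c(F) \simeq \Yo^s_c(X' \otimes G_{can,min}) \]
by two applications of continuity. I expect no real obstacle here; the only point that requires a moment of care is the identification of the induced bornology on a locally finite subset with the minimal bornology, which is what makes the two sides literally equal rather than merely equivalent.
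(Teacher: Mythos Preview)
Your proof is correct and takes essentially the same approach as the paper: the paper defers to the functorial version (\cref{biorgregergrege1111}) specialized to $I=*$, and that proof is precisely your argument---identify the posets of invariant locally finite subsets via the duality hypothesis, check that the induced coarse structures agree (same underlying $G$-coarse space) and that the induced bornologies are both minimal (by local finiteness), and then apply continuity of $\Yo^s_c$.
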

 \begin{proof}
This lemma is a special case of the following Lemma \ref{biorgregergrege1111} for the case $I=*$.
\end{proof}

We will need  a functorial variant of \cref{biorgregergrege111}. 
We  consider  a small    category  $I$ and  a functor   
 $X_{0} \colon I \to G\Coarse$.  
   Assume further that we are given two lifts  $X,X^{\prime}$ of $X_{0}$ to functors from $I$ to $G\BC$ along the forgetful functor $G\BC\to G\Coarse$
   as depicted in the following diagram:
   \[ \xymatrix{&&G\BC\ar[d]\\I\ar[rr]_-{X_{0}}\ar[urr]^-{X,X^{\prime}}&&G\Coarse}\]

 Extending  the notion of continuous equivalence (\cref{grioer34g43g43ggg4}), we call two functors $I\to G\BC$ continuously equivalent if they become equivalent after application of $\Yo_{c}^{s}$.
  
 \begin{lem} \label{biorgregergrege1111}
  If $X(i)$ is dual to $X^{\prime}(i)$ for every $i$ in $I$,
 then  $X \otimes G_{can,max}$ and $X^{\prime} \otimes G_{can,min}$ are  continuously equivalent.
 \end{lem}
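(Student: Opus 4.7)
The strategy is to apply continuity of $\Yo^{s}_{c}$ to identify both functors with a common colimit diagram built from $G$-invariant locally finite subsets, and then to exploit the fact that on underlying $G$-sets the morphisms $X(f)\otimes\id_{G}$ and $X'(f)\otimes\id_{G}$ coincide.

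First I would observe that the duality hypothesis transports from the pair $(X(i)\otimes G_{max},\,X'(i)\otimes G_{min})$ in \cref{grepogergeerge} to the pair $(X(i)\otimes G_{can,max},\,X'(i)\otimes G_{can,min})$. Indeed, local finiteness (\cref{rgieog3434gergeg}) is a purely bornological property, and the bornologies agree: the bornology of $X(i)\otimes G_{can,max}$ and of $X(i)\otimes G_{max}$ are both generated by the subsets $B\times G$ for $B$ bounded in $X(i)$, and analogously for $X'(i)\otimes G_{can,min}$ versus $X'(i)\otimes G_{min}$. Hence the posets of $G$-invariant locally finite subsets of $X(i)\otimes G_{can,max}$ and $X'(i)\otimes G_{can,min}$ coincide; denote the common poset by $\cL(i)$. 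Moreover, for each $L\in\cL(i)$ the induced $G$-bornological coarse space structures on $L$ agree: the underlying $G$-coarse structure is in both cases restricted from the common $G$-coarse space $X_{0}(i)\otimes G_{can}$, and the induced bornology reduces in both cases to the minimal (finite-subsets) bornology, by local finiteness.

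Combined with the continuity formula (\cref{rguierog34trt34g}), this yields pointwise equivalences
\[\Yo^{s}_{c}(X(i)\otimes G_{can,max})\simeq \colim_{L\in\cL(i)}\Yo^{s}_{c}(L)\simeq \Yo^{s}_{c}(X'(i)\otimes G_{can,min})\]
for every $i$ in $I$. For naturality in $I$, the crucial observation is that for every morphism $f\colon i\to j$ in $I$ the morphisms $X(f)\otimes\id_{G}$ and $X'(f)\otimes\id_{G}$ in $G\BC$ are given by the same equivariant map $X_{0}(i)\times G\to X_{0}(j)\times G$, since $X$ and $X'$ both lift $X_{0}$. Any proper controlled map $\phi\colon Y\to Y'$ in $G\BC$ sends $\cL(Y)$ to $\cL(Y')$ via $L\mapsto \phi(L)$: for $B'\subseteq Y'$ bounded, $\phi^{-1}(B')$ is bounded in $Y$, so $\phi(L)\cap B'\subseteq \phi(L\cap\phi^{-1}(B'))$ is finite. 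Hence both morphisms $X(f)\otimes\id_{G}$ and $X'(f)\otimes\id_{G}$ induce the same map of posets $\cL(i)\to\cL(j)$ and the same morphisms on the corresponding locally finite pieces. Naturality of the continuity colimit then promotes the pointwise equivalences into a natural equivalence of functors $I\to G\Sp\cX_{c}$, which is precisely the asserted continuous equivalence.

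The main point of care is the naturality of the colimit description of \cref{rguierog34trt34g} with respect to morphisms in $G\BC$, namely that for any $\phi\colon Y\to Y'$ in $G\BC$ the induced map $\colim_{L\in\cL(Y)}\Yo^{s}_{c}(L)\to\colim_{L'\in\cL(Y')}\Yo^{s}_{c}(L')$ along $\phi_{\ast}\colon L\mapsto \phi(L)$ identifies with $\Yo^{s}_{c}(\phi)$ under the continuity equivalences. This is automatic from $\Yo^{s}_{c}$ being a functor on $G\BC$, combined with the fact that for each $L\in\cL(Y)$ the inclusion $L\hookrightarrow Y$ is a morphism in $G\BC$ and that the resulting diagram of inclusions is cofinal among subspaces approximating $Y$.
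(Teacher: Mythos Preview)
Your proposal is correct and follows essentially the same approach as the paper's proof: both identify the posets of $G$-invariant locally finite subsets on each side, verify that the induced $G$-bornological coarse structures on each $L$ coincide (same coarse structure from $X_{0}(i)\otimes G_{can}$, minimal bornology by local finiteness), and then use continuity of $\Yo^{s}_{c}$ to obtain the equivalence. The only real difference is in packaging the naturality: the paper makes the $\infty$-categorical construction explicit by forming the Grothendieck construction $I^{X}$ of the functor $i\mapsto\cL(i)$, building a functor from $I^{X}$ to spans $X(i)\otimes G_{can,max}\leftarrow L\to X'(i)\otimes G_{can,min}$ in $G\BC$, applying $\Yo^{s}_{c}$, and left Kan extending along $I^{X}\to I$ to obtain a functor from $I$ to spans in $G\Sp\cX_{c}$ whose legs are equivalences by continuity; your argument invokes the naturality of the continuity colimit more informally, which is fine but is precisely what the Grothendieck/span construction makes rigorous.
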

 \begin{proof} 
 For $i$ in $I$
  let $\cL_{X(i)}$ and $\cL_{X^{\prime}(i)}$  be the posets  of invariant locally finite subsets 
 of $X(i)\otimes G_{can,max}$   and  $X'(i) \otimes G_{can,min}$   equipped with their induced structures, respectively. 
  {We first show that the assumption of the lemma implies an equality of posets}
$\cL_{X(i)}=\cL_{X^{\prime}(i)}$. Indeed, the assumption says that the collections of underlying sets of  the elements of $\cL_{X(i)}$ and $\cL_{X^{\prime}(i)}$ are equal. In addition, 
 for $L$ in $\cL_{X(i)}$ its   coarse structure  coincides with the one induced from $X^{\prime}(i)\otimes G_{can,min}$.
 Finally,   in view of the definition of the notion of local finiteness,   the induced bornological structures
 from $X(i)\otimes G_{can,max}$ and $X^{\prime}(i)\otimes G_{can,min}$
  are the minimal  one  in both cases.  

  We have a functor $I\to \mathbf{Poset}$ which sends $i$ in $I$ to the poset $\cL_{X(i)}$ and $i\to i^{\prime}$ to the map
  $\cL_{X(i)}\to \cL_{X(i^{\prime})}$ induced by the proper map $X(i)\to X(i^{\prime})$.
  We let $I^{X}$  be the Grothendieck construction for this functor. 
   
  We have a functor from $I^{X}$ to {spans} in $G\BC$ which evaluates on the object $(i,L)$ of $I^{X}$ with $L\in \cL_{X(i)}$ to
  \[X(i)\otimes G_{can,max}\leftarrow L =L^{\prime}\to X^{\prime}(i)\otimes G_{can,min}\ .\]
  Here $L^{\prime}$ is the set $L$ considered as an element of $\cL_{X^{\prime}(i)}$.
  
  We now apply $\Yo_{c}^{s}$ and form the left Kan extension of the resulting  diagram  along the forgetful functor $I^{X}\to I$. Then we get a functor {from $I$ to the category of spans in $G\Sp\cX_{c}$} which evaluates at $i$ in $I$ to
 \[\Yo^{s}_{c}(X(i)\otimes G_{can,max})\xleftarrow{\simeq}  \colim_{L\in \cL_{X(i)}} \Yo_{c}^{s}(L)=\colim_{L^{\prime}\in \cL_{X^{\prime}(i)}} \Yo_{c}^{s}(L^{\prime})\xrightarrow{\simeq} \Yo_{c}^{s}(X^{\prime}(i)\otimes G_{can,min})\ .\]
 By continuity of $\Yo^{s}_{c}$, see \cref{rguierog34trt34g}, the left and the right morphisms are equivalences as indicated. Therefore, this diagram provides the equivalence claimed in the lemma.
\end{proof}

 \section{Continuous equivalence of coarse structures}

In general, the value of an equivariant  coarse homology theory on $G$-bornological coarse spaces depends non-trivially on the coarse structure. In this section we show that in the case of a contiuous equivariant coarse homology theory one can change the coarse structure to some extent without changing the value of the homology theory. This is formalized in the notion of a continuous equivalence{; see \cref{grioer34g43g43ggg4}}.

 Let $X$ be a $G$-bornological space with two compatible $G$-coarse structures $\cC$ and $\cC^{\prime}$ such that $\cC\subseteq \cC^{\prime}$.
We write $X_\cC$ and $X_{\cC^{\prime}}$ for the associated $G$-bornological coarse spaces.
\begin{lem}\label{rgirogergergergerg}
Assume that for every locally finite subset $L$ of $X$ the coarse structures on $L$ induced by $\cC$ and $\cC'$ coincide.
Then $\id_X \colon X_\cC \to X_{\cC'}$ is a continuous equivalence.
\end{lem}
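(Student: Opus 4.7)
The plan is to verify the defining property of a continuous equivalence (\cref{grioer34g43g43ggg4}), namely that $\Yo^s_c(\id_X) \colon \Yo^s_c(X_\cC) \to \Yo^s_c(X_{\cC'})$ is an equivalence in $G\Sp\cX_c$. The key input is the continuity property of $\Yo^s_c$ recalled in \cref{rguierog34trt34g}: the value of $\Yo^s_c$ on a $G$-bornological coarse space is the colimit of its values on the poset of $G$-invariant locally finite subsets (equipped with their induced structures).

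First I would observe that the notion of a locally finite subset (\cref{rgieog3434gergeg}) depends only on the underlying $G$-bornological space, which by hypothesis is the same for $X_\cC$ and $X_{\cC'}$. Consequently, the posets $\cL(X_\cC)$ and $\cL(X_{\cC'})$ are canonically identified. Next, the assumption of the lemma states exactly that for each $L$ in this common poset, the induced $G$-coarse structures from $\cC$ and $\cC'$ agree. Together with the already-identified induced bornologies, this means that for every $L$ the $G$-bornological coarse spaces obtained by inducing from $X_\cC$ and from $X_{\cC'}$ are literally the same object of $G\BC$.

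Finally, I would assemble these observations into a commuting diagram
\[\xymatrix{\colim_{L \in \cL(X_\cC)} \Yo^s_c(L) \ar[r]^-{=} \ar[d]_-{\simeq} & \colim_{L \in \cL(X_{\cC'})} \Yo^s_c(L) \ar[d]^-{\simeq} \\ \Yo^s_c(X_\cC) \ar[r]_-{\Yo^s_c(\id_X)} & \Yo^s_c(X_{\cC'})}\]
in which the vertical maps are equivalences by \cref{rguierog34trt34g} applied to the continuous coarse homology theory $\Yo^s_c$, the top horizontal map is the identity by the previous step, and the bottom horizontal map is the one induced by $\id_X$. A two-out-of-three argument then shows that $\Yo^s_c(\id_X)$ is an equivalence, as required.

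There is no genuine obstacle here: the argument is a direct application of continuity together with the fact that local finiteness is a bornological notion. The only point that needs care is checking that the inclusion-induced maps $\Yo^s_c(L) \to \Yo^s_c(X_\cC)$ and $\Yo^s_c(L) \to \Yo^s_c(X_{\cC'})$ are compatible via $\id_X$, which is immediate from the functoriality of $\Yo^s_c$ and the fact that $\id_X$ restricts to the identity on each locally finite $L$.
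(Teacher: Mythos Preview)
Your proof is correct and is essentially identical to the paper's argument: both use continuity of $\Yo^s_c$ (\cref{rguierog34trt34g}) to express each side as a colimit over the common poset of $G$-invariant locally finite subsets, observe that the induced $G$-bornological coarse structures on each such $L$ coincide by hypothesis, and conclude by a two-out-of-three argument in the resulting commutative square. The only cosmetic difference is that the paper draws the square transposed relative to yours.
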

\begin{proof}
Let $\cL $  denote  the poset  of locally finite subsets of $X$. 
Then the claim follows from the commutative square
\[\xymatrix{
 \colim_{L \in \cL} \Yo^s_c(L_{X_{\cC}})\ar[r]^-{\simeq}\ar[d]^-{\simeq} & \Yo^s_c(X_\cC)\ar[d]^{\Yo^{s}_{c}(\id_{X})} \\
 \colim_{L \in \cL} \Yo^s_c(L_{X_{\cC^{\prime}}})\ar[r]^-{\simeq} & \Yo^s_c(X_{\cC'})
}\]
 The horizontal maps are equivalences by continuity{; see} \cref{rguierog34trt34g}. The left vertical map is an equivalence 
since $L_{X_{\cC}}=L_{X_{\cC^{\prime}}}$ for every $L$ in $\cL$ by assumption,
where $L_{X_{\cC}}$ indicates that we equip $L$ with the coarse structure induced from $X_{\cC}$.
\end{proof}

The  identity on the underlying sets induces a    morphism \begin{equation}\label{ewfioj2of23f23ffff}G_{can,max}\to G_{max,max}\end{equation} of $G$-bornological coarse spaces. If $X$ is a $G$-bornological coarse space, then we get an induced morphism
\begin{equation}\label{ewfioj2of23f23ff}
X\otimes G_{can,max}\to X\otimes G_{max,max}\ .
\end{equation}
 \begin{lem}\label{wtquzdguz4}
	If $X$ is $G$-bounded, then the morphism \eqref{ewfioj2of23f23ff}
	is a continuous equivalence.
	 \end{lem}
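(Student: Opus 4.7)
The plan is to verify the criterion of Lemma \ref{rgirogergergergerg}. The two $G$-bornological coarse spaces $X\otimes G_{can,max}$ and $X\otimes G_{max,max}$ share the same underlying $G$-bornological space because the bornologies of $G_{can,max}$ and $G_{max,max}$ agree (both being the maximal one). It therefore suffices to show that the coarse structures induced on every $G$-invariant locally finite subset $L\subseteq X\times G$ agree. Since $\id$ is a morphism $X\otimes G_{can,max}\to X\otimes G_{max,max}$, the canonical structure is automatically contained in the maximal one on $L$; only the reverse inclusion requires work.

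By $G$-boundedness, fix a bounded subset $B$ of $X$ with $GB=X$, and let $U$ be a $G$-invariant coarse entourage of $X$. By compatibility of coarse and bornological structures, $U[B]$ is bounded, so local finiteness of $L$ implies that
\[ F:=\{g\in G \mid L\cap(U[B]\times\{g\})\neq\emptyset\} \]
is a finite subset of $G$. Now consider any pair $((x_1,g_1),(x_2,g_2))\in (L\times L)\cap (U\times(G\times G))$. Using $GB=X$, choose $h\in G$ with $h^{-1}x_1\in B$; by the $G$-invariance of $L$ and $U$ we have $(h^{-1}x_1,h^{-1}g_1),(h^{-1}x_2,h^{-1}g_2)\in L$ and $h^{-1}x_1,h^{-1}x_2\in U[B]$, whence $h^{-1}g_1,h^{-1}g_2\in F$ and hence $(g_1,g_2)\in h(F\times F)\subseteq G\cdot(F\times F)$.

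Since $G\cdot(F\times F)$ is an invariant entourage of the canonical coarse structure on $G$, the entourage $U\times\bigl(G\cdot(F\times F)\bigr)$ belongs to the coarse structure of $X\otimes G_{can,max}$. Because every coarse entourage of $X\otimes G_{max,max}$ is contained in one of the form $U\times(G\times G)$ for some $G$-invariant coarse entourage $U$ of $X$, the above inclusion proves that the $\max$-induced coarse structure on $L$ is contained in the $can$-induced one. Lemma \ref{rgirogergergergerg} then yields the claim.

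The main (but essentially routine) point is the translation argument: $G$-boundedness allows us to move any point of $L$ into $B\times G$ by an element of $G$, and local finiteness then bounds the occurring $G$-components by the finite set $F$, converting the a priori uncontrolled $G$-spread of pairs in $L$ into one controlled by the canonical coarse structure.
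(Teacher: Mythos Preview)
Your proof is correct and follows essentially the same route as the paper's: both invoke Lemma~\ref{rgirogergergergerg}, translate a pair in $L$ into the bounded set $U[B]\times G$ using $G$-boundedness and $G$-invariance, and then use local finiteness to confine the $G$-coordinates to a finite set $F$ (the paper's $W$), yielding containment in $U\times G(F\times F)$. The only cosmetic point is that your claim $h^{-1}x_2\in U[B]$ tacitly uses $U=U^{-1}$ (or else choose $h$ with $h^{-1}x_2\in B$ instead), which is a harmless WLOG.
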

\begin{proof}
	Let $L$ be a $G$-invariant locally finite subset of the underlying bornological space of $X\otimes G_{can,max}$. By \cref{rgirogergergergerg}, it suffices to show that the coarse structure induced on $L$ from  $X\otimes G_{max,max}$ is contained in the coarse structure induced from $X\otimes G_{can,max}$ (since the other containment is obvious).
	
	Since $X$ is $G$-bounded  (see \cref{regij4o34g3g}) by assumption, there exists a bounded subset $A$ of $X$ such that $GA=X$. Let $U$ be an invariant entourage of $X$ containing the diagonal.
	 {It will suffice to show that $(U \times (G \times G)) \cap (L \times L)$ is an element of the coarse structure induced on $L$ by $X \otimes G_{can,max}$. Note that there is an implicit reordering of the factors in the product to make sense of the intersection.}
	
	Note that  $U[A]$ is bounded in $X$ and that $U\subseteq G(U[A]\times A)$. Because $L$ is locally finite, 
	$L^{\prime}:=L\cap (U[A]\times G)$ is finite. Let $W$ be the projection of $L^{\prime}$ to $G$. It is a finite subset of~$G$.
	 {We claim that}
	\[(U\times (G\times G))\cap (L\times L)\subseteq (U\times G(W\times W)) \cap (L\times L)\ .\]
	Indeed, the condition that 
	\[  {(ga,ga^{\prime},h,h^{\prime})} \in (G(U[A]\times A)\times (G\times G))\cap (L\times L)\]
	with  $a\in U[A]$ and $a^{\prime}\in A$ is equivalent to 
	\[ {(a,a^{\prime},g^{-1}h,g^{-1}h^{\prime})} \in ((U[A]\times A)\times (G\times G))\cap (L\times L)\ .\]
	This implies that $g^{-1}h\in W$ and $g^{-1}h^{\prime}\in W$, and hence
	$(h,h^{\prime})\in G(W\times W)$.	
	
	Hence we conclude that the restriction of $U\times (G\times G)$ to $L$ is contained in the entourage $(U\times G(W\times W))\cap (L\times L)$ induced from $X\otimes G_{can,max}$.
\end{proof}

\begin{ddd}
	We let $G\Sp\cX_{bd}$ denote the full subcategory of $G\Sp\cX$ generated under colimits by the images of $G$-bounded $G$-bornological coarse spaces under $\Yo^s$.
\end{ddd}

\begin{ex}\label{rgieorogergergerg}
	Let $K$ be a $G$-simplicial complex. We consider the $G$-uniform bornological coarse space $K_{d,d,d}$  obtained from  $K$  with the structures induced by the spherical path quasi-metric.
	We claim that if $K$ is $G$-finite, then $\cO^{\infty}(K_{d,d,d})$ belongs to $G\Sp\cX_{bd}$.
	Indeed, $K$ has finitely many $G$-cells. In view of the homological properties of $\cO^{\infty}$
	we know that $\cO^{\infty}(K_{d,d,d})$ is a finite colimit of objects of the form
	$\cO^{\infty}(S_{disc,min,min})$ for $S$ in $G\Orb$; compare the proof of \cref{gio3jo24f22f3}.
	Because $\cO^{\infty}(S_{disc,min,min})\simeq \Sigma \Yo^{s}(S_{min,min})$ by \cite[Prop.~9.35]{equicoarse} and $S_{min,min}$ is $G$-bounded,
	we conclude the claim.
\end{ex}

 The morphism \eqref{ewfioj2of23f23ffff}  in turn induces a
  natural transformation between endofunctors  \begin{equation}\label{vrkhi34fjrvr111}-\otimes G_{can,max}\to -\otimes G_{max,max} \colon G\Sp\cX\to G\Sp\cX\ .\end{equation}

\begin{kor}\label{rgioreo44reg} If $X$ belongs to $G\Sp\cX_{bd}$, then \eqref{vrkhi34fjrvr111} induces a continuous equivalence
\[X\otimes G_{can,max}\to  X\otimes G_{max,max}\ .\]\end{kor}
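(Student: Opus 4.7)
My plan is to reduce the general statement to \cref{wtquzdguz4} via a standard closure-under-colimits argument. The key observation is that both endofunctors $- \otimes G_{can,max}$ and $- \otimes G_{max,max}$ on $G\Sp\cX$ preserve colimits: indeed, each is a twist functor $T_S^{Mot}$ for $S = G_{can,max}$ or $S = G_{max,max}$ (viewed through $\Yo^s$), and as noted after diagram \eqref{rgeieo4343frfwfefweftw}, such twists preserve colimits. The localization functor $C^s$ of \eqref{oih4oifhoifh23f2f23dd2d2d2} is also colimit-preserving.

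Consequently, composing with $C^s$ yields a natural transformation
\[ \tau \colon C^s(- \otimes G_{can,max}) \to C^s(- \otimes G_{max,max}) \]
between two colimit-preserving functors $G\Sp\cX \to G\Sp\cX_c$. The full subcategory $\bD \subseteq G\Sp\cX$ consisting of those $X$ for which $\tau_X$ is an equivalence is therefore closed under colimits, since equivalences in $G\Sp\cX_c$ are detected after passing to any fixed colimit diagram and stability of equivalences under colimits is a standard fact in presentable stable $\infty$-categories.

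Next I would verify that $\bD$ contains $\Yo^s(Y)$ for every $G$-bounded $G$-bornological coarse space $Y$. Since $\Yo^s$ is symmetric monoidal, the component of $\tau$ at $\Yo^s(Y)$ is precisely $C^s$ applied to the morphism $\Yo^s(Y \otimes G_{can,max}) \to \Yo^s(Y \otimes G_{max,max})$ induced by \eqref{ewfioj2of23f23ff}. By \cref{wtquzdguz4}, this morphism is a continuous equivalence, i.e., an equivalence after $C^s$. Hence $\Yo^s(Y) \in \bD$ for every $G$-bounded $Y$.

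Finally, by the very definition of $G\Sp\cX_{bd}$ as the smallest full subcategory of $G\Sp\cX$ closed under colimits and containing the images under $\Yo^s$ of all $G$-bounded $G$-bornological coarse spaces, we conclude $G\Sp\cX_{bd} \subseteq \bD$, which is exactly the claim of the corollary. I do not anticipate a serious obstacle here; the only point requiring some care is confirming that each of the three functors $- \otimes G_{can,max}$, $- \otimes G_{max,max}$, and $C^s$ preserves colimits, which is already recorded in the earlier discussion of the excerpt.
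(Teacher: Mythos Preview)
Your argument is correct and follows the same route as the paper's proof, just spelled out in more detail: the paper simply notes that $\otimes$ on $G\Sp\cX$ preserves colimits in each variable separately (citing \cite[Lem.~4.17]{equicoarse}) and invokes \cref{wtquzdguz4}. One small terminological point: the functors $-\otimes G_{can,max}$ and $-\otimes G_{max,max}$ are not literally instances of $T_S^{Mot}$ as defined in the paper (that notation is reserved for twisting by $S_{min,min}$ for a $G$-set $S$), but your substantive claim---that they preserve colimits because the descended $\otimes$ does---is exactly what is needed and is what the paper uses.
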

\begin{proof}
  This follows directly from \cref{wtquzdguz4} since the symmetric monoidal structure  $\otimes$ on $G\Sp\cX$ commutes with colimits in each variable separately{; see} \cite[Lem.~4.17]{equicoarse}.
\end{proof}

Recall \cref{fiofhjweio23r23r23r}  of the $G$-set of coarse components $\pi_{0}(X)$ of a $G$-coarse space $X$.  

Let $X$ be a $G$-set with two $G$-coarse structures $\cC$ and $\cC^{\prime}$ such that $\cC\subseteq \cC^{\prime}$.
We write 
$X_{\cC,max}$ and $X_{\cC^{\prime},max}$ for the associated $G$-bornological coarse spaces with the maximal bornology.
\begin{lem}\label{wtquzdguz5}
	If the canonical map $\pi_{0}(X_{\cC}) \to \pi_{0}(X_{\cC^{\prime}})$ is an isomorphism,
	then the morphism
	\[X_{\cC,max}\otimes G_{can,min}\to X_{\cC^{\prime},max}\otimes G_{can,min}\] is a   continuous equivalence. \end{lem}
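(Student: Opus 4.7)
The plan is to apply \cref{rgirogergergergerg}. Since the map $X_{\cC,max}\otimes G_{can,min}\to X_{\cC',max}\otimes G_{can,min}$ is the identity on the common underlying $G$-bornological space, it suffices to show that for every $G$-invariant locally finite subset $L$, the coarse structures induced on $L$ from the two sides agree. The inclusion $\cC\subseteq \cC'$ gives one direction for free, so the real work lies in the reverse inclusion.

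First I would unpack local finiteness. The bornology of $X_{max}\otimes G_{can,min}$ is generated by products $A\times F$ with $A\subseteq X$ arbitrary and $F\subseteq G$ finite; combined with $G$-invariance of $L$, the only new input from local finiteness is that $L_1 := \{x\in X \mid (x,e)\in L\}$ is finite, in which case $L = \{(gx,g) \mid g\in G,\ x\in L_1\}$. Consequently, for every finite $B\subseteq G$ the set $F_B := \bigcup_{b\in B} b L_1$ is also finite. Note further that the underlying $G$-bornological space is the same for both spaces in the statement, so $L$ is locally finite with respect to one if and only if it is locally finite with respect to the other.

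Second I would take an arbitrary invariant generating entourage of $X_{\cC',max}\otimes G_{can,min}$; by cofinality it lies inside a product $U'\times W$ with $U'\in \cC'^{G}$ and $W = G(B\times B)$ for some finite $B\subseteq G$. Given a pair $((x_1,g_1),(x_2,g_2))\in (U'\times W)\cap (L\times L)$, write $(g_1,g_2)=(gb_1,gb_2)$ with $g\in G$ and $b_1,b_2\in B$. Invariance of $L$ and $U'$ then shows $(g^{-1}x_1,g^{-1}x_2) \in U'\cap (F_B\times F_B)$. The hypothesis that $\pi_0(X_\cC)\to \pi_0(X_{\cC'})$ is a bijection says that any two points of $F_B$ related by $U'$ also lie in a common entourage of $\cC$. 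Since $(F_B\times F_B)\cap U'$ is finite, the finitely many choices of such entourages can be amalgamated into a single $V\in \cC$ containing every such pair; in particular $(g^{-1}x_1,g^{-1}x_2)\in V$, hence $((x_1,g_1),(x_2,g_2))\in (GV\times W)\cap (L\times L)$. This realizes the original restricted entourage inside the coarse structure of $X_{\cC,max}\otimes G_{can,min}$ and completes the proof.

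The main obstacle is the combinatorial bookkeeping in the last step: one needs to amalgamate the finitely many $V_{p,q}\in \cC$ provided by the hypothesis on coarse components for $(p,q)\in (F_B\times F_B)\cap U'$ into a single entourage independent of the pair $((x_1,g_1),(x_2,g_2))$. This is possible precisely because the maximal bornology on $X$ combined with local finiteness forces $L_1$, and hence $F_B$, to be finite. The structure of the argument is closely parallel to the proof of \cref{wtquzdguz4}, with $F_B$ playing the role of the $G$-bounded generating set there.
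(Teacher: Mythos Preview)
Your proof is correct and follows essentially the same strategy as the paper: both apply \cref{rgirogergergergerg}, reduce to a $G$-invariant locally finite $L$ with finite $L_1$, and use the $\pi_0$ hypothesis on the finite set $BL_1$ (the paper uses $B'L_1$ with $B'=W[B]$) to produce the required entourage in $\cC$. Your normalization---translating so both group coordinates land in $B$---is slightly cleaner than the paper's, which translates one coordinate to the identity and consequently needs $W^2$ rather than $W$ on the target side.
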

\begin{proof}
	Let $L$ be a locally finite subset of the underlying  $G$-bornological space of $X_{\cC,max}\otimes G_{can,min}$.
	By \cref{rgirogergergergerg}, it suffices to show that every entourage of the coarse structure induced on $L$ by $X_{\cC^{\prime},max}\otimes G_{can,min}$ is contained in an entourage of the coarse structure induced from $X_{\cC,max}\otimes G_{can,min}$.

	Let $W:=G(B\times B)$ be an entourage of $G_{can,min}$ for some bounded subset $B$ of $G_{can,min}$.  {We can assume that $B$ contains the neutral element and is closed under inverses since this will only enlarge the entourage $W$.}
	Furthermore, let 
	$V$ be in $\cC^{\prime}$.  It suffices to show that
	$(V \times W)\cap (L\times L)$ is contained in an entourage of the form
	$(U\times W^{2})\cap (L\times L)$ for some entourage $U$ in $\cC$, where  $W^{2}:=W\circ W$ denotes the composition of $W$ with itself,  see \eqref{rv3roih43iuoff3fwe}.
	 {Note that we are implicitly permuting the factors of the products to make sense of the intersection.}

	The subset  $B^{\prime}:=W[B]$ of $G$ is finite.
	Note that  $L_{1}$, see \eqref{ehfiohefoiwhf}, and hence also $B^{\prime}L_{1}$ are finite. Since $\pi_{0}( {X_\cC})\cong \pi_{0}( {X_{\cC'}})$,
	there exists an invariant entourage $U$ of $X$ such that
	$V\cap (L_{1}\times B^{\prime}L_{1})\subseteq U$. 
	 {We show that this implies}
	\[ (V\times W)\cap (L\times L)\subseteq (U\times W^{2})\cap (L\times L)\ .\]
	Indeed, for  $l,l^{\prime}$ in $L_{1}$ the condition
	$((gl,g),(g^{\prime}l^{\prime},g^{\prime}))\in V \times W$ implies {$(g,g')\in W$. Hence there exists $h$ in $G$ such that $hg$ and $hg'$ are contained in $B$. Then $(hg',1)$ and thus $(g^{-1}g',(hg)^{-1})$ are in $W$. Since $hg$ is in $B$, so is $(hg)^{-1}$. Hence $g^{-1}g'$ is in $W[B]$ and $g'$ is in $gW[B]=gB'$.} 
	We write $g^{\prime}=gb$ for $b$ in $B^{\prime}$. Then 
	$((l,1),(bl^{\prime},b))\in U\times W^{2}$ and hence also
	$((gl,g),(g^{\prime}l,g^{\prime}))\in U\times W^{2}$ by $G$-invariance of $U$ and $W^{2}$.
\end{proof}

\section{Assembly and forget-control maps}
Morally, an assembly map is the map induced in an equivariant homology theory by the projection $W\to *$ for some $G$-topological space $W$ with certain relations with classifying spaces.
In the present section $W$ will be the Rips complex associated to a $G$-bornological coarse space $X$.
 
On the other side, the  prototype for a forget-control map  is the map
$F^{\infty}(X)\to  \Sigma F^{0}(X)$ induced by the cone boundary.

These two maps will be twisted  by $G$-bornological coarse spaces derived from the $G$-set $G$ equipped with   suitable coarse and bornological structures. The notation for the assembly map associated to a $G$-bornological coarse space will be $\alpha_{X}$, and the forget-control map will be denoted by $\beta_{X}$.

 {In this section, we compare the assembly map $\alpha_{X}$ and the forget-control map $\beta_{X}$. The main results are  \cref{rgiofwewefeewfewf} and \cref{cor:comparison}.}

The comparison argument will go through  intermediate versions of the forget-control map denoted by $\beta^{\pi_{0}}_{X}$ and $\beta^{\pi_{0}^{weak}}_{X}$. The structure of the comparison argument is as follows:
\begin{enumerate}
\item $\beta_{X}$ and $\beta_{X}^{\pi_{0}}$ are  compared in  \cref{lem:beta-beta-pi0}.
\item $\beta_{X}^{\pi_{0}}$ and $\beta_{X}^{\pi_{0}^{\weak}}$ are compared in   \cref{lem:beta-pi0-beta-pi0weak}.
\item $\beta_{X}^{\pi_{0}^{\weak}}$ and $\alpha_{X}$ are compared in  \cref{lem:alpha-beta-pi0weak}.
\end{enumerate}
The combination of these results yields one of the main results (\cref{rgiofwewefeewfewf}).

Before we consider the forget-control maps themselves, we investigate  preliminary versions of them defined on $G$-simplicial complexes.  Let $G\Simpl$ denote the category of $G$-simplicial complexes. A $G$-simplicial complex  $K$
  comes with the invariant spherical path quasi-metric which induces a $G$-uniform bornological coarse structure on $K$. We refer to  \cref{ufewiofheiwfhuewifewew} and \cref{wrgoijo42reg} for the corresponding notation. We thus have the following functors  \begin{equation}\label{relkjeorigrfr43rrf43f}
k_{d,d,d}\ , k_{d,d,max}\ , k_{d,max,max}\colon G\Simpl\to G\UBC\ ,\ K\mapsto K_{d,d,d}\  , K_{d,d,max}\ , k_{d,max,max}\ , 
\end{equation}
and 
\[k_{d,d}\ ,k_{d,max}\ ,k_{max,max}\colon G\Simpl\to G\BC\ , \quad K\mapsto K_{d,d}\ , K_{d,max}\ , K_{max,max}\ .\]
Note that $\cF\circ k_{d,d,d}\simeq k_{d,d}$, $\cF\circ k_{d,d,max}\simeq k_{d,max}$ and $\cF\circ k_{d,max,max}\simeq k_{max,max}$, where 
$\cF$ is  the forgetful functor \eqref{veroihi3uuhfove}.
 
We consider the transformations between functors $G\Simpl\to G\Sp\cX$
obtained by precomposing  the cone boundary map \eqref{feojp2r2} with $k_{d,d,d}$ or $k_{d,max,max}$:
\begin{equation}\label{jjkjkvwev}
\beta^{max} \colon (\cO^{\infty}\circ k_{d,max,max})\otimes G_{can,min}\to (\Sigma \Yo^{s} \circ k_{max,max})\otimes   G_{can,min}
\end{equation}
and
\begin{equation}\label{jjkjkvwev1}
\beta^d \colon (\cO^{\infty}\circ k_{d,d,d})\otimes G_{max,max}\to (\Sigma \Yo^{s} \circ k_{d,d})\otimes   G_{max,max} \ .
\end{equation}
Recall \cref{gu92zue98wregwew} of the notion of $G$-finiteness of a $G$-simplicial complex $K$.
\begin{ddd} 
		A $G$-simplicial complex $K$ is \emph{$G$-proper} if the $G$-bornological space $K_{d}$ is $G$-proper (see \cref{def:gproper}).
\end{ddd}

We let $G\Simpl^{\itconn,\itprop,\itfin} $ denote the full subcategory of $  G\Simpl$ of connected, $G$-proper and $G$-finite $G$-simplicial complexes.

Extending  the notion of continuous equivalence (\cref{grioer34g43g43ggg4}), we call two transformations between  $G\Sp\cX$-valued functors continuously equivalent, if they become equivalent after application of $C^{s}${; see} \eqref{oih4oifhoifh23f2f23dd2d2d2}.

\begin{prop}\label{fiwuowefewfwfewf}
The restrictions of the transformations $\beta^{max}$  \eqref{jjkjkvwev} and $\beta^{d}$  \eqref{jjkjkvwev1} to $G\Simpl^{\itconn,\itprop,\itfin} $ are canonically continuously equivalent.
\end{prop}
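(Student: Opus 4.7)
The plan is to construct, for each $K\in G\Simpl^{\itconn,\itprop,\itfin}$, a zigzag of natural continuous equivalences relating the sources and targets of $\beta^d$ and $\beta^{max}$, and then to verify that this zigzag intertwines the cone boundary maps. Throughout, observe that for such $K$, the $G$-bornological space $K_d$ is coarsely connected (so $\pi_0(K_d)$ is a singleton), $G$-bounded by $G$-finiteness (pick a finite subcomplex whose $G$-translates exhaust $K$), and $G$-proper by hypothesis.

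\emph{Target side.} I would first construct the three-step zigzag
\[
k_{d,d}(K)\otimes G_{max,max}\xleftarrow{\simeq}k_{d,d}(K)\otimes G_{can,max}\xrightarrow{\simeq}k_{d,max}(K)\otimes G_{can,min}\xrightarrow{\simeq}k_{max,max}(K)\otimes G_{can,min}
\]
of natural continuous equivalences. The first arrow is \cref{wtquzdguz4} applied to the $G$-bounded space $K_{d,d}$. The middle arrow uses \cref{wtquzdguz3} (with $Y=*$) to establish the duality $K_{d,d}\sim K_{d,max}$, combined with its functorial counterpart \cref{biorgregergrege1111}. The last arrow is \cref{wtquzdguz5}, which applies because connectedness of $K$ makes both $\pi_0(K_d)$ and $\pi_0(K_{max})$ singletons. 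Postcomposition with $\Sigma\Yo^s$ then delivers the target-side natural continuous equivalence.

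\emph{Source side.} Since $\cO^\infty(K_{d,d,d})=\Yo^s((\R\otimes K_{d,d,d})_h)$ and similarly for $K_{d,max,max}$, I would introduce the auxiliary $G$-uniform bornological coarse space $K_{d,d,max}$, for which $(\R\otimes K_{d,d,max})_h$ has the same hybrid coarse structure as $(\R\otimes K_{d,d,d})_h$ (both are generated from the product coarse structure of $\R\otimes K_d$ together with the metric uniform structure), but inherits the maximal bornology on $K$. The analogous zigzag
\[
(\R\otimes K_{d,d,d})_h\otimes G_{max,max}\xleftarrow{\simeq}(\R\otimes K_{d,d,d})_h\otimes G_{can,max}\simeq(\R\otimes K_{d,d,max})_h\otimes G_{can,min}\xrightarrow{\simeq}(\R\otimes K_{d,max,max})_h\otimes G_{can,min}
\]
uses in turn: \cref{wtquzdguz4} (the hybrid space keeps the product bornology and hence remains $G$-bounded); the duality between $\R\otimes K_d$ and $\R\otimes K_{max}$ furnished by \cref{wtquzdguz3} with $Y=\R_d$, together with \cref{biorgregergrege1111}; and for the final step \cref{rgirogergergergerg}, because \cref{wtquzdguz5} does not apply directly (the bornology is not maximal). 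The crucial observation in the last step is that on any $G$-invariant locally finite subset $L$, local finiteness forces $L_1\cap([-N,N]\times K)$ to be finite for each $N$, while admissibility of $\psi$ in the hybrid construction forces entourages to be uniformly small outside of the big family $((-\infty,n]\times K)_{n\in\nat}$; inside the big family, the finitely many surviving $L$-pairs force the $K$-component of any entourage to already lie in the coarse structure of $K_d$.

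\emph{Compatibility with the cone boundary.} By \cite[Prop.~9.31]{equicoarse}, the cone boundary \eqref{t4blopg34g3g3g} is, up to the suspension equivalence of \cref{eriogjoqergwfdefwefqwef}, the $\Yo^s$-image of the identity of underlying sets $(\R\otimes X)_h\to\R\otimes\cF(X)$. Each step in the zigzags above is induced either by the identity of underlying $G$-sets or by a span through locally finite subsets (\cref{biorgregergrege1111}), and both kinds of maps are compatible with the identity-based first component of the cone boundary. Combined with the naturality of every construction in $K$, this yields a commutative diagram in $G\Sp\cX_c$ identifying $C^s\circ\beta^d$ with $C^s\circ\beta^{max}$.

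The main obstacle is the last step of the source-side zigzag, which falls outside the scope of \cref{wtquzdguz5} and requires a careful reduction via \cref{rgirogergergergerg}, using the interplay between local finiteness and the admissibility condition defining the hybrid coarse structure. A close second is the bookkeeping needed to verify that the duality spans on the source and target sides are intertwined by the cone boundary; since the cone boundary is only natural with respect to morphisms in $G\UBC$, one must analyze how its realization as an identity-of-sets morphism in $G\BC$ restricts to locally finite subsets of the two hybrid spaces simultaneously.
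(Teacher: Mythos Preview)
Your overall zigzag strategy and the target-side argument match the paper exactly. The source side, however, has a genuine gap in the first step.

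You apply \cref{wtquzdguz4} to the space $(\R\otimes K_{d,d,d})_h$, asserting that it is $G$-bounded. It is not. The bornology on the hybrid space is the product bornology $\R_d \otimes K_d$ (the hybrid construction only modifies the coarse structure), and since $G$ acts trivially on the unbounded factor $\R$, no bounded subset can have $G$-translates covering $\R \times K$. So \cref{wtquzdguz4} does not apply to this map. The paper avoids this by passing to the motivic level: rather than arguing with the geometric space $(\R\otimes K_{d,d,d})_h$, it uses that $\cO^{\infty}(K_{d,d,d})$ lies in the subcategory $G\Sp\cX_{bd}$ generated under colimits by images of $G$-bounded spaces (\cref{rgieorogergergerg}, via the cell decomposition of the $G$-finite complex $K$ and the excision/homotopy invariance of $\cO^{\infty}$), and then invokes the motivic \cref{rgioreo44reg}.

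Your third source-side step also takes a harder road than necessary. You propose a direct analysis of locally finite subsets via \cref{rgirogergergergerg}, which would require controlling the hybrid entourages $U\cap U_\psi$ on such subsets. The paper bypasses this entirely: the map $K_{d,d,max}\to K_{d,max,max}$ is a coarsening in $G\UBC$, and $\cO^{\infty}$ sends coarsenings to equivalences by \cite[Prop.~9.33]{equicoarse}. This already gives an honest equivalence in $G\Sp\cX$, not merely a continuous one, and requires no analysis of the hybrid structure.
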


\begin{proof}
Let $K$ be an object of $G\Simpl^{\itconn,\itprop,\itfin} $. Then we
have a commuting square
	\[\xymatrix{ 
		\cO^{\infty}(K_{d,d,max})\otimes G_{can,min}\ar[r]\ar[d]^{\simeq}&  \Sigma \Yo^{s} (K_{d,max}\otimes G_{can,min})\ar[d] \\
		\cO^{\infty}(K_{d,max,max})\otimes G_{can,min}\ar[r]^-{\beta^{max}}&\Sigma \Yo^{s}(K_{max,max} \otimes G_{can,min})  }\]
	 {which is natural in $K$.} 	The left vertical map is an equivalence since $K_{d,d,max}\to K_{d,max,max}$    is a coarsening and $\cO^{\infty}$ sends coarsenings to equivalences \cite[Prop. 9.33]{equicoarse}.
	The right vertical map is a continuous equivalence by \cref{wtquzdguz5} because both $K_{d,max}$ and  $K_{max,max}$ are coarsely connected. Note that this is the only place where we use that $K$ is connected. 
	
We now claim that we can apply \cref{biorgregergrege1111} in order to conclude that the  {map}
	\[ \cO^{\infty}(K_{d,d,max})\otimes G_{can,min} \to \Sigma \Yo^{s} (K_{d,max}\otimes G_{can,min}) \]
	is canonically continuously equivalent to the {map}
	\[ \cO^{\infty}(K_{d,d,d})\otimes G_{can,max} \to  \Sigma \Yo^{s} (K_{d,d}\otimes G_{can,max}) \ .\]
	 {Recall from \cref{wfefuehwifefw} the hybrid coarse structure $X_h$ associated to a $G$-uniform bornological coarse space $X$. Moreover, recall} from  \eqref{t4blopg34g3g3g} that the cone boundary is given by the map
	\[ \cO^{\infty}(Z)\simeq \Yo^{s}((\R\otimes Z)_{h})\to \Yo^{s}(\R\otimes \cF(Z))\simeq \Sigma \Yo^{s}(\cF(Z))\ ,\]
	where the second map is induced by the identity of the underlying sets, and the third equivalence follows from excision.
	
 We apply  \cref{biorgregergrege1111} to the index category
	\[I:=G\Simpl^{\itconn,\itprop,\itfin}\times \Delta^{1}\] and the functor
	$X_{0} \colon I\to G\Coarse$ given on objects by
	\begin{enumerate}
	\item $(K,0)\mapsto [(\R\otimes K_{d,d,max})_{h}]^{\cC}$ and
	\item  $(K,1)\mapsto [\R\otimes K_{d,max}]^{\cC}$\ ,
	\end{enumerate}
	where the   notation $[...]^{\cC}$ indicates that we take the underlying $G$-coarse spaces.
	While the action of this functor on the morphisms in $I$ coming from morphisms $K\to K^{\prime} $ in $G\Simpl^{\itconn,\itprop,\itfin}$ is clear, it sends the morphism
	$(K,0)\to (K,1)$ coming from $0\to 1$ in $\Delta^{1}$ to the map    \[[(\R\otimes K_{d,d,max})_{h}]^{\cC}\to [\R\otimes K_{d,max}]^{\cC}\]
	given by the identity on the underlying sets.
	The lifts $X$ and $X^{\prime}$ of this functor to $G\BC$ are given on objects by
\begin{enumerate}
	\item $(K,0)\mapsto (\R\otimes K_{d,d,max})_{h}$
	\item  $(K,1)\mapsto \R\otimes K_{d,max}$
	\end{enumerate}
	for $X$, and by
	\begin{enumerate}
	\item $(K,0)\mapsto (\R\otimes K_{d,d,d})_{h}$
	\item  $(K,1)\mapsto \R\otimes K_{d,d}$
	\end{enumerate}
	for $X^{\prime}$,
	while the lifts on the level of morphisms are clear.

	 {We claim that for} every $(K,i)$ in $I$ the value $X(K,i)$ is dual to  $X^{\prime}(K,i)$. Indeed, since the $G$-bornological space $K_{d}$ is $G$-proper and $G$-bounded (since $K$ is $G$-finite), $K_{d}$ is dual to $K_{max}$ by \cref{wtquzdguz3} (applied with $Y$ a point). Furthermore, 
	 the $G$-bornological space $\R\otimes K_{d}$ is dual to $\R\otimes K_{max}$, again  by \cref{wtquzdguz3} (applied  with $Y=\R$). This finishes the verification of the claim.

	Finally, we have the natural commuting square
	\[\xymatrix{\cO^{\infty}(K_{d,d,d})\otimes G_{can,max}\ar[r]\ar[d]&\Sigma \Yo^{s}(K_{d,d} \otimes G_{can,max}) \ar[d]\\
		\cO^{\infty}(K_{d,d,d})\otimes G_{max,max}\ar[r]^-{\beta^d} &  \Sigma \Yo^{s} (K_{d,d}\otimes G_{max,max})  }\]
	The right vertical map is a continuous equivalence by Lemma \ref{wtquzdguz4} since $K_{d,d}$ is $G$-bounded.
	Since $K$ is $G$-finite, by \cref{rgieorogergergerg}
	we know that $\cO^{\infty}(K_{d,d,d})\in G\Sp\cX_{bd}$.
	Hence the left vertical morphism is a continuous equivalence by \cref{rgioreo44reg}.
\end{proof}

If the $G$-simplicial complex $K$ is not connected, then the proof of Proposition \ref{fiwuowefewfwfewf}  establishes a modified assertion.
For its formulation we first introduce some notation.

Let $X$ be a $G$-coarse space and let $U_{\pi_{0}}$ be the entourage from \eqref{v4kjhuiuhedcwcwcwc}.

\begin{ddd}\label{fioffef}We let $X_{\pi_{0}}$ denote the $G$-set $X$ with the $G$-coarse structure $\cC_{\pi_{0}}$ generated by $U_{\pi_{0}}$.
\end{ddd}
Note the following.
\begin{enumerate}
	\item {The identity of the underlying set yields a controlled map $X\to X_{\pi_{0}}$ which induces an isomorphism $\pi_{0}(X)\xrightarrow{\cong} \pi_{0}(X_{\pi_{0}})$.}
	\item If $X$ is coarsely connected, then $X_{\pi_{0}}\cong X_{max}$.
\end{enumerate}

We actually obtain  functors
\[k_{d,\pi_{0},max}\colon G\Simpl\to G\UBC , \quad K\mapsto K_{d,\pi_{0},max}\]
and
\[k_{\pi_{0},max}\colon G\Simpl\to G\BC , \quad K\mapsto K_{\pi_{0},max}\ .\]

Similar to the transformation $\beta^{max}$ from \eqref{jjkjkvwev}, we define a natural transformation of functors  $G\Simpl\to G\Sp\cX$
\begin{equation}\label{jjkjkvwev11} {\beta^{\pi_0}} \colon (\cO^{\infty}\circ k_{d,\pi_{0},max})\otimes G_{can,min}\to  (\Sigma \Yo^{s} \circ k_{\pi_{0},max})\otimes   G_{can,min}\ .\end{equation}

Let $G\Simpl^{\itprop,\itfin}$ denote the full subcategory of $G\Simpl$ of $G$-proper and $G$-finite $G$-simplicial complexes.
The proof of \cref{fiwuowefewfwfewf} shows the following proposition.

\begin{prop}\label{fuwef9iewfewfewfwef}
The restrictions of the transformations $\beta^{\pi_0}$  from \eqref{jjkjkvwev11}  and $\beta^d$ from \eqref{jjkjkvwev1} to $G\Simpl^{\itprop,\itfin}$
are canonically continuously equivalent.
\end{prop}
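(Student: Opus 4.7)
The plan is to follow the structure of the proof of \cref{fiwuowefewfwfewf} almost verbatim, replacing the coarse structure $K_{max,max}$ everywhere by $K_{\pi_0,max}$. The only place where connectedness entered in that argument was in invoking \cref{wtquzdguz5} to show that the canonical morphism $K_{d,max}\otimes G_{can,min}\to K_{max,max}\otimes G_{can,min}$ is a continuous equivalence, which required $\pi_{0}(K_{d,max})\to \pi_{0}(K_{max,max})$ to be a bijection. In our setting, the coarse structure $\cC_{\pi_{0}}$ on $K_{\pi_{0}}$ was defined precisely so that the canonical map $\pi_{0}(K_{d,max})\to \pi_{0}(K_{\pi_{0},max})$ is a bijection with no connectedness hypothesis (see the remark following \cref{fioffef}), so the same application of \cref{wtquzdguz5} goes through.

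Concretely, for $K$ in $G\Simpl^{\itprop,\itfin}$ I would first build the natural commuting square
\[\xymatrix{
\cO^{\infty}(K_{d,d,max})\otimes G_{can,min}\ar[r]\ar[d]^{\simeq}&  \Sigma \Yo^{s} (K_{d,max}\otimes G_{can,min})\ar[d] \\
\cO^{\infty}(K_{d,\pi_0,max})\otimes G_{can,min}\ar[r]^-{\beta^{\pi_0}}&\Sigma \Yo^{s}(K_{\pi_0,max}\otimes G_{can,min})  }\]
where the left vertical morphism is an equivalence because $K_{d,d,max}\to K_{d,\pi_0,max}$ is a coarsening and $\cO^\infty$ is invariant under coarsenings by \cite[Prop.~9.33]{equicoarse}, and the right vertical morphism is a continuous equivalence by \cref{wtquzdguz5} as just explained.

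Next I would apply \cref{biorgregergrege1111} to the index category $I:=G\Simpl^{\itprop,\itfin}\times \Delta^1$ with precisely the same functor $X_{0}\colon I\to G\Coarse$ and the same lifts $X$, $X^{\prime}$ to $G\BC$ used in the proof of \cref{fiwuowefewfwfewf}, to conclude that the top row of the square above is canonically continuously equivalent to
\[ \cO^{\infty}(K_{d,d,d})\otimes G_{can,max}\to \Sigma \Yo^{s}(K_{d,d}\otimes G_{can,max})\ .\]
The duality assumption required by \cref{biorgregergrege1111} is verified exactly as before: $K_d$ is $G$-proper by hypothesis and $G$-bounded by $G$-finiteness, so \cref{wtquzdguz3} yields that $K_d$ is dual to $K_{max}$ and, taking $Y=\R$, that $\R\otimes K_d$ is dual to $\R\otimes K_{max}$. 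I would then close the comparison with $\beta^{d}$ via the natural square
\[\xymatrix{\cO^{\infty}(K_{d,d,d})\otimes G_{can,max}\ar[r]\ar[d]&\Sigma \Yo^{s}(K_{d,d} \otimes G_{can,max}) \ar[d]\\
\cO^{\infty}(K_{d,d,d})\otimes G_{max,max}\ar[r]^-{\beta^d} &  \Sigma \Yo^{s} (K_{d,d}\otimes G_{max,max})  }\]
in which the right vertical arrow is a continuous equivalence by \cref{wtquzdguz4} (using that $K_{d,d}$ is $G$-bounded), and the left vertical arrow is a continuous equivalence by \cref{rgieorogergergerg} combined with \cref{rgioreo44reg}.

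There is essentially no new obstacle: the single substantive modification relative to \cref{fiwuowefewfwfewf} is to recognise that the role of connectedness in the original proof is taken over by the definition of $\cC_{\pi_{0}}$. The only point worth rechecking with a little care is that the square comparing $\beta^{\pi_{0}}$ and the $(d,max)$-map is genuinely natural in $K$, so that the application of \cref{biorgregergrege1111} (which requires a diagram indexed by $G\Simpl^{\itprop,\itfin}\times \Delta^{1}$) makes sense; this is clear from functoriality of the $\pi_{0}$-construction in \cref{fioffef}.
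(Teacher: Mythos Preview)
Your proposal is correct and is precisely what the paper intends: the paper's own proof of \cref{fuwef9iewfewfewfwef} is the single sentence ``The proof of \cref{fiwuowefewfwfewf} shows the following proposition,'' and you have accurately unpacked this by replacing $K_{max,max}$ with $K_{\pi_0,max}$ throughout and noting that \cref{wtquzdguz5} applies because $\pi_0(K_d)\to\pi_0(K_{\pi_0})$ is a bijection by construction.
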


The following definition is adapted from  \cite[Def.~3.24]{roe}. Let $X$ be a bornological coarse space.
\begin{ddd}\label{roerogergregergreg}
	$X$ is \emph{uniformly discrete} if the bornology is the minimal bornology (see  \cref{ufewiofheiwfhuewifewew}) and for every entourage $U$ of $X$ there is a uniform bound for  the cardinalities of the sets   $U[x]$ for all points $x$ in $X$. 
\end{ddd}
\begin{rem}\label{3goi334t3tt}
	In \cite{buen} we called this property   \emph{strongly bounded geometry}.  It is not invariant under coarse equivalences. The adjective \emph{strongly} distinguishes this notion from the notion of \emph{bounded geometry} which is invariant under coarse equivalences. 
\end{rem}

\begin{ex}
	The $G$-bornological coarse space  $G_{can,min}$ is uniformly discrete.
\end{ex}

\begin{rem} \label{girjgorgergergreg}
	Let $X$ be a $G$-bornological coarse space and $U$ be an invariant entourage of $X$.
	The condition that $X$  is uniformly discrete has the following consequences:
	\begin{enumerate}
		\item $P_{U}(X)$ is a finite-dimensional, locally finite 
		simplicial complex. {Furthermore, for $X=G_{can,min}$ the $G$-simplicial complex $P_U(G_{can,min})$ is $G$-finite, i.e.\ it belongs to $G_{\Fin(G)}\Simpl^{\fin}$; see \cref{gu92zue98wregwew}.}
		\item {Since $X$ carries the minimal bornology and $P_U(X)$ is locally finite,} the bornology on $P_{U}(X)_{b}$ (which by definition is generated by the subsets $P_{U}(B)$ for all bounded subsets $B$ of $X$) coincides with the bornology $P_{U}(X)_{d}$ induced from the spherical path quasi-metric.\qedhere
	\end{enumerate}
\end{rem}

Let $X$ be a $G$-bornological coarse space and let $U$ be an invariant entourage of $X$.
\begin{ddd}\label{ewiufhiwfewefew}
We let  $\cC_{\pi_0^\weak}$ denote the coarse structure on $P_U(X)$  generated by the entourage
	\[\bigcup_{W\in  \pi_0(X)}P_U(W)\times P_U(W) \ .\qedhere\]
\end{ddd}
We have obvious inclusions of $G$-coarse structures \begin{equation}\label{rgerioogegerg34t}
\cC_{\pi_0}\subseteq \cC_{\pi_0^\weak}\subseteq \cC_{max}
\end{equation}
 on $P_{U}(X)$.
The coarse structure $\cC_{\pi_{0}}$  was introduced in  \cref{fioffef}  and  depends on  the coarse structure of $P_{U}(X)_{d}$ given by the path quasi-metric.
In contrast, the coarse structure $\cC_{\pi_{0}^\weak}$ is given by \cref{ewiufhiwfewefew} using  the coarse structure of $X$.
In analogy to  {\cref{def:rips}}, we have functors \begin{equation}\label{eeerregegrgreg}
P_{\pi_{0}} \colon G\BC^{\cC}\to G\UBC\ , \quad (X,U)\mapsto P_{U}(X)_{d,\pi_{0},max}
\end{equation}
and \begin{equation}\label{eeerregegrgreg1}
P_{\pi_{0}^{weak}} \colon G\BC^{\cC}\to G\UBC\ , \quad (X,U)\mapsto P_{U}(X)_{d,\pi^{weak}_{0},max}\ .
\end{equation}
In view of the first inclusion in \eqref{rgerioogegerg34t} we have
a natural transformation
\begin{equation}\label{rwefhfjwefwekjnk}
P_{\pi_{0}}\to P_{\pi_{0}^{weak}}\ . 
\end{equation}
 The following construction is analogous to \cref{gioowegfwefwfwef}. If we precompose the fibre sequence
 \eqref{feojp2r2} with one of  \eqref{eeerregegrgreg} or  \eqref{eeerregegrgreg1}, then we obtain fiber sequences of functors $G\BC^{\cC}\to G\Sp\cX$ which send $(X,U)$ to
\begin{align}\label{efweflkwef234rergrg2}
\mathclap{\Yo^{s}(P_{U}(X)_{\pi_0,max}) \to \Yo^s(\cO (P_{U}(X)_{d,\pi_0,max}))\to\cO^{\infty}(P_{U}(X)_{d,\pi_0,max}) \xrightarrow{\partial}\Sigma \Yo^{s}(P_{U}(X)_{\pi_0,max})}\notag\\
\end{align}
and to
\begin{align}\label{efweflkwef234rergrg}
\mathclap{\Yo^{s}(P_{U}(X)_{\pi_0^\weak,max}) \to \Yo^s(\cO (P_{U}(X)_{d,\pi_0^\weak,max}))\to\cO^{\infty}(P_{U}(X)_{d,\pi_0^\weak,max}) \xrightarrow{\partial}\Sigma \Yo^{s}(P_{U}(X)_{\pi_0^\weak,max})\ ,}\notag\\
\end{align}
respectively. The transformation   \eqref{rwefhfjwefwekjnk} induces a natural transformation of fibre sequence  from \eqref{efweflkwef234rergrg2} to \eqref{efweflkwef234rergrg}.

\begin{ddd} \label{rgoij34g34geregegge} We define fiber sequences of functors $G\BC \to G\Sp\cX$\begin{equation}\label{g4tjknk3jf34f34f34f}
F^{0}_{\pi_0}\to F_{\pi_0}\to F_{\pi_0}^{\infty} \xrightarrow{\partial} \Sigma F^{0}_{\pi_0}
\end{equation}
 and \begin{equation}\label{g4tjknk3jf34f34f34f1}
 F^{0}_{\pi_0^\weak}\to F_{\pi_0^\weak}\to F_{\pi_0^\weak}^{\infty} \xrightarrow{\partial} \Sigma F^{0}_{\pi_0^\weak}
\end{equation}
	 by left Kan extension of \eqref{efweflkwef234rergrg2} and \eqref{efweflkwef234rergrg}
		along the forgetful functor \eqref{vreoi34fg3vefv}, respectively.
\end{ddd}
 
Again we have a natural transformation of fibre sequences from \eqref{g4tjknk3jf34f34f34f} to \eqref{g4tjknk3jf34f34f34f1}.
 
 Let $X$ be a $G$-bornological coarse space. The morphisms in the following definition are induced by the natural transformation denoted by $\partial$ in \cref{gioowegfwefwfwef} or \cref{rgoij34g34geregegge}.

\begin{ddd}\label{viojweiovjvwevwefwecwcwcw}
	The map \begin{equation}\label{bthjrvnkj3rv3rvevevev}
\beta_{X} \colon F^\infty(X) \otimes G_{max,max}\to \Sigma F^0(X) \otimes G_{max,max}
\end{equation}
 	in $G\Sp\cX$ is called the \emph{forget-control map}.
\end{ddd}
	
The maps 
\begin{equation}\label{geroigergerrgeg}
\beta^{\pi_0}_{X} \colon F_{\pi_0}^\infty(X) \otimes G_{can,min}\to \Sigma F_{\pi_0}^0(X) \otimes G_{can,min}
\end{equation}
and \begin{equation}\label{geroigergerrgeg1}
\beta^{\pi_0^\weak}_{X} \colon F_{\pi_0^\weak}^\infty(X) \otimes G_{can,min}\to \Sigma F_{\pi_0^\weak}^0(X) \otimes G_{can,min}
\end{equation}
are intermediate versions of the forget-control map and used in the comparison argument. 
 
 Let $X$ be a $G$-bornological coarse space.
 
 \begin{lem}\label{lem:beta-beta-pi0}
  Assume:
	\begin{enumerate}
		\item $X$ is uniformly discrete.
		\item $X$ is $G$-proper.
		\item $X$ is $G$-finite{, i.e.~$G \backslash X$ is a finite set}.		
	\end{enumerate}
	Then the maps $\beta_X$ and $\beta^{\pi_0}_X$ in \eqref{geroigergerrgeg} and \eqref{geroigergerrgeg1} are canonically continuously equivalent.
 \end{lem}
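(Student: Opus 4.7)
My plan is to reduce the comparison to \cref{fuwef9iewfewfewfwef} applied to the Rips complexes $P_U(X)$, and then to left-Kan-extend along the forgetful functor $G\BC^\cC \to G\BC$.

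The first step is to verify that under the hypotheses on $X$, the Rips complex $P_U(X)$ belongs to $G\Simpl^{\itprop,\itfin}$ for every invariant entourage $U$ of $X$. Uniform discreteness of $X$, combined with \cref{girjgorgergergreg}, implies that $P_U(X)$ is a finite-dimensional, locally finite simplicial complex with $P_U(X)_b = P_U(X)_d$. The $G$-finiteness of $X$ together with the uniform bound on the cardinalities of the sets $U[x]$ shows that $P_U(X)$ has only finitely many $G$-orbits of simplices, so it is $G$-finite. Finally, $G$-properness transfers from $X$ to $P_U(X)$ because every bounded subset of $P_U(X)_d$ is contained in $P_U(B)$ for some finite subset $B \subseteq X$, and the condition $gP_U(B) \cap P_U(B) \neq \emptyset$ forces $gB \cap B \neq \emptyset$, which holds for only finitely many $g$ by $G$-properness of $X$.

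The second step is to apply \cref{fuwef9iewfewfewfwef}, which provides a canonical continuous equivalence between the natural transformations $\beta^d$ and $\beta^{\pi_0}$ on $G\Simpl^{\itprop,\itfin}$. Precomposing with the Rips complex functor $P \colon G\BC^\cC \to G\Simpl^{\itprop,\itfin}$, which is well-defined by the first step, yields a canonical continuous equivalence between the corresponding natural transformations of functors $G\BC^\cC \to G\Sp\cX$. The third step is to take the left Kan extension along the forgetful functor $G\BC^\cC \to G\BC$. Since $X$ is uniformly discrete, $P_U(X)_{d,d,b} = P_U(X)_{d,d,d}$, so the left Kan extension of $\beta^d \circ P$ reproduces $\beta_X$ from \cref{viojweiovjvwevwefwecwcwcw}, while the left Kan extension of $\beta^{\pi_0} \circ P$ reproduces $\beta_X^{\pi_0}$ from \eqref{geroigergerrgeg} by its very definition. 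Because $C^s$ preserves colimits, the left Kan extension preserves continuous equivalences, and we obtain the desired canonical continuous equivalence between $\beta_X$ and $\beta_X^{\pi_0}$.

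The main technical obstacle is the verification of the first step, which requires careful bookkeeping of how uniform discreteness, $G$-properness, and $G$-finiteness of $X$ each contribute to the corresponding properties of $P_U(X)$, in particular the identification of the two competing bornologies on $P_U(X)$. Once this is established, the remaining steps are essentially formal, relying on the functoriality of the Rips construction and the colimit-preservation of $C^s$ to propagate the continuous equivalence from \cref{fuwef9iewfewfewfwef} through the Kan extension.
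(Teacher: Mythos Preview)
Your proposal is correct and follows essentially the same approach as the paper: both verify that the hypotheses on $X$ force $P_U(X)\in G\Simpl^{\itprop,\itfin}$ (with $P_U(X)_{d,d,b}=P_U(X)_{d,d,d}$), apply \cref{fuwef9iewfewfewfwef} to the Rips complexes, and then pass to the colimit over $\cC^G(X)$. The paper phrases the last step as a pointwise colimit via \cref{rgoipewerfwefewfew}, while you package it as a left Kan extension together with the fact that $C^s$ preserves colimits; these are the same argument in different language.
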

\begin{proof} In view of \cref{gioowegfwefwfwef} and  \cref{rgoipewerfwefewfew},
the morphism $\beta_{X}$ is given as a colimit of the diagram of morphisms
\begin{equation}\label{rgierogregege}
 \cO^{\infty}(P_{U}(X)_{d,d,b})\otimes G_{max,max}\to \Sigma \Yo^{s}(P_{U}(X)_{d,d,b})\otimes G_{max,max}
\end{equation}  indexed by the poset $\cC^{G}(X)$ (obtained by precomposing   \eqref{jjkjkvwev1}  with  the functor  $P_{-}(X) \colon \cC^{G}(X)\to G\Simpl $).
Similarly, the morphism $\beta_{X}^{\pi_{0}}$ is given as a colimit of the diagram of morphisms \begin{equation}\label{rgierogregege1}
\cO^{\infty}(P_{U}(X)_{d,\pi_{0},max})\otimes G_{can,min}\to  \Sigma \Yo^{s}(P_{U}(X)_{d,\pi_{0},max})\otimes G_{can,min}\ .
\end{equation} indexed by $\cC^{G}(X)$ (again obtained by  precomposing  \eqref{jjkjkvwev11} with the functor  $P_{-}(X)$).

 Since $X$ is uniformly discrete and $G$-finite, for every $U$ in $\cC^{G}(X)$  the $G$-simplicial complex $P_{U}(X)$ is $G$-finite.
 In addition, the bornology induced from the metric coincides with the bornology induced from $X${; see} \cref{girjgorgergergreg}.
 Finally, since $X$ is $G$-proper,  the $G$-simplicial complex $P_{U}(X)$ is  also $G$-proper. Hence $P_{U}(X)$  belongs to $G\Simpl^{\itprop,\itfin}$.
 
 We can now apply \cref{fuwef9iewfewfewfwef} and conclude that the diagrams (parametrized by $U$ in $\cC^{G}(X)$) of  morphisms \eqref{rgierogregege} and \eqref{rgierogregege1}  
 are  canonically equivalent.
 Therefore, their colimits $\beta_{X}$ and $\beta_{X}^{\pi_{0}}$ are canonically equivalent, too.
 \end{proof}

 Let $X$ be a $G$-bornological coarse space.
 
 \begin{lem}\label{lem:beta-pi0-beta-pi0weak}
  Assume:
	\begin{enumerate}
		\item $X$ is uniformly discrete.
		\item $X$ is $G$-proper.
		\item $X$ is $G$-finite.		
	\end{enumerate}
	Then $\beta^{\pi_0}_X$ and $\beta^{\pi_0^\weak}_X$ are canonically continuously equivalent.
 \end{lem}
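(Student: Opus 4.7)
Following the pattern of the proof of \cref{lem:beta-beta-pi0}, the plan is to express both $\beta^{\pi_0}_X$ and $\beta^{\pi_0^\weak}_X$ as colimits over $U$ in $\cC^G(X)$ of their evaluations on the Rips complexes $P_U(X)$ equipped with the two coarse structures of \cref{fioffef} and \cref{ewiufhiwfewefew}. The inclusion $\cC_{\pi_0} \subseteq \cC_{\pi_0^\weak}$ from \eqref{rgerioogegerg34t} induces a natural transformation of these colimit diagrams. By naturality of the cone fibre sequence \eqref{feojp2r2}, to establish that the induced map $\beta^{\pi_0}_X \to \beta^{\pi_0^\weak}_X$ is a continuous equivalence it suffices to show that the natural morphisms
\[ \Yo^s(P_U(X)_{\pi_0,max} \otimes G_{can,min}) \to \Yo^s(P_U(X)_{\pi_0^\weak,max} \otimes G_{can,min}) \]
and the analogous morphisms for $\cO^\infty$ become continuous equivalences after taking $\colim_U$.

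I will establish this via a stabilization argument on $G$-invariant locally finite subsets $L$. Applying $C^s$ and using the continuity of $\Yo^s_c$ (\cref{rguierog34trt34g}) to rewrite both colimits in terms of such $L$, I note that since $P_U(X)$ carries the maximal bornology and $G_{can,min}$ the minimal one, a $G$-invariant locally finite subset $L$ of $P_U(X) \otimes G_{can,min}$ has finite $g$-slices, forcing $L_1 := L \cap (P_U(X) \times \{1\})$ to be finite. The crucial observation to prove is: for every $U \in \cC^G(X)$, every finite $B \subseteq G$ and every such $L$, there exists $U' \in \cC^G(X)$ with $U \subseteq U'$ such that the generating entourage
\[ \Big(\bigcup_{W \in \pi_0(X)} P_U(W) \times P_U(W)\Big) \times G(B \times B) \]
of the $\pi_0^\weak$-coarse structure, restricted to $L \times L$, is contained in the corresponding generating entourage of the $\pi_0$-coarse structure on $L$ inside $P_{U'}(X)_{\pi_0,max} \otimes G_{can,min}$.

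The construction of $U'$ uses that the set $B \cdot L_1$ is finite, so it contains only finitely many pairs $(q,q')$ lying in the same coarse component of $X$; for each such pair one chooses an entourage $V_{q,q'}$ of $X$ containing $(q,q')$ and forms $U' := U \cup \bigcup_{(q,q')} G V_{q,q'}$, a finite union of $G$-invariant coarse entourages. In $P_{U'}(X)$, each pair $(q,q')$ becomes a $U'$-edge and thus lies in a common metric component, which precisely absorbs the $\pi_0^\weak$-entourage into $\cC_{\pi_0}(P_{U'}(X))$ on $L$. After interchanging the colimits over $U$ and $L$ and passing to the colimit over $U$, the coarse structures on $L$ stabilize to the same union structure, yielding equivalent values of $\Yo^s_c$. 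The main technical obstacle is to verify the parallel stabilization for the hybrid coarse structures entering $\cO^\infty$, which uses that the uniform structures on $P_U(X)_{d,\pi_0,max}$ and $P_U(X)_{d,\pi_0^\weak,max}$ agree and are stable under enlargement of $U$, so that the stabilization of the underlying coarse structures on $L$ translates into stabilization of the hybrid structures in $(\R \otimes P_U(X)_{d,\pi_0,max})_h$ versus $(\R \otimes P_U(X)_{d,\pi_0^\weak,max})_h$.
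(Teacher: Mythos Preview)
Your reduction to showing that the two vertical comparison maps (the $\cO^\infty$ side and the $\Sigma\Yo^s$ side) become continuous equivalences after $\colim_U$ is correct, and your treatment of the $\Sigma\Yo^s$ side is essentially the paper's argument: one uses continuity of $\Yo^s_c$, the finiteness of $L_1$, and then enlarges $U$ to $U'$ so that any two points of $L_1$ whose supports lie in a common $W\in\pi_0(X)$ end up in a single simplex of $P_{U'}(X)$, hence in the same metric path-component. (The extra bookkeeping with the finite set $B\subseteq G$ is not needed: once the generating entourages of $\cC_{\pi_0}$ and $\cC_{\pi_0^\weak}$ agree on $L$, the product entourages with any $G_{can}$-entourage automatically agree on $L$ as well.)

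The genuine gap is on the $\cO^\infty$ side. You flag it as ``the main technical obstacle'' and propose a stabilization argument for the hybrid coarse structures on locally finite subsets of $(\R\otimes P_U(X))_h\otimes G_{can,min}$, but you do not carry this out, and it is not clear that the $L_1$-argument adapts cleanly to the hybrid setting (the locally finite subsets now live in $\R\times P_U(X)\times G$, and the hybrid entourages $U\cap U_\psi$ mix the coarse and uniform data in a way that does not obviously reduce to a statement about finite subsets of $P_U(X)$). The paper avoids this entirely: for each fixed $U$ the map
\[
 P_U(X)_{d,\pi_0,max}\;\longrightarrow\;P_U(X)_{d,\pi_0^\weak,max}
\]
is a \emph{coarsening} of $G$-uniform bornological coarse spaces (same uniform structure, same bornology, larger coarse structure), and $\cO^\infty$ sends coarsenings to equivalences by \cite[Prop.~9.33]{equicoarse}. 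So the left vertical map is already an equivalence in $G\Sp\cX$ before taking any colimit and before passing to continuous motives. This is the key observation you are missing; once you use it, only the $\Sigma\Yo^s$ side requires the continuity/stabilization argument, and your sketch for that side is fine.
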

\begin{proof} We consider  an invariant entourage
  $U$ of $X$ and form
   the commutative square
 \[\xymatrix{
  \cO^{\infty}(P_{U}(X)_{d,\pi_0,max})\otimes G_{can,min}\ar[r]^-{\partial}\ar[d] & \Sigma \Yo^{s} (P_{U}(X)_{\pi_{0},max}\otimes   G_{can,min})\ar[d] \\
  \cO^{\infty}(P_{U}(X)_{d,\pi_0^\weak,max})\otimes G_{can,min}\ar[r]^-{\partial} & \Sigma \Yo^{s} (P_{U}(X)_{\pi_{0}^\weak,max}\otimes   G_{can,min})
 }\]
in $G\Sp\cX$, where the vertical morphisms are induced by \eqref{rwefhfjwefwekjnk}.
 In view of Lemma \ref{rgoipewerfwefewfew}, after taking colimits over $U$ in the poset $\cC^{G}(X)$, the horizontal maps become equivalent to $\beta^{\pi_0}_X$ and $\beta^{\pi_0^\weak}_X$, respectively.
 
 The left vertical morphism is an equivalence since it is obtained by applying $\cO^{\infty}$ to a   coarsening and $\cO^{\infty}$ sends coarsenings to equivalences by \cite[Prop. 9.34]{equicoarse}. 
 
 It remains to show that the right vertical map becomes a continuous equivalence after taking the colimit over $\cC^G(X)$.  We let $\cF(U)$ denote the poset of invariant locally finite  subsets of the $G$-bornological space $P_{U}(X)_{max}\otimes G_{min}$. We then consider
the following commutative diagram 
\[\xymatrix{\colim\limits_{U\in \cC^{G}(X)} \colim\limits_{L\in \cF(U)} \Yo_{c}^{s}( L_{P_{U}(X)_{\pi_{0},max}\otimes G_{can,min}})\ar[r]\ar[d]
		&\colim\limits_{U\in \cC^{G}(X)} \colim\limits_{L\in \cF(U)} \Yo_{c}^{s}( L_{P_{U}(X)_{\pi_0^\weak,max}\otimes G_{can,min}})\ar[d]\\
		\colim\limits_{U\in \cC^{G}(X)}   \Yo_{c}^{s}(  P_{U}(X)_{\pi_0,max}\otimes G_{can,min})\ar[r]
		&\colim\limits_{U\in \cC^{G}(X)}   \Yo_{c}^{s}(  P_{U}(X)_{\pi_0^\weak,max}\otimes G_{can,min})}\]
		where the subscript indicates from which space the bornological coarse structure on $L$ is induced.
	In view of  \cref{rguierog34trt34g},  continuity of $\Yo^{s}_{c}$ implies that the vertical maps are equivalences.
	
	For $L$ in $\cF(U)$ we know that $L_{1}:=L\cap (P_{U}(X)\times \{1\})$ is finite.  There exists an invariant entourage $U^{\prime}$ of $X$ such that $U\subseteq U^{\prime}$ and such that the condition on a subset
	$F$ of $L_{1}$ 
	\begin{itemize}
	\item $F$ is contained in $P_U(W)$ for some $W$ in $\pi_{0}(X)$\end{itemize}
	implies the condition
	\begin{itemize}
	\item 
	 $F$ is contained in a single simplex of $P_{U^{\prime}}(X)$.
	 \end{itemize}
Then the coarse structures induced on $L$ from 
	$  P_{U^{\prime}}(X)_{\pi_{0},max}\otimes   G_{can,min} $ and $  P_{U^{\prime}}(X)_{\pi_0^\weak,max}\otimes   G_{can,min} $ coincide. By a cofinality consideration the upper horizontal map is hence an equivalence. It follows that the lower horizontal map is an equivalence as desired.
\end{proof}

Recall from \cref{efweu9u9wefwef} that we have
functors \begin{equation}\label{iobperberberbre}
G\Top \xrightarrow{\ell} G\Top[W_{G}^{-1}] \xrightarrow[\simeq]{\ff} \PSh(G\Orb)\ .
\end{equation}
 Let $\bC$ be a   cocomplete  stable $\infty$-category and $H \colon G\Top \to \bC$ be a functor. \begin{ddd}\label{ergergregergreg}
 The functor $H$ is an \emph{equivariant homology theory}  if it is equivalent to 
 the {restriction} along \eqref{iobperberberbre} of a colimit-preserving functor $ \PSh(G\Orb)\to \bC$.
\end{ddd}
\begin{rem}\label{fjieowjfoijfiofwefwefweff}
Note that in \cite[Def.~10.3]{equicoarse} we use the term \emph{strong  equivariant homology theory} for the objects defined in \cref{ergergregergreg}
  in order to distinguish it from the classical  notion of an equivariant homology theory as defined  \cite[Def.~10.4]{equicoarse}. For the purpose of the present paper we will employ the more natural definition above and drop the word strong.
   \end{rem}
   
In view of the universal property of presheaves, the $\infty$-category $\Fun^{\colim}(\PSh(G\Orb),\bC)$ of colimit-preserving functors  is equivalent to the $\infty$-category $\Fun(G\Orb,\bC)$. Therefore, in order to specify an equivariant homology theory  or such a colimit preserving functor essentially  uniquely,  it suffices to specify the corresponding functor in $\Fun(G\Orb,\bC)$

\begin{ddd}\label{hgergergegregrege}
We define \[\tilde \cO^{\infty}_{\hlg} \colon G\Top[W_{G}^{-1}] \to G\Sp\cX\] to be the colimit-preserving functor
essentially uniquely determined by the functor
\[G\Orb\to   G\Sp\cX\ , \quad S\mapsto \cO^{\infty}(S_{disc,max,max})\ .\]
Furthermore, define the equivariant homology theory
 \begin{equation}\label{vreopkopk34pof34f34f}
 \cO^{\infty}_{\hlg}:=\tilde \cO^{\infty}_{\hlg}\circ \ell \colon G\Top  \to G\Sp\cX
\end{equation} 
\end{ddd}
\begin{rem}\label{retgioojtitj43893z894343g43}
Note that the functor $\cO^{\infty}_{\hlg}$ differs from the functor (denoted by the same symbol) defined in \cite[Def.~10.10]{equicoarse}. Both versions of this functor   coincide on CW-complexes.  In the present paper, we prefer to use the definition above since it fits better with the needs in   \cref{rgiofergegergr43534543534535}.
\end{rem}

In view of \cite[Prop.~9.35]{equicoarse} the functor $\tilde \cO^{\infty}_{\hlg}$ is equivalent to the functor essentially uniquely determined by the functor
\[ G\Orb \to G\Sp\cX\ , \quad S\mapsto   \Sigma \Yo^{s}(S_{min,max})\ .\] 

In analogy to \cref{def:rips} we consider the functor
\[ P^{top} \colon G\BC^{\cC}\to G\Top[W_{G}^{-1}]\ , \quad X\mapsto  \ell(P_{U}(X))  \ , \]
where $P_{U}(X) $ in $G\Top$ is the underlying $G$-topological space of the $G$-uniform space $P_{U}(X)_{d}$ and $\ell$ is the localization as in \eqref{iobperberberbre}. 
\begin{ddd}
We define the \emph{Rips complex functor}
\[ \Rips \colon G\BC\to  G\Top[W_{G}^{-1}] \]
as the left Kan extension of the functor
$P^{top}$ along the forgetful functor \eqref{vreoi34fg3vefv}.
\end{ddd}

If   $X$ is a $G$-bornological coarse space, then by \cref{rgoipewerfwefewfew} we have:
 \begin{kor}  \label{ergieogergergeeggreg}  The Rips complex of $X$ is  given by  \[\Rips(X)\cong \colim_{U\in \cC^{G}(X)} \ell(P_{U}(X))\ .\]  
\end{kor}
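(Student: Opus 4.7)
The plan is to invoke \cref{rgoipewerfwefewfew} directly, since the Rips complex functor has been defined precisely as a left Kan extension along the forgetful functor \eqref{vreoi34fg3vefv}, and \cref{rgoipewerfwefewfew} gives an explicit pointwise formula for such left Kan extensions.

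More concretely, I would apply \cref{rgoipewerfwefewfew} with the cocomplete target $\bC := G\Top[W_G^{-1}]$ (cocompleteness of this $\infty$-category is implicit in its use throughout the section) and with the functor
\[ F := P^{top} \colon G\BC^{\cC} \to G\Top[W_G^{-1}]\ , \quad (X,U) \mapsto \ell(P_U(X))\ . \]
By definition, the Rips complex functor $\Rips$ is the left Kan extension of $F$ along the forgetful functor $G\BC^{\cC} \to G\BC$. Specializing \cref{rgoipewerfwefewfew} to this situation therefore yields the equivalence
\[ \Rips(X) \simeq \colim_{U \in \cC^{G}(X)} F(X,U) = \colim_{U \in \cC^{G}(X)} \ell(P_{U}(X))\ , \]
which is the claim of the corollary.

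Since the corollary is a direct consequence of \cref{rgoipewerfwefewfew} and the definitions, there is no substantive obstacle to overcome; the only thing to note is that the cofinality argument in the proof of \cref{rgoipewerfwefewfew} works verbatim in the present setting, as it only uses the structure of the forgetful functor $G\BC^{\cC} \to G\BC$ and not the particular target category.
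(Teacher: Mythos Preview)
Your proof is correct and follows exactly the paper's approach: the corollary is stated immediately after the definition of $\Rips$ with the justification ``by \cref{rgoipewerfwefewfew}'', and your proposal simply spells out this application.
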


\begin{rem}
Note that the present definition of the Rips complex differs from the definition given in \cite[Def.~11.2]{equicoarse}.
In the reference we defined the Rips complex of $X$ as the $G$-topological space $\colim_{U\in \cC^{G}(X)} P_{U}(X)$.
This definition fits well with the version of $\cO^{\infty}_{\hlg}$ used there{; see} \cref{retgioojtitj43893z894343g43}.
In contrast, in the present paper we replace the colimit by the homotopy colimit.
\end{rem}

For a $G$-bornological coarse space $X$ we consider $\pi_0(X)$ as a discrete $G$-topological space. 
For every $U$ in $\cC^{G}(X)$ we have a projection 
\[P_{U}(X)\to \pi_{0}(X)\]
of $G$-topological spaces. Applying $\ell$ and forming the colimit over $\cC^{G}(X)$, we obtain
 a canonical projection morphism
\begin{equation}\label{vbeoprevervevevrvrev}
\Rips(X)\to \ell(\pi_0(X))
\end{equation} in $G\Top[W^{-1}_{G}]$.

In the following, we calculate the Rips complex of the bornological coarse space $G_{can,min}$ explicitly. \begin{lem}\label{rviuohiofe2df2edf2}
We have an equivalence
\[\ff(\Rips(G_{can,min}))\simeq E_{\Fin} G\ ,\]
 {where $\ff \colon G\Top[W_{G}^{-1}]\to \PSh(G\Orb)$ denotes the equivalence from \eqref{efwoiwggwegf}.}
\end{lem}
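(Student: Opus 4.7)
The plan is to verify that the $G$-space $\Rips(G_{can,min})$ satisfies the fixed-point characterization of $E_\Fin G$ recalled in \cref{efweu9u9wefwef}. Since $\ff$ is an equivalence of $\infty$-categories (hence preserves colimits) and $\Rips(G_{can,min})\cong \colim_{U\in \cC^{G}(G_{can,min})}\ell(P_{U}(G_{can,min}))$ by \cref{ergieogergergeeggreg}, applying $\ff$ and evaluating at an orbit $G/H$ gives
\[
\ff(\Rips(G_{can,min}))(G/H)\simeq \colim_{U}\ell\bigl(P_{U}(G_{can,min})^{H}\bigr).
\]
In view of \cref{def:efg}, it suffices to show that this colimit is contractible when $H\in\Fin$ and empty when $H$ is infinite.

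The invariant coarse structure $\cC^{G}(G_{can,min})$ is cofinally generated by the entourages $U_{B}:=G(B\times B)$ as $B$ ranges over the finite subsets of $G$ containing the neutral element. For such $U_{B}$, a simplex of $P_{U_{B}}(G_{can,min})$ is spanned by a finite set of vertices contained in some translate $gB$. Since $G$ acts freely on itself by left translation, the stabilizer of any finite subset $S\subseteq G$ is contained in $Ss^{-1}$ for any $s\in S$, and is therefore finite. Hence every point of $P_{U_{B}}(G_{can,min})$ has finite stabilizer, which immediately forces $P_{U_{B}}(G_{can,min})^{H}=\emptyset$ for every infinite subgroup $H$ and every $B$.

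For a finite subgroup $H\le G$ I would then prove contractibility by an exhaustion argument. Once $B\supseteq H$ the $H$-barycentre $\beta_{g}:=|H|^{-1}\sum_{h\in H}\delta_{hg}$ is a well-defined element of $P_{U_{B}}(G_{can,min})^{H}$ for every $g\in G$, so the fixed-point spaces are non-empty. To prove contractibility of the colimit I would show that any finite subcomplex $K\subseteq P_{U_{B}}(G_{can,min})^{H}$ becomes null-homotopic in $P_{U_{B'}}(G_{can,min})^{H}$ for some sufficiently large $B'\supseteq B$: choosing $B'$ large enough that for every vertex $v$ occurring in $K$ the set $H\cdot\{v,1\}$ is $U_{B'}$-bounded, each simplex of $K$ becomes a face of an $H$-invariant simplex that also contains the $H$-orbit of the identity, so the $H$-equivariant linear homotopy contracting this simplex onto $\beta_{1}$ yields a null-homotopy of $K$ inside $P_{U_{B'}}(G_{can,min})^{H}$.

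The main obstacle is making the contraction argument in the last step precise: one must verify that $H$-equivariance is preserved along the chosen coning homotopy, that the enlargement $B'$ can indeed be chosen depending only on the finite data of $K$, and that the resulting null-homotopies are compatible enough to exhibit a weak contraction of the colimit. Once these combinatorial verifications are in hand, combining the two cases with the uniqueness property of $E_{\Fin}G$ recalled in \cref{def:efg} concludes the proof.
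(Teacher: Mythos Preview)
Your approach matches the paper's: both verify the fixed-point characterization of $E_{\Fin}G$ by writing $\ff(\Rips(G_{can,min}))(G/H)$ as the filtered colimit of $\ell(P_U(G)^H)$, dispose of infinite $H$ via finiteness of stabilizers, and for finite $H$ the paper simply cites \cite[Lem.~11.4]{equicoarse} for the vanishing of $\colim_U \pi_n(P_U(G)^H)$ where you spell out the coning argument. Your argument is correct, with one cosmetic fix: $P_{U_B}(G)^H$ is not itself a subcomplex, so replace ``finite subcomplex $K$'' by ``map from $S^n$'' (its image meets finitely many simplices, giving a finite $H$-invariant vertex set $V$, and then $B' := V \cup H$ makes $V\cup H$ span a single $H$-invariant simplex whose fixed-point set is convex); the compatibility worry you flag at the end is unnecessary, since weak contractibility of a filtered colimit of spaces only requires that each individual sphere becomes null-homotopic at some later stage, not that the null-homotopies be coherent.
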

\begin{proof}  We must verify that $\ff(\Rips(G_{can,min}))$ satisfies the condition stated in \cref{def:efg}. Because colimits in presheaves are formed objectwise and  the equivalence $\ff$ preserves colimits, by \cref{ergieogergergeeggreg} we have the equivalence
\[ \ff(\Rips(G_{can,min}))( {S})\simeq \colim_{U\in \cC^{G}(G_{can,min})} \ff(\ell(P_{U}(G_{can,min})))( {S})  \]
 {for every transitive $G$-set $S$.}
 {By definition of $\ff$, we have}
\[ \ff(\ell(P_{U}(G_{can,min})))( {S})\simeq \ell(\Map_{G\Top}( {S}_{disc},P_{U}(G_{can,min})))\ .\]
Since all stabilizers of points in $P_{U}(G_{can,min})$ are finite, we see that
\[ \Map_{G\Top}( S_{disc},P_{U}(G_{can,min}))\cong \emptyset \]
if {$S$ has infinite stabilizers}.
If $S$ has finite stabilizers, then the argument given in the proof of \cite[Lem.~11.4]{equicoarse} shows that
\[\colim_{U\in \cC^{G}(G_{can,min})} \pi_{n} ( \Map_{G\Top}(S_{disc},P_{U}(G_{can,min}) ))\] is trivial for all $n$ in $\nat$. 
This implies that
\[ \colim_{U\in \cC^{G}(G_{can,min})} \ell( \Map_{G\Top}(S_{disc},P_{U}(G_{can,min}) ))\simeq *\ .\qedhere\]
\end{proof}

\begin{ddd}
 The \emph{assembly map} $\alpha_X$ is the map \begin{equation}\label{f3oi4pf34f34f43f3}
\alpha_X \colon \tilde \cO^\infty_\homolg(\Rips(X)) \otimes G_{can,min} \to \cO^\infty_\homolg( \pi_0(X) ) \otimes G_{can,min} 
\end{equation}
 induced by the projection  \eqref{vbeoprevervevevrvrev}.
\end{ddd}
Note that on the target of this map we used \eqref{vreopkopk34pof34f34f} in order to suppress the symbol $\ell$.

Let $G\Simpl^{\fin}$ denote the category of $G$-finite $G$-simplicial complexes. Recall the functor $k_{d,max,max}$ defined in   \eqref{relkjeorigrfr43rrf43f}.
\begin{lem}\label{rgiowr4wwggwg}We have a canonical equivalence of functors $G\Simpl^{\fin} \to G\Sp\cX$
\[(\cO_{\hlg}^{\infty})_{|G\Simpl^{\fin}} \simeq (\cO^{\infty}\circ k_{d,max,max})_{|G\Simpl^{\fin}}\ .\]
\end{lem}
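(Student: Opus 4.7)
The plan is to construct a canonical natural transformation $\eta \colon (\cO^\infty_\hlg)_{|G\Simpl^{\fin}} \to \cO^\infty \circ k_{d,max,max}$ and show it is a pointwise equivalence by induction on the number of equivariant simplices.

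First, I would observe that both functors agree on a $G$-orbit $S$ viewed as a $0$-dimensional $G$-simplicial complex: the right-hand side evaluates to $\cO^\infty(S_{d,max,max}) = \cO^\infty(S_{disc,max,max})$, since the spherical path quasi-metric on a discrete set induces the discrete uniform structure; this coincides with the defining value of $\tilde\cO^\infty_\hlg$ on $G\Orb$ from \cref{hgergergegregrege}.

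Concretely, $\eta_K$ will be induced by the maps $(G/G_\sigma)_{disc,max,max} \to K_{d,max,max}$ in $G\UBC$ for each simplex $\sigma$ of $K$, sending the coset $eG_\sigma$ to the barycenter of $\sigma$; these are well-defined morphisms because the target carries the maximal coarse structure and the maximal bornology, so that any equivariant map from a discrete uniform space is automatically controlled, proper and uniform. Combined with a presentation of $\cO^\infty_\hlg(K) = \tilde\cO^\infty_\hlg(\ell(K))$ as a colimit over the simplex category of $K$ of the terms $\cO^\infty((G/G_\sigma)_{disc,max,max})$, using that $\tilde\cO^\infty_\hlg$ is colimit-preserving, these maps assemble to $\eta_K$. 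Equivalently, one may observe that $\cO^\infty \circ k_{d,max,max}$ is homotopy invariant by \cite[Cor.~9.38]{equicoarse}, so it factors through $\ell$, and appeal to the universal property of $\tilde\cO^\infty_\hlg$ as the colimit-preserving extension from $G\Orb$.

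To show that $\eta$ is an equivalence, I would verify that both functors are homotopy invariant and excisive for equivariant simplicial decompositions of $G$-finite simplicial complexes. The left-hand side has these properties by construction as an equivariant homology theory (\cref{ergergregergreg}); for the right-hand side, homotopy invariance is \cite[Cor.~9.38]{equicoarse}, and excision combines \cite[Cor.~9.36 and Rem.~9.37]{equicoarse} with an analogue of \cite[Lem.~10.9]{equicoarse} stating that $k_{d,max,max}$ sends equivariant simplicial decompositions to equivariant uniform decompositions in $G\UBC$. A standard induction on the number of equivariant simplices of $K$, using the pushout squares for simplex attachments and the Five-Lemma applied to the resulting Mayer--Vietoris fiber sequences, then reduces the general case to the orbit case already established.

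The main obstacle I anticipate is confirming the excision property for $\cO^\infty \circ k_{d,max,max}$, since the cited results are phrased for $k_{d,d,d}$. I expect the adaptation to be routine: the uniform-decomposition property of $k$ depends only on the uniform structure, which is unchanged between $k_{d,d,d}$ and $k_{d,max,max}$, and excision of $\cO^\infty$ on $G\UBC$ is insensitive to the bornology, so the argument of \cite[Lem.~10.9]{equicoarse} should transfer verbatim.
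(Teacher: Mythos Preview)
Your proposal is correct and follows essentially the same approach as the paper: both arguments rest on the agreement of the two functors on orbits (via $S_{d,max,max} = S_{disc,max,max}$ and \cref{hgergergegregrege}), combined with the fact that both functors are homotopy invariant \cite[Cor.~9.38]{equicoarse} and excisive for equivariant simplicial decompositions \cite[Lem.~10.9 and Cor.~9.36]{equicoarse}, so that an induction over cells yields the result. You supply more detail than the paper on how the comparison transformation arises (via the universal property of $\tilde\cO^\infty_\hlg$ as a colimit-preserving extension from $G\Orb$), and you correctly flag and resolve the point that the cited excision results are literally stated for $k_{d,d,d}$ rather than $k_{d,max,max}$; the paper simply invokes the same references without comment.
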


\begin{proof}
The functor $\cO^{\infty}\circ k_{d,max,max}$  (see \eqref{relkjeorigrfr43rrf43f}) is  excisive for decompositions of $G$-simplicial complexes   by \cite[Lem. 10.9 and Cor.~9.36]{equicoarse}. Furthermore, it is homotopy invariant by \cite[Cor.~9.38]{equicoarse}. The functor    $\cO^{\infty}_{\hlg}$ has the same properties. By \cref{hgergergegregrege}, we have an   equivalence \[(\cO^{\infty}_{\hlg})_{|G\Orb}\simeq  (\cO^{\infty}\circ k_{d,max,max})_{|G\Orb} \ .\] for
$S$ in $G\Orb$. This implies the desired equivalence.
\end{proof}

Let $X$ be a $G$-bornological coarse space.
 \begin{lem}\label{lem:alpha-beta-pi0weak}
  Assume:
	\begin{enumerate}
		\item $X$ is uniformly discrete.
		\item $X$ is $G$-proper.
		\item $X$ is $G$-finite.		
	\end{enumerate}
	Then $\alpha_X$ and $\beta^{\pi_0^\weak}_X$ from \eqref{f3oi4pf34f34f43f3} and \eqref{geroigergerrgeg1} are canonically equivalent.
 \end{lem}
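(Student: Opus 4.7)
My plan is to expand both $\alpha_X$ and $\beta^{\pi_0^\weak}_X$ as colimits over the poset $\cC^G(X)$ of invariant coarse entourages of $X$ and match them level by level. By \cref{rgoipewerfwefewfew} applied to the cone-boundary fibre sequence \eqref{efweflkwef234rergrg}, $\beta^{\pi_0^\weak}_X$ is realized as the colimit over $U \in \cC^G(X)$ of the maps $\partial_U \otimes \id$ from $\cO^\infty(P_U(X)_{d,\pi_0^\weak,max}) \otimes G_{can,min}$ to $\Sigma\Yo^s(P_U(X)_{\pi_0^\weak,max}) \otimes G_{can,min}$. Using the identification $\Rips(X) \simeq \colim_U \ell(P_U(X))$ from \cref{ergieogergergeeggreg} together with colimit-preservation of $\tilde\cO^\infty_\hlg$ and $-\otimes G_{can,min}$, the map $\alpha_X$ becomes the colimit over $U$ of $\cO^\infty_\hlg(p_U) \otimes \id$, where $p_U \colon P_U(X) \to \pi_0(X)$ is the projection onto coarse components.

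Under the hypotheses on $X$ and \cref{girjgorgergergreg}, each $P_U(X)$ is a $G$-finite, $G$-proper $G$-simplicial complex. For the source identification, \cref{rgiowr4wwggwg} gives $\cO^\infty_\hlg(P_U(X)) \simeq \cO^\infty(P_U(X)_{d,max,max})$, and since $\cC_{\pi_0^\weak} \subseteq \cC_{max}$ by \eqref{rgerioogegerg34t}, the identity of underlying sets $P_U(X)_{d,\pi_0^\weak,max} \to P_U(X)_{d,max,max}$ is a coarsening, which $\cO^\infty$ sends to an equivalence by \cite[Prop.~9.33]{equicoarse}. For the target identification, applying \cref{rgiowr4wwggwg} to $\pi_0(X)$ viewed as a $G$-finite $0$-dimensional $G$-simplicial complex, together with the alternative description of $\tilde\cO^\infty_\hlg$ from the Remark following \cref{hgergergegregrege} and coarse excision along the finite orbit decomposition of $\pi_0(X)$, yields $\cO^\infty_\hlg(\pi_0(X)) \simeq \Sigma\Yo^s(\pi_0(X)_{min,max})$. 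It then suffices to show that the component projection $P_U(X)_{\pi_0^\weak,max} \to \pi_0(X)_{min,max}$ induces an equivalence on $\Yo^s(-\otimes G_{can,min})$: coarse excision along the orbits of $\pi_0(X)$ reduces to a single orbit $O_j$, and each $P_U(W)$ with $W \in O_j$ lies in a single entourage of $\pi_0^\weak$ and is hence coarsely a point.

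Map compatibility will follow by naturality of the cone boundary $\partial$ as a transformation of functors on $G\UBC$: the composite $P_U(X)_{d,\pi_0^\weak,max} \to P_U(X)_{d,max,max} \to \pi_0(X)_{disc,max,max}$ is a morphism in $G\UBC$ (since $p_U$ contracts the small-scale spherical uniform structure on $P_U(X)_d$ into the discrete uniform structure on $\pi_0(X)$) and it factors the cone boundary through the coarsening used in the source identification. Taking the colimit over $\cC^G(X)$ will then produce the desired canonical equivalence $\beta^{\pi_0^\weak}_X \simeq \alpha_X$. The main obstacle will be the equivariant coarse equivalence $\Yo^s(P_U(O_j)_{\pi_0^\weak,max} \otimes G_{can,min}) \simeq \Yo^s(O_{j,min,max} \otimes G_{can,min})$ in the case when the orbit stabilizer $G_{W_j}$ of a representative is infinite: no $G_{W_j}$-fixed point in $P_U(W_j)$ is available a priori, so one must pass to the continuous localization and reduce to $G$-invariant locally finite subsets via \cref{rguierog34trt34g}, where the tensor with $G_{can,min}$ provides enough room to realize the identification equivariantly.
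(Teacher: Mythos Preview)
Your overall strategy matches the paper's: expand both maps as colimits over $\cC^G(X)$, identify the sources via \cref{rgiowr4wwggwg} together with the coarsening-invariance of $\cO^\infty$, and identify the targets by showing that the projection $P_U(X)_{\pi_0^\weak,max}\otimes G_{can,min}\to \pi_0(X)_{min,max}\otimes G_{can,min}$ becomes an equivalence after applying $\Yo^s$.

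The difference --- and the weak point in your plan --- is the target step. You reduce by excision to a single $G$-orbit in $\pi_0(X)$, then worry that an infinite component-stabiliser $G_{W_j}$ obstructs choosing an equivariant basepoint in $P_U(W_j)$, and propose to repair this by passing to the continuous localization via \cref{rguierog34trt34g}. That detour is unnecessary and, as stated, would only yield a \emph{continuous} equivalence rather than the honest equivalence the lemma asserts.

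The paper avoids this entirely by a direct section argument which already exploits what you flag at the very end: the free $G$-action on the second tensor factor. One chooses, \emph{non}-equivariantly, a point in $P_U(X)$ over each element of $\pi_0(X)$; this gives a map $\pi_0(X)\times\{1\}\to P_U(X)\times\{1\}$, which extends uniquely to a $G$-equivariant map
\[
 s\colon \pi_0(X)_{min,max}\otimes G_{can,min}\longrightarrow P_U(X)_{\pi_0^\weak,max}\otimes G_{can,min}\ .
\]
One checks that $p\circ s=\id$ and that $s\circ p$ is close to the identity (the displacement lies in $U_{\pi_0^\weak}\times\diag(G)$). Hence $p$ is a coarse equivalence and $\Yo^s(p)$ is an equivalence --- no continuity needed, and no fixed-point obstruction arises, because the section is only required to be equivariant \emph{after} tensoring with $G_{can,min}$. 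Your excision reduction is harmless but superfluous; the point is that the $G_{can,min}$ factor already supplies the equivariant section at the level of $G\BC$, not merely after restricting to locally finite subsets.
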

\begin{proof}
The assumptions on $X$ imply that $P_{U}(X)$ is $G$-finite
for every invariant coarse entourage $U$ of $X$. Therefore, by \cref{rgiowr4wwggwg}
we have a  {canonical} equivalence 
	\[\cO^\infty_\homolg(P_U(X))\simeq \cO^\infty(P_U(X)_{d,max,max})\ .\]
	Similarly, we have a {canonical} equivalence
	\[\cO^\infty_\homolg(\pi_0(X))\simeq \cO^\infty(\pi_0(X)_{disc,max,max})\ .\]
	These equivalences yield the lower square in the following diagram. The upper square is induced by a coarsening. Therefore the vertical maps are equivalences by \cite[Prop.~9.33]{equicoarse}.
 \begin{equation}\label{eq:asquare}\xymatrix{
     \colim\limits_{U \in \cC^G(X)} \cO^\infty(P_U(X)_{d,\pi_0^\weak,max}) \otimes G_{can,min}\ar[d]^\simeq\ar[r]& \cO^\infty(\pi_0(X)_{disc,min,max}) \otimes G_{can,min}\ar[d]^\simeq \\
  \colim\limits_{U \in \cC^G(X)} \cO^\infty(P_U(X)_{d,max,max}) \otimes G_{can,min}\ar[d]^\simeq\ar[r] & \cO^\infty(\pi_0(X)_{disc,max,max}) \otimes G_{can,min}\ar[d]^\simeq \\
  \colim\limits_{U \in \cC^G(X)} \cO^\infty_\homolg(P_U(X)) \otimes G_{can,min}\ar[r]^-{\alpha_{X}} & \cO^\infty_\homolg(\pi_0(X)) \otimes G_{can,min}
 }\end{equation}
By \cref{ergieogergergeeggreg}, \eqref{vreopkopk34pof34f34f} and the fact that $\tilde \cO^{\infty}_{\hlg}$ preserves colimits we have the equivalence
	\begin{equation*}\label{verh3jfnkjrververv}
\tilde \cO^\infty_\homolg(\Rips(X)) \simeq \colim_{U\in \cC^{G}(X)}\cO^\infty_\homolg(P_U(X))\ .
\end{equation*} 
Hence the lower horizontal map in \eqref{eq:asquare} is equivalent to $\alpha_X$ as indicated.

 {The upper horizontal arrow from \eqref{eq:asquare} fits into} the commutative square
\[\xymatrix{
     \colim\limits_{U \in \cC^G(X)} \cO^\infty(P_U(X)_{d,\pi_0^\weak,max}) \otimes G_{can,min}\ar[d]^{\beta_{X}^{\pi_{0}^{\weak}}}\ar[r]& \cO^\infty(\pi_0(X)_{disc,min,max}) \otimes G_{can,min}\ar[d]^{\simeq}\\
     \colim\limits_{U \in \cC^G(X)} \Sigma\Yo^s(P_U(X)_{\pi_0^\weak,max})\otimes G_{can,min}\ar[r]^-{\simeq}&\Sigma\Yo^s(\pi_0(X)_{min,max})\otimes G_{can,min}
     }\]
     Here the right vertical map is an equivalence by \cite[Prop.~9.35]{equicoarse}. 
     
     We now show that the lower horizontal map is an equivalence. The argument is similar to \cite[Lem.~10.7]{equicoarse}. By choosing a representative in $P_{U}(X)$  for every element of $\pi_0(X)$ we obtain a map
     $\pi_0(X)\times \{1\}\to P_{U}(X)\times \{1\}$. This map has a unique extension to a $G$-equivariant map
     $\pi_0(X)\times G \to  P_{U}(X)\times G$. We now observe that this  map is a morphism of $G$-bornological coarse spaces
     \[s \colon \pi_0(X)_{ min,max}\otimes G_{can,min}\to  P_{U}(X)_{\pi^{\weak}_{0},max}\otimes G_{can,min}\ .\]
     It is a right inverse of the projection \[p \colon P_{U}(X)_{\pi^{\weak}_{0},max}\otimes G_{can,min}\to \pi_0(X)_{ min,max}\otimes G_{can,min}\ ,\] and the composition $s\circ p$ is close to the identity by construction.
     It follows that $p$ is a coarse equivalence and this implies that  lower horizontal map is an equivalence.

     It follows that the upper horizontal map in \eqref{eq:asquare} is equivalent to $\beta_X^{\pi_0^\weak}$.
\end{proof}

Let $X$ be a $G$-bornological coarse space. Combining \cref{lem:alpha-beta-pi0weak,lem:beta-pi0-beta-pi0weak,lem:beta-beta-pi0}, we obtain the following corollary.

 \begin{kor}\label{rgiofwewefeewfewf}
 \label{kor:finitecase}
   Assume:
	\begin{enumerate}
		\item $X$ is uniformly discrete;
		\item $X$ is $G$-proper;
		\item $X$ is $G$-finite.		
	\end{enumerate}
	Then the assembly map $\alpha_X$ and the forget-control map $\beta_X$ from \eqref{f3oi4pf34f34f43f3} and \eqref{bthjrvnkj3rv3rvevevev} are canonically continuously equivalent.
 \end{kor}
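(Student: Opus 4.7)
The statement we need to prove asserts a canonical continuous equivalence between the assembly map $\alpha_X$ and the forget-control map $\beta_X$, under the hypotheses that $X$ is uniformly discrete, $G$-proper and $G$-finite. The strategy is to use the two intermediate forget-control maps $\beta^{\pi_0}_X$ and $\beta^{\pi_0^\weak}_X$ introduced in \eqref{geroigergerrgeg} and \eqref{geroigergerrgeg1} as stepping stones, and to assemble the chain of continuous equivalences
\[ \alpha_X \;\simeq\; \beta^{\pi_0^\weak}_X \;\simeq\; \beta^{\pi_0}_X \;\simeq\; \beta_X \]
from \cref{lem:alpha-beta-pi0weak}, \cref{lem:beta-pi0-beta-pi0weak} and \cref{lem:beta-beta-pi0}, respectively.

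Concretely, the plan is as follows. First I would invoke \cref{lem:beta-beta-pi0} to replace $\beta_X$, which is defined using $G_{max,max}$, by $\beta^{\pi_0}_X$, which uses the finer coarse structure $G_{can,min}$ together with the coarse structure $\cC_{\pi_0}$ on Rips complexes. The key input there is \cref{fuwef9iewfewfewfwef} applied to the $G$-simplicial complexes $P_U(X)$, which under the three hypotheses on $X$ lie in $G\Simpl^{\itprop,\itfin}$ by \cref{girjgorgergergreg}. Next I would use \cref{lem:beta-pi0-beta-pi0weak} to identify $\beta^{\pi_0}_X$ with $\beta^{\pi_0^\weak}_X$; this step amounts to replacing the path-metric-controlled coarse structure $\cC_{\pi_0}$ on $P_U(X)$ by the coarse structure $\cC_{\pi_0^\weak}$ that records only the coarse components of $X$, and is justified by the fact that $\cO^\infty$ is invariant under coarsening together with continuity arguments about locally finite subsets. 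Finally, \cref{lem:alpha-beta-pi0weak} provides the identification of $\beta^{\pi_0^\weak}_X$ with $\alpha_X$, where the assembly map is exhibited via the Rips complex and $\cO^\infty_\hlg$, making use of \cref{rgiowr4wwggwg} (so that $\cO^\infty_\hlg$ applied to a $G$-finite complex agrees with $\cO^\infty \circ k_{d,max,max}$) and of the coarse equivalence $P_U(X)_{\pi_0^\weak,max} \otimes G_{can,min} \to \pi_0(X)_{min,max} \otimes G_{can,min}$ constructed by choosing a representative in each coarse component.

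Since all three intermediate equivalences are asserted to be canonical (and natural in $X$ through their constructions via left Kan extensions from $G\BC^{\cC}$), composing them yields the claimed canonical continuous equivalence $\alpha_X \simeq \beta_X$. There is no new calculation required; the role of this corollary is to package the three preceding lemmas into a single statement. The only substantive point to check is that the three hypotheses on $X$ are exactly those needed to apply each of \cref{lem:beta-beta-pi0}, \cref{lem:beta-pi0-beta-pi0weak} and \cref{lem:alpha-beta-pi0weak}, which they are by direct inspection. The main obstacle has already been absorbed into the three lemmas, in particular into \cref{fiwuowefewfwfewf}, where duality of bornologies (\cref{wtquzdguz3}) and the continuous equivalences from \cref{wtquzdguz4} and \cref{wtquzdguz5} are used to interchange the coarse and bornological structures on $G$ between $G_{can,min}$ and $G_{max,max}$.
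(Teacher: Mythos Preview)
Your proposal is correct and matches the paper's own argument exactly: the corollary is obtained by concatenating the canonical continuous equivalences of \cref{lem:beta-beta-pi0}, \cref{lem:beta-pi0-beta-pi0weak} and \cref{lem:alpha-beta-pi0weak}, each of which applies under precisely the three stated hypotheses on $X$. There is nothing to add; your expanded explanation of what each step accomplishes is accurate but goes beyond what the paper records.
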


In the following, we derive a version of \cref{rgiofwewefeewfewf} without the assumption of $G$-finiteness. To this end, we must modify the definition of the forget-control map.

Let $G\BC^\fin$ denote the full subcategory of $G\BC$ consisting of $G$-finite
$G$-bornological coarse spaces.
Let $E\colon G\BC^\fin\to \bC$ be some functor to a cocomplete target~$\bC$.

\begin{ddd}\label{ergiuhergoegergergergre}
We define $E_{\fin}$ as the left Kan extension
\[ \xymatrix{G\BC^\fin\ar[d]\ar[r]^-{E}&\bC\\ G\BC\ar@{-->}[ur]_-{E_\fin}&} \]
along the inclusion functor $G\BC^\fin\to G\BC$ of the restriction of $E$ to $G\BC^\fin$.
\end{ddd}
We have a canonical transformation of functors
\begin{equation}\label{rvgtgrtgtrgtrtrervervv}
E_{\fin}\to E \colon G\BC\to \bC\ .
\end{equation} 

Let $X$ be a $G$-bornological coarse space. By $\cK(X)$ we denote the poset of all invariant $G$-finite subspaces of $X$ with the induced $G$-bornological coarse structures.
\begin{lem}\label{rgioegergergerg}
	We have a canonical equivalence
	\[ \colim_{W\in \cK(X)} E(W)\simeq E_{\fin}(X) \ .\]
\end{lem}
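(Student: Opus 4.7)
The plan is to apply the pointwise formula for the left Kan extension and to verify that the natural functor $\cK(X)\to G\BC^{\fin}_{/X}$ is cofinal.

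By the pointwise formula for left Kan extensions (together with the fact that the inclusion $G\BC^{\fin}\hookrightarrow G\BC$ is fully faithful),
\[ E_{\fin}(X)\simeq \colim_{(W,f)\in G\BC^{\fin}_{/X}}E(W),\]
where the indexing $\infty$-category is the slice of $G\BC^{\fin}$ over $X$ taken with respect to morphisms in $G\BC$. There is an obvious functor
\[ \iota\colon \cK(X)\to G\BC^{\fin}_{/X} \]
which sends a $G$-invariant $G$-finite subspace $L\subseteq X$ (with its induced structures) to the inclusion $L\hookrightarrow X$, and which sends a containment $L\subseteq L'$ in $\cK(X)$ to the induced inclusion $L\hookrightarrow L'$ over $X$. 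It suffices to show that $\iota$ is cofinal, for then the colimit defining $E_{\fin}(X)$ can be rewritten as $\colim_{L\in\cK(X)}E(L)$.

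Fix $(W,f)$ in $G\BC^{\fin}_{/X}$. To verify the cofinality criterion, I shall exhibit an initial object in the comma category $\cK(X)\times_{G\BC^{\fin}_{/X}}(G\BC^{\fin}_{/X})_{(W,f)/}$, which forces this category to be weakly contractible. Set $L_{0}:=f(W)\subseteq X$, equipped with the $G$-bornological coarse structure induced from $X$, and let $\bar f\colon W\to L_{0}$ be the corestriction of $f$. Then $L_{0}\in\cK(X)$: it is $G$-invariant because $f$ is equivariant, and $G$-finite because the surjection $W\twoheadrightarrow L_{0}$ ensures that $L_{0}$ has at most as many $G$-orbits as $W$. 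Moreover $\bar f$ is a morphism in $G\BC$: it is controlled because $f$ is, and it is proper because $f$ is proper and the bornology on $L_{0}$ is the one induced from $X$. Hence $(L_{0},\bar f)$ is an object of the comma category.

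For initiality, consider any other object $(L,h\colon W\to L)$ of the comma category, where $L\in\cK(X)$ and $h$ is a morphism in $G\BC$ satisfying $i_{L}\circ h=f$ for the inclusion $i_{L}\colon L\hookrightarrow X$. Then $L_{0}=f(W)=i_{L}(h(W))\subseteq L$, so the inclusion $L_{0}\hookrightarrow L$ in $\cK(X)$ provides the unique morphism in the comma category from $(L_{0},\bar f)$ to $(L,h)$. This establishes initiality, hence cofinality, hence the lemma. The only nontrivial point is the verification that $f(W)$ with its induced structures really lies in $\cK(X)$ and that $\bar f$ is a morphism of $G$-bornological coarse spaces; everything else is a routine unwinding of definitions.
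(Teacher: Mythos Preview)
Your proof is correct and follows essentially the same approach as the paper: both use the pointwise formula for the left Kan extension and establish cofinality of $\cK(X)$ in $G\BC^{\fin}_{/X}$ via the observation that the image $f(W)$ of a $G$-finite space is again $G$-finite. You simply spell out the cofinality argument in more detail by exhibiting $f(W)$ as an initial object of the relevant comma category, whereas the paper states cofinality in one sentence.
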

\begin{proof}
	By the objectwise formula for the left Kan extension 
	we have  
	\[ \colim_{(W\to X)\in G\BC^\fin/X} E(W)\simeq E_{\fin}(X) \ .\]
	Since the image of a $G$-finite subspace under a morphism of $G$-bornological coarse spaces is again $G$-finite the subcategory $\cK(X)$ is cofinal in $ G\BC^\fin/X$. This implies the assertion.
\end{proof}

Recall \cref{roerogergregergreg} of the notion of uniform discreteness. In the following, we consider the transformation \eqref{rvgtgrtgtrgtrtrervervv} for the functor
$E:=\tilde \cO^{\infty}_{\hlg}\circ \Rips \colon G\BC\to G\Sp\cX$. 
Let $G\BC^{\udisc}$ be the full subcategory of $G\BC$ of uniformly discrete $G$-bornological coarse spaces.

\begin{lem}
	\label{lem:ohlg} The restriction of the transformation
	\[(\tilde \cO^\infty_{\hlg}\circ \Rips)_\fin\to\tilde \cO^\infty_{\hlg}\circ \Rips\]
	to $G\BC^{\udisc}$ is an equivalence.
\end{lem}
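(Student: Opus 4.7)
Write $E := \tilde\cO^\infty_\hlg \circ \Rips \colon G\BC\to G\Sp\cX$. By \cref{rgioegergergerg} the statement amounts to showing that for $X$ uniformly discrete the canonical map
\[ \colim_{W\in \cK(X)} E(W) \longrightarrow E(X) \]
induced by the inclusions $W\hookrightarrow X$ is an equivalence. The overall strategy is to unfold both sides as iterated filtered colimits indexed by $\cK(X)$ and by $\cC^G(X)$, and then to swap the order of these two filtered colimits.

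First, using that $\tilde\cO^\infty_\hlg$ preserves colimits together with \cref{ergieogergergeeggreg}, I rewrite both sides as
\[ E(X) \simeq \colim_{U\in \cC^G(X)} \cO^\infty_\hlg(P_U(X)), \qquad E(W) \simeq \colim_{U'\in \cC^G(W)} \cO^\infty_\hlg(P_{U'}(W)). \]
For $W\in \cK(X)$, the restriction map $\cC^G(X)\to \cC^G(W)$ (sending $U$ to $U\cap (W\times W)$) is cofinal, since every entourage of $W$ extends to an invariant entourage of $X$. Hence $E(W)\simeq \colim_{U\in \cC^G(X)} \cO^\infty_\hlg(P_U(W))$.

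Next, the uniform discreteness of $X$ (\cref{roerogergregergreg}) enters through \cref{girjgorgergergreg}: for every $U\in\cC^G(X)$ the $G$-simplicial complex $P_U(X)$ is locally finite. Each simplex of $P_U(X)$ is supported on a finite $U$-bounded subset of $X$, whose $G$-orbit is a $G$-finite invariant subset $W$ of $X$, and the simplex already lies in the $G$-finite subcomplex $P_U(W) \subseteq P_U(X)$. Therefore
\[ P_U(X) = \bigcup_{W\in \cK(X)} P_U(W), \]
a filtered union of $G$-invariant subcomplexes. Because this is a filtered diagram of inclusions of $G$-CW subcomplexes, taking $H$-fixed points for any subgroup $H$ commutes with the union; consequently the filtered union represents the $\infty$-categorical filtered colimit of the $\ell(P_U(W))$ in $G\Top[W_G^{-1}] \simeq \PSh(G\Orb)$. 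Since $\tilde\cO^\infty_\hlg$ is colimit-preserving by \cref{hgergergegregrege}, I obtain
\[ \cO^\infty_\hlg(P_U(X)) \simeq \colim_{W\in \cK(X)} \cO^\infty_\hlg(P_U(W)). \]

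Combining these ingredients and using that filtered colimits in a stable $\infty$-category commute with one another,
\begin{align*}
E(X) &\simeq \colim_{U\in \cC^G(X)} \colim_{W\in \cK(X)} \cO^\infty_\hlg(P_U(W)) \\
&\simeq \colim_{W\in \cK(X)} \colim_{U\in \cC^G(X)} \cO^\infty_\hlg(P_U(W)) \simeq \colim_{W\in \cK(X)} E(W),
\end{align*}
and by construction this composite equivalence is the transformation in question. The most delicate point is the second paragraph, namely justifying that the filtered union $P_U(X)=\bigcup_W P_U(W)$ genuinely models the $\infty$-categorical colimit after applying $\ell$ and then $\tilde\cO^\infty_\hlg$; this is where uniform discreteness is indispensable, since without it one could have simplices of $P_U(X)$ that are not contained in any $G$-finite subcomplex.
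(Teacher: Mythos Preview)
Your proposal is correct and follows essentially the same approach as the paper: expand both sides as double filtered colimits over $\cK(X)$ and $\cC^G(X)$ using \cref{rgioegergergerg} and \cref{ergieogergergeeggreg}, then swap the order; the key step in both is the identification $\cO^\infty_\hlg(P_U(X)) \simeq \colim_{W\in\cK(X)} \cO^\infty_\hlg(P_U(W))$ for fixed $U$, justified via local finiteness of $P_U(X)$ (from uniform discreteness, \cref{girjgorgergergreg}).

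One small inaccuracy worth flagging: your closing remark that without uniform discreteness ``one could have simplices of $P_U(X)$ that are not contained in any $G$-finite subcomplex'' is not the actual obstruction. Every simplex of $P_U(X)$ is supported on a finite subset of $X$ and hence lies in $P_U(W)$ for $W$ the $G$-orbit of that support, which is always $G$-finite. The genuine role of uniform discreteness is that local finiteness of $P_U(X)$ ensures the metric topology agrees with the weak (CW) topology, so that the filtered union $\bigcup_W P_U(W)$ really computes the colimit in $G\Top$ and is preserved by $\ell$; the paper phrases this as $P_U(X)$ being a filtered homotopy colimit of its $G$-compact subsets, with the $P_U(W)$ cofinal among these. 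Your argument that fixed points commute with filtered unions of subcomplexes is fine, but implicitly relies on $P_U(X)$ carrying the CW topology, which is exactly what local finiteness provides.
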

\begin{proof}
	If $X$ is uniformly discrete, then  for every $U$ in $\cC^{G}(X)$ the complex $P_{U}(X)$ is a {locally finite}  $G$-simplicial complex. Consequently, $P_{U}(X)$ as a $G$-topological space is a filtered colimit over its $G$-compact subsets.
 {In fact, this filtered colimit is a homotopy colimit, so it is preserved by the functor $\ell$.}
	 The subsets
	$P_{U}(L)$ of $P_{U}(X)$ for invariant $G$-finite subsets {$L$} of $X$ are cofinal in the $G$-compact subsets of $P_{U}(X)$. All this is used below to justify the  equivalence marked by $!$. At this point we further use the fact that $\tilde \cO^{\infty}_{\hlg}$ preserves  colimits in $G\Top[W_{G}^{-1}]$.     Hence if $X$ is uniformly discrete, then we have the following equivalences (the first one is due to Lemma~\ref{rgioegergergerg})
\begin{align*}
	  \tilde \cO^\infty_{\hlg}(\Rips(X))_\fin & \simeq \colim_{L\in \cK(X)} \tilde \cO^\infty_{\hlg}(\Rips(L)) \\
	  & \simeq \colim_{L\in \cK(X)} \tilde\cO^\infty_\hlg(\colim_{U \in \cC^G(X)} \ell(P_U(L))) \\
	  & \simeq \colim_{L\in \cK(X)} \colim_{U \in \cC^G(X)} \tilde\cO^\infty_\hlg(\ell(P_U(L))) \\
	  & \simeq \colim_{U \in \cC^G(X)} \colim_{L\in \cK(X)} \tilde\cO^\infty_\hlg(\ell(P_U(L))) \\
	  & \stackrel{!}{\simeq} \colim_{U \in \cC^G(X)} \tilde\cO^\infty_\hlg(\ell(P_U(X))) \\
	  & \simeq \tilde\cO^\infty_\hlg(\colim_{U \in \cC^G(X)} \ell(P_U(X))) \\
	  & \simeq \tilde\cO^\infty_{\hlg}(\Rips(X))\ .\qedhere
\end{align*}
\end{proof}

We now consider the functor
$\cO^{\infty}_{\hlg}\circ \pi_{0} \colon G\BC\to G\Sp\cX$. A similar argument as for {\cref{lem:ohlg}} shows:
\begin{lem}\label{rgi9oergergergreg}
The transformation
\[(  \cO^{\infty}_{hlg}\circ \pi_{0} )_{\fin}\to   \cO^{\infty}_{hlg}\circ  \pi_{0} \]
is an equivalence.
 \end{lem}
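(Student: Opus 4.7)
The plan is to mirror the argument given for \cref{lem:ohlg}, replacing the use of the Rips complex functor by the (considerably simpler) coarse components functor $\pi_0$, and dropping the uniform-discreteness hypothesis since now every step works without it. By \cref{rgioegergergerg} applied to $E = \cO^{\infty}_\hlg \circ \pi_0$ restricted to $G\BC^\fin$, we have a canonical equivalence
\[(\cO^{\infty}_\hlg \circ \pi_0)_\fin(X) \simeq \colim_{L \in \cK(X)} \cO^{\infty}_\hlg(\pi_0(L))\]
for every $G$-bornological coarse space $X$, so the task is to identify this colimit with $\cO^{\infty}_\hlg(\pi_0(X))$.

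First I would observe that if $L$ is an invariant $G$-finite subspace of $X$, then $\pi_0(L)$ is a $G$-finite subset of $\pi_0(X)$, and the assignment $L \mapsto \pi_0(L)$ yields a functor from $\cK(X)$ to the filtered poset $\cK(\pi_0(X))$ of $G$-finite sub-$G$-sets of $\pi_0(X)$. This functor is cofinal: given any $G$-finite $S \subseteq \pi_0(X)$, one picks a single point from each coarse component in $S$ to obtain an invariant $G$-finite subspace $L \subseteq X$ with $\pi_0(L) \supseteq S$.

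Next I would unfold $\cO^{\infty}_\hlg = \tilde{\cO}^{\infty}_\hlg \circ \ell$ and interchange the colimit with $\tilde{\cO}^{\infty}_\hlg$, which preserves colimits by construction (\cref{hgergergegregrege}). The resulting chain of equivalences reads
\begin{align*}
 (\cO^{\infty}_\hlg \circ \pi_0)_\fin(X) &\simeq \colim_{L \in \cK(X)} \tilde{\cO}^{\infty}_\hlg(\ell(\pi_0(L))) \\
 &\simeq \colim_{S \in \cK(\pi_0(X))} \tilde{\cO}^{\infty}_\hlg(\ell(S)) \\
 &\simeq \tilde{\cO}^{\infty}_\hlg\Bigl(\colim_{S \in \cK(\pi_0(X))} \ell(S)\Bigr) \\
 &\simeq \tilde{\cO}^{\infty}_\hlg(\ell(\pi_0(X))) \simeq \cO^{\infty}_\hlg(\pi_0(X))\ ,
\end{align*}
where the middle equivalence uses the cofinality established above.

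The only non-trivial step is the penultimate equivalence, which requires that the filtered colimit of the $\ell(S)$ in $G\Top[W_G^{-1}]$ computes $\ell(\pi_0(X))$. This holds because a discrete $G$-set, viewed as a $G$-topological space, is the (filtered) colimit of its $G$-finite sub-$G$-sets in $G\Top$, and this colimit is already a homotopy colimit (the inclusions are cofibrations between discrete spaces), so it is preserved by the localization $\ell$. This is the same mechanism invoked for the corresponding step in the proof of \cref{lem:ohlg}, but here it is cleaner because $\pi_0(X)$ is already discrete, so no appeal to local finiteness of a simplicial complex is needed.
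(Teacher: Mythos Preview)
Your proof is correct and follows essentially the same approach as the paper, which merely indicates that the argument is analogous to that of \cref{lem:ohlg} and points out the key simplification: a discrete $G$-topological space is always a filtered (homotopy) colimit of its $G$-finite subspaces, so no uniform-discreteness hypothesis is needed. You have simply spelled out the details---the cofinality of $\cK(X)\to\cK(\pi_0(X))$ and the commutation of $\tilde\cO^\infty_\hlg$ with the relevant colimit---that the paper leaves implicit.
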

 We do not need to restrict to uniformly discrete spaces here since a discrete $G$-topological space is always a filtered  {(homotopy)} colimit of its $G$-finite subspaces.

In the following we use the abbreviations
$F^{x}_{\fin}$ for $(F^{x})_{\fin}$ for $x\in \{\emptyset,0,\infty\}$, and we write $\beta_{X,\fin}$ for the image
of $\beta_{X}$ under the $(-)_{\fin}$-construction.

Let $X$ be a $G$-bornological coarse space.
\begin{prop}
	\label{prop:a-b-comp}
	Assume:
	\begin{enumerate}
		\item $X$ is uniformly discrete;
		\item $X$ is $G$-proper.
	\end{enumerate}
	Then the assembly map
	\[\alpha_X\colon \tilde \cO^\infty_{\hlg}(\Rips(X))\otimes G_{can,min}\to \cO^\infty_{\hlg}(\pi_0(X))\otimes G_{can,min}\]
	is canonically continuously equivalent to the forget-control map
	\[\beta_{X,\fin}\colon F^\infty_{\fin}(X)\otimes G_{max,max}\to \Sigma F^0_{\fin}(X)\otimes G_{max,max}\ .\]
\end{prop}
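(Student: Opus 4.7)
The plan is to bootstrap from Corollary \ref{rgiofwewefeewfewf} by passing to the colimit over the poset $\cK(X)$ of invariant $G$-finite subspaces of $X$. The point is that the assumptions of uniform discreteness and $G$-properness are inherited by every subspace, so for each $W$ in $\cK(X)$ all three hypotheses of Corollary \ref{rgiofwewefeewfewf} are satisfied and we obtain a canonical continuous equivalence
\[\alpha_W \simeq \beta_W\]
in a manner that is natural in $W$.

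First I would argue that these equivalences assemble into an equivalence of diagrams indexed by $\cK(X)$, using that all functors involved (namely $\tilde\cO^\infty_\hlg$, $\cO^\infty_\hlg$, $\Rips$, $\pi_0$, $F^0$, $F^\infty$, as well as the twists $-\otimes G_{can,min}$ and $-\otimes G_{max,max}$) are functorial, and that the twists preserve colimits in the first argument by \cite[Lem.~4.17]{equicoarse}. Taking the colimit over $\cK(X)$ of the continuous equivalence $\alpha_W \simeq \beta_W$ then yields a continuous equivalence between
\[\colim_{W\in\cK(X)}\bigl(\tilde\cO^\infty_\hlg(\Rips(W))\otimes G_{can,min}\to \cO^\infty_\hlg(\pi_0(W))\otimes G_{can,min}\bigr)\]
and
\[\colim_{W\in\cK(X)}\bigl(F^\infty(W)\otimes G_{max,max}\to \Sigma F^0(W)\otimes G_{max,max}\bigr).\]

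Next I would identify both sides with the maps in the statement. For the right-hand side this is immediate: by \cref{rgioegergergerg} applied to $F^\infty$ and $F^0$, the colimit is precisely $\beta_{X,\fin}$. For the left-hand side, \cref{lem:ohlg} gives $\colim_W \tilde\cO^\infty_\hlg(\Rips(W)) \simeq \tilde\cO^\infty_\hlg(\Rips(X))$ using that $X$ is uniformly discrete, while \cref{rgi9oergergergreg} gives $\colim_W \cO^\infty_\hlg(\pi_0(W)) \simeq \cO^\infty_\hlg(\pi_0(X))$ unconditionally. Together, the colimit of the diagram of assembly maps $\alpha_W$ is canonically equivalent to $\alpha_X$ after tensoring with $G_{can,min}$.

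The main obstacle will be bookkeeping the naturality of all the identifications. In particular, one has to check that the chain of continuous equivalences $\alpha_W \simeq \beta^{\pi_0^\weak}_W \simeq \beta^{\pi_0}_W \simeq \beta_W$ established in the proof of \cref{rgiofwewefeewfewf} via \cref{lem:alpha-beta-pi0weak,lem:beta-pi0-beta-pi0weak,lem:beta-beta-pi0} is natural with respect to inclusions $W\hookrightarrow W'$ in $\cK(X)$, so that the whole equivalence is promoted to an equivalence of functors $\cK(X) \to \Fun(\Delta^1, G\Sp\cX)$. This amounts to observing that each of the intermediate constructions (the functors $P$, $P_{\pi_0}$, $P_{\pi_0^\weak}$, the cone and cone-at-infinity, the identification of $\cO^\infty_\hlg$ with $\cO^\infty\circ k_{d,max,max}$ on $G$-finite complexes, and the continuous equivalences from \cref{wtquzdguz4,wtquzdguz5,biorgregergrege1111}) is functorial in morphisms of $G$-bornological coarse spaces, which they are by construction. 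Once this naturality is in hand, the proof is concluded by taking the colimit over $\cK(X)$.
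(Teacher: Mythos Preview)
Your proposal is correct and follows essentially the same approach as the paper: the paper's proof is a one-sentence reference to \cref{kor:finitecase}, \cref{rgioegergergerg}, \cref{lem:ohlg}, and \cref{rgi9oergergergreg}, observing that every invariant subspace of $X$ inherits uniform discreteness and $G$-properness. Your argument is simply a more detailed unpacking of that sentence, including the naturality bookkeeping that the paper leaves implicit.
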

\begin{proof}
	Since every invariant subspace of $X$ is again uniformly discrete and $G$-proper, the proposition follows immediately from \cref{kor:finitecase}, \cref{rgioegergergerg}, \cref{lem:ohlg} and \cref{rgi9oergergergreg}.
\end{proof}

Let $X$ be a $G$-bornological coarse space and let $S$ be a $G$-set. 
\begin{kor}
	\label{cor:comparison}
	Assume:
	\begin{enumerate}
		\item $X$ is uniformly discrete;
		\item $X$ is $G$-proper;
		\item $X$ is coarsely connected.
	\end{enumerate}
	Then the $S$-twisted assembly map
	\[\alpha_{X,S}\colon \tilde  \cO^\infty_{\hlg}(\ell(S_{disc})\times \Rips(X))\otimes G_{can,min}\to \cO^\infty_{\hlg}(S_{disc})\otimes G_{can,min}\]
	is canonically continuously equivalent to the forget-control map
	\[\beta_{S_{min,min}\otimes X}\colon F^\infty_{\fin}(S_{min,min}\otimes X)\otimes G_{max,max}\to \Sigma F^0_{\fin}(S_{min,min}\otimes X)\otimes G_{max,max}\ .\]
\end{kor}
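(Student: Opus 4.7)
The strategy is to apply \cref{prop:a-b-comp} to the $G$-bornological coarse space $S_{min,min}\otimes X$, and then to identify the data it produces with the twisted assembly map $\alpha_{X,S}$ on the one hand and the forget-control map $\beta_{S_{min,min}\otimes X,\fin}$ on the other.

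First I would verify the hypotheses of \cref{prop:a-b-comp} for $S_{min,min}\otimes X$. Every invariant coarse entourage of $S_{min,min}\otimes X$ is contained in some $\diag(S)\times U_X$ with $U_X$ an invariant entourage of $X$, and
\[(\diag(S)\times U_X)[(s,x)]=\{s\}\times U_X[x]\ ,\]
whose cardinality is uniformly bounded in $(s,x)$ by the uniform discreteness of $X$. Since the bornology on $S_{min,min}\otimes X$ is generated by products of finite sets, it is the minimal one, so $S_{min,min}\otimes X$ is uniformly discrete. For $G$-properness, any bounded subset is contained in some finite $F_S\times F_X$, and $\{g\in G\mid g(F_S\times F_X)\cap(F_S\times F_X)\neq\emptyset\}\subseteq\{g\in G\mid gF_X\cap F_X\neq\emptyset\}$ is finite because $X$ is $G$-proper.

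Next I would identify both $\Rips(S_{min,min}\otimes X)$ and $\pi_0(S_{min,min}\otimes X)$. Because the coarse entourages sit inside $\diag(S)\times U_X$, any $U$-bounded probability measure on $S\times X$ is supported in a single fibre $\{s\}\times X$, which gives a natural $G$-equivariant isomorphism of simplicial complexes
\[P_{\diag(S)\times U_X}(S_{min,min}\otimes X)\cong S\times P_{U_X}(X)\ ,\]
with $S$ carrying the discrete topology. Entourages of this form are cofinal in $\cC^G(S_{min,min}\otimes X)$, so applying $\ell$ and taking the colimit, and using that $\ell(S_{disc})\times(-)$ commutes with colimits in $G\Top[W_G^{-1}]$, one obtains a canonical equivalence
\[\Rips(S_{min,min}\otimes X)\simeq \ell(S_{disc})\times \Rips(X)\ .\]
Coarse connectedness of $X$ together with the product shape of the coarse structure also yields $\pi_0(S_{min,min}\otimes X)\cong S$ as $G$-sets.

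Finally, \cref{prop:a-b-comp} applied to $S_{min,min}\otimes X$ produces a canonical continuous equivalence between $\alpha_{S_{min,min}\otimes X}$ and $\beta_{S_{min,min}\otimes X,\fin}$. The two identifications from the previous paragraph turn the projection $\Rips(S_{min,min}\otimes X)\to \pi_0(S_{min,min}\otimes X)$ inducing $\alpha_{S_{min,min}\otimes X}$ into the map $\ell(S_{disc})\times\Rips(X)\to \ell(S_{disc})$ coming from $\Rips(X)\to *$, so that $\alpha_{S_{min,min}\otimes X}$ becomes exactly the twisted assembly map $\alpha_{X,S}$ of the statement. The only real technicality is verifying that the identification of the Rips complex with a product is natural in the entourage, which is immediate once one has the explicit description of the $U$-bounded measures above.
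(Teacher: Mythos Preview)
Your proposal is correct and follows essentially the same route as the paper: verify that $S_{min,min}\otimes X$ is uniformly discrete and $G$-proper, identify $\Rips(S_{min,min}\otimes X)\simeq \ell(S_{disc})\times\Rips(X)$ via the simplicial isomorphism $P_{\diag(S)\times U}(S_{min,min}\otimes X)\cong S\times P_U(X)$ (which is \cref{gioergregregregtw}), identify $\pi_0(S_{min,min}\otimes X)\cong S$ using coarse connectedness of $X$, and then invoke \cref{prop:a-b-comp}. The paper omits the explicit verification of the hypotheses that you include, but otherwise the arguments coincide.
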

\begin{proof} As in the proof of \cref{gioergregregregtw}, for every $U$ in $\cC^{G}(X)$ we have the natural isomorphism of $G$-simplicial complexes  
	  \[P_{\diag(S)\times U}(S_{min,min}\otimes X)\cong S_{disc}\times P_U(X)\ .\] 
	  We now apply $\ell$ and use that
	  $\ell(S_{disc}\times P_{U}(X))\simeq \ell(S_{disc})\times \ell(P_{U}(X))$ (note that $\ell$ preserves products since all $G$-topological spaces are fibrant).
	 We then form
	   the colimit over $U$ in $\cC^{G}(X)$ and use that $\ell(S_{disc})\times -$ preserves this colimit since $G\Top[W_{G}^{-1}]$ (being equivalent to $\PSh(G\Orb)$) is an $\infty$-topos.
We 	  eventually  obtain the isomorphism 
	  \[ \Rips(S_{min,min}\otimes X)\cong \ell(S_{disc})\times \Rips(X) \]
	  in $G\Top[W_{G}^{-1}]$.
	  
	  Since $X$ is coarsely connected, the projection $S_{min,min}\otimes X\to S_{min,min}$ induces a bijection on $\pi_0$. Furthermore, $\pi_0(S_{min,min})\cong S_{disc}$. The corollary now follows from \cref{prop:a-b-comp}
\end{proof}

\section{Induction}\label{efiowe32fwefwef}
Let $H$ be a subgroup of $G$. Then we have various induction functors:
\begin{enumerate}
\item $\Ind_{H}^{G} \colon H\Set\to G\Set${; see} \eqref{kljoeirjfoirejfreffwfe}.
\item \label{ewfoiwfwefewfwf} $\Ind_{H}^{G,top} \colon  {H}\Top\to G\Top$, $X\mapsto G_{disc}\times_{H}X$.
\item $\Ind_{H}^{G,htop} \colon {H}\Top[W_{H}^{-1}]\to G\Top[W_{G}^{-1}]$, the derived version of $\Ind_H^{G,top}$.
\item \label{ijf32ioj23d23d} $\Ind_{H}^{G} \colon \PSh(H\Orb)\to \PSh(G\Orb)$, the left-adjoint of the restriction functor $\Res_H^G \colon \PSh(G\Orb) \to \PSh(H\Orb)$. The latter is given by restriction along the functor $(\Ind_{H}^{G})_{|H\Orb} \colon H\Orb\to G\Orb$;
\item $\Ind_{H}^{G} \colon H\BC\to G\BC${; see} \eqref{gioreggwefewffewfwfwfwf}.
\item $\Ind_{H}^{G,Mot} \colon H\Sp\cX\to G\Sp\cX${; see} \eqref{3ljg34opg34f34f3}.
\item  $\Ind_{H}^{G,\cU} \colon H\UBC\to G\UBC${; see} \eqref{vrevoijoi3jfoi3f34f3}.
\end{enumerate}

We also have an analogous list  of  restriction functors $\Res_{H}^{G,-}$.

\begin{rem}
 
Using the description of $\ff$ given in \cref{efweu9u9wefwef},
the adjunction
\[ \Ind^G_H \colon H\Top \leftrightarrows G\Top : \Res_H^G \]
implies that we have natural equivalences
\[\hspace{-0.5cm} \Res^{G}_{H}(\ff(X))\simeq \ell(\Map_{G\Top}(G_{disc}\times_{H}S,X))\simeq
\ell(\Map_{H\Top}(S,\Res^{G,top}_{H}(X)))\simeq  \ff(\Res^{G,htop}_{H}(X)) \]
for $X$ in $G\Top$.
So $\Res_H^G$ and $\Res_H^{G,htop}$ correspond to each other 
under $\ff$. It then follows that also their left adjoints $\Ind_H^G$ and $\Ind_H^{G,htop}$ become
 identified under $\ff$.
\end{rem}

Let $X$ be an $H$-bornological coarse space. We can consider the $G$-bornological coarse spaces $G_{min,min}\otimes H_{min,min}\otimes X$ and $G_{min,min}\otimes X$, where $G$ acts both times on the first factor.
 {In the following, let} $B_{H}$ denote the $H$-completion functor which replaces the original bornology of a space by the bornology generated by $HB$ for all originally bounded subsets $B$.
 {For a $G$-bornological coarse space $Y$, we denote by $Y_{max-\cB}$ the same coarse space equipped with the maximal bornology.}
In the lemma below, the group $H$ acts on $G\times X$ by $h(g,x):=(gh^{-1},hx)$.

\begin{lem}
	\label{lem:coequa}
	The following is a coequalizer in $G\BC$:
	\[(G_{min,min}\otimes H_{min,min}\otimes X)_{max-\cB}\rightrightarrows B_{H}(G_{min,min}\otimes X)\to \Ind_H^G(X)\ ,\]
	where the first two maps are given by $(g,h,x)\mapsto (gh,x)$ and $(g,h,x)\mapsto (g,hx)$ respectively.
\end{lem}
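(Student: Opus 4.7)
The plan is to directly verify the universal property of the coequalizer in $G\BC$. At the level of underlying $G$-sets (with $G$ acting by left multiplication on the first factor throughout), the two parallel maps impose exactly the equivalence relation $(g,x)\sim(gh^{-1},hx)$ whose quotient is $G\times_H X$. Thus the content of the lemma is that the specified bornological and coarse structures make this into a coequalizer diagram.

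For the right-hand map $q\colon B_H(G_{min,min}\otimes X)\to\Ind_H^G(X)$ to be a morphism in $G\BC$, I would check controlledness (the generating entourage $\diag(G)\times U$ is sent by $q\times q$ to $\Ind_H^G(U)$, which generates the coarse structure on $\Ind_H^G(X)$) and properness. The latter is the crucial point: a generating bounded set of $\Ind_H^G(X)$ has the form $\{[g,b]:b\in B\}$, and a direct computation identifies its preimage under $q$ with
\[ H\cdot(\{g\}\times B)=\{(gh^{-1},hb):h\in H,\ b\in B\}, \]
which is exactly the type of generating bounded set adjoined by the $H$-completion operator $B_H$. This explains the necessity of $B_H$ in the statement. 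The two parallel maps $\phi_1,\phi_2$ out of the leftmost space are automatically proper by virtue of the maximal bornology there; controlledness is immediate for $\phi_1$, and for $\phi_2$ uses that $HU$ is a coarse entourage of $X$ whenever $U$ is, by $H$-invariance of the coarse structure on $X$.

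For the universal property, given a $G\BC$-cofork $f\colon B_H(G_{min,min}\otimes X)\to Y$, the equation $f\circ\phi_1=f\circ\phi_2$ unwinds to $f(gh,x)=f(g,hx)$, so the set-theoretic factorization $\bar f([g,x]):=f(g,x)$ is forced, well-defined, and $G$-equivariant. Controlledness of $\bar f$ follows because $\bar f\circ q=f$ and the coarse entourages of $\Ind_H^G(X)$ are generated by images under $q$. Properness reduces to the identity $\bar f^{-1}(B')=q(f^{-1}(B'))$ (using surjectivity of $q$): since $f^{-1}(B')$ is bounded in the $H$-completed bornology by properness of $f$, its image under $q$ is contained in a finite union of generating bounded sets of $\Ind_H^G(X)$.

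The main technical point requiring care is the interaction of the $H$-completion operator $B_H$ with the generating bounded sets of the induction, encapsulated in the computation of $q^{-1}$ above; once this is set up correctly, the remaining verifications of controlledness, properness, and equivariance are routine.
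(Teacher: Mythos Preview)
Your direct verification of the universal property is correct and essentially complete. The paper's own proof is merely a citation to \cite[Rem.~6.6]{equicoarse}, so you have supplied what the paper defers to the reference: checking that $q$ and the parallel maps are morphisms in $G\BC$ (your identification $q^{-1}(\{[g,b]:b\in B\}) = H\cdot(\{g\}\times B)$ is exactly the reason for the $H$-completion $B_H$), and then verifying that any cofork factors uniquely through $\Ind_H^G(X)$ with the induced map controlled and proper. There is no alternative route here---this is the expected argument, and your execution is sound.
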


\begin{proof} This is \cite[Rem.~6.6]{equicoarse}. 
\end{proof}

Let $Y$ be a $G$-bornological coarse space and let $X$ be an $H$-bornological coarse space.
\begin{lem}\label{ergijeogegregrgregg}
	\label{lem:frobenius}
	We have an isomorphism
	\begin{equation}\label{43foijiiof43f34f34f}
\Ind_H^G(\Res_H^G(Y)\otimes X) \cong
Y\otimes \Ind_H^G(X)\ ,
\end{equation}
	which is natural in $Y$ and $X$. 
\end{lem}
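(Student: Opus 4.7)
The plan is to exhibit the isomorphism explicitly by writing down a $G$-equivariant bijection on underlying sets and then verifying that it identifies the coarse structures and the bornologies on both sides. Naturality will be automatic from the explicit formula.

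First, on underlying $G$-sets, one defines
\[ \phi \colon G\times_H (Y\times X) \to Y\times (G\times_H X),\quad [g,(y,x)]\mapsto (gy, [g,x]) \]
and checks it is well-defined (replacing $(g,y,x)$ by $(gh^{-1},hy,hx)$ gives $(gh^{-1}\cdot hy,[gh^{-1},hx]) = (gy,[g,x])$), $G$-equivariant, and has two-sided inverse $(y,[g,x])\mapsto [g,(g^{-1}y,x)]$. This is the usual shearing/Frobenius bijection.

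Next, one checks that $\phi$ is an isomorphism of $G$-coarse spaces. By definition, the coarse structure on $\Ind_H^G(\Res_H^G(Y)\otimes X)$ is generated by the entourages $\Ind_H^G(V\times U)$ for $V$ a coarse entourage of $Y$ (regarded as an $H$-coarse entourage via $\Res_H^G$) and $U$ a coarse entourage of $X$, while the coarse structure on $Y\otimes \Ind_H^G(X)$ is generated by entourages of the form $V\times \Ind_H^G(U)$ for $V$ a $G$-invariant coarse entourage of $Y$ and $U$ a coarse entourage of $X$. Since every coarse entourage of $Y$ is contained in a $G$-invariant one, both systems of generators may be indexed by the same data. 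Using that the coarse structure on $Y$ is $G$-invariant, a direct calculation shows
\[ (\phi\times\phi)(\Ind_H^G(V\times U)) = V\times \Ind_H^G(U)\ , \]
where on the right we interpret the product via the factor swap. Concretely, for $(y_1,y_2)\in V$ and $(x_1,x_2)\in U$ the pair $([g,(y_1,x_1)],[g,(y_2,x_2)])$ maps to $((gy_1,gy_2),([g,x_1],[g,x_2]))$, and $(gy_1,gy_2)\in GV = V$; conversely, for any such datum on the right we may apply $\psi$ and use $G$-invariance of $V$ to land in $\Ind_H^G(V\times U)$.

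Third, one checks bornologies agree. The bornology on $\Ind_H^G(\Res_H^G(Y)\otimes X)$ is generated by the images of subsets $\{g\}\times (A\times B)$ under $G\times Y\times X \to G\times_H(Y\times X)$, for $A$ bounded in $Y$ and $B$ bounded in $X$; under $\phi$ these map to $gA\times\{[g,b]\mid b\in B\}$. The bornology on $Y\otimes \Ind_H^G(X)$ is generated by $A'\times B'$ with $A'$ bounded in $Y$ and $B'$ bounded in $\Ind_H^G(X)$, and the latter is generated by the images of $\{g\}\times B$. Using that $gA$ is bounded in $Y$ whenever $A$ is (by $G$-invariance of the bornology on $Y$), the two systems of generators are seen to be interchanged by $\phi$ and $\psi$.

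Finally, naturality in $Y$ and $X$ is immediate from the defining formula. The main obstacle in carrying this out is the bookkeeping in the second step: one must carefully unwind the definition of $\Ind_H^G(V\times U)$ as a $G$-invariant subset of $(G\times_H(Y\times X))^2$ and exploit the fact that the $G$-coarse structure on $Y$ is invariant in order to reconcile the asymmetry between the ``$H$-diagonal'' appearing in $\Ind_H^G(V\times U)$ and the ``plain product'' appearing in $V\times\Ind_H^G(U)$. Once this is done, the bornological and naturality statements are essentially formal.
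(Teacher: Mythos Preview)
Your direct approach is correct: you write down the explicit shearing bijection $[g,(y,x)]\mapsto (gy,[g,x])$ and verify by hand that it matches coarse structures and bornologies, exploiting $G$-invariance of the structures on $Y$ throughout. The paper instead works one level up, using the coequalizer description of $\Ind_H^G$ from \cref{lem:coequa}: it exhibits an isomorphism between the two parallel-pair diagrams presenting each side (via the maps $(g,h,s,x)\mapsto (ghs,g,h,x)$ and $(g,s,x)\mapsto (gs,g,x)$), and then passes to coequalizers. Note that on the coequalizer level the paper's map agrees with yours.

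What the two buy: the paper's route is more uniform in that it reduces everything to a comparison of diagrams in $G\BC$ before taking quotients, but it incurs the extra obligation of checking that $Y\otimes -$ preserves the relevant (colim-admissible) coequalizer, which is mentioned but not spelled out. Your argument avoids this side verification entirely at the cost of a slightly more hands-on entourage computation; in particular your identity $(\phi\times\phi)(\Ind_H^G(V\times U)) = V\times \Ind_H^G(U)$ for $G$-invariant $V$ is exactly the point where the ``asymmetry'' you flagged gets resolved, and your argument for it is clean. Either route is fine here.
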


\begin{proof}
	Consider the $G$-bornological coarse spaces $((G\times H)_{min,min}\otimes Y\otimes X)_{max-\cB}$ and $B_{H}(G_{min,min}\otimes Y\otimes X)$ where $G$ acts on the first factor, and $(Y\otimes ( G\times H)_{min,min}\otimes X)_{max-\cB}$ and $B_{H}(Y\otimes G_{min,min}\otimes X)$ where $G$ acts now diagonally on the first two factors. The isomorphisms \[((G\times H)_{min,min}\otimes Y\otimes X)_{max-\cB}\to (Y\otimes ( G\times H)_{min,min}\otimes X)_{max-\cB}\] given by $(g,h,s,x)\mapsto (ghs,g,h,x)$ and \[B_{H}(G_{min,min}\otimes Y\otimes X)\to B_{H}(Y\otimes G_{min,min}\otimes X)\] given by $(g,s,x)\mapsto (gs,g,x)$ induce an isomorphism of the coequalizer diagrams for $\Ind_H^G(\Res_H^G(Y)\otimes X)$ and $Y\otimes \Ind_H^G(X)$ from \cref{lem:coequa}.
In the case of $Y\otimes \Ind_H^G(X)$ we implicitly use the facts (which can both be checked in a straightforward manner) that the functor 
$Y\otimes- \colon G\BC\to G\BC$ preserves colimits of colim-admissible diagrams in $G\BC$ in the sense of \cite[Def. 2.20]{equicoarse}, and that the  coequalizer diagram in  \cref{lem:coequa} is 
colim-admissible.
\end{proof}

The equivalence from \cref{lem:frobenius} extends to equivariant  coarse motivic  spectra in the $Y$-variable.
Thus let  $Y$ be in $G\Sp\cX$  and let $X$ be as before.

\begin{kor}\label{43foijiiof43f34f34f1111} We have an equivalence
\begin{equation}
\Ind_H^{G,Mot}(\Res_H^{G,Mot}(Y)\otimes X) \cong  
Y\otimes \Ind_H^G(X)\ ,
\end{equation}
	which is natural in $Y$ and $X$. 
\end{kor}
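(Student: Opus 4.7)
The plan is to deduce the corollary from \cref{ergijeogegregrgregg} by a Kan extension argument in the $Y$-variable. With the natural interpretation of $Y\otimes \Ind_H^G(X)$ as $Y\otimes \Yo^s_G(\Ind_H^G(X))$ in $G\Sp\cX$ (and analogously on the left), both sides of the claim define functors $F_1, F_2 \colon G\Sp\cX\to G\Sp\cX$ via
\[ F_1(Y) := \Ind_H^{G,Mot}\bigl(\Res_H^{G,Mot}(Y)\otimes \Yo^s_H(X)\bigr), \qquad F_2(Y) := Y\otimes \Yo^s_G(\Ind_H^G(X))\ . \]
It suffices to show that $F_1\simeq F_2$ naturally in $Y$ and $X$.

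First, I would verify that $F_1$ and $F_2$ are colimit-preserving. For $F_2$ this is immediate from the fact that the symmetric monoidal product on $G\Sp\cX$ preserves colimits in each variable separately (see \cite[Lem.~4.17]{equicoarse}). For $F_1$, the functors $\Ind_H^{G,Mot}$ and $-\otimes \Yo^s_H(X)$ preserve colimits for the same reasons (the former being a left adjoint by \cite[Sec.~6.5]{equicoarse}). For the remaining functor $\Res_H^{G,Mot}$, one argues that $\Yo^s_H\circ \Res_H^G$ is a $G$-equivariant coarse homology theory with values in $H\Sp\cX$, and hence extends uniquely along $\Yo^s_G$ to a colimit-preserving functor by \cref{prop:motives-universal-prop}; this extension is $\Res_H^{G,Mot}$.

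Next, I would show that $F_1$ and $F_2$ agree naturally on the essential image of $\Yo^s_G$. For $Z$ in $G\BC$, the commutation squares \eqref{rgeieo4343frfwfefwef} (together with its analogue for restriction and the symmetric monoidality of $\Yo^s$ from \cite[Lem.~4.17]{equicoarse}) yield equivalences $F_1(\Yo^s_G(Z))\simeq \Yo^s_G(\Ind_H^G(\Res_H^G(Z)\otimes X))$ and $F_2(\Yo^s_G(Z))\simeq \Yo^s_G(Z\otimes \Ind_H^G(X))$. Then \cref{ergijeogegregrgregg} supplies a natural equivalence between these two.

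Finally, since two colimit-preserving functors from $G\Sp\cX$ that agree naturally on the image of $\Yo^s_G$ are equivalent (because this image generates $G\Sp\cX$ under colimits; this is part of the content of \cref{prop:motives-universal-prop}), we obtain $F_1\simeq F_2$. Naturality in $X$ is inherited from \cref{ergijeogegregrgregg}. The only real subtlety is the colimit-preservation of $\Res_H^{G,Mot}$, which is the step I expect to require the most care to articulate precisely within the paper's framework; once that is in hand, the rest is formal.
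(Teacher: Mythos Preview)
Your proposal is correct and follows essentially the same approach as the paper: the paper's one-line proof simply asserts that the operations $\Ind_H^G$, $\Res_H^G$ and $-\otimes X$ all descend from $G\BC$ to $G\Sp\cX$ and then invokes \cref{ergijeogegregrgregg}, while you have carefully unpacked what ``descend'' means (colimit-preservation plus agreement on the image of $\Yo^s$) and verified each piece. Your identification of the colimit-preservation of $\Res_H^{G,Mot}$ as the step requiring the most care is apt, and your argument via \cref{prop:motives-universal-prop} is the right one.
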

\begin{proof}
This follows from  \cref{lem:frobenius} and  the fact that the operations
$\Ind_{H}^{G}$, $\Res_{H}^{G}$ and $-\otimes X$ all descend from $G\BC$ to $G\Sp\cX$.
\end{proof}

\begin{rem}\label{giu9o34ergergegrege}
We have versions of \cref{ergijeogegregrgregg} for 
\begin{enumerate}
\item $Y$ a $G$-coarse space, $X$ a $H$-coarse space, and the isomorphism \eqref{43foijiiof43f34f34f} for $G$-coarse spaces, and
\item $Y$ a $G$-set, $X$ a $H$-set, and the isomorphism \eqref{43foijiiof43f34f34f} for $G$-sets, 
\end{enumerate}
with the same isomorphism on the level of underlying sets.
\end{rem}

Let $Y$ be an $H$-invariant subset of a $G$-coarse space $X$. We consider
$Y$ as an $H$-coarse space with the structures induced from $X$.
For every coarse entourage $U$ of $X$ we define the coarse entourage $U_{Y}:=(Y\times Y)\cap U$ of $Y$.

\begin{lem}\label{3g89234ereger}
	The set of entourages $\{U_{Y}\:|\: U\in \cC^{G}(X)\}$ is cofinal in $\cC^{H}(Y)$.
\end{lem}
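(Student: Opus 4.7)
The plan is to unwind the definitions and use the $G$-invariance axiom for coarse structures. Given $V\in\cC^{H}(Y)$, we seek $U\in\cC^{G}(X)$ such that $V\subseteq U_{Y}$, so that $\{U_{Y}\mid U\in\cC^{G}(X)\}$ is cofinal in $\cC^{H}(Y)$ ordered by inclusion.

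First I would use that $Y$ carries the subspace coarse structure induced from $X$: by definition, every entourage of $Y$ (in particular $V$) is of the form $V=(Y\times Y)\cap U'$ for some $U'\in\cC(X)$. The entourage $U'$ need not be $G$-invariant (or even $H$-invariant) here. This step is immediate from the definition of the induced structure on an $H$-invariant subset.

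Next I would set $U:=GU'$. By the third axiom in the definition of a $G$-coarse structure, $GU'$ belongs to $\cC(X)$, and it is tautologically $G$-invariant since $g(GU')=GU'$ for all $g\in G$. Hence $U\in\cC^{G}(X)$. Finally, since $U'\subseteq GU'=U$, we obtain
\[ V \;=\; (Y\times Y)\cap U' \;\subseteq\; (Y\times Y)\cap U \;=\; U_{Y}\ , \]
which is exactly the cofinality statement. There is no real obstacle here; the only thing to check is that the induced $H$-coarse structure on $Y$ is correctly characterised, which follows from the $H$-invariance of $Y$ together with the fact that $\cC(X)$ is closed under the $G$-action.
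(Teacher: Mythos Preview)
Your proof is correct and is essentially the same argument as the paper's: both use that $\cC(Y)$ consists exactly of restrictions $(Y\times Y)\cap U'$ with $U'\in\cC(X)$, and that $\cC^{G}(X)$ is cofinal in $\cC(X)$ via $U'\mapsto GU'$. The paper phrases this in two sentences rather than unpacking the cofinality elementwise, but the content is identical.
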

\begin{proof}
By definition of $\cC(Y)$, the set  $\{U_{Y}\:|\: U\in \cC(X)\}$ is cofinal in (actually equal to) $\cC(Y)$. Since $\cC^{G}(X)$ is cofinal in $\cC(X)$ (since $\cC(X)$ is a $G$-coarse structure), it then follows that   $\{U_{Y}\:|\: U\in \cC^{G}(X)\}$ is cofinal in $\cC^{H}(Y)$.
\end{proof}

\begin{lem}
	\label{lem:comparison-right}
	The  inclusion $H_{can,min}\to \Res_H^G(G_{can,min})$ induces an equivalence 
	\[F_{\fin}^{0}(\Ind_H^G(H_{can,min}))\to F_{\fin}^{0}(\Ind_H^G(\Res_H^G(G_{can,min})))\]
\end{lem}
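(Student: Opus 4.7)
The plan is to reduce the claim to an explicit coarse equivalence at the $H$-equivariant level. By \cref{rgioegergergerg} applied to $F^0$, the functor $F^0_\fin$ is a filtered colimit of $F^0$ over $G$-finite invariant subspaces. Since $\Ind_H^G(H_{can,min})$ consists of a single $G$-orbit it is already $G$-finite, and the left-hand side reduces to $F^0(\Ind_H^G(H_{can,min}))$. A $G$-finite invariant subspace of $\Ind_H^G(\Res_H^G(G_{can,min}))$ is canonically of the form $\Ind_H^G(W')$ for an $H$-invariant subspace $W' = Hg_1 \sqcup \ldots \sqcup Hg_n$ of $\Res_H^G(G_{can,min})$ consisting of finitely many $H$-cosets in $G$, equipped with the bornological coarse structure induced from $G_{can,min}$. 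Hence the right-hand side is $\colim_{W'} F^0(\Ind_H^G(W'))$, indexed by the directed poset of such $W'$.

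The key technical step is to show that for every such $W'$ containing $H$ (we may assume $g_1 = 1$), the inclusion $\iota \colon H_{can,min} \hookrightarrow W'$ is a coarse equivalence of $H$-bornological coarse spaces. Define the $H$-equivariant retraction $\psi \colon W' \to H_{can,min}$ by $\psi(hg_i) := h$. A direct computation using the explicit form of the induced entourages $G(B \times B) \cap (W' \times W')$ for finite $B \subseteq G$ shows that $\psi$ is controlled (its image under $\psi \times \psi$ is contained in $H(B'' \times B'')$ for an appropriate finite $B'' \subseteq H$ depending only on $B$ and on the coset representatives $g_1, \ldots, g_n$) and proper, that $\psi \circ \iota = \id_H$, and that $\iota \circ \psi$ is $G(\{1,g_1,\ldots,g_n\} \times \{1,g_1,\ldots,g_n\})$-close to $\id_{W'}$. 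Thus $\iota$ is a coarse equivalence. Applying two-out-of-three to the analogous triangles formed by these retractions shows that every inclusion $W' \hookrightarrow W''$ in the poset of $H$-finite invariant subspaces of $\Res_H^G(G_{can,min})$ is also a coarse equivalence.

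Since $\Ind_H^G$ preserves coarse equivalences and $F^0$ is coarse-invariant (being a colimit of $\Yo^{s}$ applied to Rips complexes, which are functorial in coarse equivalences), each structure map in the filtered colimit computing $F^0_\fin(\Ind_H^G(\Res_H^G(G_{can,min})))$ is an equivalence. The colimit therefore collapses to the value at the element $W' = H$, and the natural comparison map from $F^0_\fin(\Ind_H^G(H_{can,min}))$ realizes the asserted equivalence. The main obstacle lies in the explicit verification that the retraction $\psi$ is controlled uniformly in $B$; this requires careful bookkeeping of the induced entourages on finite unions of $H$-cosets inside $\Res_H^G(G_{can,min})$, which encode both the canonical coarse structure along each coset and the cross-coset pairs coming from finite subsets of $G$ that span several cosets simultaneously.
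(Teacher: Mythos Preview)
Your proposal is correct and follows the same underlying strategy as the paper: both reduce to $H$-finite invariant subspaces $L \supseteq H$ of $\Res_H^G(G_{can,min})$, construct the same $H$-equivariant retraction $s\colon L \to H$ (your $\psi$), and use that $s\circ i = \id$ while $i\circ s$ is close to the identity. The packaging differs slightly. The paper first invokes \cref{gregee4erg} to commute $\Ind_H^G$ past $F^0$, and then verifies the resulting equivalence directly at the Rips-complex level: for entourages $U$ containing the closeness entourage $V$, the composite $i_*\circ s_*$ on $P_{U_L}(L)$ has distance at most $1$ from the identity, so coarse invariance of $\Yo^s$ finishes the argument. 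You instead establish the coarse equivalence $H_{can,min}\to W'$ at the level of $H$-bornological coarse spaces, push it through $\Ind_H^G$, and then appeal to coarse invariance of $F^0$ as a black box. Your route is a bit more conceptual and avoids the explicit cofinality bookkeeping with $\cC'$; the paper's route is more self-contained in that it does not rely on knowing in advance that $F^0$ sends coarse equivalences to equivalences (which is true, but your parenthetical justification via ``functoriality of Rips complexes in coarse equivalences'' is exactly what the paper's Rips argument makes precise).
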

\begin{proof}
Note that $\Ind_H^G(H_{can,min})$ is $G$-finite so that we can omit the index $\fin$ on the domain of the morphism. 
It suffices to show that the inclusion of $\Ind_H^G(H_{can,min} )$ into any $G$-invariant $G$-finite subset of $\Ind_H^G(\Res_H^G(G_{can,min}))$ induces an equivalence after applying $F^{0}$.
We now observe that $G$-finite subsets of $\Ind_H^G(\Res_H^G(G ))$ correspond to $H$-finite subsets of
$G $. We furthermore use that
$\Ind_{H}^{G}$ commutes with $F^{0}$ by  \cref{gregee4erg}.
It then remains to show that for every $H$-invariant and $H$-finite subset $L$ of $ G$ containing $H $ the inclusion $i \colon H\to L$  induces an equivalence 
$F^{0}(H_{can,min})\to F^{0}(L_{G_{can,min}})$.

Let $L$ be an $H$-invariant and $H$-finite subset of $ G$ containing $H$. We choose an $H$-equivariant  left-inverse   $s\colon L\to H$   of the inclusion $i$.
For every orbit $R$ in    $H\backslash L$ we pick a point $l_R$ in the orbit $R$. Since $H\backslash L$ is finite, the subset    $V:=H\{(s(l_R),l_R) \mid R\in H\backslash L \}$ of $G\times G$ belongs to the coarse structure $ \cC(G_{can,min})$. We set
$\cC^{\prime}:=\{  U\in \cC^{G}(G_{can,min}) \mid V\subseteq U\}$,
$U_{L}:=(L\times L)\cap U$. By \cref{3g89234ereger},
 the set $\{U_{L} \mid U\in \cC^{\prime} \}$
 is cofinal in $\cC^H(L)$. In view of \cref{rgoipewerfwefewfew} applied to $E=\Yo^{s}\circ \cF$  and \cref{gioowegfwefwfwef} of $F^{0}$, it therefore suffices to show that the morphism
 \begin{equation}\label{hoijoh5h45h45h}
\Yo^{s}(P_{U_H}(H_{can,min})_{d,b})\to \Yo^{s}(P_{U_{L}}(L_{G_{can,min}})_{d,b})
\end{equation} induced by $i$
is an equivalence for every entourage $U$ in $\cC^{\prime}$.

Since $L$ is $H$-finite, the map  $s$ is automatically a morphism $L_{G_{can,min}}\to H_{can,min}$. We argue that  
  the morphism
	\[ \Yo^{s}(P_{U_{L}}(L_{G_{can,min}})_{d,b})\to \Yo^{s}(P_{U_H}(H_{can,min})_{d,b})\]
	induced by $s$ is an inverse to \eqref{hoijoh5h45h45h}.
	
The composition $s\circ i$ is the identity. By definition of $U$, the composition \[P_{U_{L}}(L_{G_{can,min}})_{d,b}\stackrel{s}{\to}  P_{U_H}(H_{can,min})_{d,b}\stackrel{i}{\to} P_{U_{L}}(L_{G_{can,min}})_{d,b}\]
  has distance at most 1 from the identity. Since $\Yo^{s}$ is coarsely invariant, its sends this composition to a morphism which is equivalent to the identity. 
   This finishes the proof.
\end{proof}

In the following, we indicate by a subscript $G$ or $H$ for which group the Rips complex functor is considered.
\begin{lem}\label{ergi0erogregergg}
We have an equivalence of functors from $G\BC$ to $H\Top[W_{H}^{-1}]$
\[ \Res_{H}^{G,htop}\circ \Rips_{G}\cong \Rips_{H}\circ \Res_{H}^{G}\ . \]
\end{lem}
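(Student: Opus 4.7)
The plan is to unravel both sides into explicit colimits and identify them via cofinality. By \cref{ergieogergergeeggreg} applied in the two contexts, I have
\[ \Rips_G(X) \simeq \colim_{U \in \cC^G(X)} \ell_G(P_U(X))\ , \qquad \Rips_H(Y) \simeq \colim_{V \in \cC^H(Y)} \ell_H(P_V(Y)) \]
for $X$ in $G\BC$ and $Y$ in $H\BC$. The first step is to commute $\Res_H^{G,htop}$ through the colimit and through $\ell_G$. Commuting through the colimit is legitimate because, under the Elmendorf-type equivalences $G\Top[W_G^{-1}]\simeq \PSh(G\Orb)$ and $H\Top[W_H^{-1}]\simeq \PSh(H\Orb)$, the functor $\Res_H^{G,htop}$ corresponds to restriction of presheaves along the induction functor $(\Ind_H^G)_{\vert H\Orb} \colon H\Orb\to G\Orb$, and this is both a left and a right adjoint in the presentable setting; in particular it preserves colimits. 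Commuting through $\ell$ reduces to the observation that the strict restriction functor $\Res_H^{G,top} \colon G\Top\to H\Top$ carries $W_G$ into $W_H$, so it descends to a derived functor fitting into a natural equivalence $\Res_H^{G,htop}\circ \ell_G \simeq \ell_H\circ \Res_H^{G,top}$.

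The second step is to compare the Rips complexes at each fixed entourage. The $G$-simplicial complex $P_U(X)$ is defined purely in terms of the underlying set of $X$ and the entourage $U$, with the group action induced by the action on $X$. Hence $\Res_H^{G,top}(P_U(X)) = P_U(\Res_H^G X)$ as $H$-simplicial complexes, and in particular as $H$-topological spaces. Combining the first two steps yields
\[ \Res_H^{G,htop}(\Rips_G(X)) \simeq \colim_{U \in \cC^G(X)} \ell_H(P_U(\Res_H^G X))\ . \]

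The third step, which is the main point of the argument, is the cofinality of the inclusion
\[ \iota \colon \cC^G(X) \hookrightarrow \cC^H(\Res_H^G X)\ , \qquad U \mapsto U\ . \]
The map is well defined because every $G$-invariant entourage is $H$-invariant. For cofinality, given any $V$ in $\cC^H(\Res_H^G X)$ the underlying set $V$ lies in the coarse structure $\cC(X)$, so $GV = \bigcup_{g\in G}gV$ is again an element of $\cC(X)$ (by the $G$-invariance condition in the definition of a $G$-coarse structure); moreover $GV$ is $G$-invariant and contains $V$. Hence $\iota$ is cofinal and, since the functor $V\mapsto \ell_H(P_V(\Res_H^G X))$ is monotone, the colimits over $\cC^G(X)$ and $\cC^H(\Res_H^G X)$ agree. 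Naturality in $X$ is then routine because all comparison maps are induced by the identity on underlying sets.

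The main obstacle I expect is purely bookkeeping: making sure that the formal identifications in the first step (compatibility of $\Res$ with $\ell$ and with colimits) are performed coherently at the $\infty$-categorical level so that the composed equivalence is natural in $X$ as a functor of $\infty$-categories, rather than only objectwise. Once the passage through Elmendorf's equivalence is established, the remaining cofinality argument is elementary.
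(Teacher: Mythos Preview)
Your proposal is correct and follows essentially the same approach as the paper: the paper's proof cites the obvious isomorphism $\Res_{H}^{G,top}(P_{U}(X))\cong P_{U}(\Res_{H}^{G}(X))$, the compatibility $(\ell\circ \Res_{H}^{G,top})_{|G\Simpl}\simeq (\Res_{H}^{G,htop}\circ\ell)_{|G\Simpl}$, the colimit formula for $\Rips$, and the cofinality of $\cC^{G}(X)$ in $\cC^{H}(\Res^{G}_{H}(X))$. You supply more justification than the paper does (for instance, spelling out the cofinality argument via $V\mapsto GV$ and why $\Res_H^{G,htop}$ preserves colimits), but the structure and key ingredients are identical.
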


\begin{proof}
This immediately follows from the obvious isomorphism
\[\Res_{H}^{G,top}(P_{U}(X))\cong P_{U}( \Res_{H}^{G}(X))\]
for every $X$ in $G\BC$ and $U$ in $\cC^{G}(X)$, 
 \cref{ergieogergergeeggreg}, the equivalence
 \[(\ell\circ \Res_{H}^{G,top})_{|G\Simpl}\simeq (\Res_{H}^{G,htop}\circ\ell)_{|G\Simpl}\ ,\] 
 and the observation that $\cC^{G}(X)$ is cofinal in 
 $\cC^{H}(\Res^{G}_{H}(X))$.
\end{proof}

\begin{lem}\label{ergioergege}
We have an equivalence of functors from $H\BC$ to $G\Top[W_{G}^{-1}]$
\[ \Ind_{H}^{G,htop}\circ \Rips_{H}\cong \Rips_{G}\circ  \Ind_{H}^{G}\ .\]
\end{lem}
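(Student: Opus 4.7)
The plan is to reduce the statement to the already-established compatibility at the Rips-complex level by combining the colimit description from \cref{ergieogergergeeggreg} with the simplicial-complex isomorphism from the proof of \cref{gioergregregreg}. Fix $X$ in $H\BC$; by \cref{ergieogergergeeggreg} we have
\[ \Rips_H(X) \simeq \colim_{U \in \cC^H(X)} \ell(P_U(X))\ . \]
Since $\Ind_H^{G,htop}$ is a left adjoint (to $\Res_H^{G,htop}$), it preserves colimits, so applying it yields
\[ \Ind_H^{G,htop}(\Rips_H(X)) \simeq \colim_{U \in \cC^H(X)} \Ind_H^{G,htop}(\ell(P_U(X)))\ . \]
The next ingredient is the natural equivalence $\Ind_H^{G,htop}(\ell(K)) \simeq \ell(G \times_H K)$ for any $H$-simplicial complex $K$, which rests on the fact that $\Ind_H^{G,top}$ preserves equivariant weak equivalences between $H$-CW complexes; this follows from the decomposition of $L$-fixed points of $G \times_H K$, for a subgroup $L$ of $G$, as a disjoint union over $L$-orbits of $G/H$ of fixed-point sets of $K$ for subgroups of $H$.

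Having done this, the underlying $G$-simplicial complex isomorphism~\eqref{rgkjrhni3u4f34f3f3f} from the proof of \cref{gioergregregreg} provides a natural isomorphism $G \times_H P_U(X) \cong P_{\Ind_H^G(U)}(\Ind_H^G(X))$, so that the colimit becomes
\[ \colim_{U \in \cC^H(X)} \ell(P_{\Ind_H^G(U)}(\Ind_H^G(X)))\ . \]
To identify this with $\Rips_G(\Ind_H^G(X))$ via a second application of \cref{ergieogergergeeggreg}, I would then establish that the order-preserving map $U \mapsto \Ind_H^G(U)$ from $\cC^H(X)$ has cofinal image in $\cC^G(\Ind_H^G(X))$. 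From the explicit description of the coarse structure on $\Ind_H^G(X)$ recorded after~\eqref{gioreggwefewffewfwfwfwf}, this structure is generated by the entourages $\Ind_H^G(U)$, and the key remaining verifications are $\Ind_H^G(U_1) \circ \Ind_H^G(U_2) \subseteq \Ind_H^G(U_1 \circ U_2)$ (using the $H$-invariance of $U_2$) and $\Ind_H^G(U)^{-1} = \Ind_H^G(U^{-1})$. Naturality in $X$ is transparent throughout.

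The main technical obstacle I anticipate is the equivalence $\Ind_H^{G,htop}(\ell(K)) \simeq \ell(G \times_H K)$. Conceptually this is immediate from the Elmendorf-type identification of $H\Top[W_H^{-1}]$ with $\PSh(H\Orb)$ together with the compatibility of induction with the left adjoint on presheaf categories; however, making it fully rigorous requires either appealing to the cofibrancy of $P_U(X)$ in an appropriate model structure on $H\Top$ or carrying out the explicit fixed-point computation sketched above. By contrast, the cofinality argument at the level of coarse structures is routine.
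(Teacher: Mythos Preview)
Your proof is correct and follows essentially the same route as the paper: both use the simplicial-complex isomorphism from \cref{gioergregregreg}, the colimit description of the Rips complex from \cref{ergieogergergeeggreg}, the compatibility $(\ell\circ \Ind_{H}^{G,top})_{|H\Simpl}\simeq (\Ind_{H}^{G,htop}\circ\ell)_{|H\Simpl}$, and the cofinality of $U \mapsto \Ind_H^G(U)$. You simply spell out more of the details (the cofinality verification and the justification of the $\ell$--induction compatibility) that the paper leaves implicit.
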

\begin{proof}
For every $X$ in $H\BC$ and $U$ in $\cC^{H}(X)$ we have {by \cref{gioergregregreg}} a natural isomorphism 
\[ \Ind_{H}^{G,top}(P_{U}(X)) \cong P_{\Ind_{H}^{G}(U)}(\Ind_{H}^{G}(X))\ . \]
We now apply  \cref{ergieogergergeeggreg}, the equivalence
\[ (\ell\circ \Ind_{H}^{G,top})_{|H\Simpl}\simeq (\Ind_{H}^{G,htop}\circ\ell)_{|H\Simpl}\ ,\] 
 and the observation that the induction map $\Ind_{H}^{G} \colon \cC^{H}(X)\to  \cC^{G}( \Ind_{H}^{G}(X))$ on the level of posets of entourages
is cofinal.
\end{proof}

\begin{lem}
\label{lem:comparison-left}
	The inclusion $H_{can,min}\to \Res_H^G(G_{can,min})$ induces a continuous equivalence
	\[F_{\fin}^{\infty}(\Ind_H^G(H_{can,min}))\otimes G_{max,max}\to F_{\fin}^{\infty}(\Ind_H^G(\Res_H^G(G_{can,min})))\otimes G_{max,max}\ .\]
\end{lem}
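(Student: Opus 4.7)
The plan is to reduce the claim to an equivalence of sources of assembly maps via the forget-control/assembly comparison of \cref{prop:a-b-comp}, and then to identify the relevant Rips complexes with (inductions of) classifying spaces for families.

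First I would verify that both $X_1 := \Ind_H^G(H_{can,min})$ and $X_2 := \Ind_H^G(\Res_H^G(G_{can,min}))$ are uniformly discrete and $G$-proper $G$-bornological coarse spaces. Both carry the minimal bornology by construction, and the uniform bound on $|U[x]|$ reduces, using the explicit description of the coarse structure on an induction, to the corresponding property of $G_{can,min}$: for a generating entourage $G(B\times B)$ with $B\subseteq G$ finite, all $U$-balls have size at most $|B^{-1}B|$. A similar direct computation shows that both spaces are $G$-proper.

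Next I would invoke \cref{prop:a-b-comp} to obtain, for $i=1,2$, canonical continuous equivalences between the forget-control map $\beta_{X_i,\fin}$ and the assembly map $\alpha_{X_i}$. Each step in the proof (namely \cref{lem:beta-beta-pi0}, \cref{lem:beta-pi0-beta-pi0weak}, \cref{lem:alpha-beta-pi0weak}) is built from natural transformations and natural isomorphisms of (co)simplicial objects, so the resulting comparison is natural in $X$ on the full subcategory of uniformly discrete, $G$-proper spaces. Applied to the morphism $X_1\to X_2$ induced by $H_{can,min}\hookrightarrow \Res_H^G(G_{can,min})$, this shows that the map in the statement, being the induced map on sources of $\beta_{X,\fin}$, is continuously equivalent to the induced map on sources of $\alpha_X$:
\[ \tilde \cO^\infty_\hlg(\Rips(X_1)) \otimes G_{can,min} \to \tilde \cO^\infty_\hlg(\Rips(X_2)) \otimes G_{can,min}\ .\]
It therefore suffices to show this latter map is an equivalence. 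By \cref{ergioergege}, $\Rips(X_i)\simeq \Ind_H^{G,htop}(\Rips_H(Y_i))$ where $Y_1=H_{can,min}$ and $Y_2=\Res_H^G(G_{can,min})$; by \cref{ergi0erogregergg}, $\Rips_H(Y_2)\simeq \Res_H^{G,htop}(\Rips_G(G_{can,min}))$. Applying \cref{rviuohiofe2df2edf2} to both $H$ and $G$, and using that the $H$-restriction of a model for $E_{\Fin}G$ is a model for $E_{\Fin(H)}H$ (since $\Fin(H)\subseteq \Fin$), one sees that both $\Rips_H(Y_1)$ and $\Rips_H(Y_2)$ correspond under the Elmendorf equivalence to models for $E_{\Fin(H)}H$ in $\PSh(H\Orb)$. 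The $H$-equivariant inclusion therefore induces an equivalence $\Rips_H(Y_1)\to \Rips_H(Y_2)$ in $H\Top[W_H^{-1}]$. Applying the functor $\tilde \cO^\infty_\hlg\circ \Ind_H^{G,htop}$ and then tensoring with $G_{can,min}$ preserves equivalences, and yields the required equivalence.

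The main obstacle is the naturality of the comparison in \cref{prop:a-b-comp}, which is not made explicit in its statement; however, since each intermediate continuous equivalence in its proof is built from natural transformations and cofinal functors between posets of entourages, the comparison is natural in $X$ on the subcategory of uniformly discrete, $G$-proper spaces, which is what we need.
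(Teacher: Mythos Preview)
Your proof is correct and follows essentially the same route as the paper: apply \cref{prop:a-b-comp} (only the domain part of the comparison is needed), then use \cref{ergioergege} and \cref{ergi0erogregergg} to rewrite the Rips complexes via induction/restriction, and finally invoke \cref{rviuohiofe2df2edf2} together with $\Res_H^G(E_{\Fin}G)\simeq E_{\Fin(H)}H$ to conclude. Your added verification that $\Ind_H^G(H_{can,min})$ and $\Ind_H^G(\Res_H^G(G_{can,min}))$ are uniformly discrete and $G$-proper, and your explicit discussion of naturality of the comparison in \cref{prop:a-b-comp}, are welcome elaborations that the paper leaves implicit.
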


\begin{proof}
	By \cref{prop:a-b-comp} (using only the continuous equivalence of the domains), the map is continuously equivalent to 
	\[\tilde \cO^\infty_{\hlg}(\Rips_{G}(\Ind_H^G(H_{can,min})))\otimes G_{can,min}\to \tilde \cO^\infty_{\hlg}(\Rips_{G}(\Ind_H^G(\Res_H^G(G_{can,min}))))\otimes G_{can,min}\ .\]
	 Using  \cref{ergi0erogregergg} and \cref{ergioergege},
	we see that
	this map is equivalent to
	\[\tilde \cO^\infty_{\hlg}(\Ind_H^{G,htop}(\Rips_{H}(H_{can,min}))) \to \tilde \cO^\infty_{\hlg}(\Ind_H^{G,htop}(\Res_H^{G,htop}(\Rips_{G}(G_{can,min})))) \]
	twisted by $G_{can,min}$.
The latter map is an equivalence since the map \[\Rips_{H}(H_{can,min})\to  \Res_H^{G,htop}(\Rips_{G}(G_{can,min}))\] induced by the inclusion of $H$ into $G$ is mapped by the equivalence $\overline{\mathrm{Fix}}$
	to the  essentially unique  equivalence $E_{\Fin}H\simeq \Res_{H}^{G}(E_{\Fin}G)${; see} \cref{rviuohiofe2df2edf2}. 
\end{proof}

\section{The main theorem}\label{rgiofergegergr43534543534535}

The main result of the present section is \cref{thm:novikov1-neu-relativ}. 
Before giving its proof, we will show how to deduce \cref{thm:main-injectivity-kor1} from 
 \cref{thm:novikov1-neu-relativ}.
 
 The structure of the proof of  \cref{thm:novikov1-neu-relativ} is as follows:
 \begin{enumerate}
 \item \cref{lem:firststep} reduces the proof to the verification that a certain morphism  $\bL(S)\to \bM_{A}(S)$  is an equivalence for every
 $S$ in $G_{\cF}\Orb$.
 \item \cref{prop:comparison2} identifies this morphism with the composition  
  of a descent morphism and  a  forget-contol map 
depending on   subgroups $H$ in the family~$\cF$.
  \item In \cref{thm:leftinverse-rel}, we use the descent  {result} to show that
  the descent morphism is an equivalence, and therefore  reduce the problem to the verification that forget-control maps are equivalences for subgroups $H$ in the family $\cF$. This step employs transfers. 
\item In \cref{thm:replacement2} we use the geometric assumptions on the subgroups $H$ in order to deduce from \cite{trans}  that the 
 forget-control maps in the $H$-equivariant context are equivalences.
 \end{enumerate}

Let $G$ be a group and let $M\colon G\Orb\to \bC$ be a functor. Let $A$ be in $\PSh(G\Set)$ and let $\cF$ be a family of subgroups.  
\begin{theorem}
	\label{thm:novikov1-neu-relativ}
	Assume that:
	\begin{enumerate}
		\item $M$ is a CP-functor (see \cref{bioregrvdfb});
		\item \label{it:nov2} $r^*A$ is equivalent to $E_\Fin G$ in $\PSh(G\Orb)$ (see \eqref{efkjbnkjffrefw} for the definition of $r^{*}$);
		\item \label{it:nov3} for all $H$ in $\cF$ the object $\Res_H^G(A)$ of $\PSh(H\Set)$ is compact;
		\item $\cF$ is a subfamily of $\FDC$ (see \cref{grregergerg}.\ref{giorgjerogiergreg}) such that $\Fin\subseteq \cF$.
	\end{enumerate}
	Then the relative assembly map $\As_{\Fin,M}^\cF$ (\cref{ergoiegererg}) admits a left inverse.
\end{theorem}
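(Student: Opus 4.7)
The plan is to execute the roadmap outlined at the start of this section, assembling the pieces developed in the preceding sections.

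First I would invoke the CP-functor hypothesis to fix a strongly additive, continuous equivariant coarse homology theory with transfers $E$ such that $M\simeq E_{G_{can,min}}\circ i$. The goal is to produce two functors $\bL,\bM_A\colon G\Orb\to\bC$ together with a natural transformation $\bL\to\bM_A$, related to $M$ and to the right Kan extension $\wt E$ from \cref{fwpeou2903rf}, in such a way that an objectwise equivalence $\bL(S)\xrightarrow{\simeq}\bM_A(S)$ on $G_\cF\Orb$ produces the desired left inverse of $\As_{\Fin,M}^\cF$. This formal reduction is the content of \cref{lem:firststep}, in which $\bL$ will be built from $M$ itself while $\bM_A$ will be built from the presheaf $A$ via $\wt E$.

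The remaining task is pointwise: for each $S$ in $G_\cF\Orb$ with stabilizer $H\in\cF$, show that $\bL(S)\to\bM_A(S)$ is an equivalence. I would factor this comparison, as stated in \cref{prop:comparison2}, as a descent map composed with a forget-control map attached to $H$, using the induction compatibilities of \cref{gregee4erg} and \cref{ergioergege} to transport the computation from the $G$-equivariant to the $H$-equivariant world. The descent piece is then handled by \cref{gio3jo24f22f3}: hypothesis \ref{it:nov2}, together with the strong additivity of $E$, yields the equivalence between $\wt E(*,-)$ and $\wt E(A,-)$ on the objects of $G_\Fin\Simpl^\fin$ that arise here; compactness of $\Res_H^G(A)$ from hypothesis \ref{it:nov3} ensures the analogous $H$-equivariant descent statement is in force, so that the descent argument survives restriction to $H$. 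This step will be packaged as \cref{thm:leftinverse-rel} and is essentially formal once compatibility of the classifying presheaf $E_\Fin G$ with restriction along $H\hookrightarrow G$ has been recorded.

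The technical heart of the argument, and where I expect the main obstacle to lie, is the forget-control piece. Via \cref{cor:comparison} and the identification $\ff(\Rips(H_{can,min}))\simeq E_\Fin H$ from \cref{rviuohiofe2df2edf2}, the equivalence to be proved reduces to the $H$-equivariant assembly map for $H_{can,min}$ being an equivalence after the appropriate twists by $G_{can,min}$ and transfers. Here the hypothesis $\cF\subseteq\FDC$ enters decisively: since $H_{can}$ has $H_\Fin$-FDC, an application of the FDC-based vanishing results of \cite{trans}, reformulated in the transfer-aware setting as \cref{thm:replacement2}, produces the required equivalence. Combining the descent step with the forget-control step yields the pointwise equivalence $\bL(S)\xrightarrow{\simeq}\bM_A(S)$ on $G_\cF\Orb$, and \cref{lem:firststep} then delivers the left inverse of $\As_{\Fin,M}^\cF$.
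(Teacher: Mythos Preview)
Your proposal is correct and follows essentially the same route as the paper: reduce via \cref{lem:firststep} to the pointwise equivalence $\bL(S)\to\bM_A(S)$ on $G_\cF\Orb$, identify this map for $S=G/H$ with the map \eqref{ghuhgiurehggregreregrrergergerg} via \cref{prop:comparison2}, and then verify the two hypotheses of \cref{thm:leftinverse-rel} using Assumptions~\ref{it:nov2}--\ref{it:nov3} for the descent step and \cref{thm:replacement2} (which packages the FDC input from \cite{trans}) for the forget-control step. One small imprecision: the descent application of \cref{gio3jo24f22f3} takes place entirely on the $H$-side, i.e.\ with $E_G^H$, $\Res_H^G(A)$ and $H_{\Fin(H)}\Simpl^{\fin}$, not with $\wt E$ on $G_\Fin\Simpl^{\fin}$; you acknowledge this in the next clause, but it is worth stating cleanly since the compactness hypothesis~\ref{it:nov3} and the identification $r^*\Res_H^G(A)\simeq E_{\Fin(H)}H$ are exactly what make that $H$-equivariant application go through.
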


Before we begin with the proof, we will first deduce \cref{thm:main-injectivity-kor1} from \cref{thm:novikov1-neu-relativ}.  

\begin{rem}\label{griog34tt43t3t}
For every group $K$ the functor $r_{!} \colon \PSh(K\Orb)\to \PSh(K\Set)$ induced by $r \colon K\Orb\to K\Set$ (see \eqref{efkjbnkjffrefw}) preserves  {compacts} since it has a right-adjoint $r^{*}$ which preserves all colimits. 

We claim that for any subgroup $H$ of $K$ the functor
\[ \Res^{K}_{H} \colon \PSh(K\Set)\to \PSh(H\Set) \]
preserves compacts. The claim
 follows from the fact that $\Res^{K}_{H}$ preserves representables and colimits. Here are some more details: 
 For any $S$ in $K\Set$ the restriction $\Res^{K}_{H}(\yo(S))$ is represented by the $H$-set $\Res^{K}_{H}(S)$.  It follows that
$\Res^{K}_{H}(\yo(S))$ is representable again. We now use that a compact object $A$ in
$\PSh(K)$ is a retract of a finite colimit of representables.
Since $\Res^{K}_{H}$ preserves colimits, we conclude that $\Res^{K}_{H}(A)$ is again a retract of a finite
colimit of representables. 

In the following, we write $r^{K}_{!}$ and $r^{H}_{!}$  for the corresponding functors for subgroups  $K$ and $H$ of $G$.
We then have a commuting diagram
\begin{equation}\label{gioh4joi4jogg334g34g}
\xymatrix{\PSh(K\Orb)\ar[r]^{r^{K}_{!}}\ar[d]^{\Res^{K}_{H}}&\PSh(K\Set)\ar[d]^{\Res^{K}_{H}}\\ \PSh(H\Orb)\ar[r]^{r^{H}_{!}}&\PSh(H\Set)}
\end{equation}
  \end{rem}

\begin{lem}
	\label{lem:compact}
	For every subgroup $H$ of $G$ in the family $\cp$ (see \cref{grregergerg}.\ref{giorgjerogiergreg1}), the object $\Res_H^G(r_!E_\Fin G)$ of $\PSh(H\Set)$ is compact.
\end{lem}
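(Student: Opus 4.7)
The plan is to combine two basic observations: first, $\Res_V^G$ takes $E_\Fin G$ to $E_\Fin V$ for any subgroup $V$ of $G$; and second, both $r_!$ and the various restriction functors preserve compact objects, as noted in \cref{griog34tt43t3t}. Since the family $\cp$ is defined as the family \emph{generated} by those $V \leq G$ with $E_\Fin V$ compact in $\PSh(V\Orb)$, every $H$ in $\cp$ sits inside a conjugate of some such $V$. Up to replacing $V$ by a conjugate (which does not affect compactness of $E_\Fin V$), we may therefore pick a subgroup $V \leq G$ with $H \leq V$ and $E_\Fin V$ compact in $\PSh(V\Orb)$.

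First, I would check that $\Res_V^G E_\Fin G \simeq E_\Fin V$ in $\PSh(V\Orb)$. The functor $\Res_V^G$ is restriction along $V\Orb \to G\Orb$, $V/K \mapsto G/K$, so evaluation of $\Res_V^G E_\Fin G$ on $V/K$ agrees with the evaluation of $E_\Fin G$ on $G/K$, which is $*$ or $\emptyset$ according to whether $K$ is finite or not; this matches the characterising property of $E_\Fin V$ from \cref{def:efg}.

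Next, I would use the commutative diagram \eqref{gioh4joi4jogg334g34g} with the subgroup $V \leq G$ in place of $H \leq K$ to get an equivalence
\[ \Res_V^G(r_!^G E_\Fin G) \simeq r_!^V(\Res_V^G E_\Fin G) \simeq r_!^V E_\Fin V \]
in $\PSh(V\Set)$. Since $E_\Fin V$ is compact in $\PSh(V\Orb)$ by choice of $V$, and $r_!^V$ preserves compacts (as it is a left adjoint whose right adjoint $r^*$ preserves all colimits, \cref{griog34tt43t3t}), the presheaf $r_!^V E_\Fin V$ is compact in $\PSh(V\Set)$.

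Finally, applying $\Res_H^V$ to the equivalence above and using the transitivity $\Res_H^V \circ \Res_V^G \simeq \Res_H^G$ yields
\[ \Res_H^G(r_!^G E_\Fin G) \simeq \Res_H^V(r_!^V E_\Fin V). \]
By \cref{griog34tt43t3t}, the restriction functor $\Res_H^V$ also preserves compacts, so the right-hand side, and hence $\Res_H^G(r_! E_\Fin G)$, is compact in $\PSh(H\Set)$. The only nontrivial input is unpacking the definition of the family generated by a set of subgroups to extract the enveloping $V$; everything else is a direct combination of the two preservation properties recorded in \cref{griog34tt43t3t} with the square \eqref{gioh4joi4jogg334g34g}.
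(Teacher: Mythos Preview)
Your proof is correct and follows essentially the same route as the paper's own argument: pick an overgroup $V$ (the paper calls it $H'$) with $E_{\Fin}V$ compact, use the square \eqref{gioh4joi4jogg334g34g} to identify $\Res_V^G(r_!E_{\Fin}G)$ with $r_!^V E_{\Fin}V$, and then apply the compactness-preservation facts from \cref{griog34tt43t3t} together with $\Res_H^V$. Your version is slightly more explicit about the conjugation step needed to extract $V$ from the definition of the generated family and about why $\Res_V^G E_{\Fin}G \simeq E_{\Fin}V$, but the argument is the same.
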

\begin{proof} 
	Let $H$ in $\cp$ be given. Then there exists a subgroup $H'$ of $G$ containing $H$ such that $E_\Fin H'$ is compact.  Using \eqref{gioh4joi4jogg334g34g}  and obvious relations between various restriction functors, we obtain the equivalences
\begin{align*}
\mathclap{
\Res_H^G(r_!E_\Fin G)\simeq \Res_{H}^{H'}\Res_{H'}^G(r_!E_\Fin G)\simeq \Res_H^{H'}(r_!^{H'}(\Res_{H'}^G(E_\Fin G)))\simeq \Res_{H}^{H'}(r^{H'}_!(E_\Fin H'))\ .
}
\end{align*}
Since $\Res^{H^{\prime}}_{H}$ and $r_{!}^{H'}$ preserve compacts  by \cref{griog34tt43t3t}, this implies that  $\Res^{G}_{H}(r_{!}E_{\Fin}G)$
	is compact as claimed.
\end{proof}

Recall from \cref{efweu9u9wefwef} that  $E^{top}_{\cF}G$ denotes  a $G$-CW complex modeling the classifying space of the family $\cF$. 
Let $\ell \colon G\Top\to G\Top[W^{-1}_{G}]$ be the localization{; see} \cref{efweu9u9wefwef}.
\begin{lem}
	\label{lem:fin-dim-model}
Assume  that  there exists a finite diagram $S \colon I\to G_{\cF}\Set$ such that
\[\ell(E^{top}_{\cF}G)\simeq \colim_{I}\ell(S_{disc}).\footnote{In classical terms this assumption is equivalent to the assumption that
$\hocolim_{I}S_{disc}$ has the homotopy type of $E^{top}_{\cF}G$.}\]
Then there exists a compact object $A$ in $\PSh(G\Set)$ such that $r^*A$ is equivalent to $E_\cF G$ (see \eqref{efkjbnkjffrefw} for the definition of $r^{*}$).

In particular, such an $A$ exists if one can represent $E^{top}_{\cF}G$ by a finite-dimensional $G$-CW complex.
\end{lem}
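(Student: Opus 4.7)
The plan is to define $A := \colim_I \yo \circ S \in \PSh(G\Set)$ as the finite colimit of the representables on the given diagram, and then to verify in turn (i) compactness of $A$, (ii) the equivalence $r^{*}A \simeq E_\cF G$, and (iii) that a finite-dimensional $G$-CW model for $E^{top}_\cF G$ always produces such a finite diagram.

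Compactness is immediate: every representable $\yo(T) \in \PSh(G\Set)$ is compact, $I$ is a finite category, and a finite colimit of compact objects in a presentable $\infty$-category is compact. The second point is formal once one observes two things. First, $r^{*}$ preserves colimits, being restriction along $r$ and hence computed pointwise (it admits both a left adjoint $r_!$ and a right adjoint). Second, for each $T \in G\Set$ there is a natural equivalence $r^{*}\yo(T) \simeq \ff(\ell(T_{disc}))$ in $\PSh(G\Orb)$: both sides evaluate on a transitive $G$-set $R$ to the discrete space $\Map_{G\Set}(R,T)$, in accord with the description of $\ff$ recalled in \cref{efweu9u9wefwef}. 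Since $\ff$ is an equivalence and thus preserves colimits, combining these observations with the assumption on $S$ yields
\[ r^{*}A \simeq \colim_I \ff(\ell(S(i)_{disc})) \simeq \ff\!\left(\colim_I \ell(S(i)_{disc})\right) \simeq \ff(\ell(E^{top}_\cF G)) \simeq E_\cF G. \]

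For the ``in particular'' assertion, I would start from an $n$-dimensional $G$-CW model $X$ of $E^{top}_\cF G$ and induct along the skeletal filtration $\emptyset = X^{-1} \subset X^0 \subset \cdots \subset X^n = X$. Each stage fits into a $G$-pushout built from $T_i \times S^{i-1} \to X^{i-1}$ and the cofibration $T_i \times S^{i-1} \hookrightarrow T_i \times D^i$, where $T_i \in G_\cF\Set$ is the $G$-set of $i$-cells. Applying $\ell$ turns these into homotopy pushouts, since the attaching map is a cofibration. Contractibility of $D^i$ yields $\ell(T_i \times D^i) \simeq \ell(T_{i,disc})$, and the inductive presentation $S^j \simeq * \sqcup_{S^{j-1}} *$ exhibits $\ell(T_i \times S^{i-1})$ as a finite iterated homotopy pushout of copies of $\ell(T_{i,disc})$. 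Iterating through the $n+1$ skeleta and assembling the resulting finite homotopy colimit as a colimit over a single finite $\infty$-category $I$ produces the diagram $S \colon I \to G_\cF\Set$ required in the hypothesis.

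The main obstacle is the last step: organizing the finitely many skeletal pushouts and sphere presentations into one coherent finite diagram in $G_\cF\Set$. Steps (i) and (ii) are essentially formal and follow at once from compactness of representables, colimit-preservation of $r^{*}$, and the Yoneda-style description of $\ff$ provided by Elmendorf's theorem.
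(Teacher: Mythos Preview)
Your parts (i) and (ii) are correct and match the paper's argument; the identification $r^*\yo(T) \simeq \ff(\ell(T_{disc}))$ that you write down is exactly the content of the paper's observation that $r^* \circ \tilde{\mathrm{Fix}} \simeq \ff \circ \ell$ together with $\tilde{\mathrm{Fix}}(S_{disc}) \simeq \yo(S)$, where $\tilde{\mathrm{Fix}}(X) := \ell(\Map_G((-)_{disc},X))$.

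For part (iii) there is a genuine gap, and it is precisely the obstacle you flag. The hypothesis of the lemma asks for a functor $S \colon I \to G_\cF\Set$, so the morphisms in the indexing diagram must be maps of $G$-sets. Your cellular filtration produces iterated homotopy pushouts whose legs include the attaching maps $T_i \times S^{i-1} \to X^{i-1}$; even after rewriting source and target as finite colimits of $\ell(T_{j,disc})$'s, these attaching maps live in $G\Top[W_G^{-1}]$ and are not in general induced by morphisms in $G_\cF\Set$. The assembled diagram is therefore a diagram in $G\Top[W_G^{-1}]$, not in $G_\cF\Set$, and you cannot read off a compact object of $\PSh(G\Set)$ from it. Note also that $\ff(\ell(T_{i,disc})) \simeq \coprod_{R \in G\backslash T_i} \yo(R)$ need not be compact in $\PSh(G\Orb)$ when $T_i$ has infinitely many orbits, so you cannot circumvent the problem by working in $\PSh(G\Orb)$ either.

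The paper resolves this by first replacing $X$ with an equivariantly homotopy equivalent finite-dimensional $G$-simplicial complex $K$ (equivariant simplicial approximation), and then barycentrically subdividing so that $K$ becomes locally ordered. A locally ordered $G$-simplicial complex is the same data as a finite-dimensional semi-simplicial $G$-set, i.e.\ a functor $\Delta^{\leq \dim K}_{inj} \to G_\cF\Set$; the point is that the face maps of a $G$-simplicial complex \emph{are} maps of $G$-sets. The Bousfield--Kan formula then identifies the homotopy colimit of this diagram with (the barycentric subdivision of) $K$, producing the required finite diagram in $G_\cF\Set$. What simplicial approximation buys here is exactly that all structure maps become set-theoretic rather than merely continuous.
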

\begin{proof}
In analogy to the functor  $\mathrm{Fix}$ from  \eqref{h56h5h56h5h56h56h},
we define the functor
	\[ \mathrm{ \tilde{Fix}}\colon G\Top \to \PSh(G\Set)\ , \quad X\mapsto   \ell( \Map_G((-)_{disc},X)) \ .\]
	 We then note that $r^{*}\circ \mathrm{\tilde{Fix}}\simeq \mathrm{Fix} \simeq \ff \circ \ell$.

 Since $r^{*}$ and $\ff$ preserve colimits, 
	\begin{align*}
	 E_\cF G &\simeq \ff(\ell(E^{top}_{\cF}G)) \simeq \ff(\colim_{I} \ell(S_{disc})) \simeq  \colim_{I}  \ff(\ell(S_{disc})) \\
	 &\simeq \colim_{I} r^{*} \mathrm{ \tilde{Fix}}(S_{disc})\simeq r^{*} \colim_{ I}   \mathrm{ \tilde{Fix}}   (S_{disc})\ .
	\end{align*}
 By definition we have an identification  $ \mathrm{ \tilde{Fix}}(S_{disc})\simeq \yo(S)$.  {It follows that} if we define $A:=\colim_{ I}\yo(S)$, then $A$ is a compact object of $\PSh(G\Set)$ with $r^{*}A\simeq E_{\cF}G$.
 
 The last assertion of the lemma follows from the more general claim that for every finite-dimensional $G$-CW-complex $X$ with stabilizers in $\cF$ there exists a finite diagram $S_{X} \colon I_{X} \to G_{\cF}\Set$ such that $\ell(X) \simeq \colim_{I_{X}} \ell(S_{X,disc})$.
 
  Given such a $G$-CW-complex $X$, there exists a finite-dimensional $G$-simplicial complex $K$ with stabilizers in $\cF$ which is equivariantly homotopy equivalent to $X$ (this works as in the non-equivariant case which can for example be found in \cite[Thm.~2C.5]{hatcher}). After one barycentric subdivision, we may assume that $K$ is locally ordered. Then we may regard $K$ as a diagram $S \colon \Delta^{\leq \dim(K)}_{inj} \to G_\cF\Set$, that is as a finite-dimensional semi-simplicial $G$-set with stabilizers in $\cF$. The homotopy colimit over this finite diagram is equivalent to the barycentric subdivision of $K$; this can be verified explicitly using the Bousfield--Kan formula for the homotopy colimit \cite[Ch.~XII.2]{BK}. Consequently, $\colim_{ \Delta^{\leq \dim(K)}_{inj}}\ell(S_{disc}) \simeq \ell(X)$.
\end{proof}

\begin{rem}\label{rgeriowegewfewfrefwefew}
The argument for \cref{lem:fin-dim-model} shows that  if there exists a finite $G$-CW-model $E^{top}_{\cF}G$, then one can choose $A$ in $\PSh(G\Set)$  such that it is given as a colimit of a finite diagram with values in $G$-finite $G$-sets with stabilizers in $\cF$.
\end{rem}

\begin{proof}[Proof of \cref{thm:main-injectivity-kor1}]
	\cref{thm:main-injectivity-kor1} is a special case of \cref{thm:novikov1-neu-relativ}, where under the Assumption \ref{thm:main-injectivity-kor1}.\ref{thm:main-it1} we can use $r_!E_\Fin G$ for $A$ by \cref{lem:compact}. Here we use that $r^{*}r_{!}\simeq \id$ since $r$  in \eqref{qwelfjqwoifqfeefqefqwef} is fully faithful.
		 Under Assumption  \ref{thm:main-injectivity-kor1}.\ref{thm:main-it2}, we use \cref{lem:fin-dim-model} and that $\Res_H^G$ preserves compacts by \cref{griog34tt43t3t}.
\end{proof}

We now prepare the proof of \cref{thm:novikov1-neu-relativ}. 

Recall \cref{gfioweffwefwefwefwef} of the $\infty$-category of $G$-bornological coarse spaces with transfers and the inclusion functor \eqref{verv3r3oijfoij3f3f}.
Let $H$ be a subgroup of $G$.
\begin{lem}\label{lem:ind-tr}
The induction functor \eqref{gioreggwefewffewfwfwfwf} extends to a functor
\begin{equation*}\label{rvrvkjioerververververv}
\Ind_{H}^{G,tr} \colon H\BC_{tr} \to G\BC_{tr}
\end{equation*}
 such that \begin{equation*}\label{vtoijgoijrogif3f4f3f3}
\xymatrix{H\BC\ar[r]^{\Ind_{H}^{G}}\ar[d]^{\iota_{H}}&G\BC\ar[d]^{\iota_{G}}\\H\BC_{tr}\ar[r]^{\Ind^{G,tr}_{H}}&G\BC_{tr}}
\end{equation*}
 commutes.
\end{lem}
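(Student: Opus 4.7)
The plan is to extend the induction functor in stages following the construction of $G\BC_{tr}$ recalled in \cref{gfioweffwefwefwefwef}. First, I would observe that $\Ind_H^G$ is already defined as a functor on the underlying $G$-coarse spaces (see \cref{giu9o34ergergegrege}), hence we obtain a functor $\Ind_H^G \colon \tilde{H\BC} \to \tilde{G\BC}$, where $\tilde{G\BC}$ denotes the category with the same objects as $G\BC$ but morphisms being only morphisms of underlying coarse spaces. Postcomposition then yields a morphism of simplicial sets $\Fun(\Tw,\tilde{H\BC}) \to \Fun(\Tw,\tilde{G\BC})$, and the task is to show that this restricts to the sub-simplicial sets $H\BC_{tr}$ and $G\BC_{tr}$, and that it is compatible with the bornology-adjustment functor $P$ appearing in the construction of $m$.

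The key verifications, to be carried out in order, are the following. (i) $\Ind_H^G$ preserves pullbacks of underlying $G$-coarse spaces: using \cref{giu9o34ergergegrege} one checks the natural bijection $G\times_H (X\times_Y Z)\cong (G\times_H X)\times_{G\times_H Y}(G\times_H Z)$ on underlying sets and verifies that the induced coarse structure agrees on both sides, using the explicit description of the coarse structure on an induction from \eqref{gioreggwefewffewfwfwfwf}. (ii) $\Ind_H^G$ preserves bounded coverings: given a bounded covering $f\colon X\to Y$ in $H\BC$, one checks the five conditions of \cref{rgeeiorjgergergreg} for $\Ind_H^G(f)$. The coarse components satisfy $\pi_0(\Ind_H^G(X)) \cong G\times_H \pi_0(X)$, so Condition~\ref{wtiojoegergergweg2} is automatic; Conditions on coarse structures follow from the description of the generating entourages $\Ind_H^G(U)$; the bornological and finite-multiplicity conditions \ref{wtiojoegergergweg} follow since every bounded subset of $\Ind_H^G(X)$ is contained in a finite union of $G$-translates of images of bounded subsets of $X$ under \eqref{frefpojfp2f23f23ff}. (iii) $\Ind_H^G$ preserves proper bornological morphisms (those going right in the diagrams), which is straightforward from the construction of the induced bornology. (iv) The bornology-adjustment $P$ commutes with $\Ind_H^G$ up to natural isomorphism, because the prescription of $P$ only depends on the underlying coarse morphisms and the inducing of the target's bornology along the right-going map.

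Combining (i)--(iv), the composition $P\circ \tilde m$ from the construction after \eqref{g354oijot343g3} is compatible with induction, which produces the desired functor $\Ind_H^{G,tr}\colon H\BC_{tr}\to G\BC_{tr}$. The commutativity of the square with $\iota_H$ and $\iota_G$ is then immediate from the construction, since both sides are obtained by restricting $m$ (respectively, $\Ind_H^G\circ m$) to the object $\pt$ of the first factor and both constructions are natural in $G$.

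The main obstacle will be the verification of step (ii), specifically the finite-multiplicity condition \ref{wtiojoegergergweg} of \cref{rgeeiorjgergergreg}: a bounded subset of $\Ind_H^G(X)$ can intersect many $H$-translates of $G$-orbits of coarse components, so one must carefully organise the count using that bounded subsets of the induction are, up to finitely many $G$-translates, the images of bounded subsets of $X$ under \eqref{frefpojfp2f23f23ff}, and then invoke the corresponding finiteness condition for $f$ itself on each $H$-slice.
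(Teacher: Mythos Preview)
Your approach is essentially the same as the paper's: one verifies that $\Ind_H^G$ preserves morphisms of $G\BC$, bounded coverings, and cartesian squares in $G\Coarse$, and this shows that the induced map of simplicial sets $\Fun(\Tw,\tilde{H\BC}) \to \Fun(\Tw,\tilde{G\BC})$ restricts to $H\BC_{tr} \to G\BC_{tr}$. Your verifications (i)--(iii) are exactly these three points.

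However, your verification (iv) and your concluding paragraph are confused. The functor $P$ and the composite $P\circ\tilde m$ appear in the construction of the functor $m\colon G\Set^{op}\times G\BC \to G\BC_{tr}$, \emph{not} in the construction of the $\infty$-category $G\BC_{tr}$ itself. The category $G\BC_{tr}$ is defined in \cref{gfioweffwefwefwefwef} purely as a sub-simplicial set of $\Fun(\Tw,\tilde{G\BC})$, cut out by conditions on the simplices (left-going maps are bounded coverings, right-going maps are proper and bornological, the middle square is a pullback in $G\Coarse$). So once you have verified (i)--(iii), the restriction to $H\BC_{tr}\to G\BC_{tr}$ is immediate and there is nothing further to check involving $P$. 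The compatibility with $P$ that you describe in (iv) is precisely the content of the \emph{next} lemma, \cref{grierogerger3t4t}, not of the present one. Likewise, the commutativity of the square with $\iota_H$ and $\iota_G$ follows directly from the definition of $\iota$ (identity spans), without any appeal to $m$.
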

\begin{proof}
Recall from  \cref{gfioweffwefwefwefwef} that $H\BC_{tr}$ and $ G\BC_{tr}$ are built from certain spans whose vertices belong to $G\BC$ and whose morphisms are controlled. We apply the functor $\Ind_{H}^{G}$ (for bornological coarse spaces) to the
vertices and obtain the maps from the version of the induction for the underlying $H$-sets.
We must show that this construction preserves the conditions on the morphisms for the simplices of $ H\BC_{tr}$ and $G\BC_{tr}$ as specified in \cite[Def.~2.27]{coarsetrans}. In particular, this amounts to showing that induction preserves morphisms in  {$G\BC$}, bounded coverings, and cartesian squares in $G\Coarse$.
 
We have seen {in \cref{sec:cones}} that induction preserves controlled and proper morphisms.
Next we discuss bounded coverings.

Let $X,Y$ be in $H\BC$ and let
$f \colon X\to Y$ be an $H$-equivariant bounded covering.
Then we must show that $ \Ind_{H}^{G}(f) \colon \Ind_{H}^{G}(X)\to  \Ind_{H}^{G}(Y)$ is again a bounded covering. We verify the properties listed in \cref{rgeeiorjgergergreg}.
\begin{enumerate}
\item The coarse structure of $\Ind_H^G(X)$ is generated by the images $\Ind_{H}^{G}(U)$ of $\diag(G)\times U$ in $\Ind_{H}^{G}(X)$  for $U$ an entourage of $X$.  We now observe that  
\[ ( \Ind_{H}^{G}(f)\times \Ind_{H}^{G}(f))  (\Ind_{H}^{G}(U))=\Ind_{H}^{G}((f\times f)(U))\ ,\]
which is an entourage of $\Ind_{H}^{G}(Y)$ by definition. This shows that
$\Ind_{H}^{G}(f)$ is controlled.
\item We write $U_{\pi_{0}(X)}:=\bigcup_{U\in \cC(X)}U $. 
Then we have the equality \[U_{\pi_{0}(\Ind_{H}^{G}(X))}=\bigcup_{U\in \cC(X)}\Ind_{H}^{G}(U)=\Ind_{H}^{G}(U_{\pi_{0}(X)})\ .\]
For $U$ in $\cC(Y)$ we furthermore have
\[ (\Ind_{H}^{G}(f) \times \Ind_{H}^{G}(f))^{-1}(\Ind_{H}^{G}(U))\cap  U_{\pi_{0}(\Ind_{H}^{G}(X)) }
=\Ind_{H}^{G}((f\times f)^{-1}(U)\cap U_{\pi_{0}(X)})\ .\]
Since $f$ is a bounded covering,
this shows that $\cC(\Ind_{H}^{G}(X))$ is generated by the entourages of the form
$(\Ind_{H}^{G}(f)\times \Ind_{H}^{G}(f))^{-1}(U)\cap U_{\pi_{0}(\Ind_{H}^{G}(X))}$
for all $U$ in $\cC(\Ind_{H}^{G}(Y))$.
\item \label{qewfqwefefqf} We consider a class $\{g,x\}$ in $\Ind_{H}^{G}(X)$ (we use $\{\ ,\ \}$ to denote  $H$-orbits in $G\times X$ since we want to reserve $[-]$ for coarse components). Its coarse component is then given by \[[\{g,x\}]=\{\{g',x'\}\:|\: (\exists h\in H\:|\:  g'h=g\ ,h^{-1}x'\in [x])\}\ .\]
It follows that  {the map $[\{g,x\}]\to [x]$ sending 
$\{g',x'\}$ in $[\{g,x\}]$ to $g^{-1}g' x'$  in $[x]$ is a bijection which identifies $\Ind_{H}^{G}(f)_{|[\{g,x\}]}$ with $f_{|[x]}$. Therefore, $\Ind_{H}^{G}(f)_{|[\{g,x\}]}$ is an isomorphism of coarse spaces.}
\item \label{eroigjpwergreerggw} Let $g$ be in $G$ and $B$ be bounded in $X$. Then the image $B_{g}$  of $\{g\}\times B$ in $\Ind_{H}^{G}(X)$  is a bounded subset of $\Ind_{H}^{G}(X)$ by definition of the bornology.
Since  $\Ind_{H}^{G}(f)(B_{g})=f(B)_{g}$, its image under $\Ind_{H}^{G}(f)$ is also a bounded subset of $\Ind_{H}^{G}(Y)$. This implies that $\Ind_{H}^{G}(f)$ is bornological. 
\item  {We have to show that for} every bounded subset $B$ of $\Ind_{H}^{G}(X)$ the cardinality of the fibres of the induced map 
$\pi_{0}(B)\to \pi_{0}(\Ind_{H}^{G}(X))$ has a finite bound (which may depend on $B$).
Let $\{g,y\}$ be in $ \Ind_{H}^{G}(Y)$ and consider the component $[\{g,y\}]$ in $\pi_{0}(\Ind_{H}^{G}(Y))$. Then, as seen in \ref{qewfqwefefqf}, we have 
 $[\{g,x\}]\in \pi_{0}(\Ind_{H}^{G}(f))^{-1}[\{g,y\}]$ if and only if $[x] \in \pi_{0}(f)^{-1}([y])$. 
 If $[\{g,x\}]\cap B_{g}\not=\emptyset$, then in addition $[x]\cap  B\not=\emptyset$. Since $f$ is a bounded covering, there is a finite bound on the cardinality of  the sets
 $\{  [x]\in \pi_{0}(X)\:|\:   \pi_{0}(f)([x])=[y]\ , [x]\cap B\not=\emptyset\}$. 
  \end{enumerate}

The argument for \ref{eroigjpwergreerggw} shows that induction preserves 
bornological maps.

We finally show that induction preserves cartesian squares in $G\Coarse$.
Let \[\xymatrix{X\ar[r]\ar[d]&Y\ar[d]^{\phi}\\Z\ar[r]^{\psi}&W}\]
be a cartesian square in  $G\Coarse$.
We first show that 
\[\xymatrix{\Ind_{H}^{G}(X)\ar[r]\ar[d]&\Ind_{H}^{G}(Y)\ar[d]\\\Ind_{H}^{G}(Z)\ar[r]&\Ind_{H}^{G}(W)}\]
is a cartesian square on the level of underlying $G$-sets.
Indeed, $\Ind_{H}^{G}(X)$ is the subset of elements $(\{g,y\},\{g',z\})$ in $\Ind_{H}^{G}(Y)\times \Ind_{H}^{G}(Z)$ such that there exists $h$ in $H$ with  {$g=g'h$} and $\phi(y)=\psi(h^{-1}z)$.
This is {in bijection to} the set of elements
$\{g,(y,z)\}$  in $\Ind_H^G(X)$, where we consider $X$ as a subset of $Y\times Z$. 
Let $U$ and $V$ be entourages of $Z$ and $Y$, respectively. 
Then we have the equality \[(\Ind_{H}^{G}(U)\times \Ind_{H}^{G}(V))\cap (\Ind_{H}^{G}(X)\times \Ind_{H}^{G}(X))=
\Ind_{H}^{G}((U\times V)\cap (X\times X))\ .\]
The entourages on the left generate the coarse structure on 
$\Ind_{H}^{G}(X)$ such that the square above is cartesian in $G\Coarse$.
The entourages on the right  generate the induced coarse structure on $ \Ind_{H}^{G}(X)$.
Hence both structures coincide.
\end{proof}
 
Recall the construction of the functor $m$ from \eqref{g354oijot343g3}. In the following, we put an index $G$ or $H$ in order to indicate the respective group.
 
\begin{lem}\label{grierogerger3t4t}
We have a commuting square
\[\xymatrix@C=4em{
 G\Set^{op}\times H\BC\ar[r]^{\id\times \Ind_{H}^{G}}\ar[d]^{\Res^{G}_{H}\times \id}& G\Set^{op}\times G\BC \ar[dd]^{m_{G}}\\
 H\Set^{op}\times H\BC\ar[d]^{m_{H}}&\\H\BC_{tr}\ar[r]^{\Ind_{H}^{G,tr}}&G\BC_{tr} }\]
 {in $\Cat_\infty$.}
\end{lem}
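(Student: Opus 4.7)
The plan is to construct a natural isomorphism between the two composites in the square and to appeal to the $2$-coskeletality of $G\BC_{tr}$ (\cite[Lem.~2.30]{coarsetrans}) so that the verification reduces to checking compatibility on $0$-, $1$-, and $2$-simplices. On objects, both composites take $(S,X)$ in $G\Set^{op}\times H\BC$ to canonically isomorphic $G$-bornological coarse spaces: the composite along the top-right corner gives $S_{min,min}\otimes \Ind_H^G(X)$, while the composite along the bottom-left corner gives $\Ind_H^G(\Res_H^G(S)_{min,min}\otimes X)$. Applying the Frobenius reciprocity of \cref{ergijeogegregrgregg} with $Y:=S_{min,min}$ yields a canonical isomorphism between these, which is natural in both $S$ and $X$.

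On $1$-simplices, a morphism in $G\Set^{op}\times H\BC$ is a pair $(h,f)$, where $h \colon S'\to S$ is a map of $G$-sets and $f \colon X\to X'$ is a morphism in $H\BC$. Both composites send this to a span in $G\BC_{tr}$ of the form $m(S,X)\leftarrow m(S',X)\to m(S',X')$, whose left leg is a bounded covering induced by $h$ (see \cref{gbioegeerrgdfbv}) and whose right leg is a proper bornological map induced by $f$. Naturality of the Frobenius isomorphism in both variables, combined with the compatibility of $\Ind_H^G$ with bounded coverings and with proper bornological maps established in the proof of \cref{lem:ind-tr}, yields the required morphism of spans. For $2$-simplices the additional datum is a cartesian square in $G\Coarse$; here the key input is that $\Ind_H^G$ preserves cartesian squares of $G$-coarse spaces (again shown in the proof of \cref{lem:ind-tr}), which together with the corresponding preservation by $S_{min,min}\otimes -$ ensures the Frobenius isomorphism identifies the two pullback squares arising from the composites.

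The main obstacle is the $2$-simplex bookkeeping, where the natural isomorphism must be checked to respect the pullback data defining a $2$-simplex in the effective Burnside category $G\BC_{tr}$. However, this reduces cleanly to the preservation of cartesian squares by induction and by tensor product, both of which are already in place from \cref{lem:ind-tr} and the explicit description of the tensor product in $G\Coarse$. Once this is done, the $2$-coskeletality of $G\BC_{tr}$ automatically promotes the pointwise, $1$-simplex and $2$-simplex compatibilities to an equivalence of functors of $\infty$-categories between the two composites.
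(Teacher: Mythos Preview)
Your approach uses the same ingredients as the paper—Frobenius reciprocity together with the properties of $\Ind_H^G$ established in the proof of \cref{lem:ind-tr}—but organizes them differently and has a gap concerning the bornology adjustment built into the definition of $m$.

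The paper factors $m_G = P_G \circ \tilde m_G$ as in \cref{gfioweffwefwefwefwef} and argues in two steps. First it works at the level of the effective Burnside category $A^{eff}(\tilde{G\BC})$: since $\Ind_H^G \colon \tilde{H\BC} \to \tilde{G\BC}$ preserves pullbacks (shown in the proof of \cref{lem:ind-tr}), the functoriality of $A^{eff}$ yields $\Ind_H^{G,eff}$, and the coarse-space version of Frobenius in \cref{giu9o34ergergegrege} then gives $\Ind_H^{G,eff}\circ \tilde m_H\circ(\Res_H^G\times\id)\simeq \tilde m_G\circ(\id\times\Ind_H^G)$ in one stroke, without any simplex-by-simplex verification. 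Second, it checks separately that the bornology adjustment satisfies $P_G\circ\Ind_H^{G,eff}\simeq\Ind_H^{G,tr}\circ P_H$.

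Your direct approach via $2$-coskeletality is viable, but your description of the $1$-simplex is inaccurate and this is where the gap lies. The middle object of the span that $m_G$ assigns to $(h,f)$ is \emph{not} $m(S',X)=S'_{min,min}\otimes X$ with its tensor-product bornology: by the definition of $P$, it carries the bornology pulled back from $S'_{min,min}\otimes X'$ along $\id_{S'}\times f$, which is in general strictly larger (morphisms in $H\BC$ are proper but not assumed bornological). Consequently the Frobenius isomorphism of \cref{ergijeogegregrgregg}, which is formulated for the standard tensor bornologies, does not apply to the middle term as written. You must additionally check that the underlying bijection from \cref{ergijeogegregrgregg} (equivalently \cref{giu9o34ergergegrege}) intertwines the two pulled-back bornologies; this is precisely the verification the paper isolates as the compatibility of $P$ with induction. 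Once you add that check (and the analogous one on $2$-simplices), your argument goes through.
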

 \begin{proof}
 We freely use the notation that was used in the definition of $m$.
 {Recall that the effective Burnside category $A^{eff}$ is defined for every category with pullbacks \cite[Def.~3.6]{Barwick:2014aa}, and that $A^{eff}$ is functorial with respect to pullback-preserving functors \cite[3.5]{Barwick:2014aa}. 
 Therefore, the proof of \cref{lem:ind-tr} shows that $\Ind_H^G$ induces a functor
 \[ \Ind_H^{G,eff} \colon A^{eff}(\tilde{H\BC}) \to A^{eff}(\tilde{H\BC})\ .\]
 Then} we can use \cref{giu9o34ergergegrege} to obtain a natural equivalence
 \[ \Ind_H^{G,eff} \circ \tilde{m}_H \circ (\Res_H^G \times \id) \simeq \tilde{m}_G \circ (\id \times \Ind_H^G)\ .\]
 Hence it suffices to show that the endofunctor $P$ from the definition of $m$ is compatible with $\Ind_H^{G,eff}$ in the sense that $P_G \circ \Ind_H^{G,eff} \simeq \Ind_H^{G,tr} \circ P_H$. This is clear since the application of $P$ amounts to pulling back certain bornologies, and the isomorphism in  \cref{lem:frobenius} is compatible with this operation on bornologies. 
     \end{proof}

For the rest of the section we fix a CP-functor $M\colon G\Orb\to \bC$. According to  \cref{bioregrvdfb}, there is a $\bC$-valued  strongly additive and continuous equivariant coarse homology theory $E$ with transfers (see \cref{rgiorhgiuregergergergergerg}) such that
\begin{equation}\label{regoihihihjriu34hr34r3}
M\simeq (E\circ \iota)_{G_{can,min}}\circ i\ .
\end{equation}

Using the functor {$\Ind_H^{G,tr}$ from \cref{lem:ind-tr}},  we can define the composition
\begin{equation}\label{5gpo3jkopjk34opg3ogpg34g34g34}
E^{H}:=E\circ \Ind_{H}^{G,tr} \colon H\BC_{tr}\to  \bC\ .
\end{equation}
Because of \cref{lem:ind-mot} and \cref{lem:ind-tr}, the functor $E^{H}$ is again a $\bC$-valued coarse homology theory with transfers. 
Applying   \cref{fwpeou2903rf} to $E^{H}$, we obtain a functor
\[\wt E^{H}\colon\PSh(H\Set)^{op}\times H\Sp\cX\to \bC\ .\]
We will consider $\wt E^{H}$ as a contravariant functor in its first argument sending colimits to limits.

 The following lemma clarifies the relation between $\wt E^{H}$ and $\wt E$.

\begin{lem}
	\label{lem:induction}
	For every subgroup $H$ of $G$ there is an equivalence
	\[\wt E(-,\Ind_H^{G {,Mot}}(-))\simeq \wt E^{H}(\Res_H^G(-),-)\]
	of functors $\PSh(G\Set)^{op}\times H\Sp\cX\to \bC$.
\end{lem}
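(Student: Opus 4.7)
The proof will proceed in three steps, exploiting the compatibility established in \cref{grierogerger3t4t} and the universal property of presheaf categories.

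First, I would unpack the definitions. Applying $E$ to the diagram of \cref{grierogerger3t4t} and using that $E^H = E \circ \Ind_H^{G,tr}$ by \eqref{5gpo3jkopjk34opg3ogpg34g34g34}, one obtains a natural equivalence of functors $G\Set^{op} \times H\BC \to \bC$:
\[\underline{E^H}(\Res_H^G(-), -) \simeq \underline E(-, \Ind_H^G(-))\ .\]
In particular, since $\Res_H^G$ applied to a representable $\yo(S)$ for $S$ in $G\Set$ is represented by the $H$-set $\Res_H^G(S)$, this gives
\[\wt E^{H}(\Res_H^G(\yo(S)), X) \simeq \wt E(\yo(S), \Yo^s(\Ind_H^G(X)))\]
for all $S$ in $G\Set$ and $X$ in $H\BC$.

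Second, I would extend the equivalence in the second argument from $H\BC$ to all of $H\Sp\cX$. By the paragraph following \cref{jfi3rhfiuofewfewf}, for every $S$ in $G\Set$ and $T$ in $H\Set$ the functors $\wt E(\yo(S), -) \simeq \underline E(S,-)$ and $\wt E^{H}(\yo(T),-) \simeq \underline{E^H}(T,-)$ preserve colimits on $G\Sp\cX$ and $H\Sp\cX$, respectively. Moreover, $\Ind_H^{G,Mot}$ preserves colimits (being a left adjoint, cf.~the proof of \cref{lem:ind-mot}). The equivalence above therefore extends to a natural equivalence
\[\wt E^{H}(\Res_H^G(\yo(S)), Y) \simeq \wt E(\yo(S), \Ind_H^{G,Mot}(Y))\]
for all $S$ in $G\Set$ and $Y$ in $H\Sp\cX$.

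Finally, I would extend in the first argument to all of $\PSh(G\Set)$ using the universal property of presheaves (\cref{rem_234ertewert}). Both functors $\wt E(-, \Ind_H^{G,Mot}(Y))$ and $\wt E^{H}(\Res_H^G(-), Y)$ are contravariant in the first argument; the former sends colimits to limits by definition of $\wt E$, and the latter does so because $\wt E^H(-,Y)$ sends colimits to limits and $\Res_H^G \colon \PSh(G\Set) \to \PSh(H\Set)$ preserves colimits (being restriction along the functor $\Ind_H^G \colon H\Set \to G\Set$). Since both contravariant functors send colimits to limits and agree on representables by the second step, the universal property of $\PSh(G\Set)$ as the free colimit completion of $G\Set$ provides the desired equivalence on all of $\PSh(G\Set)^{op} \times H\Sp\cX$. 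The proof is essentially bookkeeping; the only substantive input is \cref{grierogerger3t4t}, which was already the difficult compatibility statement.
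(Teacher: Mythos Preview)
Your proof is correct and follows essentially the same approach as the paper: both reduce to the equivalence on $G\Set^{op}\times H\BC$ furnished by \cref{grierogerger3t4t}, then extend using colimit-preservation in the second variable and the universal property of presheaves in the first. The only cosmetic difference is that the paper performs the extension in the first variable before the second, whereas you do the reverse.
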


\begin{proof}
Recall \cref{jfi3rhfiuofewfewf} of $\underline{E}$ and $\underline{E^{H}}$.
By the universal property of $\PSh(G\Set)$ and since both functors send colimits to limits 
in their first arguments (note that the functor $\Res_{H}^{G} \colon \PSh(G\Set)\to \PSh(H\Set)$ preserves colimits),
  it suffices to provide  an equivalence  
\[ \underline{E}(-,\Ind_H^{G {,Mot}}(-))\simeq \underline{E^{H}}(\Res^{G}_{H}(-),-) \]
of functors $G\Set^{op}\times H\Sp\cX\to \bC$. In view of the definitions of $\underline{E}$ and $\underline{E^{H}}$,
it is enough to provide an equivalence 
\[ m_{G}(-,\Ind_{H}^{G}(-))\simeq ( \Ind^{G,tr}_{H}\circ m_{H})(\Res^{G}_{H}(-),-) \]
of functors
\[ G\Set^{op}\times H\BC\to G\BC_{tr}\ .\]
This equivalence is exactly the assertion of   \cref{grierogerger3t4t}. 
\end{proof}

 {Recall from \cref{def:twist} what it means to twist an equivariant coarse homology theory by a $G$-bornological coarse space.}
For better readability we introduce the abbrevation
\begin{equation}\label{ewecvwevewcewcw}
E_{G}:=E_{G_{max,max}}
\end{equation}  for the twist of $E$ with $G_{max,max}$.
Note that $\wt E_{G}$ denotes the result of \cref{fwpeou2903rf} applied to $E_{G}$. 
We further abbreviate $E^{H}_{G}:=(E_{G})^{H}${; see} \eqref{5gpo3jkopjk34opg3ogpg34g34g34}. Note that the order of constructions matters. We first twist by $G_{max,max}$ and then precompose with the induction from $H$ to $G$.

Since $E$ is strongly additive and extends to a coarse homology theory with transfers, also $E_{G}$ is strongly additive and extends to a coarse homology theory with transfers by \cite[Lem.~3.13]{equicoarse} and \cite[Ex.~2.57]{coarsetrans}.   {Recall the definition of $\iota\colon H\BC\to H\BC_{tr}$ from \eqref{verv3r3oijfoij3f3f}. Then $E^{H}_{G}\circ \iota$ is an $H$-equivariant coarse homology theory, and hence extends to a functor $H\Sp\cX\to \bC$ which we again denote by $E^{H}_{G}\iota$. In this way, the morphism \eqref{revkjenekjvnrvevvverveer} in \cref{thm:leftinverse-rel}.\ref{it:descent3-rel} below is well-defined.}

Let $H$ be a subgroup of $G$. 
  The map \eqref{ghuhgiurehggregreregrrergergerg} in the statement of the next theorem is induced by the projection  $\Res^{G}_{H}(A)\to *$  and the cone boundary $\partial \colon F^{\infty}(H_{can,min})\to \Sigma F^{0}(H_{can,min})${; see} \eqref{gioowegfwefwfwef}.
\begin{theorem}\label{thm:leftinverse-rel}
	We assume:
	\begin{enumerate}
		\item \label{ijgorggergerg-rel} There exists an object $A$ in $\PSh(G\Set)$ such that $r^*A$ is equivalent to $E_\Fin G$ in $\PSh(G\Orb)$ and $\Res_H^G(A)$ is compact in $\PSh(H\Set)$.
		\item\label{it:descent3-rel} For every $H$-set $S$ with finite stabilizers the forget-control map $\beta_{E_G^{H},S_{min,min}\otimes H_{can,min}}$
	\begin{equation}\label{revkjenekjvnrvevvverveer}
E_G^{H}\iota(F^\infty (S_{min,min}\otimes H_{can,min}))\to\Sigma E_G^{H}\iota(F^{0}(S_{min,min}\otimes H_{can,min}))
\end{equation} 
		is an equivalence.
	\end{enumerate}
	Then the map \begin{equation}\label{ghuhgiurehggregreregrrergergerg}
\wt E_G^{H}(*,F^\infty(H_{can,min}))\to \Sigma \wt E_G^{H} (\Res_H^G(A),F^0(H_{can,min}))
\end{equation}
	 is an equivalence. 
\end{theorem}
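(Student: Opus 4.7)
The plan is to factor the map of the theorem through the intermediate object $\wt E_G^H(\Res_H^G A, F^\infty(H_{can,min}))$ as
\[
\wt E_G^H(*, F^\infty(H_{can,min})) \xrightarrow{\phi} \wt E_G^H(\Res_H^G A, F^\infty(H_{can,min})) \xrightarrow{\psi} \Sigma \wt E_G^H(\Res_H^G A, F^0(H_{can,min})),
\]
where $\phi$ is induced by the projection $\Res_H^G A \to *$ in the first variable and $\psi$ by the cone boundary $\partial$ in the second. The goal is to show both maps are equivalences. As setup, I record that $E_G^H$ is a strongly additive $H$-equivariant coarse homology theory with transfers (inheriting these properties from $E$ via twisting by $G_{max,max}$ and precomposition with $\Ind_H^{G,tr}$ from \cref{lem:ind-tr}), that $r^*(\Res_H^G A) \simeq \Res_H^G(r^* A) \simeq \Res_H^G(E_\Fin G) \simeq E_{\Fin(H)}H$ in $\PSh(H\Orb)$, and that $H_{can,min}$ is uniformly discrete, $H$-proper and $H$-finite with stabilizers in $\Fin(H)$. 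By \cref{girjgorgergergreg} each Rips complex $P_U(H_{can,min})$ then lies in $H_{\Fin(H)}\Simpl^{\fin}$ with $P_U(H_{can,min})_{d,d,b} \simeq P_U(H_{can,min})_{d,d,d}$.

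For $\phi$, I apply \cref{gio3jo24f22f3} to the coarse homology theory with transfers $E_G^H$, the compact presheaf $\Res_H^G A$, and the family $\Fin(H)$, evaluated on the simplicial complex $K = P_U(H_{can,min})$. This yields that
\[
\wt E_G^H(*, \cO^\infty(P_U(H_{can,min})_{d,d,d})) \to \wt E_G^H(\Res_H^G A, \cO^\infty(P_U(H_{can,min})_{d,d,d}))
\]
is an equivalence for each $U \in \cC^H(H_{can,min})$. Passing to the colimit over $U$, which commutes with both $\wt E_G^H(*, -)$ and $\wt E_G^H(\Res_H^G A, -)$ by \cref{lem:homtheory} and the compactness of $*$ and $\Res_H^G A$, one obtains that $\phi$ is an equivalence.

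For $\psi$, the key structural observation is that $\Res_H^G A$ admits a presentation as a retract of a finite colimit $\colim_{i \in I} \yo(S_i)$ of representables with each $S_i \in H_{\Fin(H)}\Set$. By compactness some retract presentation with $S_i \in H\Set$ exists, and for any such presentation each canonical composite $\yo(S_i) \to \colim_I \yo(S_i) \to \Res_H^G A$ must exist as a point in $\Map_{\PSh(H\Set)}(\yo(S_i), \Res_H^G A) \simeq (\Res_H^G A)(S_i)$. Since $r^*(\Res_H^G A) \simeq E_{\Fin(H)}H$ takes value $*$ on orbits with finite stabilizer and $\emptyset$ otherwise, the product $(\Res_H^G A)(S_i) \simeq \prod_{R \in H\backslash S_i}(\Res_H^G A)(R)$ is non-empty only if every orbit of $S_i$ has finite stabilizers, forcing $S_i \in H_{\Fin(H)}\Set$. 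Using that $\wt E_G^H$ sends colimits in the first variable to limits, $\wt E_G^H(\Res_H^G A, -)$ is a retract of $\lim_{i \in I} \underline{E_G^H}(S_i, -)$. By the twist compatibility of \cref{gregee4ergtw}, the map $\underline{E_G^H}(S_i, \partial)$ is identified with the forget-control map $\beta_{E_G^H, (S_i)_{min,min} \otimes H_{can,min}}$, an equivalence by assumption (2) since $S_i \in H_{\Fin(H)}\Set$. Finite limits and retracts of equivalences are equivalences, so $\psi$ is an equivalence.

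The main obstacle is the structural reduction in the forget-control step: the argument that compactness of $\Res_H^G A$ together with $r^*(\Res_H^G A) \simeq E_{\Fin(H)}H$ rules out representables $\yo(S)$ with non-finite-stabilizer orbits from any retract presentation of $\Res_H^G A$. Once this is in hand, the remainder amounts to bookkeeping: identifying the component maps via \cref{gregee4ergtw} with the forget-control maps granted by assumption (2), and verifying strong additivity of $E_G^H$ so that \cref{gio3jo24f22f3} applies in the descent step.
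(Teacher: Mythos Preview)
Your proof follows essentially the same approach as the paper's: factor through $\wt E_G^H(\Res_H^G A, F^\infty(H_{can,min}))$, handle the descent map $\phi$ via \cref{gio3jo24f22f3}, and reduce the forget-control map $\psi$ to representables where Assumption~\ref{it:descent3-rel} applies via \cref{gregee4ergtw}.

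There is one small gap. The product decomposition $(\Res_H^G A)(S_i) \simeq \prod_{R \in H\backslash S_i}(\Res_H^G A)(R)$ you invoke is not valid for an arbitrary presheaf on $H\Set$: the Yoneda embedding $H\Set \to \PSh(H\Set)$ does not preserve coproducts (for non-transitive $T$, a map $T \to \coprod_R r(R)$ need not land in a single summand), so $\yo(S_i)$ is generally not $\coprod_R \yo(r(R))$. Equivalently, $\Res_H^G A$ need not satisfy the sheaf condition for disjoint unions. The fix is immediate and is what the paper does: given the map $\yo(S_i) \to \Res_H^G A$, precompose with the orbit inclusion $\yo(r(R)) \to \yo(S_i)$ to obtain a point of $(\Res_H^G A)(r(R)) = (r^*\Res_H^G A)(R) \simeq E_{\Fin(H)}H(R)$, which forces $R \in H_{\Fin(H)}\Orb$. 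With this correction your argument is complete and matches the paper's.
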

\begin{proof}By construction, the map \eqref{ghuhgiurehggregreregrrergergerg} is the composition
	\begin{align*}\label{giorg34243g3g4g3g}
	\wt E^{H}_G(*,F^\infty(H_{can,min}))&\xrightarrow{!}\wt E_G^{H}(\Res_H^G(A), F^\infty(H_{can,min}))\\&\xrightarrow{!!} \Sigma \wt E_G^{H}(\Res_H^G(A),F^0(H_{can,min}))\ .
	\end{align*}
	We will show that  {both morphisms are equivalences.}
	
	Since  $\Res_H^G(A)$ is compact, by  \cref{lem:homtheory}, \cref{gioowegfwefwfwef} and \cref{rgoipewerfwefewfew} 
	we
	see that the  morphism marked by $!$ is a colimit over $U$ in $\cC^{H}({H_{can,min}})$ of morphisms
	\begin{equation}\label{234ogfi34g434g4111-rel}
	\wt E_G^{H}(*,\cO^\infty (P_U(H_{can,min})_{d,d,b}))\to \wt E_G^{H}(\Res_H^G(A),\cO^\infty (P_U(H_{can,min})_{d,d,b}))\ .
	\end{equation}  Since $H_{can,min}$ is uniformly discrete, {the $H$-simplicial complex	$P_U(H_{can,min})$ belongs to $H_{\Fin(H)}\Simpl^{\fin}$ and the bornology on $P_U(H_{can,min})_{d,d,b}$ agrees with the one induced from the spherical path quasi-metric; see \cref{girjgorgergergreg}.}
	Note that $E^{G}_{H}$ is strongly additive since $E_{G}$ is so and, as one easily checks,  the induction $\Ind_{H}^{G}$ preserves free unions (see \cite[Ex.~2.16]{equicoarse}   for the notion of a free union). To conclude that \eqref{234ogfi34g434g4111-rel} is an equivalence, we apply \cref{gio3jo24f22f3} with
	\begin{enumerate}
		\item $E_{G}^{H}$ in place of $E$,
		\item $\Res^{G}_{H}(A)$ in place of $A$,
		\item $\Fin(H)$ in place of $\cF$,
		\item and using
		\begin{equation}\label{brtbbrbrvvrtv}
		r^{*}\Res^{G}_{H}(A)\simeq \Res^{G}_{H}(r^{*}A)\simeq  \Res^{G}_{H}(E_{\Fin}G)\simeq E_{\Fin(H)}H
		\end{equation} in order to verify Assumption \ref{regieorg43tr34tg34t34t} of \cref{gio3jo24f22f3}.
	\end{enumerate}
It follows that the morphism $!$  is an equivalence.
	
	We consider the morphism marked by $!!$. The  object $\Res_H^G(A)$  in $\PSh(H\Set)$ is 
	 equivalent to the colimit of some diagram obtained from $S \colon I \to H\Set$ by composing with the Yoneda embedding $H\Set\to \PSh(H\Set)$.
	
	We claim that $S(i)\in H_{\Fin(H)}\Set$ for every $i$ in $I$. If $i$ in $I$, then there exists a morphism $\yo(S(i))\to \Res_H^G(A)$. Hence we get a morphism
	$r^{*} \yo(S(i))\to r^{*}\Res_H^G(A)$. 
	Let $R$ be some $H$-orbit in $S(i)$. Because $r^*(\yo(r(R)))\simeq \yo(R)$, we get a morphism $\yo(R)\to r^{*}\Res_H^G(A)$, i.e., $(r^{*}\Res_H^G(A))(R)\neq\emptyset$.
	Because $r^*\Res_H^G(A)$ is equivalent to $E_{\Fin(H)}H$ by \eqref{brtbbrbrvvrtv}, we conclude that $R\in H_{\Fin(H)}\Orb$.
	Since $R$ was an arbitrary $H$-orbit in $S(i)$  this implies that $S(i) \in H_{\Fin}\Set$ as claimed.
	
	Since  equivalences are stable under
	limits, and since  $\wt E^{H}_{G}$ in its first argument  sends colimits to limits, in order to show that $!!$ is an equivalence it suffices to show that 
	the  forget-control map  
	\[\beta_{E^{H}_G(\yo(S),-),H_{can,min}}\colon \wt E_G^{H}(\yo(S),F^{\infty}(H_{can,min}))\to \Sigma \wt E_G^{H}(\yo(S),F^{0}(H_{can,min}))\] is an equivalence for every $S$ in $H_{\Fin(H)}\Set$. 
	Inserting the definition of $\wt E_G^{H}$, this morphism is equivalent to  the morphism \[
	E_G^{H}\iota(S_{min,min}\otimes F^{\infty}(H_{can,min}))\to \Sigma E_G^{H}\iota(S_{min,min}\otimes F^{0}(H_{can,min}))\ .\]  
	By \cref{gregee4ergtw}, this morphism can furthermore be identified with the morphism
	 \begin{equation*}\label{frvrveerververv}
 \beta_{E_G^{H},S_{min,min}\otimes H_{can,min}} \colon E_G^{H} {\iota}(F^{\infty}(S_{min,min}\otimes H_{can,min}))\to \Sigma E_G^{H} {\iota}(F^{0}(S_{min,min}\otimes H_{can,min}))
\end{equation*}
	which is an equivalence by Assumption \ref{it:descent3-rel}.
\end{proof}

\begin{rem}\label{rem_descent_concrete_S}
{Assume} $\Res_H^G(A)$ in $\PSh(H\Set)$ is a colimit of a diagram obtained from $S\colon I\to H\Set$ with values in $H$-finite $H$-sets by composing with the Yoneda embedding $H\Set\to \PSh(H\Set)$. Then, by inspection of the argument, it suffices to  require  Assumption~\ref{it:descent3-rel} of Theorem~\ref{thm:leftinverse-rel} only for $H$-finite $H$-sets $S$ with finite stabilizers.
\end{rem}

Recall the standing assumption that $M$ is a CP-functor and that $E$ is a strongly additive equivariant coarse homology theory satisfying \eqref{regoihihihjriu34hr34r3}.

\begin{theorem} 
	\label{thm:replacement2}
	If $H_{can}$ has $H_\Fin$-FDC, then
	Assumption \ref{it:descent3-rel} of \cref{thm:leftinverse-rel} is fullfilled.   
\end{theorem}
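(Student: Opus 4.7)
The plan is to deduce this from a vanishing theorem for equivariant coarse homology theories with transfers on spaces with equivariant finite decomposition complexity, established in \cite{trans}, applied to the $H$-equivariant theory $E_G^H \iota \colon H\BC \to \bC$.

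The first step is to verify that $E_G^H \iota$ is an $H$-equivariant coarse homology theory satisfying the hypotheses of the vanishing theorem in \cite{trans}: it extends to a theory with transfers (namely $E_G^H$ itself, by \cref{lem:ind-tr} combined with the fact that $E_G$ extends to transfers via \cite[Ex.~2.57]{coarsetrans}); it is strongly additive, since strong additivity of $E$ is preserved under the twist by $G_{max,max}$ by \cite[Lem.~3.13]{equicoarse} and is further preserved by precomposition with $\Ind_H^{G,tr}$, because one checks directly from \eqref{gioreggwefewffewfwfwfwf} that $\Ind_H^G$ sends free unions of $H$-bornological coarse spaces to free unions of $G$-bornological coarse spaces; and it is continuous, which is the most delicate property and requires a cofinality argument comparing posets of invariant locally finite subsets.

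Next, the hypothesis that $H_{can}$ has $H_\Fin$-FDC means, by \cref{rfgui32r23r32r32r23-neu}, that $S_{min} \otimes H_{can}$ has $H$-FDC for every $S$ in $H_\Fin\Set$. Because $H$-FDC is a property of the underlying $H$-coarse space only and does not depend on the bornology, the $H$-bornological coarse space $S_{min,min} \otimes H_{can,min}$ also has $H$-FDC. Invoking the vanishing theorem from \cite{trans} applied to $E_G^H \iota$ then yields $E_G^H \iota(F(S_{min,min} \otimes H_{can,min})) \simeq 0$, which by exactness applied to the cone fiber sequence $F^0 \to F \to F^\infty \xrightarrow{\partial} \Sigma F^0$ of \cref{gioowegfwefwfwef} is equivalent to the forget-control map \eqref{revkjenekjvnrvevvverveer} being an equivalence.

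The main technical obstacle will be verifying continuity of $E_G^H \iota$. Neither the twist by the large $G$-bornological coarse space $G_{max,max}$ nor the induction functor $\Ind_H^G$ obviously preserves the colimits over posets of invariant locally finite subsets that underlie continuity. The argument will therefore require a careful analysis of the interaction of these operations with locally finite subsets, using that for an $H$-bornological coarse space $X$ the invariant locally finite subsets of $G_{max,max} \otimes \Ind_H^G(X)$ can be approximated in a cofinal manner by sets built from locally finite subsets of $X$ together with finite translates in $G$, and then reducing via strong additivity to the continuity of $E$ itself.
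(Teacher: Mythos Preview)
Your overall strategy --- apply \cite[Thm.~1.1]{trans} to the $H$-equivariant coarse homology theory $E_G^H\iota$ and the space $S_{min,min}\otimes H_{can,min}$ --- is exactly what the paper does. The divergence is in which hypotheses you try to verify.

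The theorem in \cite{trans} that the paper invokes does \emph{not} require continuity of the coarse homology theory. Its hypotheses are: weak additivity, weak transfers, $\bC$ compactly generated, $H$-FDC of the space, and that $H$ acts discontinuously on it. The paper checks these directly: strong additivity of $E_G^H$ gives weak additivity, the transfer structure gives weak transfers via \cite[Lem.~2.59]{coarsetrans}, $\bC$ is compactly generated by the CP-functor assumption, $H$-FDC follows from the hypothesis on $H_{can}$, and discontinuity of the $H$-action on $S_{min,min}\otimes H_{can,min}$ is immediate. So you are expending all your effort on a hypothesis that is not there, while omitting two that are (compactly generated target and discontinuous action, both easy but needed).

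Worse, continuity of $E_G^H\iota$ is genuinely doubtful. Twisting by $G_{max,max}$ is precisely the kind of operation that can destroy continuity: for infinite $G$, a subset of the form $G\times F\subseteq G_{max,max}\otimes X$ is never locally finite, so the naive cofinality over $\cL(X)$ fails. Your sketch (``approximated in a cofinal manner by sets built from locally finite subsets of $X$ together with finite translates in $G$, and then reducing via strong additivity'') is not a proof, and it is not clear it can be made into one. The paper conspicuously does \emph{not} claim $E_G$ is continuous after \eqref{ewecvwevewcewcw}, only that it is strongly additive with transfers; that omission is deliberate. Drop the continuity verification entirely and check the actual hypotheses of \cite[Thm.~1.1]{trans} instead.
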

\begin{proof}
We apply \cite[Thm.~1.1]{trans}  with
 \begin{enumerate}
 \item the group $H$ in place of $G$,
 \item the $H$-bornological coarse space,
 $S_{min,min}\otimes H_{can,min}$ in place of $X$,
 \item the $\bC$-valued  $H$-equivariant coarse homology theory $E_{G}^{H}\circ \iota$ in place of $E$.
 \end{enumerate}
We can conclude that  Assumption \ref{it:descent3-rel}  {of \cref{thm:leftinverse-rel}} is fullfilled if the following conditions are satisfied:  
 \begin{enumerate}
 \item 
  $E_{G}^{H}\circ \iota$ is weakly additive,
  \item $E_{G}^{H}\circ \iota $    admits weak transfers, 
  \item $\bC$ is compactly generated, 
  \item $S_{min,min}\otimes H_{can,min}$ has $H$-FDC,
  \item  $H$ acts discontinuously on $S_{min,min}\otimes H_{can,min}$.
  \end{enumerate}
 
It follows from the assumption that $M$ is a CP-functor that $\bC$ is compactly generated.
Furthermore,  by the standing assumption, $E$ is a strongly additive coarse homology theory with transfers. As noticed above, then $E_{G}^{H}$ is also strongly additive and admits transfers.
 By \cite[Sec.~2.2]{trans} strong additivity implies weak additivity and by \cite[Lem.~2.59]{coarsetrans} the existence of transfers implies the existence of weak transfers.
 
 If $H_{can}$ has $H_\Fin$-FDC , then $S_{min,min}\otimes H_{can,min}$ has $H$-FDC by definition. And finally, $H$ acts discontinuously on $S_{min,min}\otimes H_{can,min}$ for every $S$ in {$G\Set$, in particular for every $S$ in $G_{\Fin}\Set$.}
\end{proof}

Let $A$ be in $\PSh(G\Set)$.
Recall the notation $(-)_{\fin}$ from  \cref{ergiuhergoegergergergre}.
	We define the following functors from $G\Orb$ to $\bC$:
	\begin{align}
	\bL&:=\wt E_{G} (*, F_{\fin}^{\infty}((-)_{min,min}\otimes G_{can,min}))\label{def:funtorsequence}\\
	\bM_*&:=\wt E_{G} (*,\Sigma F_{\fin}^{0}((-)_{min,min}\otimes G_{can,min}))\label{def:funtorsequence2}\\
	\bM_A&:=\wt E_{G} (A,\Sigma F_{\fin}^{0}((-)_{min,min}\otimes G_{can,min})) \label{def:funtorsequence3}
	\end{align}
The boundary of the cone sequence (see  \cref{gioowegfwefwfwef})  induces a transformation $\bL\to \bM_*$, and the map $A\to *$ induces a transformation $\bM_*\to \bM_A$.

\begin{prop}
	\label{lem:comparison}
	The transformation $\bL\to\bM_*$
	is equivalent to the transformation
	\[(E_{G_{can,min}}\iota)(\tilde \cO^\infty_\hlg(\ell(-)_{disc}\times \Rips(G_{can,min})))\to (E_{G_{can,min}}\iota) (\cO^\infty_\hlg((-)_{disc}))\]
	induced by  the projection $\Rips(G_{can,min})\to *$.
\end{prop}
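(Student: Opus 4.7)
The plan is to unfold $\wt E_G(*,-)$, recognize $\bL \to \bM_*$ as a forget-control map, and then invoke \cref{cor:comparison}. First I would simplify the left argument of $\wt E_G$: since $* \simeq \yo(\pt)$ and the Yoneda embedding is fully faithful, the defining right Kan extension gives a natural equivalence $\wt E_G(*,Y) \simeq \underline{E_G}(\pt,Y) = (E_G\iota)(Y)$ for $Y$ in $G\Sp\cX$ (using the colimit-preserving extension of the coarse homology theory $E_G\iota$). Unfolding the twist $E_G = E_{G_{max,max}}$, this becomes $\wt E_G(*,Y)\simeq E\iota(G_{max,max}\otimes Y)$, natural in $Y$.

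Under this identification, $\bL(S)\to\bM_*(S)$ becomes $E\iota$ applied to the map
\[ G_{max,max}\otimes F_\fin^\infty(S_{min,min}\otimes G_{can,min}) \to G_{max,max}\otimes \Sigma F_\fin^0(S_{min,min}\otimes G_{can,min}) \]
induced by the cone boundary. Combining \cref{viojweiovjvwevwefwecwcwcw} with the $(-)_\fin$-construction of \cref{ergiuhergoegergergergre}, this is exactly $E\iota$ applied to the forget-control map $\beta_{S_{min,min}\otimes G_{can,min},\fin}$.

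Next I would apply \cref{cor:comparison} with $X=G_{can,min}$. This space is uniformly discrete, $G$-proper, and coarsely connected, so the corollary supplies a canonical continuous equivalence between $\beta_{S_{min,min}\otimes G_{can,min},\fin}$ and the $S$-twisted assembly map $\alpha_{G_{can,min},S}$, which by construction is induced by the projection $\Rips(G_{can,min})\to *$.

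Finally, the continuity of $E$ in the definition of a CP-functor implies that $E\iota$ is a continuous coarse homology theory, so by \cref{prop:motives-universal-prop-cont} it factors through $G\Sp\cX_c$ and hence carries continuous equivalences to equivalences. Applying $E\iota$ to the continuous equivalence from \cref{cor:comparison} then produces the desired equivalence of transformations, where the remaining tensor with $G_{can,min}$ on the assembly side is absorbed via $E\iota\circ(-\otimes G_{can,min}) \simeq E_{G_{can,min}}\iota$. Naturality in $S$ is inherited from the naturality already present in \cref{cor:comparison}. I do not anticipate any serious obstacle: each step is either direct from the definitions or a packaged application of a result established earlier in the paper.
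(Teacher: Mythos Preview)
Your proposal is correct and follows essentially the same route as the paper's own proof: unfold $\wt E_G(*,-)$ to $E\iota(-\otimes G_{max,max})$, recognize the resulting map as the forget-control map, invoke \cref{cor:comparison} with $X=G_{can,min}$ using that this space is uniformly discrete, $G$-proper and coarsely connected, and then use continuity of $E\iota$ to turn the continuous equivalence into an actual equivalence, absorbing the remaining $G_{can,min}$-twist at the end. The only difference is cosmetic: you are slightly more explicit about invoking \cref{prop:motives-universal-prop-cont} for the continuity step, whereas the paper simply cites the assumption that $E\circ\iota$ is continuous.
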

\begin{proof}
	By definition of $\wt E_{G}$ (see \eqref{ewecvwevewcewcw} and  \cref{fwpeou2903rf}), the map $\bL\to \bM_*$ is equivalent to the map
	\begin{align*}
	\mathclap{
	E\iota(F_{\fin }^{\infty}((-)_{min,min}\otimes G_{can,min})\otimes G_{max,max})\to \Sigma E\iota(F_{\fin}^{0}((-)_{min,min}\otimes G_{can,min})\otimes G_{max,max})\ .
	}
	\end{align*}
	By the \cref{cor:comparison} and the assumption that $E\circ \iota$ is continuous (note that $G_{can,min}$ is $G$-proper, uniformly discrete and coarsely connected), this map is equivalent to the map
	\[E\iota(\cO^\infty_{\hlg}(\ell(-)_{disc}\times \Rips(G_{can,min})))\otimes G_{can,min})\to E\iota(\cO^\infty_{\hlg}((-)_{disc})\otimes G_{can,min})\]
	induced by the projection $\Rips(G_{can,min})\to *$.  Since twisting by $G_{can,min}$ commutes with precomposition by $\iota$, this is the map in the statement of the proposition.
\end{proof}

Let $A$ be in $\PSh(G\Set)$. Let $\cF$ be a family of subgroups of $G$ such that $\Fin\subseteq \cF$.
Recall \cref{ergoiegererg} of the relative assembly map.
\begin{prop}
	\label{lem:firststep}
	Assume that $\bL(S)\to \bM_A(S)$ is an equivalence for all $S$ in $G_\cF\Orb$.
	Then the relative assembly map
	$\As_{\Fin,M}^\cF$
	admits a left inverse.
\end{prop}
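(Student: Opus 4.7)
The plan is to leverage the factorization $\bL(S)\to\bM_*(S)\to \bM_A(S)$ of natural transformations of functors $G\Orb\to\bC$. By hypothesis the composition is an equivalence for each $S$ in $G_\cF\Orb$, so after passing to $\colim_{G_\cF\Orb}$ the first arrow admits a left inverse, namely $\colim_{G_\cF\Orb}(\bM_*\to\bM_A)$ post-composed with the inverse of the composition. The goal is then to identify this colimit map with $\Sigma \As^\cF_{\Fin,M}$, after which the statement follows because $\Sigma$ is an auto-equivalence of the stable $\infty$-category $\bC$.

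To carry out the identification I would first apply \cref{lem:comparison} to rewrite $\bL\to\bM_*$ as the natural transformation
\[(E_{G_{can,min}}\iota)\,\tilde\cO^\infty_\hlg(\ell((-)_{disc})\times \Rips(G_{can,min}))\to (E_{G_{can,min}}\iota)\,\cO^\infty_\hlg((-)_{disc})\]
induced by the projection $\Rips(G_{can,min})\to *$. Writing $F:=(E_{G_{can,min}}\iota)\circ\tilde\cO^\infty_\hlg\colon G\Top[W_G^{-1}]\to \bC$ and exploiting that $F$ preserves colimits and that Cartesian products in the $\infty$-topos $G\Top[W_G^{-1}]\simeq \PSh(G\Orb)$ preserve colimits in each variable, I obtain
\[\colim_{G_\cF\Orb}\bL\simeq F(E^{top}_\cF G \times \Rips(G_{can,min}))\simeq F(E_\Fin G)\simeq \colim_{G_\Fin\Orb}\bM_*\ ,\]
where the middle equivalence combines $\Rips(G_{can,min})\simeq E_\Fin G$ from \cref{rviuohiofe2df2edf2} with the observation that $E^{top}_\cF G\times E_\Fin G\simeq E_\Fin G$ (because $\Fin\subseteq\cF$ forces both factors to be contractible on finite orbits, while $E_\Fin G$ is already empty on non-finite orbits). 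The final equivalence uses $\cO^\infty_\hlg(S_{disc})\simeq \Sigma\Yo^s(S_{min,max})$ from \cite[Prop.~9.35]{equicoarse} together with the CP-functor relation $M\simeq (E_{G_{can,min}}\iota)\circ i$, which yields $\bM_*\simeq \Sigma M$ naturally on $G\Orb$. Under these identifications, the colimit map becomes the canonical map $\colim_{G_\Fin\Orb}\bM_*\to\colim_{G_\cF\Orb}\bM_*$, which under $\bM_*\simeq \Sigma M$ is precisely $\Sigma \As^\cF_{\Fin,M}$.

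The main subtlety I anticipate is verifying that the canonical equivalence $E^{top}_\cF G\times E_\Fin G\simeq E_\Fin G$ is coherent with the projection maps to each factor, so that the colimit of $\bL\to\bM_*$ really matches the map $F(E^{top}_\Fin G)\to F(E^{top}_\cF G)$ classifying the inclusion of families, rather than that map composed with some auto-equivalence. Once this coherence is in hand, combining it with the retraction observation of the first paragraph produces the desired left inverse for $\As^\cF_{\Fin,M}$.
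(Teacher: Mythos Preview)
Your proposal is correct and follows essentially the same route as the paper: factor $\bL\to\bM_*\to\bM_A$, pass to the colimit over $G_\cF\Orb$, then identify the first arrow with $\Sigma\As^\cF_{\Fin,M}$ via \cref{lem:comparison}, \cref{rviuohiofe2df2edf2}, and the product identification $E_\cF G\times E_\Fin G\simeq E_\Fin G$. The paper carries out exactly these steps, only spelling out the chain of equivalences a bit more explicitly.

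The coherence worry you flag dissolves once you note that $E_\cF G$ is subterminal in $\PSh(G\Orb)$: its values are either empty or contractible, so the space of maps $E_\Fin G\to E_\cF G$ is itself contractible. Hence the map you obtain cannot differ from the canonical one by a nontrivial auto-equivalence, and the identification with $\Sigma\As^\cF_{\Fin,M}$ goes through without further argument.
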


\begin{proof}
	 {Forming the colimit over $G_{\cF}\Orb$, the assumption implies} that the composition
	\[
	\colim_{S\in G_\cF\Orb}\bL(S)\to   \colim_{S\in G_\cF\Orb}\bM_*(S) \to \colim_{S\in G_\cF\Orb}\bM_A(S)\]
	is an equivalence. 
	Hence the first morphism
	\begin{equation}\label{eq:assmap}
	\colim_{S\in G_\cF\Orb}\bL(S)\to \colim_{S\in G_\cF\Orb}\bM_*(S)
	\end{equation}
	admits a left inverse.  {Since $\cC$ is stable, it suffices} to show that the morphism \eqref{eq:assmap} is equivalent to 
	the suspension of the relative assembly map $\As_{\Fin,M}^\cF$.
	
	By \cref{lem:comparison}, the map \eqref{eq:assmap} is equivalent to the map
	\begin{equation}\label{ererhierhgioergerg}
\colim_{S\in G_\cF\Orb}E_{G_{can,min}} \iota(\tilde \cO^\infty_\hlg (\ell(S_{disc})\times \Rips(G_{can,min})))\to\colim_{S\in G_\cF\Orb} E_{G_{can,min}}(\cO^\infty_\hlg(S_{disc}))\ .
\end{equation}
	We now use the equivalence 
	\begin{equation}\label{efwlkjlewf23ff}
\ell(E^{top}_{\cF}G)\simeq \colim_{S\in G_{\cF}\Orb} \ell(S_{disc})\ .
\end{equation} 
in $G\Top[W_{G}^{-1}]$. 
Since $E_{G_{can,min}}\iota$ (as a functor on $G\Sp\cX$) and the functors
\[ -\times\Rips(G_{can,min}) \colon G\Top[W_{G}^{-1}]\to G\Top[W_{G}^{-1}] \]
and $\tilde \cO^{\infty}_{\hlg}$ preserve colimits, the map \eqref{ererhierhgioergerg} is equivalent to the map
	\begin{equation}
	\label{iuhjfvhj4r78zfudjf}E_{G_{can,min}}\iota(\tilde \cO^\infty_\hlg(\ell(E^{top}_\cF G)\times \Rips(G_{can,min})))\to E_{G_{can,min}}\iota(\tilde \cO^\infty_\hlg(\ell(E^{top}_\cF G)))\ .\end{equation}
		 
	By \cref{rviuohiofe2df2edf2} we have an equivalence  $\Rips(G_{can,min}) \simeq \ell(E^{top}_\Fin G)$. Furthermore, since $\Fin\subseteq \cF$ we have an equivalence   \[\ell(E^{top}_\cF G)\times \ell(E^{top}_\Fin G)\simeq \ell(E^{top}_\cF G\times E^{top}_\Fin G)\simeq   \ell(E^{top}_\Fin G)\] induced by the projection 
	$E^{top}_{\cF}G\to *$. Consequently, the map \eqref{iuhjfvhj4r78zfudjf} and hence  	the map \eqref{eq:assmap} are further equivalent to
	\[E_{G_{can,min}}\iota(\tilde \cO^\infty_\hlg(\ell(E^{top}_\Fin G)))\to E_{G_{can,min}}\iota(\tilde \cO^\infty_\hlg(\ell(E^{top}_\cF G)))\ .\] Using \eqref{efwlkjlewf23ff}   again and  its analogue for the family $\Fin$, and \cref{hgergergegregrege} of $\tilde \cO^{\infty}_{\hlg}$, this map is equivalent to
	\[\colim_{S\in G_\Fin \Orb} E_{G_{can,min}}\iota(  \cO^\infty(S_{max,max}))\to \colim_{S\in G_\cF \Orb} (E_{G_{can,min}}\circ \iota)(\cO^\infty(S_{max,max}))\ .\]
	By \cite[Prop.~9.35]{equicoarse}, this map is equivalent to
	\begin{equation*}
\colim_{S\in G_\Fin \Orb}\Sigma E_{G_{can,min}}\iota(S_{min,max})\to \colim_{S\in G_\cF \Orb}\Sigma E_{G_{can,min}}\iota(S_{min,max})\ .
\end{equation*}
Using 
\eqref{regoihihihjriu34hr34r3}
 we can rewrite this morphism further in the form 
 \begin{equation}\label{vkjrhjekfewfwefewfew}
\colim_{S\in G_\Fin \Orb}\Sigma M(S )\to \colim_{S\in G_\cF \Orb}\Sigma M( S)\ .
\end{equation}
By comparison with \cref{ergoiegererg}, we see that \eqref{vkjrhjekfewfwefewfew}  is the suspension of the relative assembly map $\As_{\Fin,M}^{\cF}$ as desired.
\end{proof}

 {Let $H$ be a subgroup of $G$.}
\begin{prop}
	\label{prop:comparison2}
	 {The map 
	\begin{equation*}
	\wt E^{H}_{G }(*,F^\infty(H_{can,min}))\to \Sigma \wt E^{H}_{G}(\Res_H^G(A), F^0(H_{can,min}))
	\end{equation*}
	from \eqref{ghuhgiurehggregreregrrergergerg}}
	is equivalent to the map
	\begin{equation}\label{eq:somemap}
	\bL(G/H)\to \bM_A(G/H)\ ,\end{equation}
	where $\bL$ and $\bM_A$ are as in \eqref{def:funtorsequence} and \eqref{def:funtorsequence3}. 
\end{prop}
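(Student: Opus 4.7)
The plan is to chain together natural equivalences identifying both sides of \eqref{ghuhgiurehggregreregrrergergerg} with the corresponding sides of $\bL(G/H)\to\bM_A(G/H)$, compatibly with the cone boundary and the map $A\to *$.

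First, I will invoke \cref{lem:induction} (applied with $E_G$ in place of $E$, noting that $E_G$ is again a strongly additive coarse homology theory with transfers) together with the observation $\Res_H^G(*)\simeq *$ in $\PSh(H\Set)$ (right adjoints preserve terminal objects) to rewrite
\[\wt E^H_G(*,F^\bullet(H_{can,min}))\simeq\wt E_G(*,\Ind_H^{G,Mot}F^\bullet(H_{can,min}))\]
and
\[\wt E^H_G(\Res_H^G(A),F^\bullet(H_{can,min}))\simeq\wt E_G(A,\Ind_H^{G,Mot}F^\bullet(H_{can,min}))\]
for $\bullet\in\{0,\infty\}$, with naturality guaranteeing compatibility with the cone boundary and the map $A\to *$. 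Next, \cref{gregee4erg} supplies a natural equivalence $\Ind_H^{G,Mot}\circ F^\bullet\simeq F^\bullet\circ \Ind_H^G$ compatible with the cone boundary, reducing both sides to $\wt E_G(-,F^\bullet(\Ind_H^G(H_{can,min})))$.

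Since $\Ind_H^G(H_{can,min})$ is $G$-finite (its underlying $G$-set is $G$ with the left-translation action, a single orbit), the canonical transformation $F^\bullet_\fin\to F^\bullet$ from \eqref{rvgtgrtgtrgtrtrervervv} is an equivalence at this space. I will then use \cref{lem:comparison-right} for $\bullet=0$ to obtain a genuine equivalence and \cref{lem:comparison-left} for $\bullet=\infty$ to obtain a continuous equivalence (after twisting with $G_{max,max}$) between $F^\bullet_\fin(\Ind_H^G(H_{can,min}))$ and $F^\bullet_\fin(\Ind_H^G(\Res_H^G(G_{can,min})))$. The $G_{max,max}$-twist in \cref{lem:comparison-left} is absorbed by the twist built into $E_G$, and since $E$ is continuous by the CP-functor hypothesis, $E_G$ sends continuous equivalences to equivalences. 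Finally, Frobenius reciprocity (\cref{lem:frobenius}), applied to the terminal $H$-bornological coarse space (for which $\Ind_H^G(*)\cong (G/H)_{min,min}$), yields a natural isomorphism $\Ind_H^G(\Res_H^G(G_{can,min}))\cong G_{can,min}\otimes(G/H)_{min,min}$, which completes the identifications with $\bL(G/H)$ and $\bM_A(G/H)$.

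The main point that requires care, rather than a serious obstacle, is to verify that all of these equivalences organize into a commuting diagram intertwining the two maps. This is however a routine diagram chase, since each step is natural in both arguments, and both transformations under consideration are induced by the same two natural maps: the cone boundary (in the second slot) and the map $A\to *$ (in the first slot).
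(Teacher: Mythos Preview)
The proposal is correct and follows essentially the same route as the paper's proof: apply \cref{lem:induction} (with $E_G$ in place of $E$), commute induction with $F^\bullet$ via \cref{gregee4erg}, pass from $\Ind_H^G(H_{can,min})$ to $\Ind_H^G(\Res_H^G(G_{can,min}))$ using \cref{lem:comparison-right} and \cref{lem:comparison-left} (the latter's $G_{max,max}$-twist being absorbed by $E_G$ and handled by continuity of $E$), and finish with Frobenius reciprocity \cref{lem:frobenius}. Your write-up is in fact slightly more explicit than the paper's in naming where $F^\bullet_\fin\simeq F^\bullet$ is used and in noting that \cref{lem:induction} is being applied to $E_G$ rather than $E$.
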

\begin{proof}

	By \cref{lem:induction},  the map {\eqref{ghuhgiurehggregreregrrergergerg}}
	 is equivalent to the composition
	\begin{align}\label{rgoirewgrwegrgeg}
	\wt E_{G }(*,\Ind_H^{G,Mot}(F_{H}^\infty(H_{can,min})))&\to \Sigma\wt E_{G}(*,\Ind_H^{G,Mot}(F_{H}^0(H_{can,min})))\\&\to \Sigma\wt  E_{G }(A,\Ind_H^{G,Mot}(F_{H}^0(H_{can,min})))\ ,\nonumber\end{align}
	where we also use the notation from \cref{gregee4erg}.  {By \cref{gregee4erg}, induction commutes with $F^\infty$ and $F^0$.}
	Since $H$ is $H$-finite, $E$ is continuous and $E_{G}$ is the twist of $E$ with $G_{max,max}$ (by convention \eqref{ewecvwevewcewcw}), the map $\Ind_H^G(H_{can,min})\to \Ind_H^G(\Res_H^G(G_{can,min}))$ induces an equivalence from the first map in  \eqref{rgoirewgrwegrgeg}  to
	\[\wt E_{G}(*,F_{\fin}^\infty(\Ind_H^G(\Res_H^G(G_{can,min})))\to \Sigma\wt E_{G}(*,F_{\fin}^0(\Ind_H^G(\Res_H^G(G_{can,min}))))\]
	by \cref{lem:comparison-left} and \cref{lem:comparison-right}.
	
	 We now investigate the second map in \eqref{rgoirewgrwegrgeg}.	
	By \cref{lem:comparison-right} and since $H$ is $H$-finite, the map $\Ind_H^G(H_{can,min})\to \Ind_H^G(\Res_H^G(G_{can,min}))$ induces an equivalence from the second map in the composition \eqref{rgoirewgrwegrgeg} to
	\[ \Sigma\wt  E_{G }(*,F_\fin^0(\Ind_H^G(\Res_H^G(G_{can,min}))))\to \Sigma\wt  E_{G}(A,F_{\fin}^0(\Ind_H^G(\Res_H^G(G_{can,min}))))\ .\]
	
	We conclude that \eqref{rgoirewgrwegrgeg} is equivalent to 
	\begin{align}\label{rgoirewgrwegrgeg111}
	\wt  E_{G}(*,F_{\fin}^\infty(\Ind_H^G(\Res_H^G(G_{can,min})))&\to \Sigma\wt  E_{G }(*,F_{\fin}^0(\Ind_H^G(\Res_H^G(G_{can,min}))))\\&\to  \Sigma\wt  E_{G}(A,F_{\fin}^0(\Ind_H^G(\Res_H^G(G_{can,min}))))\ . \nonumber\end{align}
	Now using the isomorphism
	\[\Ind_H^G(\Res_H^G(G_{can,min}))\stackrel{\text{Lemma}~\ref{lem:frobenius}}{\cong} \Ind_{H}^{G}(\pt)\otimes G_{can,min} \cong (G/H)_{min,min}\otimes G_{can,min}\]
	and invoking \eqref{def:funtorsequence} and  \eqref{def:funtorsequence3}, we obtain an equivalence from the composition \eqref{rgoirewgrwegrgeg111}   to
	\[\bL(G/H)\to \bM_*(G/H)\to \bM_{A}(G/H)\]
	as claimed.
\end{proof}

\begin{proof}[Proof of \cref{thm:novikov1-neu-relativ}]
	By \cref{lem:firststep}, we have to show that $\bL(S)\to \bM_A(S)$ is an equivalence for all $S$ in $G_\cF\Orb$. By \cref{prop:comparison2}  {and \cref{thm:leftinverse-rel}}, it hence suffices to show that the assumptions of \cref{thm:leftinverse-rel} are satisfied  {for every $H$ in $\cF$}. 
	Assumption \ref{ijgorggergerg-rel} from \cref{thm:leftinverse-rel} follows from Assumptions \ref{it:nov2} and \ref{it:nov3} of \cref{thm:novikov1-neu-relativ}.
	Since $\cF$ was assumed to be a subfamily of $\FDC$, Assumption \ref{it:descent3-rel} of \cref{thm:leftinverse-rel} follows from \cref{thm:replacement2}. 
\end{proof}

We observe that the FDC-assumption on $\cF$ in   \cref{thm:novikov1-neu-relativ} is used to verify   Assumption \ref{it:descent3-rel} of \cref{thm:leftinverse-rel}. If one is interested in the case $\cF=\All$ and assumes that $E_{\Fin}^{top}G$ has a finite $G$-CW-model, then we can reformulate  Assumption \ref{it:descent3-rel} of \cref{thm:leftinverse-rel}
as an assumption that certain forget-control maps for    $H$-equivariant coarse homology theories introduced below are equivalences
for all finite subgroups $H$ of $G$.

For an  equivariant coarse homology theory   $E\colon G\BC  \to \bC$ and a finite subgroup $H$ we define an $H$-equivariant coarse homology theory ${}^{H}\hspace{-3pt}E  $  and its twist  ${}^{H}\hspace{-3pt}E_{H}$ by $H_{max,max}$ (compare with \eqref{ewecvwevewcewcw}) by
\begin{equation}\label{riuhfweiufhiuwef2f2f23f}  {}^{H}\hspace{-3pt}E:= E  \circ \Ind_{H}^G\ , \quad  {}^{H}\hspace{-3pt}E_{H}:=({}^{H}\hspace{-3pt}E)_{H_{max,max}} \ . \end{equation}

Let $G$ be a group,  let   $E\colon G\BC  \to \bC$ be  an  equivariant coarse homology theory,  and set $M:=E_{G_{can,min}}\circ i \colon G\Orb\to \bC$.

\begin{theorem}\label{thm_descent_classic}
Assume that:
\begin{enumerate}\item  $\bC$ is stable, complete and cocomplete.
\item $E$ is continuous and strongly additive.
\item $E$ extends to an equivariant coarse homology theory with transfers $E^{tr}$.
\item\label{item_finite_BC_descent} $E^{top}_{\Fin}G$ can be represented by a finite $G$-CW complex.
\item \label{vgweroivjrewioverwfewrfw} The forget-control map
\begin{equation*}
\label{eq_classic_descent_assumption}
 {}^{H}\hspace{-3pt}E_{H} (   \Res^{G,Mot}_{H}(F^\infty ( G_{can,min})))\to\Sigma  {}^{H}\hspace{-3pt}E_{H}(  \Res^{G,Mot}_{H}(F^{0}( G_{can,min})))
\end{equation*}
is an equivalence  {for every finite subgroup $H$ of $G$.}
\end{enumerate}
Then the assembly map $\As_{ \Fin,M}$ admits a left inverse.
\end{theorem}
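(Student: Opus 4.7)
The proof will follow the blueprint of \cref{thm:novikov1-neu-relativ} in the absolute case $\cF=\All$, adapted to exploit the finite $G$-CW hypothesis in place of an FDC assumption on the family.

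First I would produce the relevant compact presheaf. Combining Assumption~\ref{item_finite_BC_descent} with \cref{lem:fin-dim-model} and \cref{rgeriowegewfewfrefwefew} yields a compact object $A$ of $\PSh(G\Set)$ with $r^*A\simeq E_\Fin G$, expressed as a finite colimit of representables $\yo(T_i)$ where each $T_i$ is a $G$-finite $G$-set having only finite stabilizers. By \cref{griog34tt43t3t}, $\Res^G_H(A)$ is compact in $\PSh(H\Set)$ for every subgroup $H\leq G$.

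Next I would perform the standard reduction. \cref{lem:firststep} applied with $\cF=\All$ shows that it suffices to prove that $\bL(G/H)\to \bM_A(G/H)$ (see \eqref{def:funtorsequence} and \eqref{def:funtorsequence3}) is an equivalence for every subgroup $H\leq G$. By \cref{prop:comparison2}, this is the map \eqref{ghuhgiurehggregreregrrergergerg}, and by \cref{thm:leftinverse-rel} it in turn reduces to verifying, for every $H\leq G$ and every $H$-set $S$ with finite stabilizers, that the forget-control map $\beta_{E_G^H,\ S_{min,min}\otimes H_{can,min}}$ is an equivalence.

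The main obstacle is that Assumption~\ref{vgweroivjrewioverwfewrfw} supplies the forget-control equivalence only at finite subgroups, while the reduction above requires it for arbitrary $H\leq G$. Since $A$ is obtained from $G$-finite $G$-sets with finite stabilizers, every $H$-orbit appearing in $\Res^G_H(T_i)$ has the form $H/F$ for some finite $F\leq H$; in combination with \cref{rem_descent_concrete_S} and strong additivity of $E_G^H$ (inherited from $E$ via \cite[Lem.~3.13]{equicoarse} and \cite[Ex.~2.57]{coarsetrans}), this reduces the forget-control requirement to the transitive case $S=H/F$ with $F$ finite. For such $S$ I would apply transitivity of induction $\Ind_H^G\circ \Ind_F^H\simeq \Ind_F^G$, Frobenius reciprocity (\cref{ergijeogegregrgregg}), and the compatibility of the cone functors with induction (\cref{gregee4erg}) and twisting (\cref{gregee4ergtw}) to recast $\beta_{E_G^H,(H/F)_{min,min}\otimes H_{can,min}}$ as a map governed by the finite subgroup $F$. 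The most delicate technical point will be matching this transformed map with the one appearing in Assumption~\ref{vgweroivjrewioverwfewrfw}: the resulting expression involves the $F$-bornological coarse space $\Res^G_F(G_{max,max})\otimes \Res^H_F(H_{can,min})$, whereas Assumption~\ref{vgweroivjrewioverwfewrfw} controls a map involving $F_{max,max}\otimes \Res^G_F(G_{can,min})$. The bridge between these two configurations, which ultimately rests on the inclusion $H\hookrightarrow G$ being a continuous equivalence after the relevant twists, should be provided by the duality and coarse-structure comparison machinery of Sections~5 and 6 (in particular \cref{biorgregergrege1111} and \cref{wtquzdguz4}). Once this comparison is established, Assumption~\ref{vgweroivjrewioverwfewrfw} yields the desired equivalence and the theorem follows.
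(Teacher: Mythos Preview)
Your proposal takes an unnecessarily difficult route and leaves a genuine gap at the end.

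The key simplification you miss is that for $\cF=\All$ the orbit category $G_{\All}\Orb$ has a \emph{final object}, namely $G/G$. Hence the colimits in the proof of \cref{lem:firststep} collapse: the suspension of $\As_{\Fin,M}$ is identified with $\bL(G/G)\to \bM_*(G/G)$ alone, and it suffices to show that $\bL(G/G)\to \bM_A(G/G)$ is an equivalence. Via \cref{prop:comparison2} one therefore only needs \cref{thm:leftinverse-rel} for the single subgroup $H=G$. In that case $E_G^G\simeq E_G$, and the forget-control requirement (using \cref{rem_descent_concrete_S} and excision) reduces to $S=G/H$ with $H$ \emph{finite}. Then one applies \cref{gregee4ergtw}, Frobenius (\cref{ergijeogegregrgregg}, \cref{43foijiiof43f34f34f1111}), and the $H$-equivariant coarse equivalence $H_{max,max}\to \Res^G_H(G_{max,max})$ to transform the map directly into the one in Assumption~\ref{vgweroivjrewioverwfewrfw}.

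By contrast, you attempt to verify $\bL(G/H)\to \bM_A(G/H)$ for \emph{every} $H\leq G$, which forces you to run \cref{thm:leftinverse-rel} at an arbitrary $H$ and then reduce to finite $F\leq H$. The comparison you arrive at involves $\Res^H_F(F^\infty(H_{can,min}))$, whereas Assumption~\ref{vgweroivjrewioverwfewrfw} is about $\Res^G_F(F^\infty(G_{can,min}))$. These are built from the Rips complexes of $H$ and of $G$ respectively and are genuinely different objects; the tools you cite from Sections~6 and~7 (\cref{biorgregergrege1111}, \cref{wtquzdguz4}) concern bornological duality and $G$-bounded coarse replacements, not a comparison of Rips complexes along $H\hookrightarrow G$. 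The relevant comparison lemmas in the paper (\cref{lem:comparison-left}, \cref{lem:comparison-right}) are formulated for $H\hookrightarrow G$ after applying $\Ind_H^G$ and twisting by $G_{max,max}$, not in the $F$-equivariant configuration you end up in. So your ``bridge'' is not established by the cited results, and the detour is avoidable once you use that $G/G$ is final.
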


\begin{rem}
Note that the first three conditions  together are almost equivalent to the condition that $M$ is a CP-functor (see \cref{bioregrvdfb}). The assumption that $\bC$ is compactly generated is omitted because it is only used in \cref{thm:replacement2}.  

Our reason to use the equivariant coarse homology  $E$ as the primary object in this formulation is because it appears explicitly in Condition \ref{eq_classic_descent_assumption}.
\end{rem}

\begin{proof}
In the proof of Proposition~\ref{lem:firststep} we have shown that the suspension of the assembly map $\As_{\Fin,M}$ is equivalent to the morphism $\colim_{S\in G_\All\Orb}\bL(S)\to \colim_{S\in G_\All\Orb}\bM_*(S)$. Since the object $G/G$ is final in $G_\All\Orb$, this morphism is equivalent to the morphism $\bL(G/G)\to \bM_*(G/G)$. Therefore in order   to  show that it admits a left inverse, it suffices to show that the composition $\bL(G/G)\to \bM_*(G/G) \to \bM_A(G/G)$ is an equivalence. By \cref{prop:comparison2} we can equivalently show that the assumptions of \cref{thm:leftinverse-rel} with $H := G$ are satisfied.

Assumption \ref{ijgorggergerg-rel} of \cref{thm:leftinverse-rel} follows from Lemma~\ref{lem:fin-dim-model} applied to the   family $\cF =  \Fin  $ 
and Assumption \ref{item_finite_BC_descent}.
In view of \cref{rgeriowegewfewfrefwefew} and \cref{rem_descent_concrete_S} it suffices  to verify Assumption \ref{it:descent3-rel} of \cref{thm:leftinverse-rel}
for all $G$-finite $G$-sets $S$ with finite stablizers.

Using $E_{G}:=E_{G_{max,max}}\simeq (E^{tr})_G^G\iota$ by Definition  \eqref{5gpo3jkopjk34opg3ogpg34g34g34} we see that the map \eqref{revkjenekjvnrvevvverveer} in Assumption \ref{ijgorggergerg-rel} of \cref{thm:leftinverse-rel}  is the  map
\begin{equation}
\label{eq_classic_descent_nbew4df9023dfs}
E_G(F^\infty (S_{min,min}\otimes G_{can,min}))\to\Sigma E_G(F^{0}(S_{min,min}\otimes G_{can,min}))\ .
\end{equation}
We must  show that \eqref{eq_classic_descent_nbew4df9023dfs} is an equivalence for every $G$-finite $G$-set  $S$ with finite stablizers.
By Lemma~\ref{gregee4ergtw} we can interchange the twist by $S_{min,min}$ with $F^\infty$ and $F^0$.
Hence \eqref{eq_classic_descent_nbew4df9023dfs} is equivalent to 
\begin{equation}
\label{eq_classic_descent_nbew4df9023dfs1111}
E_G(S_{min,min}\otimes F^\infty ( G_{can,min}))\to\Sigma E_G(S_{min,min}\otimes F^{0}( G_{can,min}))\ .
\end{equation}
Since $S$ is a finite union of $G$-orbits, in order to show that \eqref{eq_classic_descent_nbew4df9023dfs1111} is an equivalence, by excision we can assume that $S=G/H\in G_{\Fin}\Orb$.
Then $S_{min,min}\cong \Ind_{H}^{G}(*)$.
By Lemma~\ref{ergijeogegregrgregg} we get
\[\Ind_{ H}^G(\Res_{H}^G(G_{max,max}) \otimes *) \cong G_{max,max} \otimes S_{min,min}\ .\]
The inclusion $H_{max,max}\to \Res^{G}_{H}(G_{max,max})$ is an equivalence in $H\BC$.
Consequently, $G_{max,max} \otimes S_{min,min}$ is equivalent to 
$\Ind_{ H}^G( H_{max,max}  ) $ in $G\BC$.
 In view of the definition \eqref{ewecvwevewcewcw} of $E_{G}$
 we can replace \eqref{eq_classic_descent_nbew4df9023dfs1111} by
 \begin{equation}
\label{eq_classic_descent_nbew4df9023dfs11111111}
E(\Ind_{ H}^G( H_{max,max}  )\otimes F^\infty ( G_{can,min}))\to\Sigma E(\Ind_{ H}^G( H_{max,max}  )\otimes F^{0}( G_{can,min}))\ .
\end{equation}
 Using \cref{43foijiiof43f34f34f1111} and \eqref{riuhfweiufhiuwef2f2f23f} we can rewrite \eqref{eq_classic_descent_nbew4df9023dfs11111111} in the form
 \begin{equation*}
\label{eq_classic_descent_nbew4df9023dfs1111111111}
 {}^{H}\hspace{-3pt}E_{H} (   \Res^{G,Mot}_{H}(F^\infty ( G_{can,min})))\to\Sigma  {}^{H}\hspace{-3pt}E_{H}(  \Res^{G,Mot}_{H}(F^{0}( G_{can,min})))
\end{equation*}
which is an equivalence by Assumption \ref{vgweroivjrewioverwfewrfw}.
\end{proof}

\bibliographystyle{amsalpha}
\bibliography{born-transbas}
\end{document}